\documentclass[11pt]{amsart}
\linespread{1}
\usepackage{amsmath,amsfonts,amsbsy,amsgen,amscd,mathrsfs,amssymb,amsthm}
\usepackage{enumerate,mathtools}
\usepackage{tabularx}
\usepackage{array}
\newcolumntype{P}[1]{>{\centering\arraybackslash}p{#1}}
\usepackage{makecell}

\usepackage{stmaryrd}

\usepackage{fullpage}
\usepackage{url}
\usepackage{bm}
\usepackage{mathrsfs}

\usepackage[colorlinks=true,linkcolor=blue]{hyperref} 
\makeatletter
\renewcommand*{\eqref}[1]{%
  \hyperref[{#1}]{\textup{\tagform@{\ref*{#1}}}}%
}
\makeatother


\usepackage{tikz}
\usetikzlibrary{shadings}

\numberwithin{equation}{section}

\newtheorem{theorem}{Theorem}[section]
\newtheorem{proposition}[theorem]{Proposition}

\newtheorem{corollary}[theorem]{Corollary}
\newtheorem{lemma}[theorem]{Lemma}
\newtheorem{example}[theorem]{Example}
\theoremstyle{definition}
\newtheorem{definition}[theorem]{Definition}
\newtheorem{claim}[theorem]{Claim}

\newtheorem{remark}[theorem]{Remark}
\newtheorem{assumption}[theorem]{Assumption}
\newtheorem*{claim*}{Claim}

\newcommand{\R}{\mathbb R}
\newcommand{\V}{\mathsf{V}}

\newcommand{\Vol}{\mathrm{Vol}}

\newcommand{\sspan}{\mathop{\mathrm{span}}}
\newcommand{\Id}{\operatorname{Id}}
\newcommand{\aint}{\alpha}
\newcommand{\abar}{\bar{\alpha}}
\newcommand{\bb}{\beta}
\newcommand{\incomp}{\nsim}
\newcommand{\poly}{K}
\newcommand{\bsg}{\beta}
\newcommand{\s}{\circ}
\newcommand{\lin}{\operatorname{Lin}}
\newcommand{\aff}{\mathop{\mathrm{aff}}}
\newcommand{\LL}{\ell}
\newcommand{\I}{\textnormal{I}}
\newcommand{\mixed}{y^{\sigma}_{\textnormal{crit}}}

\numberwithin{figure}{section}

\title{The extremals of Stanley's inequalities for partially ordered sets}

\author{Zhao Yu Ma}
\address{Department of Mathematics, Massachusetts Institute of Technology, 
Cambridge, MA, USA}
\email{zhaoyuma@mit.edu}

\author{Yair Shenfeld}
\address{Division of Applied Mathematics, Brown University, Providence, RI, USA}
\email{Yair\_Shenfeld@Brown.edu}

\begin{document}
\maketitle

\begin{abstract}
Stanley's inequalities for partially ordered sets establish important log-concavity relations for sequences of linear extensions counts. Their extremals however, i.e., the equality cases of these inequalities, were until now poorly understood with even conjectures lacking. In this work, we solve this problem by providing a complete characterization of the extremals of Stanley's inequalities. Our proof is based on building a new ``dictionary" between the combinatorics of partially ordered sets and the geometry of convex polytopes, which captures their extremal structures.
\end{abstract}

\tableofcontents

\section{Introduction} 
\label{sec:intro}
\subsection{Log-concave sequences}
Finite sequences of numbers $\{a_i\}_{i=1}^n$ often serve as a powerful way to encode properties of algebraic, geometric, and combinatorial  objects: $a_i$ can stand for the $i$th coefficient of a Schur polynomial, the dimension of the $i$th cohomology group of a toric variety, or the number of $i$-elements independent sets of a
matroid, etc. The properties and interrelations of the elements of the sequence  $\{a_i\}_{i=1}^n$ provide valuable information about the underlying mathematical objects. Here we focus on \emph{log-concavity} relations:
\begin{align*}
a_i^2\ge a_{i-1}a_{i+1}\quad\text{for all }i=2,\ldots, n-1,
\end{align*}
which are tied to notions of \emph{positivity}  and \emph{unimodality} \cite{Ber89,Ber94,Sta89,Stanley00, SW14, Bra15}. The question that motivates our work is the following: Suppose a log-concave sequence $\{a_i\}_{i=1}^n$, whose elements stand for some algebraic/geometric/combinatorial properties of a mathematical object, satisfies
\begin{align*}
a_j^2=a_{j-1}a_{j+1}\quad \text{for some \emph{fixed} index $j$.}
\end{align*}
What can we deduce about the underlying object? This question of identifying the \emph{extremals} of the sequence $\{a_i\}_{i=1}^n$ is fundamental for a number of reasons. At the very basic level, the structure of the extremals  is a basic property of the sequence which we ought to understand. More concretely, information about the extremals can provide information about the shape of the sequence which cannot be inferred from the log-concavity property alone: see Figure \ref{fig:stshape}.
Additionally, if one wishes to improve on the log-concavity property by having $a_i^2-a_{i-1}a_{i+1}\ge d_i$ for some non-trivial $d_i\ge 0$, then usually understanding the extremals of $\{a_i\}$, and hence the vanishing of $d_i$, is a necessary first step. From a different perspective, there are interesting questions related to combinatorial interpretations and computational complexity of the difference $a_i^2-a_{i-1}a_{i+1}$, where characterizing the vanishing condition $a_i^2=a_{i-1}a_{i+1}$ is a basic question \cite{P19,P22survey,chan2023equality}.

Establishing that a given sequence, which arises in an algebraic/geometric/combinatorial  setting, is  log-concave is a difficult problem, with many remaining open questions. In recent years, major advances were achieved on the fronts of proving log-concavity relations for various important sequences in combinatorics \cite{Huh18,kalai2022work,CP22a}.  These approaches  rely on building ``dictionaries"  between combinatorial and geometric-algebraic objects, and then using (or taking inspiration from) already-known log-concavity relations in the geometric-algebraic settings. What is missing, however, are the analogous dictionaries between the \emph{extremals} arising in the combinatorial and geometric-algebraic settings. In this work, we take a  step towards bridging this gap by focusing on  the  correspondence between combinatorics and \emph{convex} geometry due to R. Stanley in the context of partially ordered sets. We will build such a dictionary and, as a consequence, completely characterize the extremal structures arising in Stanley's inequalities  \cite{Sta81}. The question of the characterization of these  extremals was already raised by Stanley, but even conjectures on these extremals were lacking. As we will see, this is for a good reason since, surprisingly, the extremal structures of our combinatorial sequences will display the richness and subtle nature of their geometric counterparts.

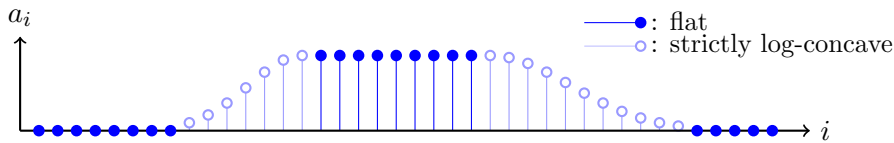
\begin{figure}[h]
\centering
\begin{tikzpicture}[scale=.25]

\foreach \i in {9,...,15}
{
	\draw[color=blue!40!white] (\i,{4*exp(-(\i-15)^2/16)}) -- (\i,0);
	\draw[color=blue!40!white,fill=white,thick] (\i,{4*exp(-(\i-15)^2/16)}) circle (0.25);
}

\foreach \i in {16,...,24}
{
	\draw[color=blue,fill=blue,thick] (\i,4) circle (0.25);
	\draw[color=blue] (\i,4) -- (\i,0);
}

\foreach \i in {25,...,35}
{
	\draw[color=blue!40!white] (\i,{4*exp(-(25-\i)^2/36)}) -- (\i,0);
	\draw[color=blue!40!white,fill=white,thick] (\i,{4*exp(-(25-\i)^2/36)}) circle (0.25);
}

\draw[thick,->] (0,0) -- (0,5) node[above] {$a_i$};
\draw[thick,->] (0,0) -- (42,0) node[right] {$i$};

\foreach \i in {1,...,8}
{
	\draw[color=blue,fill=blue,thick] (\i,0) circle (0.25);
}

\foreach \i in {36,...,40}
{
	\draw[color=blue,fill=blue,thick] (\i,0) circle (0.25);
}

\draw[color=blue] (30,5.75) -- (33,5.75);
\draw[color=blue,fill=blue,thick] (33,5.75) circle (0.25);
\draw (33,5.85) node[right] {\small : flat};

\draw[color=blue!25!white] (30,4.5) -- (33,4.5);
\draw[color=blue!40!white,fill=white,thick] (33,4.5) circle (0.25);
\draw (33,4.55) node[right] {\small : strictly log-concave};
\end{tikzpicture}
\caption{The extremals of this log-concave sequence (cf. \eqref{eq:Stanleyk=1}) are such that $a_j^2=a_{j-1}a_{j+1}\Rightarrow a_{j-1}=a_j=a_{j+1}$, corresponding to the flat parts of the sequence. The width of each of the flat parts can be characterized as well. This precise description of the shape of the sequence cannot be obtained from the log-concavity property alone.\label{fig:stshape}}
\end{figure}

\subsection{Stanley's inequalities}
\label{subsec:StanleyInq}
Let $\abar=\{y_1,\ldots,y_{n-k}\}\cup\{x_1,\ldots,x_k\}$ be a partially ordered set (poset) of $n$ elements with a fixed chain $x_1<\cdots<x_k$ of length $k$. The set of linear extensions of $\abar$ is the set of bijections of $\abar$ into $[n]:=\{1,\ldots,n\}$ which are order-preserving:
\[
\mathcal N:=\{\text{bijections }\sigma:\abar\to [n]: w\le z\Rightarrow \sigma(w)\le \sigma(z)~\forall ~w,z\in\abar\}.
\]
We are interested in linear extensions which send the elements in the  chain $x_1<\cdots<x_k$ into \emph{fixed} locations. Fix $1\le i_1<\cdots <i_k\le n$ and fix $\LL\in [k]$ such that $i_{\LL-1}+1<i_{\LL}<i_{\LL+1}-1$. For $\s\in\{-,=,+\}$, let
\begin{align*}
\mathcal N_{\s}:=\{\sigma\in\mathcal N:\sigma(x_j)=i_j~\forall\, j\in [k]\backslash \{\LL\}\text{ and }\sigma(x_{\LL})=i_{\LL}+ 1_{\s} \},
\end{align*}
where $1_{\s}:=1_{\{\s\text{ is }+\}}-1_{\{\s\text{ is }-\}}$. In words, whenever $j\neq \LL$,  $x_j$ is placed at $i_j$, and when $j=\LL$,  $x_{\LL}$ is placed at one of the locations in $\{i_{\LL}-1,i_{\LL},i_{\LL}+1\}$, depending on the sign of $\s\in\{-,=,+\}$; see Figure \ref{fig:linext}. 
\begin{figure}[h]
\begin{tikzpicture}[scale=.2]

\draw (-25,8) node[below]{1};
\draw (-17,8) node[below]{\textcolor{black}{$i_1$}};
\draw (-7.5,8) node[below]{\textcolor{black}{$i_{\LL-1}$}};
\draw (1,8) node[below]{\textcolor{red}{$i_{\LL}-1$}};
\draw (6,8) node[below]{\textcolor{red}{$i_{\LL}$}};
\draw (11,8) node[below]{\textcolor{red}{$i_{\LL}+1$}};
\draw (20,8) node[below]{\textcolor{black}{$i_k$}};
\draw (25,8) node[below]{$n$};

\draw[color=black,dashed] (-25,0)--(25,0);
\draw (-22.5,0) node[below]{\textcolor{black}{$\cdots$}};
\draw (-20,0) node[below]{\textcolor{black}{$y_3$}};
\draw[color=black,->] (-17,1)--(-17,5);
\draw (-17,0) node[below]{\textcolor{black}{$x_1$}};
\draw (-14,0) node[below]{\textcolor{black}{$y_2$}};
\draw (-11,0) node[below]{\textcolor{black}{$\cdots$}};
\draw (-7.5,0) node[below]{\textcolor{black}{$x_{\LL-1}$}};
\draw[color=black,->] (-7.5,1)--(-7.5,5);
\draw (-4,0) node[below]{\textcolor{black}{$\cdots$}};
\draw (-1,0) node[below]{\textcolor{black}{$y_{1}$}};
\draw (6,0) node[below]{\textcolor{red}{$x_{\LL}$}};
\draw (14,0) node[below]{\textcolor{black}{$y_4$}};
\draw (17,0) node[below]{\textcolor{black}{$\cdots$}};
\draw (20,0) node[below]{\textcolor{black}{$x_k$}};
\draw[color=black,->] (20,1)--(20,5);
\draw (24,0) node[below]{\textcolor{black}{$\cdots$}};

\draw[color=black,dashed,->] (5.5,0.5)--(1,5);
\draw (-0,1) node[right]{$\mathcal N_-$};
\draw[color=black,dashed,->] (5.5,0.5)--(5.5,5);
\draw (5,3.7) node[right]{$\mathcal N_=$};
\draw[color=black,dashed,->] (5.5,0.5)--(11,5);
\draw (7.5,1) node[right]{$\mathcal N_+$};
\end{tikzpicture}
\caption{Every linear extension sends $x_j$ to $i_j$ whenever $j\neq \LL$. But  \textcolor{red}{$x_{\LL}$} is sent to one of the locations \textcolor{red}{$i_{\LL}-1,~i_{\LL},~i_{\LL}+1$}, depending on whether the linear extension is in $\mathcal N_-,\mathcal N_=,\mathcal N_+$, respectively.\label{fig:linext}}
\end{figure}
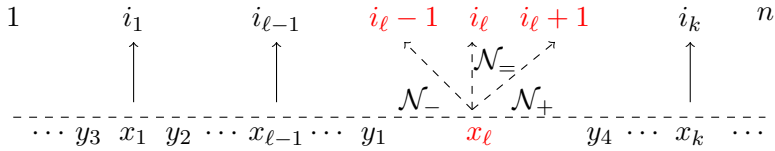

In \cite[Theorem 3.2]{Sta81}, Stanley showed that
\begin{align}
\label{eq:Stanley}
|\mathcal N_{=}|^2\ge |\mathcal N_{-}||\mathcal N_{+}|,
\end{align}
thus resolving a conjecture of Chung, Fishburn and Graham \cite{CFG80}. To see the relation to log-concave sequences consider the case $k=1$ and set 
\begin{align}
\label{eq:Stanleyk=1}
a_i:=|\{\sigma\in\mathcal N:\sigma(x_1)=i\}|,\quad i\in [n].
\end{align}
Then, \eqref{eq:Stanley} amounts to the statement that the sequence $\{a_i\}$ is log-concave. For the general case $k\ge 1$, \eqref{eq:Stanley} is a log-concavity statement about multi-index sequences. 

The goal of this work is to provide a complete characterization of the equality cases of \eqref{eq:Stanley} for any $k$. That is, we will answer the following question: If
\begin{align}
\label{eq:Stanleyeq}
|\mathcal N_{=}|^2=|\mathcal N_{-}||\mathcal N_{+}|,
\end{align}
what can we deduce about the poset $\abar$? 

To gain some intuition for the extremals of Stanley's inequalities \eqref{eq:Stanley} let us start with a trivial observation: If $\{y_1,\ldots,y_{n-k}\}$ are all incomparable to $x_{\LL}$, then $|\mathcal N_{-}|=|\mathcal N_{=}|=|\mathcal N_{+}|$, which yields equality in  \eqref{eq:Stanley}. In the same vein, consider the following example which is slightly less trivial.
\begin{example}
\label{ex:trivmech}
Suppose the poset $\abar$ satisfies
\begin{align}
\label{eq:triv}
\{z\in\abar: z<x_{\LL}\text{ and } z\not < x_{\LL-1}\}\cup \{z\in\abar: z>x_{\LL}\text{ and } z\not > x_{\LL+1}\}=\varnothing.
\end{align}
Then, given any $\sigma\in\cup_{\s\in\{-,=,+\}}\mathcal N_{\s}$, we can permute (some of) the locations of the elements $\{\sigma^{-1}(i_{\LL}-1),\, \sigma^{-1}(i_{\LL}),\, \sigma^{-1}(i_{\LL}+1)\}$ without violating any constraints. For example, given $\sigma\in\mathcal N_+$, the elements $\sigma^{-1}(i_{\LL}-1),\, \sigma^{-1}(i_{\LL})$ must be incomparable to $x_{\LL}=\sigma^{-1}(i_{\LL}+1)$ since, as $i_{\LL-1}+1<i_{\LL}<i_{\LL+1}-1$, the converse would violate \eqref{eq:triv}. Hence, we can exchange the locations of $\{\sigma^{-1}(i_{\LL}-1),\, \sigma^{-1}(i_{\LL}+1)\}$ or $\{\sigma^{-1}(i_{\LL}),\, \sigma^{-1}(i_{\LL}+1)\}$. It follows that
\begin{align}
\label{eq:trivii}
|\mathcal N_=|=|\mathcal N_-|=|\mathcal N_+|,
\end{align}
which in particular implies  \eqref{eq:Stanleyeq}. 
\end{example}
The mechanism \eqref{eq:triv} is wasteful since it is \emph{global} in nature. It controls all the elements between $x_{\LL-1}$ and $x_{\LL+1}$, even though we are concerned only with the elements which are close to $x_{\LL}$ in the sense that they are located in $i_{\LL}-1,\,i_{\LL},\,i_{\LL}+1$. Instead, we expect  \eqref{eq:Stanleyeq} to hold as soon as the mechanism \eqref{eq:triv} occurs only on a \emph{local} scale. To make this idea precise we make the following definition regarding elements that are close to $x_{\LL}$.

\begin{definition}
\label{def:neighbor}
Fix $\LL\in[k]$ such that $i_{\LL-1}+1<i_{\LL}<i_{\LL+1}-1$, and given $\s\in\{-,=,+\}$, fix $\sigma\in \mathcal N_{\s}$. The \emph{companions} of $x_{\LL}=\sigma^{-1}(i_{\LL}+1_{\s})$ are $\sigma^{-1}(i_j)$ for $i_j\in \{i_{\LL}-1,i_{\LL},i_{\LL}+1\}\backslash \{i_{\LL}+1_{\s}\}$, where $1_{\s}:=1_{\{\s\text{ is }+\}}-1_{\{\s\text{ is }-\}}$. The companion lower in ranking is the \emph{lower companion} and the companion higher in ranking is the  \emph{upper companion}.
\end{definition} 
For example, with $\s$ being $-$, the companions of $x_{\LL}=\sigma^{-1}(i_{\LL}-1)$ are $\sigma^{-1}(i_{\LL})$ and $\sigma^{-1}(i_{\LL}+1)$. The lower companion is $\sigma^{-1}(i_{\LL})$ and the upper companion is $\sigma^{-1}(i_{\LL}+1)$.

\subsection{The extremals of Stanley's inequalities}
\label{subsec:extremal_Stanley}

The characterization of the extremals of Stanley's inequalities will be in terms of the companions of $x_{\LL}$ as defined in Definition \ref{def:neighbor}. On a finer resolution, there are two distinct classes of posets which in turn have different types of extremals. The two classes of posets will be called \emph{supercritical} and \emph{critical}, a terminology which will become clear later. The precise definitions are deferred to Definition \ref{def:posetcrit}, but for now, we will simply note that a supercritical poset is always critical, but the converse is false. (There are further classes which reduce to the supercritical and critical classes. They  will be handled  in Section \ref{sec:subcrit}, see also Theorem \ref{thm:N_=>0intro}.)

\begin{theorem}{\textnormal{(\textbf{Supercritical extremals of Stanley's inequalities})}}
\label{thm:supcrit}
\vspace{0.05in}

Suppose the poset $\abar$ is supercritical. The following are equivalent:
\vspace{0.05in}

\begin{enumerate}[(i)]
\item $|\mathcal N_{=}|^2= |\mathcal N_{-}||\mathcal N_{+}|$.\\

\item $|\mathcal N_{-}|= |\mathcal N_{=}|=|\mathcal N_{+}|$.\\

\item  For every linear extension in $\mathcal N_{-}\cup\mathcal N_{=}\cup \mathcal N_{+}$, both companions of $x_{\LL}$ are incomparable to $x_{\LL}$.

\end{enumerate}
\end{theorem}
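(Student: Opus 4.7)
The strategy is the cycle (iii)$\Rightarrow$(ii)$\Rightarrow$(i)$\Rightarrow$(iii); only the last implication uses the supercriticality hypothesis. For (iii)$\Rightarrow$(ii), I would construct explicit transposition bijections between the three sets. Given $\sigma\in\mathcal N_-$, let $y:=\sigma^{-1}(i_\LL)$ be the lower companion of $x_\LL$; by (iii), $y$ is incomparable to $x_\LL$, so the modification of $\sigma$ that exchanges the images of $x_\LL$ and $y$ is a valid linear extension in $\mathcal N_=$. Applying (iii) to this image shows that its new lower companion (again $y$) is still incomparable to $x_\LL$, so the same transposition inverts the map, yielding $|\mathcal N_-|=|\mathcal N_=|$; swapping instead with the upper companion gives $|\mathcal N_=|=|\mathcal N_+|$. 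The constraint $i_{\LL-1}+1<i_\LL<i_{\LL+1}-1$ keeps the chain $x_{\LL-1}<x_\LL<x_{\LL+1}$ compatible with $x_\LL$ occupying any of the three admissible positions. The implication (ii)$\Rightarrow$(i) is immediate.

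For (i)$\Rightarrow$(iii), my plan is to transfer the problem to convex geometry through the dictionary underlying Stanley's original proof of \eqref{eq:Stanley}. That dictionary realizes $|\mathcal N_-|,|\mathcal N_=|,|\mathcal N_+|$, up to a common combinatorial factor, as mixed volumes $V(K,K,C,\ldots,C)$, $V(K,L,C,\ldots,C)$, $V(L,L,C,\ldots,C)$ of polytopes $K,L,C$ obtained by slicing an order polytope of $\abar$ at the heights $\{i_j/n\}_{j\ne\LL}$, with the two deformations $K$ and $L$ encoding the extreme positions of $x_\LL$. In these terms \eqref{eq:Stanley} is an instance of the Alexandrov--Fenchel inequality and \eqref{eq:Stanleyeq} is one of its equality cases, which I would analyze via the Shenfeld--van Handel equality theorem: it forces the support functions $h_K$ and $h_L$ to agree, modulo an affine function, on every unit normal lying in the facial support of the reference body $C$.

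The crux is to decode this geometric equality back into (iii), and I would proceed by contrapositive. Suppose some $\sigma\in\mathcal N_-\cup\mathcal N_=\cup\mathcal N_+$ has a companion $w$ of $x_\LL$ that is comparable to $x_\LL$; the aim is to use $w$ to produce an active unit normal of $C$ along which $h_K\ne h_L$, contradicting Shenfeld--van Handel and hence \eqref{eq:Stanleyeq}. The supercritical hypothesis is precisely what makes this step work: it guarantees that the normal fan of $C$ is rich enough to register every companion-comparability, so no such witness can be hidden in a degenerate face. The main obstacle I anticipate is exactly this translation step -- identifying the active normals of $C$ with combinatorial comparability data and verifying that supercriticality is the precise condition under which the correspondence is lossless. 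The weaker critical-but-not-supercritical regime, where the dictionary is coarser, will be handled by a separate argument elsewhere in the paper.
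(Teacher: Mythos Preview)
Your cycle (iii)$\Rightarrow$(ii)$\Rightarrow$(i) is correct and matches the paper's Proposition~\ref{prop:suff}; the transposition bijection is exactly the mechanism behind Lemma~\ref{lem:decompsimpl}(i).

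For (i)$\Rightarrow$(iii) your high-level plan is right---translate via Stanley's dictionary, invoke the Shenfeld--van~Handel extremal characterization, and decode---but the decoding step has a real gap. The equality conclusion of Theorem~\ref{thm:SvH} is not $h_{K_{\LL-1}}=h_{K_\LL}$ on the active normals; it is $h_{K_{\LL-1}}=a\,h_{K_\LL}+\langle \mathrm v,\cdot\rangle$ for some $a\ge 0$ and $\mathrm v\in\R^{n-k}$. So your contrapositive ``find a normal where $h_K\ne h_L$'' does not by itself yield a contradiction: a normal where the support functions differ could simply be absorbed by $a$ or $\mathrm v$. The paper spends most of Section~\ref{sec:supercrit} on precisely this point. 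First it identifies enough combinatorial extreme normals (Proposition~\ref{prop:dir}) to set up a system of linear equations in $a$ and the coordinates $\mathrm v_j$; then, through a chain argument that walks along cover relations of the poset (Lemma~\ref{lem:minmax}, Lemma~\ref{lem:a=1}), it forces $a=1$ and $\mathrm v_j=0$ for the relevant $j$. Only after this can a comparable companion be shown to produce a contradiction (the proof of Theorem~\ref{thm:supcritsec}).

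Two smaller points. First, the role you assign to supercriticality is slightly off: it is not that the normal fan becomes ``rich enough'' in the supercritical case---the extreme normals of Proposition~\ref{prop:dir}(a--d) are available already under criticality---but rather that supercriticality eliminates the $\mathcal K$-degenerate pairs from the extremal characterization, so that \eqref{eq:exth} holds without the correction terms $P_j,Q_j$. Second, the paper inserts a closure step (Section~\ref{subsec:cl}) before running the geometric argument, replacing $\abar$ by $\textnormal{Cl}(\abar)$; this is what allows it to read off poset relations (like $y_{j_i}<x_m$ in Lemma~\ref{lem:minmax}) directly from linear-extension data.
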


Theorem \ref{thm:supcrit} provides a number of insights into the extremals of \eqref{eq:Stanleyeq}. Part (ii) of the theorem (which held in \eqref{eq:trivii}) is non-trivial, and even surprising, since it puts heavy constraints on the ways in which $|\mathcal N_{=}|^2= |\mathcal N_{-}||\mathcal N_{+}|$ can occur. A priori, we could have  a geometric progression where $|\mathcal N_{-}|=ab^{c-1},~|\mathcal N_{=}|=ab^c,~|\mathcal N_{+}|=ab^{c+1}$, for some $a,b,c>0$, which would yield the equality 
\[
|\mathcal N_{=}|^2=a^2b^{2c}=(ab^{c-1})(ab^{c+1})=|\mathcal N_{-}||\mathcal N_{+}|.
\]
Theorem \ref{thm:supcrit}(ii) excludes this possibility. On the other hand, despite the information provided by  (ii), it sheds no light on the mechanism which yield equality in \eqref{eq:Stanley}. In contrast, Theorem \ref{thm:supcrit}(iii) provides the mechanism  behind the extremals: The companions of $x_{\LL}$, under any linear extension in $\bigcup_{\circ\in\{-,=,+\}}\mathcal N_{\circ}$, must be incomparable to $x_{\LL}$. Hence, the positions of $x_{\LL}$ and both of its companions can be swapped, which leads to part (ii). Note that (iii) is a \emph{local} condition which controls only the immediate companions of $x_{\LL}$, unlike \eqref{eq:triv}. The power of Theorem \ref{thm:supcrit} lies in the statement that this mechanism is the \emph{only} mechanism behind the extremals of Stanley's inequalities for supercritical posets.

The characterization of  Theorem \ref{thm:supcrit} is very clean and one might hope that it applies to every poset. This hope is quickly shattered:

\begin{example}
\label{ex:crit}
Let $\abar=\{y_1,y_2,y_3,y_4,x_1,x_2,x_3\}$ with the relations 
\[
x_1<x_2<x_3,\quad y_1<x_2,\quad x_2<y_2,\quad x_1<y_3<x_3.
\]
Set $\LL=2,$ and $i_1=2,~i_2=4,~i_3=6$. One can check that $|\mathcal N_-|= |\mathcal N_=|=|\mathcal N_+|=4$ so that Theorem \ref{thm:supcrit}(ii) holds. On the other hand, Theorem \ref{thm:supcrit}(iii) is false since $y_1,y_2$ are comparable to $x_2$ but can appear as companions of $x_2$ under linear extensions in $\mathcal N_-\cup\mathcal N_=\cup\mathcal N_+$. See Figure \ref{fig:excrit}.
\begin{figure}[h]
\begin{tikzpicture}[scale=1.5]
\begin{scope}
  \node at (-2,-1) {$\abar\quad =$};
  \node (y2) at (2,0) {$\color{blue}{y_2}$};
  \node (x3) at (0,0) {$\color{red}{x_3}$};
  \node (y3) at (-1,-1) {$\color{blue}{y_3}$};
  \node (x2) at (1,-1) {$\color{red}{x_2}$};
  \node (y1) at (2,-2) {$\color{blue}{y_1}$};
  \node (x1) at (0,-2) {$\color{red}{x_1}$};
  \node (y4) at (2,-1) {$y_4$};
  \draw [->, thick] (y3) -- (x1);
  \draw [->, thick] (x3) -- (y3);
  \draw [->, thick] (x3) -- (x2);
  \draw [->, thick] (x2) -- (x1);
  \draw  [->, thick] (y2) -- (x2);
  \draw  [->, thick] (x2) -- (y1);
  \end{scope} 
\end{tikzpicture}

\begin{align*}
&\mathcal N_- =\left\{ {\color{blue}y_1} {\color{red}x_1} {\color{red}x_2} {\color{blue}y_3} y_4  {\color{red}x_3} {\color{blue}y_2}, \quad{\color{blue}y_1} {\color{red}x_1} {\color{red}x_2} y_4 {\color{blue}y_3} {\color{red}x_3}  {\color{blue}y_2}, \quad{\color{blue}y_1} {\color{red}x_1} {\color{red}x_2} {\color{blue}y_2}{\color{blue}y_3} {\color{red}x_3} y_4, \quad {\color{blue}y_1} {\color{red}x_1} {\color{red}x_2} {\color{blue}y_3}{\color{blue}y_2} {\color{red}x_3} y_4\right\},\\
&\mathcal N_==\left\{ {\color{blue}y_1}  {\color{red}x_1} {\color{blue}y_3} {\color{red}x_2} y_4 {\color{red}x_3} {\color{blue}y_2}, \quad  {\color{blue}y_1}  {\color{red}x_1}  y_4 {\color{red}x_2} {\color{blue}y_3} {\color{red}x_3}  {\color{blue}y_2},\quad  {\color{blue}y_1}  {\color{red}x_1} {\color{blue}y_3} {\color{red}x_2}  {\color{blue}y_2} {\color{red}x_3} y_4,\quad y_4{\color{red}x_1}  {\color{blue}y_1} {\color{red}x_2}  {\color{blue}y_3} {\color{red}x_3} {\color{blue}y_2}\right\},\\
&\mathcal N_+=\left\{ {\color{blue}y_1} {\color{red}x_1} y_4 {\color{blue}y_3} {\color{red}x_2} {\color{red}x_3} {\color{blue}y_2},\quad  {\color{blue}y_1} {\color{red}x_1} {\color{blue}y_3} y_4{\color{red}x_2} {\color{red}x_3} {\color{blue}y_2},\quad y_4 {\color{red}x_1}   {\color{blue}y_1} {\color{blue}y_3} {\color{red}x_2}  {\color{red}x_3} {\color{blue}y_2},\quad y_4 {\color{red}x_1} {\color{blue}y_3} {\color{blue}y_1}  {\color{red}x_2} {\color{red}x_3} {\color{blue}y_2}\right\}.
  \end{align*}
 \caption{\textbf{Top}: Hasse diagram (arrows point from smaller to larger elements) of poset in Example \ref{ex:crit}. \textbf{Bottom}:  Collections of linear extensions of poset in Example \ref{ex:crit}.\label{fig:excrit}}
\end{figure}
\end{example}

Our next result goes beyond Theorem \ref{thm:supcrit} and characterizes the extremals of critical posets.
\begin{theorem}{\textnormal{(\textbf{Critical extremals of Stanley's inequalities})}}
\label{thm:crit}
\vspace{0.05in}

Suppose the poset $\abar$ is critical. The following are equivalent:
\vspace{0.05in}

\begin{enumerate}[(i)]
\item $|\mathcal N_{=}|^2= |\mathcal N_{-}||\mathcal N_{+}|$.\\

\item $|\mathcal N_{-}|= |\mathcal N_{=}|=|\mathcal N_{+}|$.\\

\item  For every linear extension in $\mathcal N_{-}\cup\mathcal N_{=}\cup \mathcal N_{+}$, at least one companion of $x_{\LL}$ is incomparable to $x_{\LL}$. In addition, there exist nonnegative numbers $\mathrm N_1, \mathrm N_2$ such that:
\begin{itemize}
\item For any fixed $\s \in \{-,=,+\}$, 
\begin{align*}
&|\{\sigma\in\mathcal N_{\circ}: \textnormal{only the lower companion of $x_{\LL}$ is incomparable to $x_{\LL}$} \}|\\
&=\mathrm N_1=\\
&|\{\sigma\in\mathcal N_{\circ}: \textnormal{only the upper companion of $x_{\LL}$ is incomparable to $x_{\LL}$} \}|.
\end{align*}
\item $|\{\sigma\in\mathcal N_{\circ}: \textnormal{both companions of $x_{\LL}$ are incomparable to $x_{\LL}$} \}|=\mathrm N_2\quad \forall ~ \s \in \{-,=,+\}$.
\end{itemize}
\end{enumerate}
\end{theorem}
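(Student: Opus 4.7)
The plan is to establish the cycle of implications (iii) $\Rightarrow$ (ii) $\Rightarrow$ (i) $\Rightarrow$ (iii), where the first two steps are combinatorial routine and the last is the substantive content.

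For (iii) $\Rightarrow$ (ii): the first clause of (iii) ensures that every $\sigma \in \mathcal N_{-}\cup\mathcal N_{=}\cup\mathcal N_{+}$ has at least one companion of $x_{\LL}$ incomparable to $x_{\LL}$, so for each $\circ \in \{-,=,+\}$ the set $\mathcal N_{\circ}$ is partitioned by the three subclasses ``only lower incomparable,'' ``only upper incomparable,'' and ``both incomparable.'' The numerical equalities in (iii) then yield $|\mathcal N_{\circ}| = 2\mathrm N_1 + \mathrm N_2$ independently of $\circ$, which is (ii). The step (ii) $\Rightarrow$ (i) is immediate algebra.

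The bulk of the work lies in (i) $\Rightarrow$ (iii), which I plan to attack through the poset-to-polytope dictionary that is the central new contribution of the paper. Concretely, the strategy is to encode each count $|\mathcal N_{\circ}|$ as a mixed volume of specific polytopes $\poly_{\circ}$ built from $\abar$, so that Stanley's inequality becomes a particular instance of the Alexandrov--Fenchel inequality and the equality (i) becomes the corresponding Alexandrov--Fenchel equality case. Applying the characterization of Alexandrov--Fenchel extremals specialized to the polytopes arising from \emph{critical} posets would then produce a structural rigidity statement: certain pairs of faces of $\poly_{-}, \poly_{=}, \poly_{+}$ must be identified up to translation, with the translation direction governed by $x_{\LL}$ and its companions. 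From this rigidity I plan to extract (iii) in two stages. First, the assertion that every linear extension has an incomparable companion should follow because the presence of two simultaneously comparable companions would force a polytope degeneration inconsistent with the critical extremal structure. Second, the conservation laws $\mathrm N_1$ and $\mathrm N_2$ should emerge from cardinality-preserving maps induced by the face identifications, transporting the ``only lower incomparable'' subclass of $\mathcal N_{\circ}$ bijectively to the ``only upper incomparable'' subclass of some shifted $\mathcal N_{\circ'}$, and similarly aligning the ``both incomparable'' subclasses across all $\circ$.

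The main obstacle is precisely this last extraction. In the supercritical setting of Theorem \ref{thm:supcrit}, the extremals force \emph{both} companions to always be incomparable, collapsing the analysis to a uniform swappability picture and leaving no ``intermediate'' subclass to keep track of. In the critical case, by contrast, the polytopes admit a one-parameter family of degenerate directions, and tracking how this one-dimensional geometric degeneracy encodes the refined combinatorial symmetry $\mathrm N_1 = \mathrm N_1$ between the two asymmetric subclasses---while still pinning down $\mathrm N_2$ as a $\circ$-invariant---is where the delicate translation between the Alexandrov--Fenchel extremal data and the poset-level statement will have to be carried out. This is the step I expect to absorb the bulk of the technical effort, and it is the essential way Theorem \ref{thm:crit} goes beyond Theorem \ref{thm:supcrit}.
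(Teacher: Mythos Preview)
Your high-level architecture is right and matches the paper: the cycle (iii) $\Rightarrow$ (ii) $\Rightarrow$ (i) is elementary, and (i) $\Rightarrow$ (iii) goes through the Alexandrov--Fenchel extremal characterization applied to Stanley's polytopes. But you misidentify where the geometric work is spent.

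You propose that the conservation laws for $\mathrm N_1$ and $\mathrm N_2$ must be read off from face identifications in the polytope picture. The paper instead shows (Lemma~\ref{lem:suff}(b)) that once you assume (i), the \emph{entire} content of (iii) is equivalent to the single vanishing statement
\[
|\mathcal N_-(\sim,\sim)| = |\mathcal N_+(\sim,\sim)| = 0,
\]
i.e., no linear extension in $\mathcal N_-\cup\mathcal N_+$ has both companions comparable to $x_\ell$. The $\mathrm N_2$-invariance is always true by the elementary swap bijections of Lemma~\ref{lem:decompsimpl}(i), independent of any equality hypothesis. The $\mathrm N_1$-symmetry is \emph{not} obtained by a bijection at all: it falls out of an algebraic squeeze, combining the elementary inequalities $|\mathcal N_-(\sim,\nsim)|\le|\mathcal N_-(\nsim,\sim)|$ and $|\mathcal N_+(\nsim,\sim)|\le|\mathcal N_+(\sim,\nsim)|$ (Lemma~\ref{lem:decompsimpl}(iv--v)) with the equality $|\mathcal N_=|^2=|\mathcal N_-||\mathcal N_+|$ itself. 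So the geometry never has to produce $\mathrm N_1$; it only has to kill the $(\sim,\sim)$ classes, and the rest is arithmetic.

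This matters because it substantially lightens what you must extract from the Alexandrov--Fenchel side. The paper's geometric argument (Theorem~\ref{thm:critsec}) is already delicate---it requires a closure operation on the poset, an inductive reduction through subcritical splitting pairs, identification of a critical subspace $E^\perp$ on which the degenerate pairs vanish, and new second-neighbor extreme normal directions (Proposition~\ref{prop:dir}(e--h)) beyond those used in the supercritical case. Your plan to additionally recover $\mathrm N_1$ from face identifications would add a layer that the paper shows is unnecessary; conversely, your proposal does not yet flag the subspace $E^\perp$ or the extra extreme directions, which are where the genuinely new critical-case difficulties live.
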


Let us compare and contrast Theorem \ref{thm:supcrit} and Theorem \ref{thm:crit}. The conclusion in part (ii) that the equality \eqref{eq:Stanleyeq} necessitates $|\mathcal N_{-}|= |\mathcal N_{=}|=|\mathcal N_{+}|$  remains true for supercritical and critical posets. But the mechanisms, i.e., part (iii), for this phenomenon are different. Clearly, Theorem \ref{thm:supcrit}(iii) is a stronger condition since it trivially implies the condition in Theorem \ref{thm:crit}(iii). For critical posets, the conclusion that only 0 comparable companions are allowed (namely  Theorem \ref{thm:supcrit}(iii)) is relaxed into the statement that 0 or 1 comparable companions are allowed. But in order to get $|\mathcal N_{-}|= |\mathcal N_{=}|=|\mathcal N_{+}|$, there must be a balance between between those linear extensions with 1 comparable companion, which is the content of the second part of Theorem \ref{thm:crit}(iii).

Our formulation of Theorem \ref{thm:supcrit} and Theorem \ref{thm:crit} mirrors the analogous distinction in convex geometry between supercritical and critical (cf. Theorem \ref{thm:SvHsupcritIntro}). However, our proofs provide us with a stronger statement which encompass both Theorem \ref{thm:supcrit} and Theorem \ref{thm:crit}. 

\begin{theorem}{\textnormal{(\textbf{Extremals of Stanley's inequalities})}}
\label{thm:N_=>0intro}
Suppose $\abar$ is a poset such that $|\mathcal N_{=}|>0$. Then, the following hold:\\

\begin{itemize}
\item  The conclusions of Theorem \ref{thm:crit} remain true. In addition, given any $\sigma\in\mathcal N_{\s}$, for any $\s\in\{-,=,+\}$, where one of the companions is comparable to $x_{\LL}$, we have that the lower and upper companions are incomparable to each other.\\

\item If  $\abar$ is supercritical then the conclusions of  Theorem \ref{thm:supcrit} remain true. 
\end{itemize}
\end{theorem}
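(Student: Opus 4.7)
My plan is to derive Theorem \ref{thm:N_=>0intro} from a single extremal characterization of the Alexandrov--Fenchel inequality applied to convex polytopes built from $\abar$ via the paper's combinatorial-to-convex dictionary, and then read off both the critical and supercritical conclusions from the geometric output. Specifically, I would associate to the data $(\abar, x_1<\cdots<x_k, i_1,\ldots,i_k, \LL)$ convex polytopes $K$ and $L$ together with a reference tuple of bodies $C$ such that, up to a common positive constant,
\begin{align*}
|\mathcal N_{-}| = V(K,K,C),\qquad |\mathcal N_{=}|=V(K,L,C),\qquad |\mathcal N_{+}|=V(L,L,C).
\end{align*}
These polytopes arise by slicing the order polytope of $\abar$ along hyperplanes compatible with the prescribed positions $i_1,\ldots,i_k$, with $K$ and $L$ capturing the configurations where $x_{\LL}$ sits at the two positions adjacent to $i_{\LL}$. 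Stanley's inequality \eqref{eq:Stanley} is then an instance of the Alexandrov--Fenchel inequality for mixed volumes, and the equality hypothesis \eqref{eq:Stanleyeq} is equivalent to equality in Alexandrov--Fenchel, which (cf.\ Theorem \ref{thm:SvHsupcritIntro}) forces the support functions $h_K$ and $h_L$ to agree modulo an affine linear function on every $C$-extreme face.

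The next step is to translate this support-function equality into each conclusion of Theorem \ref{thm:N_=>0intro}. Each $C$-extreme face will correspond to a linear extension in $\mathcal N_{-}\cup\mathcal N_{=}\cup\mathcal N_{+}$, with the face direction picking out the three elements placed at $i_{\LL}-1,i_{\LL},i_{\LL}+1$. Under the dictionary, the identity $h_K = h_L$ at such a face is equivalent to the admissibility of swapping $x_{\LL}$ with one of its companions, which in turn is equivalent to incomparability of that companion with $x_{\LL}$. The three face possibilities --- both companions admissible, only the lower, only the upper --- match exactly the three cases in Theorem \ref{thm:crit}(iii), and the equalities $V(K,K,C)=V(K,L,C)=V(L,L,C)$ force the one-sided counts to coincide on both sides (yielding the common value $\mathrm N_1$) and produce $|\mathcal N_{-}|=|\mathcal N_{=}|=|\mathcal N_{+}|$. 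For the ``in addition'' clause, if both companions were mutually comparable and one were comparable to $x_{\LL}$, then transitivity combined with the fact that the three companions occupy the consecutive positions $i_{\LL}-1,i_{\LL},i_{\LL}+1$ would force the second companion also to be comparable to $x_{\LL}$, contradicting the one-sided admissibility permitted by the extremal characterization.

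For the supercritical case, the analogous geometric theorem upgrades the conclusion ``$h_K=h_L$ on all $C$-extreme faces'' to ``$K$ equals $L$ up to translation'', which eliminates the one-sided admissibility possibilities entirely and yields Theorem \ref{thm:supcrit}(iii): both companions must be incomparable to $x_{\LL}$ on every linear extension in $\mathcal N_{-}\cup\mathcal N_{=}\cup\mathcal N_{+}$. The main obstacle is the second step: constructing the dictionary and verifying that the $C$-extreme faces of $K$ and $L$ stand in a natural bijection with $\mathcal N_{-}\cup\mathcal N_{=}\cup\mathcal N_{+}$ in such a way that support-function equality at a face encodes precisely the companion swap-admissibility, and then extracting the quantitative balance identity involving $\mathrm N_1$ as an intrinsic consequence of the equality of mixed volumes rather than as ad hoc combinatorial bookkeeping. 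Carrying out this correspondence rigorously, and separating the supercritical strengthening cleanly from the generic critical regime, is where the bulk of the technical work should sit.
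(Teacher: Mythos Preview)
Your sketch has a structural gap: you presume the extremal characterization of Alexandrov--Fenchel (Theorem \ref{thm:SvH}) applies directly whenever $|\mathcal N_=|>0$, but that theorem requires the reference tuple $\mathcal K$ to be at least \emph{critical}. The hypothesis $|\mathcal N_=|>0$ only guarantees $\mathcal K$ is \emph{subcritical} (Lemma \ref{lem:mvpos}), and the entire point of Theorem \ref{thm:N_=>0intro} is to remove the criticality hypothesis from Theorems \ref{thm:supcrit}--\ref{thm:crit}. The paper bridges this gap by the \emph{splitting} mechanism of Section \ref{sec:subcrit}: whenever $\mathcal K$ has a sharp-subcritical subcollection, Theorem \ref{thm:subcrit} produces a splitting pair $(r,s)$ factoring $\abar$ into subposets $\abar_1,\abar_2$ with strictly shorter chains on which Stanley-equality persists. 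Iterating this, one eventually reaches a critical subposet; Theorem \ref{thm:crit} then applies there, and the explicit bijection in the proof of Proposition \ref{prop:split} transports its conclusions back to $\abar$. None of this appears in your plan.

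Two smaller issues. First, your dictionary is not the paper's: the $(B,\mathcal K)$-extreme normal directions are specific vectors $\pm e_j$, $e_{uv}$ (Proposition \ref{prop:dir}), not faces indexed by linear extensions, and even in the supercritical case one does \emph{not} conclude $K_{\ell-1}=K_\ell$ up to translation --- only support-function agreement on those directions, from which the combinatorial constraints (including $a=1$, Lemma \ref{lem:a=1}) must still be extracted. Second, your transitivity argument for the ``in addition'' clause is incomplete. If $\sigma\in\mathcal N_-(\incomp,\sim)$ with $x_\ell$ at $i_\ell-1$, lower companion $y_u\incomp x_\ell$ at $i_\ell$, upper companion $y_v>x_\ell$ at $i_\ell+1$, then $y_u<y_v$ together with $x_\ell<y_v$ does \emph{not} force $y_u\sim x_\ell$; transitivity is silent here. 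The paper (Corollary \ref{cor:referee}) instead uses that the equality case forces $|\mathcal N_-(\incomp,\sim)|=|\mathcal N_-(\sim,\incomp)|$, upgrading the injection of Lemma \ref{lem:decompsimpl}(iv) to a bijection, and then pulls back incomparability through it.
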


Theorem \ref{thm:N_=>0intro} improves upon  Theorem \ref{thm:supcrit} and Theorem \ref{thm:crit} by showing that the conclusions of Theorem \ref{thm:crit} hold even under the assumption $|\mathcal N_{=}|>0$. In addition, Theorem \ref{thm:N_=>0intro} provides further information on the structure of the linear extensions. The only case not covered by Theorem \ref{thm:N_=>0intro} is when $|\mathcal N_{=}|=0$, which is in fact trivial and will be characterized later (Theorem \ref{thm:trivsubcrit}). 

\begin{remark}{\textnormal{(\textbf{Poset characterization})}}
\label{rem:posetchar}
There is a way to reformulate Theorem \ref{thm:supcrit}(iii) so that the characterization of the extremals is given in terms of conditions on the poset itself rather than on the set of its linear extensions:
\begin{align}
\label{eq:posetchar}
\begin{split}
&\forall\,y<x_{\LL}: ~~\exists~ s(y)\in\{ 0,\ldots, k+1\} \text{ s.t. } y<x_{s(y)}\text{ and }| \{z\in\abar:  y<z<x_{s(y)}\}|>i_{s(y)}-i_{\LL},\\
&\forall\,y>x_{\LL}:~~\exists\, r(y)\in\{ 0,\ldots, k+1\} \text{ s.t. } y>x_{r(y)}\text{ and }|\{z\in\abar:  x_{r(y)}<z<y\}||>i_{\LL}-i_{r(y)};
\end{split}
\end{align}
see Proposition \ref{prop:IIIimpliesIV}. Here, $x_0$ (res. $x_{k+1}$) is the added element with the property that it is smaller (res. bigger) than any other element in $\abar$. The formulation \eqref{eq:posetchar} can be useful in practice since, given a standard description of a poset, \eqref{eq:posetchar} is easier to check. On the other hand, the formulation of Theorem \ref{thm:supcrit}(iii) is more compatible with our dictionary, which is more natural to formulate in terms of conditions on the linear extensions of the poset. In the first version of this manuscript we wrote that ``It is an interesting problem to find an analogue of \eqref{eq:posetchar} for critical posets." However, since the first version of our work was made public, Chan and Pak \cite[Theorem 1.3]{chan2023equality} proved a remarkable result on the computational complexity of the characterization of Stanley's inequalities, which in particular implies that a poset characterization of  Stanley's inequalities of the form \eqref{eq:posetchar} would contradict fundamental conjectures in computational complexity \cite[\S 3.5]{chan2023equality}. 
\end{remark}

\begin{remark}{\textnormal{(\textbf{$k=1$})}}
\label{rem:k=1}
The characterization of the extremals of Stanley's inequalities when $k=1$ was  done in \cite[\S 15]{SvH20}. It turns out that, when $k=1$, the poset must be supercritical and the characterization of \cite[\S 15]{SvH20} in this case is the same as Theorem \ref{thm:supcrit} and Remark \ref{rem:posetchar}. While our proofs take much inspiration from the work \cite{SvH20}, the new phenomena of critical posets necessitated the development of many new ideas (see Figure \ref{fig:dictionary}). For example, the dictionary constructed in  \cite[\S 15]{SvH20} was in terms of the poset itself (as in Remark \ref{rem:posetchar}), rather than its linear extensions. But when progressing to critical posets, the approach of \cite[\S 15]{SvH20} no longer works (especially in light of  \cite[Theorem 1.3]{chan2023equality}),  while our dictionary, which is in terms of a  linear extensions description, is suitable for these more subtle and rich extremals. 

Let us also mention that, when $k=1$, Chan and Pak, using their  \emph{combinatorial atlas} method \cite{CP21}, provided a \emph{linear-algebraic} proof of Stanley's inequalities and characterized their extremals, thus avoiding any use of convex geometry; see also the proof for width two posets by Chan, Pak, and Panova \cite{chan2021extensions}. However, their approach does not currently extend to the case $k>1$. 
\end{remark}

\begin{remark}{\textnormal{(\textbf{$k=2$})}}
\label{rem:k=2}
Using our techniques, Chan and Pak \cite[Lemma 9.1]{chan2023equality} showed that, in fact, the conclusion of Theorem \ref{thm:supcrit} remains true whenever $k=2$. Per Remark \ref{rem:k=1}, the same holds true for $k=1$. It follows that Example \ref{ex:crit}, where the conclusion of Theorem \ref{thm:supcrit} is no longer valid, is sharp in terms of $k$. 
\end{remark}

\subsection{Dictionaries between convex geometry and combinatorics}
\label{subsec:AFStanleyInq}
Stanley's proof of \eqref{eq:Stanley} relies on a remarkable correspondence that he found between mixed volumes of certain convex polytopes and linear extensions counts. Once this correspondence is established, the inequality \eqref{eq:Stanley} follows from a deep log-concavity result in convex geometry: The Alexandrov-Fenchel inequality. We will start this section by reviewing Stanley's proof of the inequality \eqref{eq:Stanley}, and then move to the discussion of its extremals.

\subsubsection{The Alexandrov-Fenchel inequality}
 We start with some preliminaries from convex geometry; our standard reference is \cite{Sch14}. Given convex bodies (non-empty compact convex sets)  $C,C'\subseteq \R^{n-k}$ and scalars $\lambda ,\lambda'\ge 0$, we define their sum as
\[
\lambda C+\lambda'C':=\{\lambda x+\lambda' y:x\in C, y\in C'\}.
\]
The volume of a sum of convex bodies behaves as a polynomial: Given a positive integer $p$, convex bodies $C_1,\ldots,C_p\subseteq\R^{n-k}$, and scalars  $\lambda_1,\ldots,\lambda_p\ge 0$, we have 
\[
\Vol_{n-k}(\lambda_1C_1+\cdots+\lambda_pC_p)=\sum_{ 1\le j_1,\ldots,j_{n-k}\le p}\V_{n-k}(C_{j_1},\ldots,C_{j_{n-k}})\lambda_{j_1}\cdots\lambda_{j_{n-k}},
\]
where the coefficients $\V_{n-k}(C_{j_1},\ldots,C_{j_{n-k}})$ are called \emph{mixed volumes}. These geometric objects generalize the notions of volume, surface area, mean width, etc. The \emph{Alexandrov-Fenchel inequality} \cite[\S 7.3]{Sch14} states that sequences of mixed volumes are log-concave: For any convex bodies $C_1,\ldots,C_{n-k}\subset \R^{n-k}$,
\begin{equation}
\label{eq:AFintro}
\V_{n-k}(C_1,C_2,C_3,\ldots,C_{n-k})^2\ge \V_{n-k}(C_1,C_1,C_3,\ldots,C_{n-k})\V_{n-k}(C_2,C_2,C_3,\ldots,C_{n-k}). 
\end{equation}
Stanley's proof of \eqref{eq:Stanley} relies on the identification of the poset $\abar$ with polytopes $\poly_0,\ldots,\poly_k$. We defer the explicit construction of these polytopes for later (Section \ref{sec:preliminaries}), and for now denote by $\mathcal\poly$ a certain collection of these polytopes containing $n-k-2$ of them. The key point are the identities
\begin{equation}
\label{eq:posetvolrepintro}
\begin{split}
&|\mathcal N_-|=(n-k)!\,\V_{n-k}(K_{\LL},K_{\LL},\mathcal \poly),\\
&|\mathcal N_=|=(n-k)!\,\V_{n-k}(K_{\LL-1},K_{\LL},\mathcal \poly),\\
&|\mathcal N_+|=(n-k)!\,\V_{n-k}(K_{\LL-1},K_{\LL-1},\mathcal \poly).
\end{split}
\end{equation}
With the representation \eqref{eq:posetvolrepintro} in hand, the inequality \eqref{eq:Stanley} is equivalent to
\begin{equation}
\label{eq:AFStanleyintro}
\V_{n-k}(K_{\LL-1},K_{\LL},\mathcal \poly)^2\ge \V_{n-k}(K_{\LL},K_{\LL},\mathcal \poly)\V_{n-k}(K_{\LL-1},K_{\LL-1},\mathcal \poly),
\end{equation}
which follows immediately from \eqref{eq:AFintro}. 

Stanely's proof of \eqref{eq:Stanley} is the only proof currently known. Hence, a natural route towards the characterization of the extremals of Stanley's inequalities would require:
\begin{itemize}
\item Characterization of the extremals of the Alexandrov-Fenchel inequality.
\item Dictionary between the extremals of the Alexandrov-Fenchel inequality and the extremals of Stanley's inequalities. 
\end{itemize}
For arbitrary convex bodies, the characterization of the extremals of \eqref{eq:AFintro} is a long-standing open problem \cite[\S 7.6]{Sch14}. But when the bodies are \emph{polytopes}, this problem was recently solved by the second-named author and Van Handel \cite{SvH20}. Thus, the work \cite{SvH20} takes care of the first item and our work here is dedicated to the second item.

To build intuition regarding the correspondence between the extremal structures of posets and polytopes, let us revisit Example \ref{ex:trivmech}. As will be evident  (see \eqref{eq:Ki}), the identity \eqref{eq:triv} holds if, and only if, $\poly_{\LL-1}=\poly_{\LL}$.  In this case it is clear that equality will be attained in \eqref{eq:AFStanleyintro}. But as we saw in Theorem \ref{thm:supcrit} and Theorem \ref{thm:crit}, equality can be attained in Stanley's inequalities under much weaker conditions than those captured by Example \ref{ex:trivmech}. It follows that  equality holds in \eqref{eq:AFStanleyintro} under conditions which are much weaker than $\poly_{\LL-1}=\poly_{\LL}$. The characterization of these conditions is the topic of the next section.

\subsubsection{The extremals of the Alexandrov-Fenchel inequality for convex polytopes}
The terminology of supercritical and critical posets comes in fact from the analogous terminology in the characterization of the extremals of the Alexandrov-Fenchel inequality for convex polytopes as introduced in \cite{SvH20}----the precise definitions of supercriticality and criticality is deferred to Definition \ref{def:ssc}. In the sequel, $B\subseteq \R^{n-k}$ always stands for the unit ball, and the notions of \emph{$(B,\mathcal \poly)$-extreme normal directions} and \emph{$\mathcal \poly$-degenerate pairs}, which will be used in the subsequent theorem, will be given in Definition \ref{def:extreme} and Definition \ref{def:deg}, respectively.

\begin{theorem}{\textnormal{(\textbf{Extremals of the Alexandrov-Fenchel inequality for convex polytopes}, \cite{SvH20})}}
\label{thm:SvHsupcritIntro}
\vspace{0.1in}
\begin{itemize}
\item Suppose $\mathcal \poly$ is supercritical. Then,
\[
\V_{n-k}(K_{\LL-1},K_{\LL},\mathcal \poly)^2=\V_{n-k}(K_{\LL},K_{\LL},\mathcal \poly)\V_{n-k}(K_{\LL-1},K_{\LL-1},\mathcal \poly),
\]
if, and only if, up to dilation and translation, the supporting hyperplanes of $\poly_{\LL-1}$ and $\poly_{\LL}$ agree in all $(B,\mathcal \poly)$-extreme normal directions.\\

\item Suppose $\mathcal \poly$ is critical. Then, 
\[
\V_{n-k}(K_{\LL-1},K_{\LL},\mathcal \poly)^2=\V_{n-k}(K_{\LL},K_{\LL},\mathcal \poly)\V_{n-k}(K_{\LL-1},K_{\LL-1},\mathcal \poly),
\]
if, and only if, there exist $0\le d<\infty$ $\mathcal \poly$-degenerate pairs $(P_1,Q_1),\ldots,(P_d,Q_d)$, such that, up to dilation and translation, the supporting hyperplanes of $\poly_{\LL-1}+\sum_{j=1}^dQ_j$ and $\poly_{\LL}+\sum_{j=1}^dP_j$ agree in all $(B,\mathcal \poly)$-extreme normal directions. 
\end{itemize}
\end{theorem}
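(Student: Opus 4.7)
My plan is to establish Theorem~\ref{thm:SvHsupcritIntro} through an Alexandrov--Fenchel signature analysis of the quadratic form on support functions, following the spirit of Alexandrov's second proof of~\eqref{eq:AFintro} but refined to exploit the finite combinatorial structure of polytopes. The ``if'' directions in both bullets should be essentially a direct computation: when support functions agree on all $(B,\mathcal{\poly})$-extreme directions after adding the prescribed degenerate pairs, one verifies that the extra terms contribute zero to the mixed volume by the defining property of degenerate pairs, so equality propagates from the trivial case $\poly_{\LL-1} = \poly_\LL$. The substance of the theorem lies in the ``only if'' directions, which I would tackle as follows.

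Begin by viewing the assignment $f \mapsto \V_{n-k}(f,f,\mathcal{\poly})$ as a quadratic form $Q_{\mathcal{\poly}}$ on a suitable space of (differences of) support functions; its associated bilinear form can be written as an integral of one support function against the mixed area measure $S(\mathcal{\poly};\,\cdot\,)$ of the remaining bodies. The Alexandrov--Fenchel inequality is then equivalent to the statement that $Q_{\mathcal{\poly}}$ has Lorentzian signature modulo the trivial kernel generated by linear functions. Consequently, equality for the pair $(\poly_{\LL-1}, \poly_\LL)$ forces $h_{\poly_{\LL-1}} - c\cdot h_{\poly_\LL}$ to lie in $\ker Q_{\mathcal{\poly}}$ up to a linear function, where $c>0$ is produced by the Cauchy--Schwarz extremization. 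Since $S(\mathcal{\poly};\,\cdot\,)$ for a polytope collection is a weighted sum of Dirac measures at finitely many facet normals of $\poly_0 + \cdots + \poly_k$, the kernel condition reduces to local Hodge-type identities at each supporting direction.

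I would then introduce $(B,\mathcal{\poly})$-extreme directions as those unit vectors $u$ at which the local structure of $\mathcal{\poly}$ rigidly pins down the value of any element of $\ker Q_{\mathcal{\poly}}$. In the supercritical case, these extreme directions suffice to determine the kernel modulo translations and dilations, which translates to agreement of the support functions on the extreme locus and yields the first bullet. The critical case is the central difficulty: here the kernel is genuinely richer, and support functions may differ on the ``soft'' (non-extreme) part of the spherical support of $S(\mathcal{\poly};\,\cdot\,)$ without destroying equality. My strategy would be an induction driven by $\mathcal{\poly}$-degenerate pairs. Whenever equality holds but the supercritical characterization fails, one extracts a pair $(P_1, Q_1)$ witnessing a direction of degeneracy; adding $P_1$ to $\poly_\LL$ and $Q_1$ to $\poly_{\LL-1}$ preserves equality while reducing the kernel dimension or simplifying the normal fan. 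Iterating produces the desired decomposition by $d$ degenerate pairs, after which the residual pair falls under the supercritical characterization.

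The hardest step will be showing that, in the critical regime, the failure of supercriticality genuinely produces a degenerate pair at each stage. This demands a careful local analysis at non-extreme directions coupled with a global realizability argument: a nontrivial kernel element supported off the extreme locus must be exhibited as a support-function difference of a compatible pair of convex polytopes in the ambient family. I expect this to hinge on a classification of the local types of the singular support of $S(\mathcal{\poly};\,\cdot\,)$, together with a dimension count ensuring termination of the inductive reduction after finitely many iterations. Once this existence/realizability step is in place, the rest of the argument is an accounting of how degenerate pairs assemble into the stated decomposition.
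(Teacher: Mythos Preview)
The paper does not prove Theorem~\ref{thm:SvHsupcritIntro}; it is quoted from \cite{SvH20} (see also the more detailed restatement as Theorem~\ref{thm:SvH}, attributed to \cite[Theorem~2.13, Corollary~2.16]{SvH20}), and is used as a black box throughout. There is therefore no ``paper's own proof'' to compare your proposal against.

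That said, your outline is in the right spirit for the actual proof in \cite{SvH20}: the argument there does proceed via the Lorentzian signature of the mixed-volume quadratic form, identifies equality cases with kernel elements of an associated operator on support functions, and distinguishes supercritical from critical by the presence or absence of nontrivial kernel beyond translations. Two points where your sketch understates the difficulty: first, the ``local Hodge-type identities'' are not merely at facet normals of $\sum_m \poly_m$ but require a delicate analysis of the full normal fan and its face lattice, which is where the notion of $(B,\mathcal{\poly})$-extreme direction acquires its precise content (Definition~\ref{def:extreme}); second, the inductive extraction of degenerate pairs in the critical case is not a dimension-reduction loop of the kind you describe, but rather goes through a structural decomposition of the critical collection into maximal sharp-critical subfamilies (cf.\ Proposition~\ref{prop:max_notions} here, and \cite[\S9]{SvH20}), and the realizability of kernel elements as differences of support functions of genuine polytopes is handled via that decomposition rather than by an ad hoc existence argument. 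Your proposal does not identify a mechanism guaranteeing termination or even the existence of a single degenerate pair at each step, and without the maximal-collection machinery this is a genuine gap.
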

The complicated structure of the $(B,\mathcal \poly)$-extreme normal directions (see Figure \ref{fig:graph}) is what gives rise to the richness of the extremals. If the supporting hyperplanes of $\poly_{\LL-1}$ and $\poly_{\LL}$ agree in \emph{every direction} on the sphere $S^{n-k-1}$, then, up to dilation and translation, $\poly_{\LL-1}$ and $\poly_{\LL}$ are identical. This is an example where a \emph{global} mechanism (supporting hyperplanes of $\poly_{\LL-1},\poly_{\LL}$ agree everywhere)  gives rise to equality in \eqref{eq:AFintro}. Theorem \ref{thm:SvHsupcritIntro} provides a \emph{local} mechanism for equality in \eqref{eq:AFintro} (supporting hyperplanes of $\poly_{\LL-1},\poly_{\LL}$ agree only in very few directions), and furthermore, establishes that this local mechanism is the \emph{only} mechanism for the extremal structures of the Alexandrov-Fenchel inequality. 

\subsubsection{Dictionary for extremals} A priori, it is not at all clear that the complications and richness of the extremals  of \eqref{eq:AFintro} would also arise in our very specific family of polytopes. Indeed, in the case $k=1$, only the supercritical extremals appear. Remarkably, not only does  this complexity arise,  but we can provide a clean and intuitive characterization of the extremals arising in Stanley's inequalities for critical posets. At the core of our work is a powerful dictionary which translates between the \emph{extremal} properties of convex polytopes and partially ordered sets. We discover \emph{new} extreme normal directions, and in addition, introduce numerous new key ideas: \emph{closure}, \emph{splitting pairs}, \emph{mixing}, \emph{critical subposet}, to name just a few. It will be best to introduce these ideas at the appropriate places in the paper; Section \ref{sec:outline} will contain a brief outline of our proof. We refer to Figure \ref{fig:dictionary} for a quick summary of the main components in our dictionary, and recommend that the reader revisit this table from time to time.

\begin{figure}[h]
\begin{center}
    \begin{tabular}{ | c | c | c |}
      \hline
      \thead{Geometry} & \thead{Dictionary} & \thead{Combinatorics} \\
      \hline
        \makecell{Criticality of polytopes \\ (Definition \ref{def:ssc})}  &  \makecell{ Section \ref{sec:notionsCrit} \\
        (Proposition \ref{prop:critequiv})}  & \makecell{Criticality of posets \\ (Definition \ref{def:posetcrit})}  \\
      \hline
             \makecell{Projection\\ (\cite[Theorem 5.3.1]{Sch14})}  & \makecell{ Section \ref{sec:subcrit} \\
        (Remark \ref{rem:splitproj})}  &   \makecell{Splitting \\ (Definition \ref{def:split})}\\
      \hline
 \makecell{Criticality of splitting pairs\\ (Definition \ref{def:splitpaircrit})}    &  Section \ref{sec:beyond} &  \makecell{Mixing of splitting pairs\\ (Figure \ref{fig:orgsec})} \\
      \hline

         \makecell{Maximal collection of polytopes\\ (\cite[section 9.1]{SvH20})}   &  \makecell{Section \ref{sec:beyond}\\
             (Proposition \ref{prop:max_notions})} &        
        \makecell{Maximal splitting pair\\ (Definition \ref  {def:rmaxsmin})} \\
      \hline

       \makecell{Extreme normal directions}  &  \makecell{Section \ref{sec:ext}} & \makecell{First- and second-neighbors}  \\
      \hline
            \makecell{Translation and dilation}  &  \makecell{Sections \ref{sec:supercrit}-\ref{sec:crit}} & \makecell{Chains of poset}  \\
      \hline
                  \makecell{Critical subspace\\
                  (Equation \eqref{eq:Eperp})}  &  \makecell{Section \ref{sec:crit}} & \makecell{Critical subposet\\
                           (Equation \eqref{eq:Eperp})}  \\
      \hline
    \end{tabular}
  \end{center}
  \caption{Dictionary between geometry of polytopes and combinatorics of posets.\label{fig:dictionary}}
  \end{figure}

\subsection{Organization of paper}
We start in Section \ref{sec:preliminaries} by reviewing the connection between partially ordered sets and convex geometry. In Section \ref{sec:found} we develop a number of tools (\emph{decompositions, closure}) that are used throughout the paper and also prove the sufficiency parts of Theorem \ref{thm:supcrit} and Theorem \ref{thm:crit}. Section \ref{sec:outline} provides a brief outline of the proofs of the main results.  Section \ref{sec:notionsCrit} sets the first building block of our dictionary by showing the equivalences between notions of criticality for posets and polytopes.  Section \ref{sec:subcrit} introduces the idea of \emph{splitting} and characterizes the extremals of the \emph{subcritical} posets. Section \ref{sec:beyond} introduces the idea of \emph{mixing} which is at the heart of our proofs and applies it to \emph{splitting pairs}. In Section \ref{sec:ext} we  add to our dictionary the combinatorial characterization of the extreme normal directions. We complete the proof of Theorem \ref{thm:supcrit}  in Section \ref{sec:supercrit} and the proofs of Theorem \ref{thm:crit} and Theorem \ref{thm:N_=>0intro}  in Section \ref{sec:crit}. At the end of the paper we include a Notation Appendix for the convenience of the reader.  

\section{Preliminaries} 
\label{sec:preliminaries}
In this section we review some basics about posets and convex geometry, as well as introduce the notation we use throughout the paper. We review the connection between posets and mixed volumes, and state the characterization of the extremals of the Alexandrov-Fenchel inequality for (convex) polytopes. In addition, we provide the criticality definitions for polytopes and posets. 

We use the notation $\le,<,=,\ge,>,\sim$ to describe the relations in a poset, where $\sim$ stands for the comparability relation\footnote{Note that $\sim$ is not a transitive property.}, and by $\not\le,\not<,\not=,\not\ge,\not>,\incomp$ to describe their negations. Given integers $p\le q$ we write
 \begin{equation}
 \label{eq:bracket}
\llbracket p,q \rrbracket :=\{p,p+1,\ldots,q-1,q\}.
\end{equation}
Fix positive integers $k,n$, with $k\le n$, and consider the poset $\abar$, of size $n$,
\[
\abar=\{y_1,\ldots,y_{n-k},x_1,\ldots, x_k\},
\]
where $x_1<x_2<\cdots<x_k$ is a chain. Let 
\[
\aint=\{y_1,\ldots,y_{n-k}\}
\]
be the induced poset of size $n-k$ obtained from $\abar$ by removing the chain. To simplify the notation we add two elements $x_0,x_{k+1}$ to $\abar$ with the property that $x_0$ is smaller than any element in $\abar$ while $x_{k+1}$ is bigger than any element in $\abar$. Note that this allows us to consider the case $k=0$.

Let $\mathcal N$ be the set of all linear extensions of $\abar$, that is,
\[
\mathcal N=\{\text{bijections }\sigma:\abar\to [n]: w\le z\Rightarrow \sigma(w)\le \sigma(z)~\forall ~w,z\in\abar\},
\]
with the convention that $\sigma(x_0)=0$ and $\sigma(x_{k+1})=n+1$ for any $\sigma\in\mathcal N$.
Fix $\LL\in [k]:=\{1,\ldots, k\}$ and fix $i_1<i_2<\cdots< i_k\in [n]$, with the property $i_{\LL-1}+1<i_{\LL}<i_{\LL+1}-1$, and let $i_0:=0,~ i_{k+1}:=n+1$. We define the following sets of linear extensions, $\mathcal N_{-},\mathcal N_{=}, \mathcal N_{+}\subseteq\mathcal N$,
\begin{align*}
&\mathcal N_{-}:=\{\sigma\in\mathcal N: \sigma(x_{\LL})=i_{\LL}-1 \quad\text{and}\quad \sigma(x_m)=i_m~\forall ~m\in [k]\backslash \{\LL\}\},\\
&\mathcal N_{=}:=\{\sigma\in\mathcal N: \sigma(x_{\LL})=i_{\LL} \quad\text{and}\quad \sigma(x_m)=i_m~\forall ~m\in [k]\backslash \{\LL\}\},\\
&\mathcal N_{+}:=\{\sigma\in\mathcal N: \sigma(x_{\LL})=i_{\LL}+1 \quad\text{and}\quad \sigma(x_m)=i_m~\forall ~m\in [k]\backslash \{\LL\}\},
\end{align*}
so Stanley's inequalities read
\begin{align}
\label{eq:eqmathcalN}
|\mathcal N_{=}|^2\ge |\mathcal N_{-}||\mathcal N_{+}|.
\end{align}

\subsection{Posets and polytopes}
Fundamental to our approach towards the extremals of \eqref{eq:eqmathcalN} is the connection, due to Stanley \cite{Sta81}, between posets and convex polytopes. We start with the definition of an \emph{order polytope}: Given $\bb\subseteq\aint$ we let $\R^{\bb}:=\{t\in \R^{n-k}:t_j=0\mbox{ for } y_j\notin\beta\}$ and define the order polytope $O_{\bb}\subseteq\R^{\bb}\subseteq \R^{\aint}$ by
\[
O_{\bb}:=\{t\in \R^{\bb}: t_j\in [0,1] ~\forall\, y_j\in\bsg, \mbox{ and } t_u\le t_v\mbox{ if }y_u\le y_v~\forall\, y_u,y_v\in\bsg\}. 
\]
The order polytope encodes important properties of the poset, e.g., the volume of $O_{\aint}$ is proportional to the number of linear extensions of $\aint$ \cite[Corollary 4.2]{Sta86}. Let us recall some basic facts about order polytopes, which will require the following poset notions. A \emph{maximal} (res. \emph{minimal}) element $y\in\aint$ is such that there exists no $z\in\aint$, different than $y$, satisfying $y<z$ (res. $z<y$). Given a set $\bb\subseteq\aint$ we define $\bb^{\uparrow}$ (res. $\bb^{\downarrow}$) to be the set of maximal (res. minimal) elements of $\bb$. Given a relation $\star\in\{\le,<,=,\ge,>,\sim,\not\le,\not<,\not=,\not\ge,\not>,\incomp\}$ and $y\in\bb$ we let
\[
\bb_{\star y}:=\{z\in\bb: z\star y\},
\]
and, similarly, given relations $\star,\ast\in\{\le,<,=,\ge,>,\sim,\not\le,\not<,\not=,\not\ge,\not>,\incomp\}$, and $y,y'\in\bb$, we write
\[
\bb_{\star y,\ast y'}:=\{z\in\bb: z\star y\mbox{ and }z\ast y'\}.
\]
An element $z\in\bb$ \emph{covers} $y\in\bb$ if $z\in \bb_{>y}^{\downarrow}$. We say that $\bb$ is an \emph{upper set} (res. \emph{lower set}) if $\aint_{>y}\subseteq\bb$ (res. $\aint_{<y}\subseteq\bb$), for every $y\in\bb$. 

The next result provides information about the face structure of order polytopes based on the poset notions just introduced.
\begin{lemma}{\textnormal{(\cite[\S 1]{Sta86})}}
\label{lem:dimOb}
For any $\bb\subseteq\aint$ we have $\dim O_{\bb}=|\bb|$. The \textnormal{($|\bb|-1$)}-dimensional faces of $O_{\bb}$ are precisely the following subsets of $O_{\bb}$:
\begin{enumerate}[(i)]
\item $O_{\bb}\cap\{t_j=0\}$ for $y_j\in\bb^{\downarrow}$.
\item $O_{\bb}\cap\{t_j=1\}$ for $y_j\in\bb^{\uparrow}$.
\item $O_{\bb}\cap\{t_u=t_v\}$ for $y_u,y_v\in\bb$ such that $y_v$ covers $y_u$ in $\bb$. 
\end{enumerate}
\end{lemma}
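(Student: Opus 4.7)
The plan is to proceed in two steps: first establish the dimension claim by exhibiting a relatively open subset of $O_{\bb}$ in $\R^{\bb}$, and then identify the codimension-one faces by listing the irredundant facet-defining inequalities.

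For the dimension, I would pick any linear extension $\tau$ of the induced subposet on $\bb$ and consider the point $t^*$ with coordinates $t^*_j := \tau(y_j)/(|\bb|+1)$ for $y_j\in\bb$. Since $\tau$ is order-preserving, $t^*$ satisfies $0<t^*_j<1$ for all $y_j\in\bb$ and $t^*_u<t^*_v$ whenever $y_u<y_v$; all the defining inequalities of $O_{\bb}$ hold strictly at $t^*$, so a small ball in $\R^{\bb}$ around $t^*$ lies in $O_{\bb}$, yielding $\dim O_{\bb}=|\bb|$.

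For the facets, I would first trim the list of defining inequalities to an irredundant one. The inequalities $t_j\ge 0$ with $y_j\notin\bb^{\downarrow}$ are implied by $t_u\ge 0$ for some $y_u\in\bb^{\downarrow}$ with $y_u<y_j$ together with the order constraints; similarly $t_j\le 1$ with $y_j\notin\bb^{\uparrow}$ is redundant; and any $t_u\le t_v$ with $y_u<y_v$ not covering factors through a chain of covering relations. Thus the essential defining inequalities are precisely those listed in (i)--(iii). To confirm that each such inequality actually cuts out a facet (rather than a face of larger codimension), I would exhibit a point in the claimed face where \emph{every other} defining inequality is strict: for (i) with $y_j\in\bb^\downarrow$, the face equals (up to the obvious affine isomorphism) $O_{\bb\setminus\{y_j\}}$, whose interior point constructed via a linear extension of $\bb\setminus\{y_j\}$ witnesses strict satisfaction of all other constraints, giving dimension $|\bb|-1$; case (ii) is symmetric; and for (iii) the identification $t_u=t_v$ produces the order polytope of the quotient poset obtained from $\bb$ by merging $y_u,y_v$ into a single element, which has $|\bb|-1$ elements, hence dimension $|\bb|-1$.

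Finally, one invokes the standard fact that the facets of a polytope are in bijection with its irredundant defining inequalities, so the list (i)--(iii) is exhaustive. The only real obstacle is the bookkeeping in verifying irredundancy: one must check case-by-case that strict satisfaction of the ``surviving'' inequalities at the constructed interior points cannot be sabotaged by some redundant inequality that secretly collapses the face further. This is handled uniformly by the observation that the constructed witness points come from strict linear extensions of the relevant sub- or quotient poset, which by construction satisfy every order relation strictly except the single one being imposed with equality.
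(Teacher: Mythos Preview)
Your proof is correct and follows the standard approach. Note, however, that the paper does not actually prove this lemma: it is simply quoted from Stanley's original work on order polytopes (the citation \cite[\S 1]{Sta86}), so there is no ``paper's own proof'' to compare against. Your argument---exhibiting an interior point via a linear extension, trimming to the irredundant defining inequalities, and then verifying each surviving inequality defines a facet by identifying the face with an order polytope of a smaller or quotient poset---is precisely the classical argument Stanley gives.
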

Hyperplane sections of order polytopes will play a crucial role for us: Given $i\in \llbracket 0, k\rrbracket$, define the polytopes in $\R^{n-k}$,
\begin{align}
\label{eq:Ki}
&\poly_i:=\{t\in O_{\aint}:t_j=0\mbox{ if }y_j<x_i,~ t_j=1\mbox{ if }y_j>x_{i+1},\mbox{ for all }y_j\in\aint\}.
\end{align}
While we defined the polytopes $\{\poly_i\}$ as hyperplane sections of order polytopes, they are in fact nothing but translations of certain order polytopes. To see this relation we start with the next lemma whose proof is a matter of checking the definitions. In the sequel, given $\bb\subseteq \aint$ let $1_{\bb}:=\sum_{y_j\in\bb}e_j$, with $\{e_j\}_{j\in\bb}$ denoting the standard basis of $\R^{\bb}$.
\begin{lemma}
\label{lem:Onot}
Let $\bb,\bb'\subseteq\aint$ be disjoint sets where $\bb$ is an upper set and $\bb'$ is a lower set. Then,
\[
O_{\aint\backslash(\bb\cup\bb')}+1_{\bb}=\{t\in O_{\aint}:t_j=0 \mbox{ if }y_j\in\bb' \mbox{ and } t_j=1\mbox{ if }y_j\in \bb\},
\]
where we view $\R^{\aint\backslash(\bb\cup\bb')}$ as a subset of $\R^{\aint}	\cong\R^{n-k}$. 
\end{lemma}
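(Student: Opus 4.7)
The plan is to prove the set equality by verifying both inclusions, and to reduce each one to a routine check of the three defining conditions of an order polytope: the $[0,1]$-bounds on each coordinate, the prescribed values on specified coordinates, and the order-compatibility $t_u\le t_v$ whenever $y_u\le y_v$. The translation by $1_{\bb}$ only affects coordinates indexed by $\bb$, so on the complement $\aint\backslash(\bb\cup\bb')$ the two sides carry exactly the same data; the argument is therefore about what happens at the boundary between $\bb\cup\bb'$ and its complement.

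For the inclusion $\subseteq$, I would take $s\in O_{\aint\backslash(\bb\cup\bb')}$ and set $t=s+1_{\bb}$. The bounds $t_j\in[0,1]$ are immediate, and the prescribed values ($t_j=0$ on $\bb'$, $t_j=1$ on $\bb$, and $t_j\in[0,1]$ elsewhere) hold by construction. The order-compatibility $t_u\le t_v$ for $y_u\le y_v$ in $\aint$ I would check by cases on whether each of $y_u,y_v$ lies in $\bb$, $\bb'$, or neither. The interior case is inherited from $s\in O_{\aint\backslash(\bb\cup\bb')}$; a case with $y_u\in\bb'$ gives $t_u=0$, and a case with $y_v\in\bb$ gives $t_v=1$, each of which trivially implies $t_u\le t_v$. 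The remaining potentially delicate cases are $y_u\in\bb$ with $y_v\notin\bb$ and $y_v\in\bb'$ with $y_u\notin\bb'$, and this is exactly where the hypotheses are used: $\bb$ being an upper set forces $y_v\in\bb$ in the first, and $\bb'$ being a lower set forces $y_u\in\bb'$ in the second, reducing these back to the trivial cases.

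For the reverse inclusion $\supseteq$, I would take $t$ in the right-hand side and set $s=t-1_{\bb}$. Then $s_j=0$ on $\bb\cup\bb'$, $s_j\in[0,1]$ on the complement, and the order relations $s_u\le s_v$ for $y_u\le y_v$ in $\aint\backslash(\bb\cup\bb')$ are inherited verbatim from $t\in O_{\aint}$. Hence $s\in O_{\aint\backslash(\bb\cup\bb')}$, as required. There is no real obstacle here; the only point worth attention is the bookkeeping associated with viewing $\R^{\aint\backslash(\bb\cup\bb')}$ as the coordinate subspace of $\R^{\aint}$ where the coordinates indexed by $\bb\cup\bb'$ vanish, so that the formula $O_{\aint\backslash(\bb\cup\bb')}+1_{\bb}$ lands inside $\R^{\aint}\cong\R^{n-k}$ in the intended way.
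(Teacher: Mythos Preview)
Your proposal is correct and is precisely the kind of routine verification the paper has in mind: the paper does not write out a proof at all, stating only that it ``is a matter of checking the definitions.'' Your case analysis for the order-compatibility, and in particular your identification of the upper/lower set hypotheses as exactly what rules out the problematic cases $y_u\in\bb,\,y_v\notin\bb$ and $y_v\in\bb',\,y_u\notin\bb'$, is the right way to spell this out.
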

We can now write $\{\poly_i\}_{i\in\llbracket 0,k\rrbracket}$ as translates of order polytopes. For $i\in\llbracket 0,k\rrbracket$ define
\begin{align}
\label{eq:betai}
\bsg_i:=\aint\backslash(\aint_{<x_i}\cup \aint_{>x_{i+1}}),
\end{align}
with the convention that $\bsg_i=\varnothing$ if $i<0$ or $i>k$; for $S\subseteq \llbracket 0,k\rrbracket$ set $\bsg_S:=\cup_{i\in S}\bsg_i$. The interpretation of $\bsg_i$ is as the set of elements which can potentially be ordered between $x_i$ and $x_{i+1}$. Then, applying Lemma \ref{lem:Onot}, with the disjoint upper and lower sets $\beta=\aint_{>x_{i+1}},~ \beta'=\aint_{<x_i}$, shows that
\begin{equation}
\label{eq:Kiordpoly}
\poly_i=O_{\bsg_i}+1_{\aint_{>x_{i+1}}}\mbox{ for }i\in\llbracket 0,k\rrbracket.
\end{equation} 
As an example of $\bsg_S$, which will be useful later,  the following result handles the set $S:=\llbracket 0, r\rrbracket\cup \llbracket s, k\rrbracket$.
\begin{lemma}
\label{lem:betaintv}
For any $r\le s$,
\[
 \bsg_{\llbracket 0, r\rrbracket\cup \llbracket s, k\rrbracket}=\aint\backslash\aint_{>x_{r+1},<x_s}=\bsg_r\cup\bsg_s\cup\aint_{<x_{r+1}}\cup \aint_{>{x_s}}.
 \]
\end{lemma}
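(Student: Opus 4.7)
The plan is to prove both equalities by unfolding the definition of $\bsg_i$ and exploiting the fact that $x_0<x_1<\cdots<x_{k+1}$ is a chain, which makes the relations ``$y>x_i$'' monotone nondecreasing as $i$ decreases and ``$y<x_i$'' monotone nondecreasing as $i$ increases. The key intermediate reduction I would establish first is
\[
\bsg_{\llbracket 0, r\rrbracket}=\aint\setminus\aint_{>x_{r+1}},\qquad \bsg_{\llbracket s,k\rrbracket}=\aint\setminus\aint_{<x_s}.
\]
Since $\bsg_{\llbracket 0,r\rrbracket\cup\llbracket s,k\rrbracket}=\bsg_{\llbracket 0,r\rrbracket}\cup\bsg_{\llbracket s,k\rrbracket}$, taking the union of these two identities and using De~Morgan gives $\bsg_{\llbracket 0, r\rrbracket\cup \llbracket s, k\rrbracket}=\aint\setminus(\aint_{>x_{r+1}}\cap\aint_{<x_s})=\aint\setminus\aint_{>x_{r+1},<x_s}$, which is the first equality.

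To prove $\bsg_{\llbracket 0,r\rrbracket}=\aint\setminus\aint_{>x_{r+1}}$, one inclusion is immediate: if $y\in\bsg_i$ for some $i\le r$, then $y\not>x_{i+1}$, and since $x_{i+1}\le x_{r+1}$, also $y\not>x_{r+1}$. For the reverse direction, given $y\in\aint$ with $y\not>x_{r+1}$, I let $j$ be the largest index in $\llbracket 0,k+1\rrbracket$ with $y>x_j$ (such $j$ exists because $y>x_0$, and $j\le r$ since $y\not>x_{r+1}$). Then $y>x_j$ forces $y\not<x_j$, and by maximality $y\not>x_{j+1}$, so $y\in\bsg_j\subseteq\bsg_{\llbracket 0,r\rrbracket}$. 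The identity $\bsg_{\llbracket s,k\rrbracket}=\aint\setminus\aint_{<x_s}$ follows by the symmetric argument, using the minimal $j\ge s$ with $y\not<x_j$.

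For the second equality, I would observe that the two intermediate identities already express $\bsg_{\llbracket 0,r\rrbracket}$ and $\bsg_{\llbracket s,k\rrbracket}$ as unions of the form $\bsg_r\cup\aint_{<x_{r+1}}$ and $\bsg_s\cup\aint_{>x_s}$, respectively. Indeed, $\aint_{<x_{r+1}}\subseteq\aint\setminus\aint_{>x_{r+1}}$ trivially by antisymmetry, and for any $y\in\aint\setminus\aint_{>x_{r+1}}$ that is \emph{not} in $\aint_{<x_{r+1}}$ (i.e.\ $y\incomp x_{r+1}$ or $y\not\sim x_{r+1}$ in general with $y\not<x_{r+1}$), the relation $x_r<x_{r+1}$ forces $y\not<x_r$ (otherwise $y<x_{r+1}$), so together with $y\not>x_{r+1}$ we get $y\in\bsg_r$. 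The analogous splitting gives $\bsg_{\llbracket s,k\rrbracket}=\bsg_s\cup\aint_{>x_s}$. Taking the union yields the second equality.

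I do not anticipate a real obstacle here: this is purely a bookkeeping statement about order relations in a chain, and the only subtlety is the incomparable case $y\incomp x_{r+1}$ in the last step, which is handled by the chain inequality $x_r<x_{r+1}$. No convex geometry or nontrivial poset structure enters the argument.
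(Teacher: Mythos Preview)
Your argument is correct and is somewhat more direct than the paper's. The paper does not split the index set into $\llbracket 0,r\rrbracket$ and $\llbracket s,k\rrbracket$ and compute each half separately; instead it first establishes a general identity (labelled \eqref{eq:capcup} there),
\[
\bigcap_{q=1}^p(\aint_{<x_{j_q}}\cup \aint_{>x_{j_q+1}})=\bigcup_{q=0}^p\aint_{>x_{j_q+1},<x_{j_{(q+1)}}},
\]
valid for any selection $j_0=-1<j_1<\cdots<j_p<k+1=j_{p+1}$, and then specializes to $\{j_1,\ldots,j_p\}=\llbracket 0,r\rrbracket\cup\llbracket s,k\rrbracket$ using De~Morgan. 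The payoff of the paper's route is that \eqref{eq:capcup} is reused verbatim in the proof of Lemma~\ref{lem:spancollec} to compute $\dim(\sum_{\poly\in\mathcal\poly'}\poly)$ for an arbitrary subcollection, so proving the general form once saves work later. Your approach, by contrast, is tailored to the two-interval case and exploits the chain structure via the ``largest $j$ with $y>x_j$'' trick, which is cleaner for this lemma alone but would need to be redone (or generalized) when the analogous computation for arbitrary $\{j_1,\ldots,j_p\}$ arises. Both proofs treat the second equality as essentially immediate.
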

\begin{proof}
The second identity is clear so we focus on the first identity. Let $j_0:=-1,~ 0\le j_1<\cdots<j_p\le k,~j_{p+1}:=k+1$. We claim that
\begin{equation}
\label{eq:capcup}
\bigcap_{q=1}^p(\aint_{<x_{j_q}}\cup \aint_{>x_{j_q+1}})=\bigcup_{q=0}^p\aint_{>x_{j_q+1},<x_{j_{(q+1)}}}.
\end{equation}
$\subseteq$: Let $y\in \bigcap_{q=1}^p(\aint_{<x_{j_q}}\cup \aint_{>x_{j_q+1}})$ so that, for each $q\in \llbracket 1,p\rrbracket$, either $y<x_{j_q}$ or $y>x_{j_q+1}$. Let $q'$ be the largest $q$ such that $y>x_{j_q+1}$. Then, $y$ is not bigger than $x_{j_{(q'+1)}+1}$, which means that $y<x_{j_{(q'+1)}}$, as $y\in \aint_{<x_{j_{(q'+1)}}}\cup \aint_{>x_{j_{(q'+1)}}+1}$ (this is trivially true if $q'=p$). Hence, $y\in \aint_{>x_{j_{q'}+1},<x_{j_{(q'+1)}}}$.

$\supseteq$: Let $y\in\aint_{>x_{j_q+1},<x_{j_{(q+1)}}}$ for some $q\in \llbracket 0,p\rrbracket$. Then, for any $q'\le q$, $y>x_{j_{q'}+1}$ and,  for any $q'> q$, $y<x_{j_{q'}}$. Hence, $y\in \bigcap_{q=0}^{p+1}(\aint_{<x_{j_{q}}}\cup\aint_{>x_{j_{q}+1}})= \bigcap_{q=1}^p(\aint_{<x_{j_{q}}}\cup\aint_{>x_{j_{q}+1}})$.\\

We now to turn to the proof of the lemma. Let $j_0:=-1, j_{p+1}:=k+1$, and $\{j_1,\ldots,j_p\}:=\{0,\ldots,r,s,\ldots,k\}$. We have 
\begin{align*}
\bsg_{\llbracket 0, r\rrbracket\cup \llbracket s, k\rrbracket}&=\bigcup_{q=1}^p\beta_{j_q}=\bigcup_{q=1}^p\left(\aint\bigg \backslash\left(\aint_{<x_{j_q}}\cup \aint_{>x_{j_q+1}}\right)\right)=\aint\bigg \backslash\bigcap_{q=1}^p\left(\aint_{<x_{j_q}}\cup \aint_{>x_{j_q+1}}\right)\\
&\underset{\eqref{eq:capcup}}{=}\aint\bigg \backslash \bigcup_{q=0}^p\aint_{>x_{j_q+1},<x_{j_{(q+1)}}}.
\end{align*}
Whenever $j_q\neq r$, $j_q+1=j_{q+1}$, so $\aint_{>x_{j_q+1},<x_{j_{(q+1)}}}=\varnothing$. It follows that $\bigcup_{q=0}^p\aint_{>x_{j_q+1},<x_{j_{(q+1)}}}=\aint_{>x_{r+1},<x_{s}}$, which completes the proof.
\end{proof}

\subsection{Posets and mixed volumes}
The connection between the polytopes $\{K_i\}_{i\in\llbracket 0,k\rrbracket}$ and \newline $|\mathcal N_-|, |\mathcal N_=|, |\mathcal N_+|$, which leads to Stanley's proof of \eqref{eq:eqmathcalN}, goes through the notion of mixed volumes; we refer to \cite{Sch14} as the standard reference for the theory of convex bodies. Given convex bodies (nonempty compact convex sets) $C,C'\subseteq \R^{n-k}$, and scalars $\lambda,\lambda'\ge 0$, we define their sum as
\[
\lambda C+\lambda' C':=\{\lambda x+\lambda' y:x\in C, y\in C'\}.
\]
The volume of a sum of convex bodies behaves as a polynomial: Given convex bodies $C_1,\ldots,C_p\subseteq\R^{n-k}$, and scalars  $\lambda_1,\ldots,\lambda_p\ge 0$, we have \cite[Theorem 5.1.7]{Sch14},
\[
\Vol_{n-k}(\lambda_1C_1+\cdots+\lambda_pC_p)=\sum_{ j_1,\ldots,j_{n-k}\in\llbracket 1, p\rrbracket}\V_{n-k}(C_{j_1},\ldots,C_{j_{n-k}})\lambda_{j_1}\cdots\lambda_{j_{n-k}}.
\]
The coefficients $\V_{n-k}(C_{j_1},\ldots,C_{j_{n-k}})$, which are nonnegative, symmetric, and multilinear in their in their arguments,  are called \emph{mixed volumes}. Stanley's proof of \eqref{eq:eqmathcalN} relies on the following identification of $|\mathcal N_-|,|\mathcal N_=|,|\mathcal N_+|$ with mixed volumes \cite[Theorem 3.2]{Sta81}. For $m\in\llbracket 0,k\rrbracket$ let
\[
\mathcal \poly_m:=(\underbrace{\poly_m,\ldots,\poly_m}_{i_{m+1}-i_m-1}).
\]
Then,
\begin{align*}
&|\mathcal N_-|=(n-k)!\V_{n-k}(\mathcal\poly_0,\mathcal\poly_1,\ldots,\underbrace{\poly_{\LL-1},\ldots,\poly_{\LL-1}}_{i_\LL-1-i_{\LL-1}-1},\underbrace{\poly_\LL,\ldots,\poly_\LL}_{i_{\LL+1}-(i_\LL-1)-1},\mathcal\poly_{\LL+1},\ldots ,\mathcal\poly_k),\\
&|\mathcal N_=|=(n-k)!\V_{n-k}(\mathcal\poly_0,\mathcal\poly_1,\ldots,\underbrace{\poly_{\LL-1},\ldots,\poly_{\LL-1}}_{i_{\LL}-i_{\LL-1}-1},\underbrace{\poly_{\LL},\ldots,\poly_{\LL}}_{i_{\LL+1}-i_{\LL}-1},\mathcal\poly_{\LL+1},\ldots ,\mathcal\poly_k),\\
&|\mathcal N_+|=(n-k)!\V_{n-k}(\mathcal\poly_0,\mathcal\poly_1,\ldots,\underbrace{\poly_{\LL-1},\ldots,\poly_{\LL-1}}_{i_{\LL}+1-i_{\LL-1}-1},\underbrace{\poly_{\LL},\ldots,\poly_{\LL}}_{i_{\LL+1}-(i_{\LL}+1)-1},\mathcal\poly_{\LL+1},\ldots, \mathcal\poly_k).
\end{align*}
To shorten the notation, let
\begin{equation*}
 \mathcal \poly:=(\mathcal \poly_0,\mathcal \poly_1,\ldots,\underbrace{\poly_{\LL-1},\ldots,\poly_{\LL-1}}_{i_{\LL}-i_{\LL-1}-2}, \underbrace{\poly_{\LL},\ldots,\poly_{\LL}}_{i_{\LL+1}-i_{\LL}-2},\mathcal\poly_{\LL+1},\ldots,\mathcal \poly_k),
\end{equation*}
to get
\begin{equation}
\label{eq:posetvolrep}
\begin{split}
&|\mathcal N_-|=(n-k)!\V_{n-k}(K_{\LL},K_{\LL},\mathcal \poly),\\
&|\mathcal N_=|=(n-k)!\V_{n-k}(K_{\LL-1},K_{\LL},\mathcal \poly),\\
&|\mathcal N_+|=(n-k)!\V_{n-k}(K_{\LL-1},K_{\LL-1},\mathcal \poly).
\end{split}
\end{equation}
With the representation \eqref{eq:posetvolrep} in hand, we get that the inequality \eqref{eq:eqmathcalN} is equivalent to
\[
\V_{n-k}(K_{\LL-1},K_{\LL},\mathcal \poly)^2\ge  \V_{n-k}(K_{\LL-1},K_{\LL-1},\mathcal \poly)\V_{n-k}(K_{\LL},K_{\LL},\mathcal \poly).
\]
The latter inequality follows immediately from the Alexandrov-Fenchel inequality  \cite[Theorem 7.3.1]{Sch14}: For any convex bodies $C_1,\ldots,C_{n-k}\subseteq\R^{n-k}$ we have
\begin{equation}
\label{eq:AF}
\tag{AF}
\V_{n-k}(C_1,C_2,C_3,\ldots,C_{n-k})^2\ge \V_{n-k}(C_1,C_1,C_3,\ldots,C_{n-k})\V_{n-k}(C_2,C_2,C_3,\ldots,C_{n-k}). 
\end{equation}
This completes Stanley's proof of \eqref{eq:eqmathcalN}. Since our goal in this paper is to understand the equality cases of \eqref{eq:eqmathcalN}, the above discussion naturally leads to the investigation of the equality cases of the Alexandrov-Fenchel inequality itself. 

\subsection{The extremals of the Alexandrov-Fenchel inequality for convex polytopes}
We start with the \emph{support function} associated to a convex body: Given a convex body $C\subseteq \R^{n-k}$ we define $h_C:S^{n-k-1}\to \R$ by
\[
h_C(\mathrm u):=\sup_{x\in C}\langle \mathrm u,x\rangle,\quad \mbox{for }\mathrm u\in S^{n-k-1}.
\]
The support function evaluated at $\mathrm u$ gives the distance to the origin of the  hyperplane orthogonal to $\mathrm u$ supporting $C$. The support function respects the summation of convex bodies in the sense that
\[
h_{\lambda C+\lambda' C'}=\lambda h_C+\lambda'h_{C'},
\]
for any convex bodies $C,C'\subseteq\R^{n-k}$ and scalars $\lambda,\lambda'\ge 0$. The function $h_C$ completely describes $C$ in the sense that two convex bodies are the same if their support functions are identical. That is, $C=C'$ if $h_C(\mathrm u)=h_{C'}(\mathrm u)$ for every $\mathrm u\in S^{n-k-1}$. Since mixed volumes are invariant under translations, and scale proportionally with dilations, it is clear that equality holds in \eqref{eq:AF} whenever there exist $a\ge 0$ and $\mathrm v\in\R^{n-k}$ such that $h_{C_1}(\mathrm u)=h_{aC_2+\mathrm v}(\mathrm u)$ for every $\mathrm u\in S^{n-k-1}$. However, the difficulty in characterizing the extremals of  the Alexandrov-Fenchel inequality stems from the fact that equality can be attained in \eqref{eq:AF} even if $h_{C_1}$ and $h_{aC_2+\mathrm v}$ agree on a very small subset of $S^{n-k-1}$. The complete characterization of the extremals of \eqref{eq:AF} has been open for decades. But in the case of \emph{polytopes}, which is the setting relevant to Stanley's inequalities, the problem was completely settled in \cite{SvH20}. In order to present the results of \cite{SvH20} we need some definitions. In the sequel, $B\subseteq \R^{n-k}$ always stands for the unit ball. Given a polytope $C\subseteq \R^{n-k}$ and $\mathrm u\in S^{n-k-1}$ we write
\[
F(C,\mathrm u):=\{x\in C:\langle \mathrm u,x\rangle=h_C(\mathrm u)\},
\] 
for the face of $C$ in the direction $\mathrm u$. We recall \cite[Theorem 1.7.2]{Sch14} that
\begin{align}
\label{eq:linface}
F(C+C',\mathrm u)=F(C,\mathrm u)+F(C',\mathrm u),
\end{align}
for any convex bodies $C,C'$ and $\mathrm u\in S^{n-k-1}$.
\begin{definition}
\label{def:extreme}
Let $\mathcal C:=(C_3,\ldots,C_{n-k})$ be a nonempty collection of polytopes in $\R^{n-k}$. A vector $\mathrm u\in S^{n-k-1}$ is a $(B,\mathcal C)$\emph{-extreme normal direction} if, for any $\mathcal C'\subseteq\mathcal C$,
\[
\dim\left(\sum_{C\in\mathcal C'}F(C,\mathrm u)\right)\ge |\mathcal C'|.
\]
\end{definition}
One example of $(B,\mathcal C)$-extreme normal directions can be found in Figure \ref{fig:graph}. The definition of $(B,\mathcal C)$-extreme normal directions plays a crucial role in the characterization of the extremals of the Alexandrov-Fenchel inequality for convex polytopes. For example, it follows from \cite{SvH20} that if $C_1,\ldots,C_{n-k}$ are \emph{full-dimensional} polytopes in $\R^{n-k}$, then, equality holds in \eqref{eq:AF} if, and only if, there exist $a\ge 0$ and $ \mathrm v\in\R^{n-k}$ such that
\[
h_{C_1}(\mathrm u)=h_{aC_2+\mathrm v}(\mathrm u) \quad \mbox{for every }(B,\mathcal C)\mbox{-extreme normal directions } \mathrm u. 
\]
\begin{figure}[h]
\centering
\begin{tikzpicture}[scale=.6]

\shade[ball color = blue, opacity = 0.15] (1,0) circle [radius=2];

\draw[thick, densely dashed] (3,0) arc [start angle = 0, end angle = 180, 
x radius = 2, y radius = .5];

\draw[thick] (3,0) arc [start angle = 0, end angle = -180, x radius = 2, 
y radius = .5];

\draw[thick] (1,2) [rotate=90] arc [start angle = 0, end angle = 
-180, x radius = 2, y radius = -1];

\draw[thick, densely dashed] (1,2) [rotate=90] arc [start angle 
= 0, end angle = 
-180, x radius = 2, y radius = 1];

\draw[thick] (1,2) [rotate=90] arc [start angle = 0, end angle = 
-180, x radius = 2, y radius = 1.75];

\draw[thick, densely dashed] (1,2) [rotate=90] arc [start angle 
= 0, end angle = 
-180, x radius = 2, y radius = -1.75];

\draw[fill=black] (1,2) circle [radius=.07];
\draw[fill=black] (1,-2) circle [radius=.07];
\draw[fill=black] (2.74,-.24) circle [radius=.09] node[below right] 
{$\mathrm v_{F}$};
\draw[fill=black] (-0.74,.24) circle [radius=.07];
\draw[fill=black] (.03,-.45) circle [radius=.09];
\draw[fill=black] (1.97,.45) circle [radius=.07];

\draw (.60,-.44) node[below] {$\mathrm v_{F'}$};

\draw (1.2,-.20) node {$e_{F,F'}$};


\begin{scope}[scale=.9]

\fill[blue!15] (-5.95,.55) -- (-4.03,1.55) -- 
(-4.03,-1.25) -- (-5.95,-2.25) -- (-5.95,.55);

\fill[blue!25] (-5.95,.55) -- (-8.47,.89) -- 
(-8.47,-1.91) -- (-5.95,-2.25) -- (-5.95,.55);

\fill[blue!5] (-8.47,.89) -- (-6.53,1.79) -- 
(-4.03,1.55) -- (-5.95,.55) -- (-8.47,.89);

\draw[thick] (-5.95,.55) -- (-4.03,1.55) -- (-4.03,-1.25) --
(-5.95,-2.25) -- (-5.95,.55);

\draw[thick] (-5.95,.55) -- (-8.47,.89) -- (-8.47,-1.91) --
(-5.95,-2.25);

\draw[thick] (-8.47,.89) -- (-6.53,1.79) -- (-4.03,1.55);

\draw[thick,densely dashed] (-6.53,1.79) -- (-6.53,-1.01) --
(-4.03,-1.25);

\draw[thick,densely dashed] (-6.53,-1.01) -- (-8.47,-1.91);
\end{scope}

\draw (-6.5,-.6) node {$F'$};

\draw (-4.4,-.3) node {$F$};

\end{tikzpicture}
\caption{\footnotesize Extreme normal directions associated to the cube. The vectors $\mathrm v_F,\mathrm v_{F'}\in S^2$ are the unit normals of the facets $F,F'$, and the line $e_{F,F'}$ is the shortest geodesic between the nodes $\mathrm v_F,\mathrm v_{F'}$. The $(\textnormal{Ball},\textnormal{Cube})$-extreme normal directions comprises of the nodes and arcs in this embedded graph on the sphere $S^2$.\label{fig:graph}}
\end{figure}
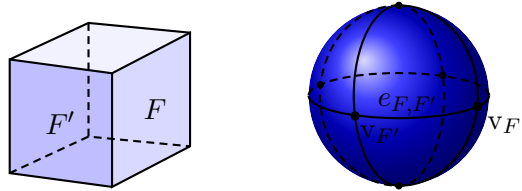

In the setting of Stanley's inequalities, the full-dimensionality assumption does not hold so we need the full power of the results of \cite{SvH20}. This requires a few definitions. 

\begin{definition}
\label{def:ssc}
Let $\mathcal C$  be a nonempty collection of polytopes in $\R^{n-k}$. 
\begin{itemize}
\item The collection $\mathcal C$ is \emph{subcritical} if, for any collection $\mathcal C'\subseteq\mathcal C$, $\dim\left(\sum_{C\in \mathcal C'}C\right)\ge |\mathcal C'|$. A collection $\mathcal C' \subseteq\mathcal C$ is \emph{sharp-subcritical} if $\dim\left(\sum_{C\in \mathcal C'}C\right)= |\mathcal C'|$.
\item The collection $\mathcal C$ is \emph{critical} if, for any nonempty collection $\mathcal C'\subseteq\mathcal C$, $\dim\left(\sum_{C\in \mathcal C'}C\right)\ge |\mathcal C'|+1$. A collection $\mathcal C' \subseteq\mathcal C$ is \emph{sharp-critical} if $\dim\left(\sum_{C\in \mathcal C'}C\right)= |\mathcal C'|+1$.
\item The collection $\mathcal C$ is \emph{supercritical} if, for any nonempty collection $\mathcal C'\subseteq\mathcal C$, $\dim\left(\sum_{C\in \mathcal C'}C\right)\ge |\mathcal C'|+2$.
\end{itemize}
\end{definition}
The origin of the above definition is the following lemma, which characterizes the conditions under which mixed volumes are positive \cite[Theorem 5.1.8]{Sch14}.
\begin{lemma}{\textnormal{(\textbf{Positivity of mixed volumes})}}
\label{lem:mvpos}
Let $C_1,\ldots, C_{n-k}$ be convex bodies in $\R^{n-k}$. Then, $\V_{n-k}(C_1,\ldots, C_{n-k})>0$ if, and only if, 
\[
\dim\left(\sum_{C\in\mathcal C'}C\right)\ge |\mathcal C'|\quad\mbox{for every collection } \mathcal C'\subseteq \{C_i\}_{i\in\llbracket 1,n-k\rrbracket}.
\]
\end{lemma}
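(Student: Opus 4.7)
The plan is to prove both directions. The forward direction (positivity of the mixed volume implies the dimension conditions) will follow from Cavalieri's principle combined with the polynomial expansion of volume. The reverse direction will be proved by induction on $N := n-k$, reducing via projection onto hyperplanes.

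\textbf{Forward direction.} Suppose, for contradiction, that some nonempty subcollection $\mathcal{C}' = \{C_{i_1}, \ldots, C_{i_m}\}$ violates the condition, so $d := \dim \sum_{j} C_{i_j} < m$. By translation invariance of mixed volumes, we may translate each $C_{i_j}$ into a common $d$-dimensional linear subspace $E \subseteq \R^N$. Set $B := \sum_{i \notin \{i_1, \ldots, i_m\}} C_i$ and decompose $\R^N = E \oplus E^\perp$. Cavalieri's principle gives
\[
\Vol_N\Bigl(\sum_{j=1}^m \lambda_j C_{i_j} + B\Bigr) = \int_{E^\perp} \Vol_d\Bigl(\sum_{j=1}^m \lambda_j C_{i_j} + B^E_u\Bigr)\, du,
\]
where $B^E_u := \{x \in E : x + u \in B\}$. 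The inner integrand is a polynomial in $(\lambda_1, \ldots, \lambda_m)$ of total degree at most $d$, so the full integral is too. Comparing with the mixed-volume expansion of the left-hand side, the coefficient of $\lambda_1 \cdots \lambda_m$ is a positively weighted sum of nonnegative mixed volumes that includes the term $\V_N(C_1, \ldots, C_N)$. Since $d < m$ forces this coefficient to vanish and mixed volumes are nonnegative, we conclude $\V_N(C_1, \ldots, C_N) = 0$, contradicting positivity.

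\textbf{Reverse direction.} I induct on $N$. The base case $N = 1$ is immediate, since $\V_1(C_1)$ is the length of $C_1$, which is positive iff $\dim C_1 \ge 1$. For the inductive step, assume the dimension conditions hold. Given any $v \in (C_N - C_N) \setminus \{0\}$, the segment $[0, v]$ is contained in a translate of $C_N$, so by monotonicity and translation invariance,
\[
\V_N(C_1, \ldots, C_N) \ge \V_N(C_1, \ldots, C_{N-1}, [0, v]) = \frac{|v|}{N}\, \V_{N-1}\bigl(\pi(C_1), \ldots, \pi(C_{N-1})\bigr),
\]
where $\pi : \R^N \to v^\perp$ is the orthogonal projection. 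It therefore suffices to choose $v$ such that the projected collection $\{\pi(C_i)\}_{i=1}^{N-1}$ satisfies the dimension conditions in $v^\perp \cong \R^{N-1}$, after which the inductive hypothesis closes the argument.

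\textbf{The main obstacle} is this choice of $v$. Projection along $v$ preserves $\dim \sum_{i \in I} C_i$ unless $v \in \lin(\sum_{i \in I} (C_i - C_i))$, in which case the dimension drops by one. The \emph{sharp} subcollections $I \subseteq [N-1]$ with $\dim \sum_{i \in I} C_i = |I|$ are those at risk, since for them the projected dimension could fall below $|I|$. For each such $I$, applying the hypothesis to $I \cup \{N\}$ yields $\dim \sum_{i \in I \cup \{N\}} C_i \ge |I|+1$, which means $C_N - C_N$ is not contained in $\lin(\sum_{i \in I} (C_i - C_i))$. Since there are only finitely many sharp subcollections, the set of ``bad'' directions inside $C_N - C_N$ is a finite union of intersections with proper linear subspaces, and a standard Baire-type argument (or direct linear-algebraic inductive construction) produces the required $v$. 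This completes the induction and hence the proof.
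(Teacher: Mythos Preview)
Your argument is correct. Note, however, that the paper does not supply its own proof of this lemma: it is quoted as \cite[Theorem 5.1.8]{Sch14} and used as a black box. So there is no in-paper proof to compare against; you have simply reproduced a standard self-contained argument for a classical fact.

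Both directions are sound. In the forward direction, the Cavalieri computation correctly shows that $\Vol_N(\sum_j \lambda_j C_{i_j} + B)$ has total degree at most $d<m$ in the $\lambda_j$'s, and the coefficient of $\lambda_1\cdots\lambda_m$ equals (a positive multiple of) $\V_N(C_{i_1},\ldots,C_{i_m},B,\ldots,B)$, which by multilinearity dominates $\V_N(C_1,\ldots,C_N)$. In the reverse direction, the projection formula $\V_N(C_1,\ldots,C_{N-1},[0,v]) = \tfrac{|v|}{N}\,\V_{N-1}(\pi C_1,\ldots,\pi C_{N-1})$ is exactly the right reduction, and your analysis of sharp subcollections is correct: for each sharp $I\subseteq[N-1]$, the hypothesis applied to $I\cup\{N\}$ forces $C_N - C_N \not\subseteq \lin\bigl(\sum_{i\in I}(C_i - C_i)\bigr)$.

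The one place worth tightening is the ``Baire-type argument'' at the end. The cleanest way to finish is to observe that $C_N - C_N$ is a convex body containing the origin, so its linear span $L := \lin(C_N - C_N)$ satisfies $\dim L \ge 1$ (from the hypothesis applied to $\{C_N\}$), and $C_N - C_N$ has nonempty interior relative to $L$. Each ``bad'' subspace $\lin\bigl(\sum_{i\in I}(C_i - C_i)\bigr)$ intersects $L$ in a \emph{proper} subspace of $L$ (else $C_N - C_N$ would be contained in it), hence in a set of Lebesgue measure zero in $L$. A finite union of such sets cannot cover the relatively open set $\mathrm{relint}(C_N - C_N)\setminus\{0\}$, so the required $v$ exists. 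This is elementary measure theory rather than Baire category, and makes the step fully rigorous.
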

For example, if the collection of polytopes $\mathcal C:=(C_3,\ldots,C_{n-k})$ in \eqref{eq:AF} is not subcritical, then Lemma \ref{lem:mvpos} shows that equality holds in \eqref{eq:AF} for trivial reasons: both sides of the inequality are zero. If $\mathcal C$ is subcritical with a sharp-subcritical collection, then the equality cases of \eqref{eq:AF} can be reduced to the equality cases of the Alexandrov-Fenchel inequality in a lower dimension; we refer to \cite{SvH20} for details. The difficult equality cases of \eqref{eq:AF}  are the supercritical and, to a much larger degree, the critical collections. The following definition is needed for the characterization of the critical extremals of \eqref{eq:AF}. 

\begin{definition}
\label{def:deg}
Let $\mathcal C=(C_3,\ldots,C_{n-k})$  be a collection of polytopes in $\R^{n-k}$ and let $(P,Q)$ be a pair of convex bodies in $\R^{n-k}$. The pair $(P,Q)$ is a $\mathcal C$\emph{-degenerate pair} if $P$ is not a translate of $Q$,
\begin{align*}
\V_{n-k}(P,Q,\mathcal C)=0,\quad\mbox{and}\quad \V_{n-k}(P,B,\mathcal C)=\V_{n-k}(Q,B,\mathcal C).
\end{align*}
\end{definition}

\begin{theorem}{\textnormal{(\cite[Theorem 2.13, Corollary 2.16]{SvH20})}}
\label{thm:SvH}
Let $C_1,\ldots, C_{n-k}$ be  polytopes in $\R^{n-k}$ and let $\mathcal C:=(C_3,\ldots, C_{n-k})$. 
\begin{itemize}
\item Suppose $\mathcal C$ is supercritical. Then, equality holds in \eqref{eq:AF} if, and only if, there exist $a\ge 0$ and $\mathrm v\in \R^{n-k}$ such that
\[
h_{C_1}(\mathrm u)=h_{aC_2+\mathrm v}(\mathrm u) \quad \mbox{for all }(B,\mathcal C)\textnormal{-extreme normal directions } \mathrm u. 
\]
\item Suppose $\mathcal C$ is critical. Then, equality holds in \eqref{eq:AF} if, and only if, there exist $a\ge 0,\, \mathrm v\in \R^{n-k}$, and a number $0\le d<\infty$ of $\mathcal C$-degenerate pairs $(P_1,Q_1),\ldots,(P_d,Q_d)$, such that 
\[
h_{C_1+\sum_{j=1}^dQ_j}(\mathrm u)=h_{aC_2+\mathrm v+\sum_{j=1}^dP_j}(\mathrm u) \quad \mbox{for all }(B,\mathcal C)\textnormal{-extreme normal directions } \mathrm u. 
\]
\end{itemize}
\end{theorem}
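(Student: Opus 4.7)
The plan is to reduce the characterization of equality in \eqref{eq:AF} to understanding the kernel of a Lorentzian-type bilinear form $B(f,g) := \V_{n-k}(f,g,\mathcal C)$ defined on an appropriate space of polytope support functions. I would first recall that \eqref{eq:AF} is equivalent to the statement that $B$ has signature $(+,-,-,\ldots,-)$ modulo its kernel; equality then holds precisely when $h_{C_1}$ and $ah_{C_2}+\langle \mathrm v,\cdot\rangle$ (for some $a\ge 0,\,\mathrm v\in\R^{n-k}$) differ by an element of $\ker B$. The supercritical versus critical dichotomy in the conclusion reflects precisely the dimension of this kernel: supercriticality forces the kernel to collapse to the trivial directions (translations and dilations), whereas criticality leaves room for additional null directions encoded by $\mathcal C$-degenerate pairs.

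Next, I would identify $\ker B$ explicitly. Two trivial contributions always lie there: linear functions (corresponding to translations of polytopes) and, by the bilinearity and homogeneity of mixed volumes, multiples of $h_{C_2}$ itself. Using the positivity criterion in Lemma \ref{lem:mvpos}, one argues that in the supercritical case these are the \emph{only} kernel elements once one restricts to the active directions, so equality reduces to the stated identity $h_{C_1}=h_{aC_2+\mathrm v}$ on all $(B,\mathcal C)$-extreme normal directions. In the critical case, by Definition \ref{def:deg}, any $\mathcal C$-degenerate pair $(P,Q)$ produces a nonzero element $h_P-h_Q\in \ker B$; the characterization must therefore allow adding $\sum_j(h_{P_j}-h_{Q_j})$ to one side, which is precisely what the second bullet states.

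The core structural step is explaining why $(B,\mathcal C)$-extreme normal directions (Definition \ref{def:extreme}) are the correct set on which to test the equality. Support functions of polytopes are piecewise linear and determined by their values at the unit normals to facets; however, only directions $\mathrm u$ for which $\dim\bigl(\sum_{C\in\mathcal C'}F(C,\mathrm u)\bigr)\ge |\mathcal C'|$ contribute nontrivially to $\V_{n-k}(\cdot,\cdot,\mathcal C)$, by the formula \eqref{eq:linface} combined with the positivity criterion applied to the induced collection on the face lattice. Directions below the dimension threshold produce zero contributions and are invisible to the equality analysis, which is exactly the content of the extreme normal direction condition.

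The main obstacle I anticipate is the induction step that reduces from $n-k$ dimensions to lower dimensions. The natural strategy is to project onto a hyperplane where some $C_i$ becomes lower-dimensional and apply the inductive hypothesis to a smaller collection, but the combinatorics of how extreme normal directions and the critical/supercritical classification behave under such projections is delicate: a critical collection can become sharp-subcritical after projection, and $\mathcal C$-degenerate pairs of the original collection must be matched with degeneracies of the projected one. Handling these transitions—in particular isolating the precise way degenerate pairs emerge as obstructions to the cleaner supercritical characterization—is where most of the technical work should live.
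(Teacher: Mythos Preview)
The paper does not prove this theorem at all: it is quoted verbatim from \cite[Theorem~2.13, Corollary~2.16]{SvH20} and used as a black box. There is therefore no ``paper's own proof'' to compare your proposal against; the present paper's contribution lies entirely downstream, in translating the geometric conclusion of this theorem into combinatorial statements about posets.

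Your sketch is a reasonable caricature of the strategy actually used in \cite{SvH20} (Lorentzian form, kernel analysis, local-to-global gluing), but several of the heuristics are not quite right as stated. In particular, it is not true that multiples of $h_{C_2}$ lie in the kernel of $B$; the relation $h_{C_1}=ah_{C_2}+\langle\mathrm v,\cdot\rangle$ on extreme directions arises from equality in a Cauchy--Schwarz-type inequality for a hyperbolic form, not from $h_{C_2}$ being a null vector. More importantly, the genuinely hard step in \cite{SvH20} is not the dimension-reduction you describe but a propagation argument: one first characterizes the kernel locally on faces of a reference polytope and must then glue these local descriptions along the face lattice, controlling how degenerate pairs interact with the combinatorics of the normal fan. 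Your proposal does not touch this gluing step, which is where the bulk of the work lives. If you intend to supply an independent proof rather than cite \cite{SvH20}, that is the gap you would need to fill.
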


\subsubsection{The extremals of Stanley's inequalities} The crux of our work lies in understanding how to apply Theorem \ref{thm:SvH} in our setting in order to get a \emph{combinatorial} characterization of the equality cases of \eqref{eq:eqmathcalN}. For convenience and future reference, let us explicitly write Theorem \ref{thm:SvH} in our setting. 
\begin{theorem}
\label{thm:SvHComb}
$~$
\begin{itemize}
\item Suppose $\mathcal \poly$ is supercritical. Then, $|\mathcal N_{=}|^2=|\mathcal N_{-}||\mathcal N_{+}|$ holds, if, and only if, there exist $a\ge 0$ and $\mathrm v\in \R^{n-k}$ such that
\[
h_{\poly_{\LL-1}}(\mathrm u)=h_{a\poly_{\LL}+\mathrm v}(\mathrm u) \quad \mbox{for all }(B,\mathcal \poly)\textnormal{-extreme normal directions } \mathrm u. 
\]
\item Suppose $\mathcal \poly$ is critical. Then, $|\mathcal N_{=}|^2=|\mathcal N_{-}||\mathcal N_{+}|$ holds, if, and only if, there exist $a\ge 0,\,\mathrm v\in \R^{n-k}$, and a number $0\le d<\infty$ of $\mathcal \poly$-degenerate pairs $(P_1,Q_1),\ldots,(P_d,Q_d)$, such that 
\[
h_{\poly_{\LL-1}+\sum_{j=1}^dQ_j}(\mathrm u)=h_{a\poly_{\LL}+\mathrm v+\sum_{j=1}^dP_j}(\mathrm u) \quad \mbox{for all }(B,\mathcal \poly)\textnormal{-extreme normal directions } \mathrm u. 
\]
\end{itemize}
\end{theorem}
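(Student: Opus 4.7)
The statement is a direct transcription of Theorem \ref{thm:SvH} (the Shenfeld--Van Handel characterization) into the combinatorial setting via Stanley's mixed-volume representation, so my plan is essentially a bookkeeping exercise: verify that the hypotheses and conclusions translate cleanly under the identification in \eqref{eq:posetvolrep}.

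First, I would invoke \eqref{eq:posetvolrep}, which gives
\begin{align*}
|\mathcal N_-|&=(n-k)!\,\V_{n-k}(\poly_\LL,\poly_\LL,\mathcal\poly),\\
|\mathcal N_=|&=(n-k)!\,\V_{n-k}(\poly_{\LL-1},\poly_\LL,\mathcal\poly),\\
|\mathcal N_+|&=(n-k)!\,\V_{n-k}(\poly_{\LL-1},\poly_{\LL-1},\mathcal\poly).
\end{align*}
Dividing out the common factor $((n-k)!)^2$ turns the equality $|\mathcal N_=|^2=|\mathcal N_-||\mathcal N_+|$ into exactly the equality case of the Alexandrov-Fenchel inequality \eqref{eq:AF} applied with $C_1:=\poly_{\LL-1}$, $C_2:=\poly_\LL$, and $(C_3,\ldots,C_{n-k}):=\mathcal\poly$.

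Next, I would observe that the auxiliary collection $\mathcal C=(C_3,\ldots,C_{n-k})$ in Theorem \ref{thm:SvH} is, under this identification, literally the collection $\mathcal\poly$. Consequently the hypothesis ``$\mathcal C$ is supercritical'' becomes ``$\mathcal\poly$ is supercritical'', and similarly for criticality; there is nothing to check beyond matching the two notations. Likewise, the notions of $(B,\mathcal C)$-extreme normal direction and $\mathcal C$-degenerate pair become, verbatim, $(B,\mathcal\poly)$-extreme normal direction and $\mathcal\poly$-degenerate pair.

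With these identifications in hand, the supercritical half of Theorem \ref{thm:SvHComb} is the supercritical bullet of Theorem \ref{thm:SvH} applied to $(C_1,C_2,\mathcal C)=(\poly_{\LL-1},\poly_\LL,\mathcal\poly)$, and the critical half is the critical bullet applied to the same data. There is no genuine obstacle in this argument: the only subtlety is to make sure the symmetry of mixed volumes is used correctly so that the single factor $\poly_{\LL-1}$ in the $|\mathcal N_=|$ representation is placed in the $C_1$ slot (and the single $\poly_\LL$ in the $C_2$ slot), at which point the rescaling $C_1\leadsto aC_2+\mathrm v$ and the translate $+\sum_j Q_j$ vs $+\sum_j P_j$ appearing in Theorem \ref{thm:SvH} carry over unchanged. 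All of the real work---the combinatorial interpretation of $(B,\mathcal\poly)$-extreme directions, of $\mathcal\poly$-degenerate pairs, and of equality of support functions in these directions---is deferred to the subsequent sections, as outlined in Figure \ref{fig:dictionary}.
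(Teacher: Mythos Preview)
Your proposal is correct and matches the paper's approach: the paper presents Theorem~\ref{thm:SvHComb} without a separate proof, introducing it with the remark ``For convenience and future reference, let us explicitly write Theorem~\ref{thm:SvH} in our setting,'' which is precisely the transcription via \eqref{eq:posetvolrep} that you describe.
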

Our proof proceeds by induction on $k$. The base case $k=0$ is trivial as equality in \eqref{eq:eqmathcalN} cannot occur because $|\mathcal N_-|=|\mathcal N_+|=0$ while $|\mathcal N_=|=|\mathcal N|$. Hence, Theorem \ref{thm:supcrit} and Theorem \ref{thm:crit} hold trivially when $k=0$. From here on we assume that $k\ge 1$ and that equality holds in \eqref{eq:eqmathcalN}:
\begin{equation*}
|\mathcal N_{=}|^2=|\mathcal N_{+}||\mathcal N_{-}|\quad\Longleftrightarrow \quad\V_{n-k}(K_{\LL-1},K_{\LL},\mathcal \poly)^2=\V_{n-k}(K_{\LL-1},K_{\LL-1},\mathcal \poly)\V_{n-k}(K_{\LL},K_{\LL},\mathcal \poly).
\end{equation*}
\begin{assumption}
\label{ass:induct}
Theorem \ref{thm:supcrit} and Theorem \ref{thm:crit}  hold true for $k-1$.
\end{assumption}
We conclude this section by introducing the notions of criticality for posets. The relations between the criticality notions of Definition \ref{def:ssc} and the following Definition \ref{def:posetcrit} is given in Section \ref{sec:notionsCrit}.
\begin{definition}
\label{def:posetcrit}
Let $\abar=\{y_1,\ldots,y_{n-k}\}\cup\{x_1,\ldots,x_k\}$ be a poset, with a fixed chain $x_1<\cdots<x_k$, and fix $1\le i_1<\cdots <i_k\le n$ such that $i_{\LL-1}+1<i_{\LL}<i_{\LL+1}-1$ for some fixed $\LL\in [k]$. Suppose that $|\mathcal N_=|>0$.
\begin{itemize}
\item The poset $\abar$ is \emph{supercritical} if, for any integer $p\ge 1$ and $j_0:=-1< j_1<\cdots<j_p<k+1=:j_{p+1}$, such that  $ i_{j_q+1}-i_{j_q}-1-1_{j_q\in\{\LL-1,\LL\}}$ are positive for any $q\in [p]$, we have
\[
\sum_{q=0}^p1_{\{j_q+1<j_{(q+1)}\}}|\abar_{>x_{j_q+1},<x_{j_{(q+1)}}}|\le |\{q\in [p]:j_q\in\{\LL-1,\LL\}\}|-2+\sum_{q=0}^p1_{\{j_q+1<j_{(q+1)}\}}(i_{j_{(q+1)}}-i_{j_q+1}-1).
\]
\item The poset $\abar$ is \emph{critical} if, for any integer $p\ge 1$ and $j_0:=-1< j_1<\cdots<j_p<k+1=:j_{p+1}$, such that  $ i_{j_q+1}-i_{j_q}-1-1_{j_q\in\{\LL-1,\LL\}}$ are positive for any $q\in [p]$, we have
\[
\sum_{q=0}^p1_{\{j_q+1<j_{(q+1)}\}}|\abar_{>x_{j_q+1},<x_{j_{(q+1)}}}|\le |\{q\in [p]:j_q\in\{\LL-1,\LL\}\}|-1+\sum_{q=0}^p1_{\{j_q+1<j_{(q+1)}\}}(i_{j_{(q+1)}}-i_{j_q+1}-1).
\]
\end{itemize}
To get some intuition for Definition \ref{def:posetcrit} note that when $|\mathcal N_=|>0$ we have  
\[
|\abar_{>x_{r+1},<x_s}|\le i_s-i_{r+1}-1\quad \forall  ~ r\le s.
\]
Hence, criticality is captured in Definition \ref{def:posetcrit} by checking the tightness of the above bound. (Equivalent and more transparent definitions of  (super)criticality of posets are given in \cite[\S 10.7]{chanpakequiv2023}.)

Finally, let us remark that the case $k=1$ is always supercritical, where we use that $|\mathcal N_-|,|\mathcal N_=|,|\mathcal N_+|$ are positive, as $|\mathcal N_=|>0$ and $|\mathcal N_{=}|^2=|\mathcal N_{+}||\mathcal N_{-}|$.
\end{definition}

\section{Linear extensions}
\label{sec:found}
In this section we introduce a number of ideas and tools that will simplify the proofs of our main results. Section \ref{subsec:decomp} presents a \emph{decompositions} of $\mathcal N_{-}, \mathcal N_{=},\mathcal N_{+}$. Section \ref{subsec:suff} uses the above decompositions to prove the sufficiency part of Theorem \ref{thm:supcrit} and Theorem \ref{thm:critsec} (Proposition \ref{prop:suff}), and introduces conditions which are equivalent to Theorem \ref{thm:supcrit} and Theorem \ref{thm:critsec} (Lemma \ref{lem:suff}). Finally, Section \ref{subsec:cl} introduces the  technical tool of \emph{closure} where relations are added to the poset $\abar$ based on linear extensions.

\subsection{Decompositions of linear extensions}
\label{subsec:decomp}
Fix $\s\in\{-,=,+\}$ and $\star,	\ast\in\{\incomp,\sim\}$. Recall Definition \ref{def:neighbor} and let
\[
\mathcal N_{\s}(\star,\ast):=\{\sigma\in\mathcal N_{\s}:~\text{lower companion  $\star ~ x_{\LL}$ and upper companion  $\ast ~x_{\LL}$}\}.
\]
It is clear that we have the disjoint decompositions, 
\begin{align}
\label{eq:decomps_sum}
|\mathcal N_-|&=|\mathcal N_-(\incomp,\incomp)|+|\mathcal N_-(\incomp,\sim)|+|\mathcal N_-(\sim,\incomp)|+|\mathcal N_-(\sim,\sim)|,\nonumber\\
|\mathcal N_=|&=|\mathcal N_=(\incomp,\incomp)|+|\mathcal N_=(\incomp,\sim)|+|\mathcal N_=(\sim,\incomp)|+|\mathcal N_=(\sim,\sim)|,\\
|\mathcal N_+|&=|\mathcal N_+(\incomp,\incomp)|+ |\mathcal N_+(\incomp,\sim)|+|\mathcal N_+(\sim,\incomp)|+|\mathcal N_+(\sim,\sim)|\nonumber.
\end{align}
The next result shows that, regardless of whether equality holds in \eqref{eq:eqmathcalN}, certain relations between terms in \eqref{eq:decomps_sum} always hold.
\begin{lemma}
\label{lem:decompsimpl}
For any poset $\abar$ the following hold:
\begin{enumerate}[(i)]
\item $|\mathcal N_-(\incomp,\incomp)|=|\mathcal N_=(\incomp,\incomp)|=| \mathcal N_+(\incomp,\incomp)|$.
\item $|\mathcal N_-(\incomp,\sim)|=|\mathcal N_=(\incomp,\sim)|$.
\item $|\mathcal N_=(\sim,\incomp)|=|\mathcal N_+(\sim,\incomp)|$.
\item $|\mathcal N_-(\sim,\incomp)|\le |\mathcal N_-(\incomp,\sim)|$.
\item $|\mathcal N_+(\incomp,\sim)|\le |\mathcal N_+(\sim,\incomp)|$.
\end{enumerate}
\end{lemma}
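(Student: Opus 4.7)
The strategy for all five parts is to construct explicit bijections (for (i)--(iii)) or injections (for (iv), (v)) by swapping adjacent elements in a linear extension. The fundamental tool is the elementary observation that if $\sigma$ is a linear extension of $\abar$ and $u,v\in\abar$ satisfy $\sigma(v)=\sigma(u)+1$, then transposing the values $\sigma(u)$ and $\sigma(v)$ produces another linear extension if and only if $u\incomp v$. I will apply this to the three-position block $\{i_\LL-1,i_\LL,i_\LL+1\}$ that holds $x_\LL$ and its two companions.

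For (i), any $\sigma\in\mathcal N_\s(\incomp,\incomp)$ has $x_\LL$ and both companions pairwise in the three consecutive slots, with both companions incomparable to $x_\LL$. Define $\mathcal N_-(\incomp,\incomp)\to\mathcal N_=(\incomp,\incomp)\to\mathcal N_+(\incomp,\incomp)$ by successively swapping $x_\LL$ with the element immediately above it. Each swap involves $x_\LL$ and an incomparable companion, hence produces a linear extension; the set of elements occupying $\{i_\LL-1,i_\LL,i_\LL+1\}$ is unchanged, so both companions of the new position of $x_\LL$ remain incomparable to $x_\LL$. The inverse swaps give the reverse maps, so these are bijections.

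For (ii) and (iii), define maps by swapping $x_\LL$ with its \emph{incomparable} companion. In (ii), for $\sigma\in\mathcal N_-(\incomp,\sim)$ swap $x_\LL$ at $i_\LL-1$ with the lower companion at $i_\LL$: the result lies in $\mathcal N_=$; the (new) lower companion is the element just swapped, still incomparable, while the upper companion is unchanged and remains comparable, landing in $\mathcal N_=(\incomp,\sim)$. The swap is reversible, giving a bijection. Part (iii) is the mirror image, swapping $x_\LL$ with the upper companion to exchange $\mathcal N_=(\sim,\incomp)$ and $\mathcal N_+(\sim,\incomp)$.

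For (iv), take $\sigma\in\mathcal N_-(\sim,\incomp)$, so $x_\LL$ sits at $i_\LL-1$, the lower companion $a$ at $i_\LL$ satisfies $a\sim x_\LL$, and the upper companion $b$ at $i_\LL+1$ satisfies $b\incomp x_\LL$. Position forces $a>x_\LL$, and if $a\sim b$ then again position forces $a<b$; by transitivity $x_\LL<b$, contradicting $b\incomp x_\LL$. Therefore $a\incomp b$, so swapping the adjacent entries $a$ and $b$ yields a linear extension in $\mathcal N_-$ whose lower companion is $b$ (incomparable) and upper companion is $a$ (comparable), i.e.\ an element of $\mathcal N_-(\incomp,\sim)$. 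This map is a transposition, hence injective, proving $|\mathcal N_-(\sim,\incomp)|\le|\mathcal N_-(\incomp,\sim)|$. Part (v) is proved by the symmetric argument at the top of the block, swapping the two companions in any $\sigma\in\mathcal N_+(\incomp,\sim)$.

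The one point that requires genuine care, rather than mere bookkeeping, is the transitivity step used in (iv) (and its mirror in (v)) to deduce $a\incomp b$ from $a\sim x_\LL$ and $b\incomp x_\LL$; all other verifications are immediate from the incomparability of the elements being swapped. No other obstacles arise, since none of the swaps disturb the positions of the remaining chain elements $x_j$ for $j\ne\LL$.
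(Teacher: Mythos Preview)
Your proof is correct and follows essentially the same approach as the paper: adjacent transpositions within the three-slot block $\{i_\LL-1,i_\LL,i_\LL+1\}$, with the same transitivity argument in (iv)--(v) to deduce that the two companions are incomparable before swapping them.
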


\begin{proof}
$~$

\begin{enumerate}[(i)]
\item We show $|\mathcal N_-(\incomp,\incomp)|=|\mathcal N_=(\incomp,\incomp)|$; the argument for $|\mathcal N_=(\incomp,\incomp)|=| \mathcal N_+(\incomp,\incomp)|$ is analogous. Let $\pi_{i_{\LL-1},i_\LL}:[n]\to [n]$ be the permutation that swaps the positions of $i_{\LL-1}$ and $i_{\LL}$. We claim that defining $\pi_{i_{\LL-1},i_{\LL}}(\sigma):=\pi_{i_{\LL-1},i_{\LL}}\circ \sigma$, for $\sigma \in \mathcal N_-(\incomp,\incomp)$, yields a bijection $\pi_{i_{\LL-1},i_{\LL}}:\mathcal N_-(\incomp,\incomp)\to\mathcal N_=(\incomp,\incomp)$. That $\pi_{i_{\LL-1},i_{\LL}}(\mathcal N_-(\incomp,\incomp))\subseteq\mathcal N_=(\incomp,\incomp)$ follows from the fact that $x_{\LL}$ is incomparable to the element placed in $i_{\LL}$ so their positions can be swapped. Hence, to conclude that $\pi_{i_{\LL-1},i_{\LL}}$ is a bijection it suffices to show that $\pi_{i_{\LL-1},i_{\LL}}$ is invertible and that its inverse $\pi_{i_{\LL-1},i_{\LL}}^{-1}$ satisfies $\pi_{i_{\LL-1},i_{\LL}}^{-1}(\mathcal N_=(\incomp,\incomp))\subseteq \mathcal N_-(\incomp,\incomp)$. The inverse $\pi_{i_{\LL-1},i_{\LL}}^{-1}$ exists since $\pi_{i_{\LL-1},i_{\LL}}^{-1}=\pi_{i_{{\LL}-1},i_{\LL}}$. That  $\pi_{i_{\LL-1},i_{\LL}}(\mathcal N_=(\incomp,\incomp))\subseteq \mathcal N_-(\incomp,\incomp)$ is clear. 
\item Analogous argument to (i).
\item Analogous argument to (i).
\item  Let $\pi_{i_{\LL},i_\LL+1}:[n]\to [n]$ be the permutation that swaps the positions of $i_{\LL}$ and $i_{\LL+1}$. We claim that defining $\pi_{i_{\LL},i_{\LL+1}}(\sigma):=\pi_{i_{\LL},i_{\LL+1}}\circ \sigma$, for $\sigma \in \mathcal N_-(\sim,\incomp)$, yields an injection $\pi_{i_{\LL},i_{\LL+1}}:\mathcal N_-(\sim,\incomp)\to\mathcal N_-(\incomp,\sim)$. Indeed, fix $\sigma \in \mathcal N_-(\sim,\incomp)$, so $\sigma(x_{\LL})=i_{\LL}-1$, and let $y_u:=\sigma^{-1}(i_{\LL}),y_v:=\sigma^{-1}(i_{\LL}+1)$ so that, by the definition of $ \mathcal N_-(\sim,\incomp)$, $x_{\LL}<y_u$ and $y_v\incomp x_{\LL}$. We cannot have $y_u<y_v$ since that would imply $x_{\LL}<y_u<y_v$ contradicting $y_v\incomp x_{\LL}$. Since $y_u=\sigma^{-1}(i_{\LL}),y_v=\sigma^{-1}(i_{\LL}+1)$, we cannot have $y_v<y_u$ so we must have $y_u\incomp y_v$. It follows that swapping the positions of $y_u$ and $y_v$ in $\sigma$ yields the linear extension $\pi_{i_{\LL},i_{\LL+1}}(\sigma)\in \mathcal N_-(\incomp,\sim)$.
\item  Analogous argument to (iv).
\end{enumerate}
\end{proof}

\subsection{Sufficiency}
\label{subsec:suff}
The decompositions given in Section \ref{subsec:decomp} help us prove the sufficiency of the conditions of Theorem \ref{thm:supcrit}(iii)  and Theorem \ref{thm:crit}(iii). 
\begin{proposition}{\textnormal{(\textbf{Sufficient conditions})}}
\label{prop:suff}
$~$
\begin{enumerate}[(a)]
\item  Theorem \ref{thm:supcrit}(ii) $\Longrightarrow$ Theorem \ref{thm:supcrit}(i) and Theorem \ref{thm:crit}(ii) $\Longrightarrow$ Theorem \ref{thm:crit}(i).
\item Theorem \ref{thm:crit}(iii) $\Longrightarrow$ Theorem \ref{thm:crit}(ii).
\item Theorem \ref{thm:supcrit}(iii) $\Longrightarrow$ Theorem \ref{thm:crit}(iii) $\Longrightarrow$ Theorem \ref{thm:supcrit}(ii).
\end{enumerate}
\end{proposition}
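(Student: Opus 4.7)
All three parts follow more or less directly from the decomposition \eqref{eq:decomps_sum} together with Lemma \ref{lem:decompsimpl}. The plan is to unpack each implication one at a time.

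For part (a), both implications are tautological: if $|\mathcal N_-|=|\mathcal N_=|=|\mathcal N_+|$, then squaring the middle quantity gives the product of the outer two. Nothing beyond this needs to be said.

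For part (b), the plan is to simply read off Theorem \ref{thm:crit}(iii) term by term in \eqref{eq:decomps_sum}. The hypothesis that every linear extension in $\mathcal N_- \cup \mathcal N_= \cup \mathcal N_+$ has at least one companion incomparable to $x_\LL$ says that
\[
|\mathcal N_\circ(\sim,\sim)|=0 \qquad \forall\,\circ\in\{-,=,+\}.
\]
The numerical conditions on $\mathrm N_1$ and $\mathrm N_2$ identify the remaining three terms in each row of \eqref{eq:decomps_sum}: the ``only lower incomparable'' count $|\mathcal N_\circ(\incomp,\sim)|$ equals $\mathrm N_1$, the ``only upper incomparable'' count $|\mathcal N_\circ(\sim,\incomp)|$ equals $\mathrm N_1$, and $|\mathcal N_\circ(\incomp,\incomp)| = \mathrm N_2$, all independently of $\circ$. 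Summing gives $|\mathcal N_\circ|=2\mathrm N_1+\mathrm N_2$ for every $\circ$, which is precisely Theorem \ref{thm:crit}(ii).

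For part (c), the second implication Theorem \ref{thm:crit}(iii) $\Rightarrow$ Theorem \ref{thm:supcrit}(ii) is just part (b), since Theorems \ref{thm:supcrit}(ii) and \ref{thm:crit}(ii) state the same identity. For the first implication, Theorem \ref{thm:supcrit}(iii) asserts that for every $\sigma$ in any $\mathcal N_\circ$ both companions of $x_\LL$ are incomparable to $x_\LL$, which forces
\[
|\mathcal N_\circ(\sim,\incomp)|=|\mathcal N_\circ(\incomp,\sim)|=|\mathcal N_\circ(\sim,\sim)|=0 \qquad \forall\,\circ\in\{-,=,+\}.
\]
By Lemma \ref{lem:decompsimpl}(i), the remaining counts $|\mathcal N_\circ(\incomp,\incomp)|$ take a common value, call it $\mathrm N_2$. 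Setting $\mathrm N_1:=0$ then verifies every clause of Theorem \ref{thm:crit}(iii). There is no genuine obstacle here; the whole proposition is a bookkeeping exercise on the decomposition \eqref{eq:decomps_sum}, with Lemma \ref{lem:decompsimpl}(i) providing the one nontrivial equality needed.
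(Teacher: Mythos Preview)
Your proof is correct and follows the same approach as the paper: part (a) is trivial, part (b) is a direct bookkeeping on the decomposition \eqref{eq:decomps_sum}, and part (c) reduces to (b) together with the observation that Theorem \ref{thm:supcrit}(iii) is the special case $\mathrm N_1=0$ of Theorem \ref{thm:crit}(iii). The only difference is that you make explicit the use of Lemma \ref{lem:decompsimpl}(i) in the first implication of (c), which the paper absorbs into the word ``immediate''; since that lemma holds unconditionally for any poset, this is purely expository.
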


\begin{proof}
$~$

\begin{enumerate}[(a)]
\item Immediate.\\

\item The conditions in Theorem \ref{thm:crit}(iii) read
\begin{align*}
&|\mathcal N_-(\sim,\sim)|=|\mathcal N_=(\sim,\sim)|=| \mathcal N_+(\sim,\sim)|=0,\\
&|\mathcal N_-(\incomp,\sim)|=|\mathcal N_-(\sim,\incomp)|=\mathrm N_1,\\
&|\mathcal N_=(\incomp,\sim)|=|\mathcal N_=(\sim,\incomp)|=\mathrm N_1,\\
&|\mathcal N_+(\incomp,\sim)|=|\mathcal N_+(\sim,\incomp)|=\mathrm N_1,\\
&|\mathcal N_-(\incomp,\incomp)|=|\mathcal N_=(\incomp,\incomp)|=| \mathcal N_+(\incomp,\incomp)|=\mathrm N_2.
\end{align*}
Hence, \eqref{eq:decomps_sum} reads
\begin{align*}
&|\mathcal N_-|=\mathrm N_2+\mathrm N_1+\mathrm N_1+0=\mathrm N_2+2\mathrm N_1,\\
&|\mathcal N_=|=\mathrm N_2+\mathrm N_1+\mathrm N_1+0=\mathrm N_2+2\mathrm N_1,\\
&|\mathcal N_+|=\mathrm N_2+\mathrm N_1+\mathrm N_1+0=\mathrm N_2+2\mathrm N_1,
\end{align*}
which is the statement in Theorem \ref{thm:crit}(ii).\\

\item The first implication is immediate and the second implication follows from (b).
\end{enumerate}
\end{proof}

In order to prove Theorem \ref{thm:supcrit} and Theorem \ref{thm:crit} it remains to show that Theorem \ref{thm:supcrit}(i) $\Longrightarrow$ Theorem \ref{thm:supcrit}(iii) and  Theorem \ref{thm:crit}(i) $\Longrightarrow$ Theorem \ref{thm:crit}(iii). To this end,  the following conditions will suffice.

\begin{lemma}
\label{lem:suff}
$~$

\begin{enumerate}[(a)]
\item The conditions in Theorem \ref{thm:supcrit}(iii) hold if, and only if, 
\[
|\mathcal N_=(\incomp,\sim)|=|\mathcal N_=(\sim,\incomp)|=|\mathcal N_=(\sim,\sim)|=0.
\]
\item Suppose $|\mathcal N_=|^2=|\mathcal N_-||\mathcal N_+|$. The conditions in Theorem \ref{thm:crit}(iii) hold if, and only if, 
\[
|\mathcal N_-(\sim,\sim)|=|\mathcal N_+(\sim,\sim)|=0.
\]
\end{enumerate}
\end{lemma}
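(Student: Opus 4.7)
The plan is to reduce both equivalences to a short bookkeeping argument combining Lemma~\ref{lem:decompsimpl} with Stanley's inequality \eqref{eq:eqmathcalN} (for (a)) or with its equality case (for (b)). First I would introduce names for the eight quantities appearing in \eqref{eq:decomps_sum}: set
\[
a:=|\mathcal N_{=}(\incomp,\incomp)|,\;\; b:=|\mathcal N_{=}(\incomp,\sim)|,\;\; c:=|\mathcal N_{=}(\sim,\incomp)|,\;\; q:=|\mathcal N_{=}(\sim,\sim)|,
\]
and $p:=|\mathcal N_{-}(\sim,\sim)|$, $r:=|\mathcal N_{+}(\sim,\sim)|$, $d:=|\mathcal N_{-}(\sim,\incomp)|$, $e:=|\mathcal N_{+}(\incomp,\sim)|$. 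Lemma~\ref{lem:decompsimpl} then identifies $|\mathcal N_{\circ}(\incomp,\incomp)|=a$ for every $\circ\in\{-,=,+\}$, $|\mathcal N_{-}(\incomp,\sim)|=b$, $|\mathcal N_{+}(\sim,\incomp)|=c$, and $d\le b$, $e\le c$, so that
\[
|\mathcal N_-|=a+b+d+p,\qquad |\mathcal N_=|=a+b+c+q,\qquad |\mathcal N_+|=a+c+e+r.
\]

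The forward directions of both (a) and (b) are immediate from Definition~\ref{def:neighbor}: Theorem~\ref{thm:supcrit}(iii) empties every subset $\mathcal N_{\circ}(\star,\ast)$ with $\star$ or $\ast$ equal to $\sim$ (in particular $b=c=q=0$), while Theorem~\ref{thm:crit}(iii) explicitly includes $p=q=r=0$. For the backward direction of (a), feeding $b=c=q=0$ into Lemma~\ref{lem:decompsimpl}(iv)-(v) forces $d=e=0$, and \eqref{eq:eqmathcalN} becomes $a^2\ge (a+p)(a+r)$, i.e., $a(p+r)+pr\le 0$; under the standing assumption $|\mathcal N_{=}|=a>0$ of the section this forces $p=r=0$, whence every ``comparable'' quantity vanishes and Theorem~\ref{thm:supcrit}(iii) follows.

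For the backward direction of (b) I would pair $p=r=0$ with the equality $|\mathcal N_=|^2=|\mathcal N_-||\mathcal N_+|$ via AM--GM: $2|\mathcal N_=|=2\sqrt{|\mathcal N_-||\mathcal N_+|}\le |\mathcal N_-|+|\mathcal N_+|$ simplifies to $b+c+2q\le d+e$. Since Lemma~\ref{lem:decompsimpl}(iv)-(v) supplies the reverse bound $d+e\le b+c$, equality must hold throughout, pinning $q=0$, $d=b$, $e=c$. Substituting back, the assumed equality reduces to $(a+b+c)^2=(a+2b)(a+2c)$, i.e., $(b-c)^2=0$, so $b=c$; setting $\mathrm N_1:=b=c$ and $\mathrm N_2:=a$ exhibits all of the balance conditions of Theorem~\ref{thm:crit}(iii).

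\noindent\textbf{Main obstacle.} No deep combinatorial step is required; the entire lemma is bookkeeping. The one subtle point is that in (b) a single AM--GM application pins down $q$, $d$, $e$ but \emph{a priori} leaves $b$ and $c$ unequal, and one must extract a second piece of information out of the full equality $|\mathcal N_=|^2=|\mathcal N_-||\mathcal N_+|$ via the perfect-square cancellation $(a+b+c)^2-(a+2b)(a+2c)=(b-c)^2$ to conclude $b=c$.
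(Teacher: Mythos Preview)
Your proof is correct and follows essentially the same bookkeeping approach as the paper. The only organizational difference in (b) is that you first apply AM--GM to pin down $q=0$, $d=b$, $e=c$ and then use the full equality to get $b=c$, whereas the paper bundles everything into the single chain $(N'+2b)(N'+2c)=(N'+b+c)^2-(b-c)^2\le |\mathcal N_=|^2=|\mathcal N_-||\mathcal N_+|\le (N'+2b)(N'+2c)$; the content is identical. One minor phrasing point: $|\mathcal N_=|>0$ is not literally a ``standing assumption of the section'' at this stage of the paper---it enters because Theorem~\ref{thm:supcrit}(iii) is stated for supercritical posets, and supercriticality (Definition~\ref{def:posetcrit}) presupposes $|\mathcal N_=|>0$.
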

\begin{proof}
We start with proof of (a).  The ``only if" part is clear. To prove the ``if" part, assume that 
\[
|\mathcal N_=(\incomp,\sim)|=|\mathcal N_=(\sim,\incomp)|=|\mathcal N_=(\sim,\sim)|=0,
\]
which by  \eqref{eq:decomps_sum} implies
\[
|\mathcal N_=|=|\mathcal N_=(\incomp,\incomp)|.
\]
On the other hand, Lemma \ref{lem:decompsimpl}(i) yields
\[
N':=|\mathcal N_-(\incomp,\incomp)|=|\mathcal N_=(\incomp,\incomp)|=|\mathcal N_+(\incomp,\incomp)|,
\]
so \eqref{eq:decomps_sum} reads
\begin{align*}
|\mathcal N_-|&=N'+|\mathcal N_-(\incomp,\sim)|+|\mathcal N_-(\sim,\incomp)|+|\mathcal N_-(\sim,\sim)|,\\
|\mathcal N_=|&=N',\\
|\mathcal N_+|&=N'+ |\mathcal N_+(\incomp,\sim)|+|\mathcal N_+(\sim,\incomp)|+|\mathcal N_+(\sim,\sim)|.
\end{align*}
Stanley's inequality \eqref{eq:eqmathcalN},
\[
|\mathcal N_{=}|^2\ge |\mathcal N_{-}||\mathcal N_{+}|,
\]
implies that all the terms other than $N'$ must vanish, which completes the proof.\\

We now prove (b). The `only if" part is clear. To prove the ``if" part, assume that
\[
|\mathcal N_-(\sim,\sim)|=| \mathcal N_+(\sim,\sim)|=0.
\]
Using Lemma \ref{lem:decompsimpl}(i-iii), set
\begin{align*}
&N':=|\mathcal N_-(\incomp,\incomp)|=|\mathcal N_=(\incomp,\incomp)|=| \mathcal N_+(\incomp,\incomp)|,\\
&N_a':=|\mathcal N_-(\incomp,\sim)|=|\mathcal N_=(\incomp,\sim)|,\\
&N_b':=|\mathcal N_=(\sim,\incomp)|=|\mathcal N_+(\sim,\incomp)|,
\end{align*}
so \eqref{eq:decomps_sum} reads
\begin{align*}
|\mathcal N_-|&=N'+N_a'+|\mathcal N_-(\sim,\incomp)|,\\
|\mathcal N_=|&=N'+N_a'+N_b'+|\mathcal N_=(\sim,\sim)|,\\
|\mathcal N_+|&=N'+N_b'+ |\mathcal N_+(\incomp,\sim)|.
\end{align*}
By Lemma \ref{lem:decompsimpl}(iv-v),
\[
|\mathcal N_-(\sim,\incomp)|\le N_a'\quad\text{and}\quad  |\mathcal N_+(\incomp,\sim)|\le N_b'
\]
so
\begin{align*}
|\mathcal N_-|&=N'+N'_a+|\mathcal N_-(\sim,\incomp)|\le N'+2N_a',\\
|\mathcal N_=|&=N'+N'_a+N'_b+|\mathcal N_=(\sim,\sim)|\ge N'+N_a'+N_b',\\
|\mathcal N_+|&=N'+N_b'+ |\mathcal N_+(\incomp,\sim)|\le N'+2N_b'.
\end{align*}
Hence,
\begin{align*}
&(N'+2N_a')(N'+2N_b')=(N'+N_a'+N_b')^2-(N_a'-N_b')^2\le (N'+N_a'+N_b')^2\\
&\le |\mathcal N_=|^2=|\mathcal N_-||\mathcal N_+|\le  (N'+2N_a')(N'+2N_b').
\end{align*}
It follows that all of the above inequalities are in fact equalities. In particular,
\begin{align}
&|\mathcal N_=(\sim,\sim)|=0,\label{N-=0}\\
&N_a'=N_b',\label{Na'=Nb'}\\
&|\mathcal N_-(\sim,\incomp)|=N_a',\label{Na'=}\\
&|\mathcal N_+(\incomp,\sim)|=N_b'.\label{Nb'=}
\end{align}
The identity \eqref{N-=0}, together with the assumption  $|\mathcal N_-(\sim,\sim)|=|\mathcal N_+(\sim,\sim)|=0$, imply  that every linear extension in $\mathcal N_{\s}$, for any $\s\in\{-,=,+\}$, has either 0 or 1 comparable companions to $x_{\LL}$. It remains to show that there exist  nonnegative numbers $\mathrm N_1, \mathrm N_2$ such that
\begin{align*}
&|\mathcal N_-(\sim,\incomp)|=|\mathcal N_=(\sim,\incomp)|=|\mathcal N_+(\sim,\incomp)|=|\mathcal N_-(\incomp,\sim)|=|\mathcal N_=(\incomp,\sim)|=|\mathcal N_+(\incomp,\sim)|=\mathrm N_1,\\
&|\mathcal N_-(\incomp,\incomp)|=|\mathcal N_=(\incomp,\incomp)|=|\mathcal N_+(\incomp,\incomp)|=\mathrm N_2.
\end{align*}
The first part follows since 
\begin{align*}
&|\mathcal N_-(\sim,\incomp)|\underset{\eqref{Na'=}}{=}N_a':=|\mathcal N_-(\incomp,\sim)|\underset{\text{Lemma \ref{lem:decompsimpl}(ii)}}{=}|\mathcal N_=(\incomp,\sim)|\underset{\eqref{Na'=Nb'}}{=}N_b':=|\mathcal N_=(\sim,\incomp)|\\
&\underset{\text{Lemma \ref{lem:decompsimpl}(iii)}}{=}|\mathcal N_+(\sim,\incomp)|\underset{\eqref{Nb'=}}{=}|\mathcal N_+(\incomp,\sim)|=:\mathrm N_1,
\end{align*}
and the second part follows by Lemma \ref{lem:decompsimpl}(i). 
\end{proof}

We conclude the section with a corollary of the above lemmas, which will be needed for the proof of Theorem \ref{thm:N_=>0intro}. (Note that the assumption in the following result that $\abar$ is critical can be  relaxed to $|\mathcal N_=|>0$, cf. Section \ref{sec:crit}.)

\begin{corollary}
\label{cor:referee}
Let $\abar$ be a critical poset such that  $|\mathcal N_=|^2=|\mathcal N_-||\mathcal N_+|$, and assume that Theorem \ref{thm:supcrit} and Theorem \ref{thm:crit} hold true. Fix $\s\in \{-,=,+\}$ and $\sigma\in \mathcal N_{\circ}(\sim,\incomp)\cup \mathcal N_{\circ}(\incomp,\sim)$. Then, the upper and lower companions are incomparable to each other. 
\end{corollary}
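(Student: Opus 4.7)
The plan is a case analysis on $\s\in\{-,=,+\}$ and on which of the two companions is comparable to $x_\LL$, leveraging the fact that the equality hypothesis together with Theorem \ref{thm:crit} upgrade the injections of Lemma \ref{lem:decompsimpl} to bijections. Indeed, under the assumptions of the corollary, part (iii) of Theorem \ref{thm:crit} yields $|\mathcal N_{\s}(\sim,\incomp)|=|\mathcal N_{\s}(\incomp,\sim)|=\mathrm N_1$ for every $\s\in\{-,=,+\}$, so each injection constructed in parts (ii)--(v) of Lemma \ref{lem:decompsimpl} is necessarily a bijection between finite sets of equal size.

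For the base cases $\sigma\in\mathcal N_{-}(\sim,\incomp)$ and $\sigma\in\mathcal N_{+}(\incomp,\sim)$, the conclusion that the companions $y_u, y_v$ are incomparable is already embedded in the proofs of Lemma \ref{lem:decompsimpl}(iv)--(v). For instance, in the first case, writing $y_u:=\sigma^{-1}(i_\LL)$ and $y_v:=\sigma^{-1}(i_\LL+1)$, one has $y_u>x_\LL$, $y_v\incomp x_\LL$, and $\sigma(y_u)<\sigma(y_v)$; the relation $y_u<y_v$ would give $x_\LL<y_u<y_v$, contradicting $y_v\incomp x_\LL$, while $y_v<y_u$ is ruled out by $\sigma(y_u)<\sigma(y_v)$, so $y_u\incomp y_v$. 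For the ``sibling'' cases $\sigma\in\mathcal N_{-}(\incomp,\sim)$ and $\sigma\in\mathcal N_{+}(\sim,\incomp)$, I would invoke surjectivity of the now-bijective maps $\pi_{i_\LL,i_\LL+1}$ and $\pi_{i_\LL-1,i_\LL}$ of Lemma \ref{lem:decompsimpl}(iv)--(v): every such $\sigma$ equals $\pi(\sigma^{*})$ for some $\sigma^{*}$ in the corresponding base case, and since the transposition permutes only the two positions occupied by the companions of $x_\LL$, the pair of companion elements of $x_\LL$ is preserved (only the ``lower'' and ``upper'' labels are interchanged). The desired incomparability for $\sigma$ then follows from that for $\sigma^{*}$.

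For the remaining cases $\sigma\in\mathcal N_{=}(\incomp,\sim)$ and $\sigma\in\mathcal N_{=}(\sim,\incomp)$, I would chain in one further bijection. The bijections of Lemma \ref{lem:decompsimpl}(ii)--(iii) transport $\mathcal N_{=}(\incomp,\sim)$ into $\mathcal N_{-}(\incomp,\sim)$ and $\mathcal N_{=}(\sim,\incomp)$ into $\mathcal N_{+}(\sim,\incomp)$ by swapping $x_\LL$ with its incomparable companion; these transpositions again leave the pair of companion elements of $x_\LL$ fixed as elements of $\aint$. Composing with the previous step completes the argument. The main point to watch is that, across the chain of two transpositions, one can still identify the companions of $\sigma$ with the two companions of the final image; this is a routine position-tracking exercise and I do not anticipate a conceptual obstacle.
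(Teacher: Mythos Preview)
Your proposal is correct and follows essentially the same approach as the paper's proof: establish the base cases $\mathcal N_-(\sim,\incomp)$ and $\mathcal N_+(\incomp,\sim)$ by the transitivity argument already present in Lemma~\ref{lem:decompsimpl}(iv)--(v), then use that the equality hypothesis forces the injections of Lemma~\ref{lem:decompsimpl} to become bijections (the paper cites the proof of Lemma~\ref{lem:suff} for $|\mathcal N_-(\incomp,\sim)|=|\mathcal N_-(\sim,\incomp)|$, you invoke Theorem~\ref{thm:crit}(iii) directly---both are valid under the stated hypotheses), and transport the incomparability through these companion-preserving transpositions to the remaining cases, including the two-step chain for $\s$ equal to $=$.
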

\begin{proof}
We start by establishing the claim in the case where $\s$ is equal to $-$. Fix $\sigma\in \mathcal N_{-}(\sim,\incomp)$.  If the  upper and lower companions are comparable to each other, then, by transitivity, $\sigma\in \mathcal N_{-}(\sim,\sim)$, which is a contradiction. On the other hand, the proof of Lemma \ref{lem:suff} shows that $|\mathcal N_{-}(\incomp,\sim)|=|\mathcal N_{-}(\sim,\incomp)|$. Hence, the map $\pi_{i_{\LL},i_\LL+1}:\mathcal N_{-}(\sim,\incomp)\to \mathcal N_{-}(\incomp,\sim)$ defined in the proof of Lemma \ref{lem:decompsimpl}(iv) is a bijection. It follows that the  upper and lower companions in any $\sigma\in \mathcal N_{-}(\incomp,\sim)$ cannot be comparable to each other, or else they will also be comparable to each other in $\pi_{i_{\LL},i_\LL+1}(\sigma)\in  \mathcal N_{-}(\sim,\incomp)$, which is a contradiction. 

Analogous argument works when $\s$ is equal to $+$. In the case when $\s$ is equal to $=$, we note that Lemma \ref {lem:decompsimpl}(ii-iii) gives bijections $\mathcal N_{-}(\incomp,\sim)\to \mathcal N_{=}(\incomp,\sim)$ and $\mathcal N_{+}(\sim, \incomp)\to \mathcal N_{=}(\sim, \incomp)$, so we can argue as above to conclude that  the  upper and lower companions are incomparable. 
\end{proof}

\subsection{Closure}
\label{subsec:cl}
Since we are interested in the extremals of \eqref{eq:eqmathcalN}, it is beneficial to add relations to $\abar$ which are compatible with $\mathcal N_{-},\mathcal N_{=},\mathcal N_{+}$, while leaving these sets invariant.
\begin{definition}
\label{def:clposet}
Denote by $\textnormal{Cl}(\abar)$ (the \emph{closure} of $\abar$) the poset with  the same elements as $\abar$ and with the partial order on $\textnormal{Cl}(\abar)$ given by
\[
w<z\quad\textnormal{if and only if}\quad \sigma(w)<\sigma(z)~\forall ~\sigma\in \mathcal N_-\cup \mathcal N_=\cup \mathcal N_+.
\]
Let 
\[
\mathcal N^{\text{cl}}:=\{\textnormal{bijections }\sigma:\textnormal{Cl}(\abar)\to [n]: w\le z\Rightarrow \sigma(w)\le \sigma(z)~\forall ~w,z\in\textnormal{Cl}(\abar)\},
\]
with the analogous $\mathcal N_{\s}^{\text{cl}}(\star,\ast)$ for $\s \in\{-,=,+\}$ and $\star,\ast\in\{\incomp,\sim\}$.
\end{definition}

We first need to check that Definition \ref{def:clposet} is well-defined. Indeed, if $z_1,z_2,z_3\in  \textnormal{Cl}(\abar)$ are such that $z_1<z_2$ and $z_2<z_3$ in $\textnormal{Cl}(\abar)$, then, by definition, $\sigma(z_1)<\sigma(z_2)$ and $\sigma(z_2)<\sigma(z_3)$ for every $\sigma\in \bigcup_{\s \in\{-,=,+\}} \mathcal N_{\s}$, so $\sigma(z_1)<\sigma(z_2)<\sigma(z_3)$. It follows that $z_1<z_3$ in $\textnormal{Cl}(\abar)$.

Let us now show that the relations in $\textnormal{Cl}(\abar)$ are compatible with the relations in $\abar$.

\begin{lemma}
\label{lem:posetcl}
If $z_1<z_2$ in $\abar$ then $z_1<z_2$ in $\textnormal{Cl}(\abar)$. If $z_1\incomp z_2$ in $\textnormal{Cl}(\abar)$ then $z_1\incomp z_2$ in $\abar$. 
\end{lemma}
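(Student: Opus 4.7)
The plan is to derive both statements directly from the definitions, using only the basic observation that $\mathcal N_-\cup\mathcal N_=\cup\mathcal N_+\subseteq \mathcal N$. No additional structural theory is required: this lemma is really a sanity check that the closure operation extends the original order.

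For the first statement, suppose $z_1<z_2$ in $\abar$. By the definition of a linear extension, every $\sigma\in\mathcal N$ satisfies $\sigma(z_1)<\sigma(z_2)$, and in particular this holds for every $\sigma\in\mathcal N_-\cup\mathcal N_=\cup\mathcal N_+$. By the defining condition of $\textnormal{Cl}(\abar)$ in Definition \ref{def:clposet}, this is precisely what is needed to conclude $z_1<z_2$ in $\textnormal{Cl}(\abar)$.

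For the second statement I would argue by contrapositive: if $z_1\sim z_2$ in $\abar$ then, without loss of generality, $z_1<z_2$ in $\abar$, and the first part of the lemma then gives $z_1<z_2$ in $\textnormal{Cl}(\abar)$, so $z_1\sim z_2$ in $\textnormal{Cl}(\abar)$. Equivalently, if $z_1\incomp z_2$ in $\textnormal{Cl}(\abar)$, then neither $z_1<z_2$ nor $z_2<z_1$ can hold in $\abar$, whence $z_1\incomp z_2$ in $\abar$.

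There is no real obstacle here; the ``hard'' direction, namely that the closure may contain strictly more relations than $\abar$, is precisely what is \emph{not} ruled out. The content of the lemma is only that the closure operation does not destroy any existing order relation, which is immediate from the inclusion $\mathcal N_-\cup\mathcal N_=\cup\mathcal N_+\subseteq \mathcal N$. The substantive work with closures will come later, when one needs to identify which \emph{new} relations are forced by the hypothesis that equality holds in Stanley's inequality.
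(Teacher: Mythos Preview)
Your proof is correct and follows essentially the same argument as the paper: derive the first statement directly from the definition of $\textnormal{Cl}(\abar)$ via the inclusion $\mathcal N_-\cup\mathcal N_=\cup\mathcal N_+\subseteq\mathcal N$, and obtain the second by contraposition. Your write-up is slightly more careful than the paper's in handling the contrapositive (explicitly noting that comparability reduces, without loss of generality, to $z_1<z_2$), but the content is the same.
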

\begin{proof}
If $z_1<z_2$ in $\abar$, then $\sigma(z_1)<\sigma(z_2)$ for every $\sigma\in \bigcup_{\s \in\{-,=,+\}} \mathcal N_{\s}$, so $z_1<z_2$ in $\textnormal{Cl}(\abar)$. The contrapositive of this statement is that if $z_1\incomp z_2$ in $\textnormal{Cl}(\abar)$ then $z_1\incomp z_2$ in $\abar$. 
\end{proof}

While the closure operation is compatible with the relations in $\abar$, it can introduce new relations as the following example demonstrates.
\begin{example}
\label{ex:cl}
 Let $\abar=\{x_1,x_2,y_1,y_2,y_3\}$, so $k=2$ and $n=5$, and suppose that the only relations are $x_1<x_2$ and $y_1<x_1$. Let $i_1=2,i_2=4$ and $l=2$, and note that $i_{\LL-1}+1=i_1+1=3<4=i_2=i_{\LL}< 5=(n+1)-1=i_{\LL+1}-1$. Let us show that, in $\textnormal{Cl}(\abar)$, $x_1<y_2$ and $x_1<y_3$, relations which do not hold in $\abar$. Indeed, take any $\sigma\in \mathcal N_-\cup \mathcal N_=\cup \mathcal N_+$ and note that $\sigma(x_1)=i_1=2$ so, since $y_1<x_1$, we must have $\sigma(y_1)=1$. Thus, $\sigma(y_2),\sigma(y_3)>2$, and hence,  in $\textnormal{Cl}(\abar)$, $x_1<y_2$ and $x_1<y_3$. See Figure \ref{fig:excl}.
 
 \begin{figure}
\begin{tikzpicture}[scale=1]
\begin{scope}
  \node (x2) at (0,0) {$\color{red}{x_2}$};
  \node (x1) at (0,-1) {$\color{red}{x_1}$};
  \node (y3) at (2,-2) {$\color{blue}{y_3}$};
  \node (y2) at (1,-2) {$\color{blue}{y_2}$};
  \node (y1) at (0,-2) {$\color{blue}{y_1}$};
   \draw[->,thick] (x2) -- (x1);
    \draw[->,thick] (x1) -- (y1);
  \node at (1,-2.7){$\abar$};
\end{scope}

\begin{scope}[xshift=5cm]
  \node (x2) at (0,0) {$\color{red}{x_2}$};
  \node (x1) at (0,-1) {$\color{red}{x_1}$};
  \node (y3) at (1,0) {$\color{blue}{y_3}$};
  \node (y2) at (-1,0) {$\color{blue}{y_2}$};
  \node (y1) at (0,-2) {$\color{blue}{y_1}$};
   \draw[->,thick] (x2) -- (x1);
    \draw[->,thick] (x1) -- (y1);
  \draw[->,dashed,thick] (y2) -- (x1);
   \draw[->,dashed,thick] (y3) -- (x1);
\node at (0,-2.7){$\textnormal{Cl}(\abar)$};
\end{scope}

\begin{scope}[xshift=6cm, yshift=-0.3cm]
  \node at (5,0) {$\mathcal N_- =\{{\color{blue}{y_1}}{\color{red}{x_1x_2}}{\color{blue}{y_2y_3}},~ {\color{blue}{y_1}}{\color{red}{x_1x_2}}{\color{blue}{y_3y_2}}\}$};
  \node at (5,-1) {$\mathcal N_=  =\{{\color{blue}{y_1}}{\color{red}{x_1}}{\color{blue}{y_2}}{\color{red}{x_2}}{\color{blue}{y_3}}, ~{\color{blue}{y_1}}{\color{red}{x_1}}{\color{blue}{y_3}}{\color{red}{x_2}}{\color{blue}{y_2}}\}$};
  \node at (5,-2) {$\mathcal N_+ :=\{ {\color{blue}{y_1}}{\color{red}{x_1}}{\color{blue}{y_2y_3}}{\color{red}{x_2}}, ~{\color{blue}{y_1}}{\color{red}{x_1}}{\color{blue}{y_3y_2}}{\color{red}{x_2}}\}$};
 \end{scope}
\end{tikzpicture}

\caption{Hasse diagram (arrows point from smaller to larger elements) of posets in Example \ref{ex:cl}, together with their (identical) sets of linear extensions, showing that new relations can occur under the closure operation.}
\label{fig:excl}
\end{figure}
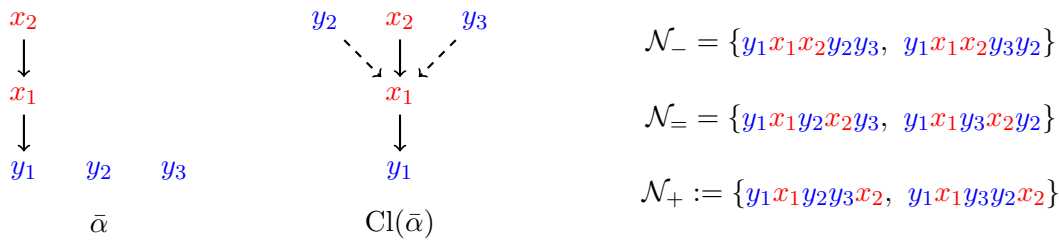 
\end{example}

The next result shows that our basic objects of interest remain more-or-less invariant under the closure operation. To simplify the notation, let $(\textnormal{i})$ (res. $(\textnormal{ii})$) stand for the conditions in Theorem \ref{thm:supcrit}(i) and Theorem \ref{thm:crit}(i) (res. Theorem \ref{thm:supcrit}(ii) and Theorem \ref{thm:crit}(ii)), and let $(\textnormal{iii}_{\text{supcrit}})$ (res. $(\textnormal{iii}_{\text{crit}})$) stand for the conditions in Theorem \ref{thm:supcrit}(iii) (res. Theorem \ref{thm:crit}(iii)). We use an upper script ``cl"  for the corresponding notation when $\textnormal{Cl}(\abar)$, rather than $\abar$, is used.
\begin{proposition}
\label{prop:clposet}
The set $\textnormal{Cl}(\abar)$ is a poset satisfying
\begin{enumerate}[(a)]
\item $\mathcal N_{\s}^{\textnormal{cl}}=\mathcal N_{\s}$ for every $\s\in\{-,=,+\}$.
\item $(\textnormal{i}^{\textnormal{cl}})\Longleftrightarrow (\textnormal{i})$,
\item $(\textnormal{ii}^{\textnormal{cl}})\Longleftrightarrow (\textnormal{ii})$,
\item $(\textnormal{iii}_{\textnormal{supcrit}}^{\textnormal{cl}})\Longrightarrow (\textnormal{iii}_{\textnormal{supcrit}})$ and  $(\textnormal{iii}_{\textnormal{crit}}^{\textnormal{cl}})\Longrightarrow (\textnormal{iii}_{\textnormal{crit}})$.
\end{enumerate}
\end{proposition}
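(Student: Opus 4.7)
The plan is to first verify that $\textnormal{Cl}(\abar)$ is a bona fide poset, and then to establish (a)--(d) in order. Reflexivity of the closure relation is trivial, and transitivity was already noted after Definition \ref{def:clposet}. For antisymmetry, observe that if $z_1<z_2$ and $z_2<z_1$ both held in $\textnormal{Cl}(\abar)$, then any $\sigma\in\mathcal N_-\cup\mathcal N_=\cup\mathcal N_+$ would have to satisfy $\sigma(z_1)<\sigma(z_2)<\sigma(z_1)$, which is absurd (and this union is nonempty in the setting of interest, since otherwise \eqref{eq:eqmathcalN} is vacuous).

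For part (a), I would prove both inclusions. The containment $\mathcal N_{\s}\subseteq \mathcal N_{\s}^{\textnormal{cl}}$ is immediate from the defining property of $\textnormal{Cl}(\abar)$: each $\sigma\in\mathcal N_{\s}$ respects every closure relation by construction, and the positional constraints $\sigma(x_j)=i_j$ are identical in $\abar$ and $\textnormal{Cl}(\abar)$. The reverse containment $\mathcal N_{\s}^{\textnormal{cl}}\subseteq \mathcal N_{\s}$ follows from Lemma \ref{lem:posetcl}: a linear extension of $\textnormal{Cl}(\abar)$ respects all relations of $\textnormal{Cl}(\abar)$, which include all relations of $\abar$. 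Parts (b) and (c) then follow at once from (a), since $|\mathcal N_{\s}^{\textnormal{cl}}|=|\mathcal N_{\s}|$ for each $\s\in\{-,=,+\}$, so both the product equality defining $(\textnormal{i})$ and the triple equality defining $(\textnormal{ii})$ transfer verbatim.

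Part (d) is the main obstacle, and the reason the closure operation is useful rather than merely cosmetic. The subtlety is that closure may genuinely turn an incomparable pair of $\abar$ into a comparable pair of $\textnormal{Cl}(\abar)$ (as in Example \ref{ex:cl}), so the membership of a fixed $\sigma$ in the refined classes $\mathcal N_{\s}(\star,\ast)$ can depend on which poset is used for the comparability test. The one-sided fact I would rely on is the contrapositive half of Lemma \ref{lem:posetcl}: incomparability in $\textnormal{Cl}(\abar)$ forces incomparability in $\abar$. Equivalently, $\mathcal N_{\s}(\sim,\sim)\subseteq \mathcal N_{\s}^{\textnormal{cl}}(\sim,\sim)$, and the event ``at least one companion is incomparable to $x_{\LL}$'' transfers from $\textnormal{Cl}(\abar)$ to $\abar$. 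For the supercritical half, $(\textnormal{iii}_{\textnormal{supcrit}}^{\textnormal{cl}})$ asserts that under every $\sigma\in\bigcup_{\s}\mathcal N_{\s}^{\textnormal{cl}}$ both companions are incomparable to $x_{\LL}$ in $\textnormal{Cl}(\abar)$; by Lemma \ref{lem:posetcl} together with (a) this translates verbatim into $(\textnormal{iii}_{\textnormal{supcrit}})$ for $\abar$.

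For the critical half, the ``at least one incomparable companion'' clause transfers in exactly the same way, being nothing but $\mathcal N_{\s}^{\textnormal{cl}}(\sim,\sim)=\varnothing$ forcing $\mathcal N_{\s}(\sim,\sim)=\varnothing$. The cardinality balance requires one additional routing step: apply Proposition \ref{prop:suff}(b) inside $\textnormal{Cl}(\abar)$ to upgrade $(\textnormal{iii}_{\textnormal{crit}}^{\textnormal{cl}})$ to $(\textnormal{i}^{\textnormal{cl}})$, invoke part (b) of the present proposition to transfer this to $(\textnormal{i})$ in $\abar$, and finally apply Lemma \ref{lem:suff}(b) to $\abar$---now applicable because we have already established $\mathcal N_{\pm}(\sim,\sim)=\varnothing$ in $\abar$---to recover the full balance statement of $(\textnormal{iii}_{\textnormal{crit}})$. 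The anticipated difficulty is precisely the asymmetric behaviour of the mixed patterns $(\incomp,\sim)$ and $(\sim,\incomp)$ under closure, and the plan sidesteps it by routing everything through the complementary pattern $(\sim,\sim)$, for which the inclusion of refined classes is unambiguous.
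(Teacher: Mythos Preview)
Your proposal is correct and follows essentially the same route as the paper: parts (a)--(c) are handled identically, and for (d) you use the same containment $\mathcal N_{\s}(\sim,\sim)\subseteq \mathcal N_{\s}^{\textnormal{cl}}(\sim,\sim)$ together with the routing $(\textnormal{iii}_{\textnormal{crit}}^{\textnormal{cl}})\Rightarrow(\textnormal{i}^{\textnormal{cl}})\Leftrightarrow(\textnormal{i})$ and Lemma~\ref{lem:suff}(b) that the paper uses. The only cosmetic difference is that for the supercritical half of (d) you argue the incomparability transfer directly, whereas the paper first passes through the equivalent reformulation of Lemma~\ref{lem:suff}(a); this is the same argument in slightly different packaging.
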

\begin{proof}
$\textnormal{Cl}(\abar)$ is indeed a poset since  irreflexivity is immediate and transitivity was checked after Definition \ref{def:clposet}.\\
 
\begin{enumerate}[(a)]
\item We show that $\mathcal N_=^{\text{cl}}=\mathcal N_=$; the proof that $\mathcal N_-^{\text{cl}}=\mathcal N_-$ and $\mathcal N_+^{\text{cl}}=\mathcal N_+$ is analogous. We start by observing that since Lemma \ref{lem:posetcl} yields ``$w<z$ in $\abar$ implies  $w<z$ in $\textnormal{Cl}(\abar)$", it follows that ``$\sigma\in \mathcal N_=^{\text{cl}}$ implies $\sigma\in \mathcal N_=$". Conversely, let $\sigma\in\mathcal N_=$ so it suffices to show that $\sigma\in\mathcal N^{\text{cl}}$. The latter holds since if $w<z$ in $\textnormal{Cl}(\abar)$, then it must be, by the definition of $\textnormal{Cl}(\abar)$, that $\sigma(w)<\sigma(z)$, and hence $\sigma\in\mathcal N^{\text{cl}}$.\\

\item Follows trivially from (a).\\

\item Follows trivially from (a).\\

\item  We show that 
\begin{align}
\label{eq:clsupcrit}
|\mathcal N_{=}^{\text{cl}}(\incomp,\sim)|=|\mathcal N_{=}^{\text{cl}}(\sim,\incomp)|=|\mathcal N_{=}^{\text{cl}}(\sim,\sim)|=0\quad\Longrightarrow \quad |\mathcal N_{=}(\incomp,\sim)|=|\mathcal N_{=}(\sim,\incomp)|=|\mathcal N_{=}(\sim,\sim)|=0,
\end{align}
which proves $(\textnormal{iii}_{\textnormal{supcrit}}^{\textnormal{cl}})\Longrightarrow (\textnormal{iii}_{\textnormal{supcrit}})$ by Lemma \ref{lem:suff}(a). To establish \eqref{eq:clsupcrit} we show $|\mathcal N_{=}(\incomp,\sim)|=0$; the proof of $|\mathcal N_{=}(\sim,\incomp)|=0$ and $|\mathcal N_{=}(\sim,\sim)|=0$ is analogous. Suppose $|\mathcal N_{=}(\incomp,\sim)|>0$ so there exists $\sigma\in\mathcal N_=$ such that $\sigma(x_{\LL})=i_{\LL}$ and $x_{\LL}<\sigma^{-1}(i_{\LL}+1)$ in $\abar$. By (a), $\sigma \in\mathcal N_{=}^{\text{cl}}$, and by Lemma \ref{lem:posetcl}, $x_{\LL}<\sigma^{-1}(i_{\LL}+1)$ in $\textnormal{Cl}(\abar)$. It follows that $\sigma\in\mathcal N_{=}^{\text{cl}}(\incomp,\sim)\cup \mathcal N_{=}^{\text{cl}}(\sim,\sim)$, which is a contradiction.  

Next we show
\begin{align}
\label{eq:clto}
|\mathcal N_{-}^{\text{cl}}(\sim,\sim)|=|\mathcal N_{+}^{\text{cl}}(\sim,\sim)|=0\quad\Longrightarrow \quad |\mathcal N_{-}(\sim,\sim)|=|\mathcal N_{+}(\sim,\sim)|=0.
\end{align}
Since $(\textnormal{iii}_{\textnormal{crit}}^{\textnormal{cl}})\Longrightarrow (\textnormal{i}_{\textnormal{crit}}^{\textnormal{cl}})$ by Proposition \ref{prop:suff}(a--b), and since $(\textnormal{i}^{\textnormal{cl}})\Longleftrightarrow (\textnormal{i})$ by part (a), the proof will be complete by Lemma \ref{lem:suff}(b). 

To establish \eqref{eq:clto} we show that $|\mathcal N_{+}^{\text{cl}}(\sim,\sim)|=0\Rightarrow |\mathcal N_{+}(\sim,\sim)|=0$; the proof of $|\mathcal N_{-}^{\text{cl}}(\sim,\sim)|=0\Rightarrow |\mathcal N_{-}(\sim,\sim)|=0$ is analogous. Indeed, if $|\mathcal N_{+}(\sim,\sim)|>0$ then there exists $\sigma\in \mathcal N_{+}$ such that $\sigma(x_{\LL})=i_{\LL}+1$ and $\sigma^{-1}(i_{\LL}-1),\sigma^{-1}(i_{\LL})$ are both smaller than $x_{\LL}$ in $\abar$. By (a), $\sigma\in \mathcal N_{+}^{\text{cl}}$, and by Lemma \ref{lem:posetcl}, $\sigma^{-1}(i_{\LL}-1),\sigma^{-1}(i_{\LL})$ are both smaller than $x_{\LL}$ in $\textnormal{Cl}(\abar)$. It follows that $\sigma\in \mathcal N_{+}^{\text{cl}}(\sim,\sim)$. In other words, $|\mathcal N_{+}(\sim,\sim)|>0\Rightarrow |\mathcal N_{+}^{\text{cl}}(\sim,\sim)|>0$, which is the contrapositive of what we want to show. 
\end{enumerate}
\end{proof}

\section{Proof outline}
\label{sec:outline}
In this section we outline the proof of the characterization of the extremals of Stanley's inequalities. The first step is to understand how we use the closure procedure. We have the following equivalences:
\begin{align*}
&(\textnormal{i}^{\text{cl}})\quad\underset{\text{Thms. \ref{thm:supcritsec}, \ref{thm:critsec} + Lem. \ref{lem:suff}}}{\Longrightarrow}\hspace{.05in} (\textnormal{iii}^{\text{cl}})\quad\underset{\text{Prop. \ref{prop:suff}(b-c) }}{\Longrightarrow}\quad (\textnormal{ii}^{\text{cl}})\quad\underset{\text{trivial}}{\Longrightarrow}\quad (\textnormal{i}^{\text{cl}})\\
&\Updownarrow\tiny\text{Prop. \ref{prop:clposet}(b)}\hspace{1.23in}\Downarrow \tiny\text{Prop. \ref{prop:clposet}(d)}\hspace{.69in}\Updownarrow \tiny\text{Prop. \ref{prop:clposet}(c)}\hspace{0.23in}\Updownarrow\tiny\text{Prop. \ref{prop:clposet}(b)}\\
&\hspace{0.01in}(\textnormal{i}) \hspace{1.8in}(\textnormal{iii}) \hspace{0.3in}\underset{\text{Prop. \ref{prop:suff}(b-c)}}{\Longrightarrow}\hspace{0.15in}(\textnormal{ii})\quad\hspace{0.1in}\underset{\text{trivial}}{\Longrightarrow}\quad \hspace{0.02in}(\textnormal{i})
\end{align*}
The only implication that has not been proven thus far is $(\textnormal{i}^{\text{cl}})\Longrightarrow (\textnormal{iii}^{\text{cl}})$, which will follow from Theorem \ref{thm:supcritsec}, Theorem \ref{thm:critsec}, and  Lemma \ref{lem:suff}. Hence, from here on we may assume:
\begin{assumption}
\label{ass:cl}
\[
\abar=\textnormal{Cl}(\abar).
\]
\end{assumption}
Note that Remark \ref{rem:posetchar}, which is proven in Proposition \ref{prop:IIIimpliesIV}, does not require Assumption \ref{ass:cl}. The first extremals we need to characterize are those arising in the trivial case $|\mathcal N_=|=0$, which we dispose of in Theorem \ref{thm:trivsubcrit}. Assuming that $|\mathcal N_=|>0$, the characterization of \eqref{eq:Stanleyeq} is divided to three types of classes, \emph{subcritical}, \emph{supercritical}, and \emph{critical}. By subcritical we mean that $\mathcal \poly$ is subcritical. The supercritical and critical settings were defined in Definition \ref{def:ssc} and Definition \ref{def:posetcrit}.\\

The characterization of the \textbf{subcritical extremals} relies on the \emph{splitting mechanism} (Definition \ref{def:split} and Proposition \ref{prop:split}). The idea is that if $\mathcal \poly$ is truly subcritical, rather than critical,  we can reduce the problem to the extremals of a  poset with a shorter chain $\{x_i\}$. Arguing by induction, we then characterize the subcritical extremals (Theorem \ref{thm:subcrit}).\\

For the \textbf{supercritical extremals}, the starting point is Theorem \ref{thm:SvHComb} which yields that
 $|\mathcal N_{=}|^2=|\mathcal N_{-}||\mathcal N_{+}|$ holds, if, and only if, there exist $a\ge 0$ and $\mathrm v\in \R^{n-k}$ such that
\begin{align}
\label{eq:hsup}
h_{\poly_{\LL-1}}(\mathrm u)=h_{a\poly_{\LL}+\mathrm v}(\mathrm u) \quad \mbox{for all }(B,\mathcal \poly)\textnormal{-extreme normal directions } \mathrm u. 
\end{align}
The identity \eqref{eq:hsup} constitutes a system of equations (one equation for each $\mathrm u$) and the goal is to interpret these equations as combinatorial constraints on the poset $\abar$. Hence, the first important step is to find enough $(B,\mathcal \poly)$--extreme normal directions which can be described combinatorially. This is achieved in Section \ref{sec:ext} (Proposition \ref{prop:dir}(a-d)) by using the \emph{mixing} phenomenon (Section \ref{subsec:guide}). Once these directions are found in Section \ref{sec:ext}, Section \ref{sec:supercrit} is dedicated to plugging these directions back into \eqref{eq:hsup} and analyzing the outcomes. The second important step is to show that the scalar $a$ and the vector $\mathrm v$ in \eqref{eq:hsup} satisfy $a=1$ and $\mathrm v_j=0$ for certain $j$'s. The identity \eqref{eq:hsup} then further simplifies and provides the bulk of the desired characterization of the extremals (Theorem \ref{thm:supcritsec}). We explain in Section \ref{sec:supercrit} how to control $a$ and $\mathrm v$.\\

The starting point for the \textbf{critical extremals} is again Theorem \ref{thm:SvHComb}, but now we need to use its second part which states that
 $|\mathcal N_{=}|^2=|\mathcal N_{-}||\mathcal N_{+}|$ holds, if, and only if, there exist $a\ge 0,\,\mathrm v\in \R^{n-k}$, and a number $0\le d<\infty$ of $\mathcal \poly$-degenerate pairs $(P_1,Q_1),\ldots,(P_d,Q_d)$, such that 
\[
h_{\poly_{\LL-1}+\sum_{j=1}^dQ_j}(\mathrm u)=h_{a\poly_{\LL}+\mathrm v+\sum_{j=1}^dP_j}(\mathrm u) \quad \mbox{for all }(B,\mathcal \poly)\textnormal{-extreme normal directions } \mathrm u. 
\]
The presence of the degenerate pairs causes great difficulties (which are not just technical since, as we saw, new extremals do indeed arise for critical posets). The first key idea to resolve these problems is to find a sub-poset of $\abar$ on which we have more-or-less a supercritical behavior. From a geometric standpoint, this corresponds to finding a subspace $E^{\perp}$ such that 
\begin{align}
\label{eq:hcrit}
h_{\poly_{\LL-1}}(\mathrm u)=h_{a\poly_{\LL}+\mathrm v}(\mathrm u) \quad \mbox{for all }(B,\mathcal \poly)\textnormal{-extreme normal directions } \mathrm u\text{ which are contained in }E^{\perp}.
\end{align}
The identification of $E^{\perp}$ and its properties relies on the mixing properties of the \emph{maximal splitting pair} (Section \ref{lem:splitequiv}). Even after identifying $E^{\perp}$ we face the problem that \eqref{eq:hcrit} provides less constraints than \eqref{eq:hsup} due to the restriction to the subspace $E^{\perp}$. Hence, we cannot derive enough combinatorial constraints on $\abar$. The solution is to find even more  $(B,\mathcal \poly)$-extreme normal directions which were not needed for supercritical posets (Proposition \ref{prop:dir}(e-h)). With these new directions in hand, Section \ref{sec:crit} proceeds roughly as Section \ref{sec:supercrit} to show that $a=1$ and $\mathrm v_j=0$ for certain $j$'s. This description is an oversimplification since the situation is in fact much more delicate. It is precisely this delicacy which leads to the new extremals for critical posets.

\section{Notions of criticality}
\label{sec:notionsCrit}
In this section we start building our dictionary between convex geometry and combinatorics. The first building block is a correspondence between geometric and combinatorial notions of criticality, which will be used throughout this work. Section \ref{subsec:triv} starts with the easiest correspondence (Lemma \ref{lem:spancollec}), which connects the linear spans of polytopes in $\mathcal\poly$ with subsets of $\abar$. Consequently, we characterize the trivial extremals which appear when $|\mathcal N_=|=0$ (Theorem \ref{thm:trivsubcrit}). Section \ref{subsec:equivcrit} is dedicated to the equivalences between geometric and combinatorial notions of criticality (Proposition \ref{prop:critequiv}), and  their consequences on sharp-subcritical and sharp-critical collections (Lemmas \ref{lem:sharsubcrit}, \ref{lem:sharcrit}).

\subsection{The trivial extremals}
\label{subsec:triv}
We start with some notation. Given a convex body $C$  let $\aff(C)$ stand for the affine hull of $C$, and let $\lin(C)$ stand for the vector space obtained by the translation of $\aff(C)$ to the origin, i.e., $\lin(C):=\aff(C)-c_0=\sspan(C-c_0)$, for any $c_0\in C$. Given a collection $\mathcal C$ of convex bodies, it is immediate to see that
\begin{equation}
\label{eq:linspan}
\lin\left(\sum_{C\in \mathcal C}C\right)=\sspan\left((\lin(C_1),\ldots,\lin(C_{|\mathcal C|}\right)).
\end{equation}
The following lemma relates the combinatorics of subsets of $\aint$ to the linear spans of the polytopes in $\{\poly_i\}$. 
\begin{lemma}
\label{lem:spancollec}
Let $j_0:=-1< j_1<\cdots<j_p<k+1=:j_{p+1}$ and set
\[
\mathcal \poly':=(\underbrace{\poly_{j_1},\ldots,\poly_{j_1}}_{\kappa_1},\ldots,\underbrace{\poly_{j_p},\ldots,\poly_{j_p}}_{\kappa_p}),
\]
where $\kappa_1,\ldots, \kappa_p$ are positive integers. 
Then,
\[
\lin\left(\sum_{\poly\in \mathcal \poly'}\poly\right)=\R^{\bsg_{\{j_1,\ldots,j_p\}}},
\]
and, consequently,
\[
\dim\left(\sum_{\poly\in \mathcal \poly' }\poly\right)=n-k-\sum_{q=0}^p|\aint_{>x_{j_q+1},<x_{j_{(q+1)}}}|.
\]
\end{lemma}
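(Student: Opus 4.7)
The plan is to reduce both claims to the structure of a single order polytope $\poly_i$ and then use the set-theoretic identity established inside the proof of Lemma \ref{lem:betaintv}.

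First I would observe that, by \eqref{eq:Kiordpoly}, each $\poly_i$ is a translate of the order polytope $O_{\bsg_i}$, which sits inside $\R^{\bsg_i}\subseteq \R^{\aint}$. By Lemma \ref{lem:dimOb}, $\dim O_{\bsg_i}=|\bsg_i|$, so $O_{\bsg_i}$ is full-dimensional in $\R^{\bsg_i}$ and hence $\lin(\poly_i)=\lin(O_{\bsg_i})=\R^{\bsg_i}$. Plugging this into \eqref{eq:linspan} and noting that the multiplicities $\kappa_q$ are irrelevant for the linear span gives
\[
\lin\!\left(\sum_{\poly\in \mathcal \poly'}\poly\right)=\sspan\bigl(\R^{\bsg_{j_1}},\ldots,\R^{\bsg_{j_p}}\bigr)=\R^{\bsg_{j_1}\cup\cdots\cup\bsg_{j_p}}=\R^{\bsg_{\{j_1,\ldots,j_p\}}},
\]
which is the first assertion.

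For the dimension formula, the task reduces to computing $|\bsg_{\{j_1,\ldots,j_p\}}|$. The key point is that the identity \eqref{eq:capcup} from the proof of Lemma \ref{lem:betaintv} is stated and proved for arbitrary $j_0:=-1<j_1<\cdots<j_p<k+1=:j_{p+1}$, not only for the specific choice $S=\llbracket 0,r\rrbracket\cup\llbracket s,k\rrbracket$. Applying it verbatim to our collection yields
\[
\bsg_{\{j_1,\ldots,j_p\}}=\bigcup_{q=1}^p\left(\aint\bigg\backslash(\aint_{<x_{j_q}}\cup \aint_{>x_{j_q+1}})\right)=\aint\bigg\backslash\bigcup_{q=0}^p\aint_{>x_{j_q+1},<x_{j_{(q+1)}}}.
\]
Next I would verify that the sets $\aint_{>x_{j_q+1},<x_{j_{(q+1)}}}$ for $q\in\llbracket 0,p\rrbracket$ are pairwise disjoint: for $q<q'$, any common element $y$ would satisfy both $y<x_{j_{(q+1)}}$ and $y>x_{j_{q'}+1}\ge x_{j_{(q+1)}}$, which is impossible. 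Therefore
\[
|\bsg_{\{j_1,\ldots,j_p\}}|=|\aint|-\sum_{q=0}^p|\aint_{>x_{j_q+1},<x_{j_{(q+1)}}}|=(n-k)-\sum_{q=0}^p|\aint_{>x_{j_q+1},<x_{j_{(q+1)}}}|,
\]
which is the claimed value of $\dim\left(\sum_{\poly\in\mathcal\poly'}\poly\right)$.

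There is no real obstacle here: the only substantive input is the set identity \eqref{eq:capcup}, which is already available, and the fact that order polytopes are full-dimensional in their natural coordinate subspace. The mild care needed is simply to notice that \eqref{eq:capcup} was proved in the generality required and that the disjointness of the gap sets $\aint_{>x_{j_q+1},<x_{j_{(q+1)}}}$ turns a union into a sum of cardinalities.
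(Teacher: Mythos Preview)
Your proposal is correct and follows essentially the same route as the paper's proof: both combine \eqref{eq:linspan} with \eqref{eq:Kiordpoly} to identify the linear span as $\R^{\bsg_{\{j_1,\ldots,j_p\}}}$, then invoke the set identity \eqref{eq:capcup} together with disjointness of the sets $\aint_{>x_{j_q+1},<x_{j_{(q+1)}}}$ to compute the dimension. You are simply more explicit in justifying $\lin(\poly_i)=\R^{\bsg_i}$ via Lemma~\ref{lem:dimOb}, which the paper leaves implicit.
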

\begin{proof}
Combining \eqref{eq:linspan} and \eqref{eq:Kiordpoly} shows that $\lin\left(\sum_{\poly\in \mathcal \poly'}\poly\right)=\R^{\bsg_{\{j_1,\ldots,j_p\}}}$. It follows that
\[
\dim\left(\sum_{\poly\in \mathcal \poly' }\poly\right)=\left|\bigcup_{q=1}^p\bsg_{j_q}\right|=\left|\bigcup_{q=1}^p\aint\backslash(\aint_{<x_{j_q}}\cup \aint_{>x_{j_q+1}})\right|=\left|\aint\Big\backslash\bigcap_{q=1}^p(\aint_{<x_{j_q}}\cup \aint_{>x_{j_q+1}})\right|.
\]
The proof is complete by \eqref{eq:capcup}, and by noting that the sets $\{\aint_{>x_{j_q+1},<x_{j_{(q+1)}}}\}_{q\in \llbracket 0,p\rrbracket}$ are disjoint.
\end{proof}
As a first application of Lemma \ref{lem:spancollec}, we dispose of the trivial extremals. Before doing so, we present the following definition which will be used throughout the paper.
\begin{definition}
\label{def:splitpair}
A pair $(r,s)$ is \emph{splitting} if $0\le r+1< s\le k+1$ and $(r+1,s)\neq (0,k+1)$. A splitting pair $(r,s)$ is an \emph{$\LL$-splitting pair} if $r+1<\LL<s$. 
\end{definition}

\begin{theorem}{\textnormal{(\textbf{Trivial extremals})}}
\label{thm:trivsubcrit}
We have $|\mathcal N_=|=0$ if, and only if, there exists a splitting pair $(r,s)$ such that
\[
|\abar_{>x_{r+1},<x_s}|> i_s-i_{r+1}-1.
\]
\end{theorem}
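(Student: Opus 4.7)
The forward (``if'') direction is a direct counting argument. Suppose $(r,s)$ is a splitting pair with $|\abar_{>x_{r+1},<x_s}|>i_s-i_{r+1}-1$. Any $\sigma\in\mathcal N_=$ must satisfy $\sigma(x_{r+1})=i_{r+1}$ and $\sigma(x_s)=i_s$, so every element of $\abar_{>x_{r+1},<x_s}$ is forced into the $i_s-i_{r+1}-1$ open positions between $i_{r+1}$ and $i_s$. By hypothesis this is impossible, hence $|\mathcal N_=|=0$.

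For the reverse direction, I would use the mixed-volume dictionary. By \eqref{eq:posetvolrep}, $|\mathcal N_=|=0$ is equivalent to $\V_{n-k}(K_{\LL-1},K_{\LL},\mathcal \poly)=0$. Lemma \ref{lem:mvpos} then produces a sub-multiset $\mathcal\poly'$ of this full multiset satisfying $\dim(\sum_{\poly\in\mathcal\poly'}\poly)<|\mathcal\poly'|$. Since the span of a sum of polytopes only depends on which distinct polytopes appear, I may enlarge $\mathcal\poly'$ to include \emph{all} copies of each $\poly_m$ it uses without decreasing $|\mathcal\poly'|-\dim(\sum \poly)$; thus I may assume
\[
\mathcal\poly'=(\underbrace{\poly_{j_1},\ldots,\poly_{j_1}}_{i_{j_1+1}-i_{j_1}-1},\ldots,\underbrace{\poly_{j_p},\ldots,\poly_{j_p}}_{i_{j_p+1}-i_{j_p}-1})
\]
for some nonempty index set $J=\{j_1<\cdots<j_p\}\subseteq \llbracket 0,k\rrbracket$.

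Applying Lemma \ref{lem:spancollec} with $j_0:=-1$, $j_{p+1}:=k+1$ converts the subcriticality violation $|\mathcal\poly'|>\dim(\sum\poly)$ into
\[
\sum_{q=0}^{p}|\aint_{>x_{j_q+1},<x_{j_{(q+1)}}}|>\sum_{m\in \llbracket 0,k\rrbracket\setminus J}(i_{m+1}-i_m-1),
\]
after using that the total count of polytopes is $\sum_{m=0}^{k}(i_{m+1}-i_m-1)=n-k$. The right-hand side naturally splits over the gaps of $J$: for a gap $(j_q,j_{q+1})$ with $j_{q+1}>j_q+1$ its contribution is $\sum_{m=j_q+1}^{j_{q+1}-1}(i_{m+1}-i_m-1)=i_{j_{q+1}}-i_{j_q+1}-(j_{q+1}-j_q-1)$. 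Pigeonholing over these gaps yields some $q^*$ with, setting $r:=j_{q^*}$ and $s:=j_{q^*+1}$,
\[
|\aint_{>x_{r+1},<x_s}|>i_s-i_{r+1}-(s-r-1).
\]
Adding the $s-r-2$ chain elements $x_{r+2},\ldots,x_{s-1}$ (which lie in $\abar_{>x_{r+1},<x_s}$) to both sides gives $|\abar_{>x_{r+1},<x_s}|>i_s-i_{r+1}-1$. Finally, $(r,s)$ is a splitting pair: $0\le r+1<s\le k+1$ holds by construction, and $(r+1,s)=(0,k+1)$ would force $j_{q^*}=-1=j_0$ and $j_{q^*+1}=k+1=j_{p+1}$, i.e.\ $p=0$, contradicting $J\neq\varnothing$.

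The only genuinely delicate point is the passage between $\aint$ and $\abar$ in the final step and verifying that the produced pair is not the excluded $(0,k+1)$; everything else is bookkeeping on top of Lemmas \ref{lem:mvpos} and \ref{lem:spancollec}. A purely combinatorial alternative via Hall's theorem for placements $\aint\to\bigsqcup_m(i_m,i_{m+1})$ yields the same characterization, with the intervals $[r+1,s-1]$ being the only Hall-witnesses one needs to inspect because each $y\in\aint$ is compatible with a consecutive block of groups.
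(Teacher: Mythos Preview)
Your proof is correct and follows essentially the same route as the paper's: both invoke Lemma~\ref{lem:mvpos} to obtain a sub-multiset violating subcriticality, apply Lemma~\ref{lem:spancollec} to translate the dimension into a sum over gaps, and then pigeonhole to locate a single offending pair $(r,s)$. The only cosmetic difference is that you first enlarge each multiplicity to its maximum $i_{m+1}-i_m-1$ (harmless, since the span is unchanged) whereas the paper keeps the original $\kappa_q$'s and bounds $|\mathcal\poly'|\le\sum_q(i_{j_q+1}-i_{j_q}-1)$; the resulting inequality and the $\aint\to\abar$ conversion are identical.
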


\begin{proof}
$~$

$\Longleftarrow$: 
Suppose there exists a splitting pair $(r,s)$  such that
\[
|\abar_{>x_{r+1},<x_s}|> i_s-i_{r+1}-1.
\]
Every $\sigma\in\mathcal N_=$ must satisfy $\sigma(z)\in \llbracket i_{r+1}+1, i_s-1\rrbracket$ for every $z\in \abar_{>x_{r+1},<x_s}$. Since $|\llbracket i_{r+1}+1, i_s-1\rrbracket|=(i_s-1)-(i_{r+1}+1)+1=i_s-i_{r+1}-1<|\abar_{>x_{r+1},<x_s}|$, we see that no such $\sigma$ can exist.\\

$\Longrightarrow$: If $|\mathcal N_{=}|=0$ then, by \eqref{eq:posetvolrep} and Lemma \ref{lem:mvpos}, there exist $0\le j_1<\cdots<j_p\le k$, and positive integers $\kappa_1,\ldots, \kappa_p$, with $\kappa_q\le i_{j_q+1}-i_{j_q}-1$  for $q\in [p]$, such that, with
\[
\mathcal \poly'=(\underbrace{\poly_{j_1},\ldots,\poly_{j_1}}_{\kappa_1},\ldots,\underbrace{\poly_{j_p},\ldots,\poly_{j_p}}_{\kappa_p})\subseteq\mathcal (\poly_{\LL-1},\poly_{\LL},\mathcal \poly),
\]
we have
\[
\dim\left(\sum_{\poly\in \mathcal \poly' }\poly\right)<|\mathcal \poly'|.
\]
Let $j_0:=-1,~ j_{p+1}:=k+1$ and use Lemma \ref{lem:spancollec} to get 
\[
\dim\left(\sum_{\poly\in \mathcal \poly' }\poly\right)=n-k-\sum_{q=0}^p|\aint_{>x_{j_q+1},<x_{j_{(q+1)}}}|.
\]
On the other hand,  
\begin{align*}
|\mathcal \poly'|&=\sum_{q=1}^p\kappa_q\le \sum_{q=1}^p[i_{j_q+1}-i_{j_q}-1]=n-k-i_{j_{p+1}}+i_{j_0+1}+k+1+\sum_{q=1}^p(i_{j_q+1}-i_{j_q}-1)\\
&=n-k-\left(\sum_{q=1}^{p+1}i_{j_q}\right)+\left(\sum_{q=0}^pi_{j_q+1}\right)+j_{p+1}-j_0-(p+1)\\
&=n-k-\left(\sum_{q=0}^pi_{j_{(q+1)}}\right)+\left(\sum_{q=0}^pi_{j_q+1}\right)+j_{p+1}-j_0-(p+1)\\
&=n-k-\sum_{q=0}^p(i_{j_{(q+1)}}-i_{j_q+1}-j_{q+1}+j_q+1).
\end{align*}
It follows that
\begin{align}
\label{eq:thm:trivsubcrittemp}
\sum_{q=0}^p(i_{j_{(q+1)}}-i_{j_q+1}-j_{q+1}+j_q+1)<\sum_{q=0}^p|\aint_{>x_{j_q+1},<x_{j_{(q+1)}}}|.
\end{align}
Since
\begin{align*}
|\aint_{>x_{j_q+1},<x_{j_{(q+1)}}}|&=1_{\{j_q+1<j_{(q+1)}\}}\,|\aint_{>x_{j_q+1},<x_{j_{(q+1)}}}|,\\
i_{j_{(q+1)}}-i_{j_q+1}-j_{q+1}+j_q+1&=1_{\{j_q+1<j_{(q+1)}\}}\,(i_{j_{(q+1)}}-i_{j_q+1}-j_{(q+1)}+j_q+1),
\end{align*}
the inequality \eqref{eq:thm:trivsubcrittemp} is equivalent to
\[
\sum_{q=0}^p1_{\{j_q+1<j_{(q+1)}\}}(i_{j_{(q+1)}}-i_{j_q+1}-j_{q+1}+j_q+1)<\sum_{q=0}^p1_{\{j_q+1<j_{(q+1)}\}}|\aint_{>x_{j_q+1},<x_{j_{(q+1)}}}|.
\]
Using
\[
1_{\{j_q+1<j_{(q+1)}\}}\,|\abar_{>x_{j_q+1},<x_{j_{(q+1)}}}\backslash \aint_{>x_{j_q+1},<x_{j_{(q+1)}}}|=j_{(q+1)}-j_q-2,
\]
we get that \eqref{eq:thm:trivsubcrittemp} is equivalent to
\[
\sum_{q=0}^p1_{\{j_q+1<j_{(q+1)}\}}(i_{j_{(q+1)}}-i_{j_q+1}-1)<\sum_{q=0}^p1_{\{j_q+1<j_{(q+1)}\}}|\abar_{>x_{j_q+1},<x_{j_{(q+1)}}}|.
\]
Hence, there must exist a pair $(j_q+1,j_{(q+1)})$, with $j_q+1<j_{(q+1)}$, such that 
\[
|\abar_{>x_{j_q+1},<x_{j_{(q+1)}}}|>i_{j_{(q+1)}}-i_{j_q+1}-1.
\]
Since $(j_q+1,j_{(q+1)})\neq (0,k+1)$, because $j_q+1=0\Rightarrow q=0$ so $j_{(q+1)}=j_1\le j_p<j_{p+1}=k+1$, we conclude that there exists a splitting pair $(r,s)$ such that
\[
|\abar_{>x_{r+1},<x_s}|> i_s-i_{r+1}-1.
\]
\end{proof}

\begin{remark}
 Theorem \ref{thm:trivsubcrit} is the same as the result of Chan, Pak, and Panova in \cite[Theorem 1.12]{CP22}, where it was proved using purely combinatorial arguments. 
\end{remark}

In light of Theorem \ref{thm:trivsubcrit} we assume from here on that $|\mathcal N_=|>0$. Note  that $|\mathcal N_=|>0$ implies,  by \eqref{eq:posetvolrep} and Lemma \ref{lem:mvpos}, that $\mathcal\poly$ is subcritical. To summarize:
\begin{assumption}
\label{ass:N=>0}
\[
|\mathcal N_=|^2=|\mathcal N_-||\mathcal N_+|,\quad|\mathcal N_=|>0,\quad\text{and}\quad \mathcal\poly\text{ is subcritical}.
\]
\end{assumption}
\begin{remark}
\label{rem:totorder}
For future reference, we note that under Assumption \ref{ass:N=>0}, $\abar$ cannot be totally ordered. Indeed, if  $\abar$ is totally ordered, then at least two elements in $\{|\mathcal N_=|,|\mathcal N_-|,|\mathcal N_+|\}$ are zero. But since $|\mathcal N_=|^2=|\mathcal N_-||\mathcal N_+|$, that would imply that $|\mathcal N_=|=0$.
\end{remark}

\subsection{Equivalences of criticality notions}
\label{subsec:equivcrit}
The next result is at the base of the correspondence between criticality notions in our geometric and  combinatorial settings, namely, the equivalence between Definition \ref{def:ssc} and Definition \ref{def:posetcrit}.
\begin{proposition}
\label{prop:critequiv}
Fix a nonnegative integer $c$. The following are equivalent.
\begin{enumerate}[(1)]
\item For any integer $p\ge 1$ and $j_0:=-1< j_1<\cdots<j_p<k+1=:j_{p+1}$ such that  $ i_{j_q+1}-i_{j_q}-1-1_{j_q\in\{\LL-1,\LL\}}$ are positive for any $q\in [p]$, it holds that with any
\[
\mathcal \poly':=(\underbrace{\poly_{j_1},\ldots,\poly_{j_1}}_{\kappa_1},\ldots,\underbrace{\poly_{j_p},\ldots,\poly_{j_p}}_{\kappa_p})\subseteq \mathcal \poly,
\]
where $\kappa_q\le i_{j_q+1}-i_{j_q}-1-1_{j_q\in\{\LL-1,\LL\}}$ are positive integers, we have
\[
\dim\left(\sum_{\poly\in \mathcal \poly' }\poly\right)\ge |\mathcal\poly'|+c.
\]

\item For any integer $p\ge 1$, $j_0:=-1< j_1<\cdots<j_p<k+1=:j_{p+1}$ such that $ i_{j_q+1}-i_{j_q}-1-1_{j_q\in\{\LL-1,\LL\}}$ are positive for any $q\in [p]$, it holds that
\begin{align*}
\sum_{q=0}^p1_{\{j_q+1<j_{(q+1)}\}}\,|\abar_{>x_{j_q+1},<x_{j_{(q+1)}}}|\le |\{q\in [p]:j_q\in\{\LL-1,\LL\}\}|-c+\sum_{q=0}^p1_{\{j_q+1<j_{(q+1)}\}}\,(i_{j_{(q+1)}}-i_{j_q+1}-1).
\end{align*}
\end{enumerate}

\end{proposition}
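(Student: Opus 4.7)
The plan is to translate condition~(1) into condition~(2) by the algebraic manipulations already carried out at the end of the proof of Theorem~\ref{thm:trivsubcrit}. The starting observation is that, by Lemma~\ref{lem:spancollec}, the dimension
\[
\dim\left(\sum_{\poly\in\mathcal\poly'}\poly\right)=n-k-\sum_{q=0}^p|\aint_{>x_{j_q+1},<x_{j_{(q+1)}}}|
\]
depends only on the index set $\{j_1,\ldots,j_p\}$, and not on the multiplicities $\{\kappa_q\}$. Since $|\mathcal\poly'|=\sum_q\kappa_q$ is monotone in the $\kappa_q$, condition~(1)---which must hold for every admissible tuple---is equivalent to its specialization at the maximal choice $\kappa_q=i_{j_q+1}-i_{j_q}-1-1_{j_q\in\{\LL-1,\LL\}}$, at which
\[
|\mathcal\poly'|=\sum_{q=1}^p(i_{j_q+1}-i_{j_q}-1)-|\{q\in[p]:j_q\in\{\LL-1,\LL\}\}|.
\]

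Next I would rewrite $\sum_{q=1}^p(i_{j_q+1}-i_{j_q}-1)$ using the boundary identifications $i_{j_0+1}=0$ and $i_{j_{p+1}}=n+1$, together with $j_{p+1}-j_0-(p+1)=\sum_{q=0}^p(j_{(q+1)}-j_q-1)$, exactly as in the proof of Theorem~\ref{thm:trivsubcrit}. This yields
\[
\sum_{q=1}^p(i_{j_q+1}-i_{j_q}-1)=n-k-\sum_{q=0}^p 1_{\{j_q+1<j_{(q+1)}\}}\bigl[(i_{j_{(q+1)}}-i_{j_q+1}-1)-(j_{(q+1)}-j_q-2)\bigr],
\]
where the indicator appears because the summand vanishes whenever $j_q+1=j_{(q+1)}$. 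The identity
\[
1_{\{j_q+1<j_{(q+1)}\}}(j_{(q+1)}-j_q-2)=1_{\{j_q+1<j_{(q+1)}\}}\bigl(|\abar_{>x_{j_q+1},<x_{j_{(q+1)}}}|-|\aint_{>x_{j_q+1},<x_{j_{(q+1)}}}|\bigr),
\]
already recorded in the proof of Theorem~\ref{thm:trivsubcrit}, then converts the remaining $j$-gaps into differences of poset cardinalities.

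Substituting both expressions into $\dim(\sum_{\poly\in\mathcal\poly'}\poly)-|\mathcal\poly'|\ge c$, the constants $n-k$ and the $|\aint|$-sums cancel, and rearranging what remains produces precisely the inequality in condition~(2). Every step above is reversible, yielding (1)$\Leftrightarrow$(2). The argument is essentially an exercise in bookkeeping; the only genuinely conceptual ingredient is the opening reduction to the maximal choice of $\kappa_q$, which rests on the $\kappa_q$-independence of the dimension supplied by Lemma~\ref{lem:spancollec}. No new ideas beyond Lemma~\ref{lem:spancollec} and the algebra of Theorem~\ref{thm:trivsubcrit} are needed.
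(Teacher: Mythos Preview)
Your proposal is correct and follows essentially the same approach as the paper's proof: both reduce to the maximal multiplicities $\kappa_q=i_{j_q+1}-i_{j_q}-1-1_{j_q\in\{\LL-1,\LL\}}$ using the $\kappa_q$-independence of the dimension from Lemma~\ref{lem:spancollec}, then perform the same telescoping and indicator manipulations as in Theorem~\ref{thm:trivsubcrit} to convert between the geometric and combinatorial inequalities. The only cosmetic difference is that the paper argues via the contrapositive (showing $\dim<|\mathcal\poly'|+c$ for some $\mathcal\poly'$ is equivalent to the negation of~(2)), whereas you reduce directly to the extremal $\kappa_q$ first; the algebra is identical.
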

The proof of Proposition \ref{prop:critequiv} follows the logic of the proof of Theorem \ref{thm:trivsubcrit}, but it is more complicated since we now work with collections $\mathcal\poly'\subseteq\mathcal\poly$, rather than $\mathcal\poly'\subseteq(\poly_{\LL-1},\poly_{\LL},\mathcal\poly)$. This leads to the presence of the term $1_{j_{(q+1)}=\LL}+ 1_{j_q+1=\LL}$ in the proof below.
\begin{proof}[Proof of Proposition \ref{prop:critequiv}]
 Fix 
\[
\mathcal \poly':=(\underbrace{\poly_{j_1},\ldots,\poly_{j_1}}_{\kappa_1},\ldots,\underbrace{\poly_{j_p},\ldots,\poly_{j_p}}_{\kappa_p})\subseteq \mathcal \poly,
\]
where $p\ge 1$, $j_0:=-1< j_1<\cdots<j_p<k+1=:j_{p+1}$, and $0<\kappa_q\le i_{j_q+1}-i_{j_q}-1-1_{j_q\in\{\LL-1,\LL\}}$. By Lemma \ref{lem:spancollec},
\[
\dim\left(\sum_{\poly\in \mathcal \poly' }\poly\right)=n-k-\sum_{q=0}^p|\aint_{>x_{j_q+1},<x_{j_{(q+1)}}}|.
\]
On the other hand,  using $\LL\notin \{0,k+1\}$, and arguing as in the proof of Theorem \ref{thm:trivsubcrit}, 
\begin{align*}
|\mathcal \poly'|&=\sum_{q=1}^p\kappa_q\le \sum_{q=1}^p\left(i_{j_q+1}-i_{j_q}-1- 1_{j_q\in\{\LL-1,\LL\}}\right)\\
&=n-k-\sum_{q=0}^p\left(i_{j_{(q+1)}}-i_{j_q+1}-j_{q+1}+j_q+1+ 1_{j_{(q+1)}=\LL}+ 1_{j_q+1=\LL}\right).
\end{align*}
Hence, given $c$, we have that 
\[
\dim\left(\sum_{\poly\in \mathcal \poly' }\poly\right)< |\mathcal\poly'|+c,
\]
if and only if
\begin{align}
\label{eq:cdiminq}
\sum_{q=0}^p|\aint_{>x_{j_q+1},<x_{j_{(q+1)}}}|> -c+\sum_{q=0}^p\left(i_{j_{(q+1)}}-i_{j_q+1}-j_{q+1}+j_q+1+ 1_{j_{(q+1)}=\LL}+ 1_{j_q+1=\LL}\right).
\end{align}
Conversely, if \eqref{eq:cdiminq} holds, then we may take $\mathcal \poly'$ to be such that $\kappa_q=i_{j_q+1}-i_{j_q}-1- 1_{j_q\in\{\LL-1,\LL\}}$ for every $q$, to get $|\mathcal \poly'|=n-k-\sum_{q=0}^p\left(i_{j_{(q+1)}}-i_{j_q+1}-j_{q+1}+j_q+1+ 1_{j_{(q+1)}=\LL}+ 1_{j_q+1=\LL}\right)$. We may then conclude that $\dim\left(\sum_{\poly\in \mathcal \poly' }\poly\right)< |\mathcal\poly'|+c$. Hence, we get
\[
\dim\left(\sum_{\poly\in \mathcal \poly' }\poly\right)\ge |\mathcal\poly'|+c\quad \Longleftrightarrow \quad\eqref{eq:cdiminqconverse},
\]
where
\begin{align}
\label{eq:cdiminqconverse}
\sum_{q=0}^p|\aint_{>x_{j_q+1},<x_{j_{(q+1)}}}|\le -c+\sum_{q=0}^p\left(i_{j_{(q+1)}}-i_{j_q+1}-j_{q+1}+j_q+1+ 1_{j_{(q+1)}=\LL}+ 1_{j_q+1=\LL}\right).
\end{align}
Since 
\[
\sum_{q=0}^p(1_{j_{(q+1)}=\LL}+ 1_{j_q+1=\LL})=|\{q\in [p]:j_q\in\{\LL-1,\LL\}\}|,
\]
and
\begin{align*}
|\aint_{>x_{j_q+1},<x_{j_{(q+1)}}}|&=1_{\{j_q+1<j_{(q+1)}\}}\,|\aint_{>x_{j_q+1},<x_{j_{(q+1)}}}|,\\
i_{j_{(q+1)}}-i_{j_q+1}-j_{q+1}+j_q+1&=1_{\{j_q+1<j_{(q+1)}\}}(i_{j_{(q+1)}}-i_{j_q+1}-j_{(q+1)}+j_q+1),
\end{align*}
the inequality \eqref{eq:cdiminqconverse} is  equivalent to
\begin{align*}
&\sum_{q=0}^p1_{\{j_q+1<j_{(q+1)}\}}\,|\aint_{>x_{j_q+1},<x_{j_{(q+1)}}}|\\
&\le |\{q\in [p]:j_q\in\{\LL-1,\LL\}\}|-c
+\sum_{q=0}^p1_{\{j_q+1<j_{(q+1)}\}}\,\left(i_{j_{(q+1)}}-i_{j_q+1}-j_{q+1}+j_q+1\right),\\
&=|\{q\in [p]:j_q\in\{\LL-1,\LL\}\}|-c+\sum_{q=0}^p1_{\{j_q+1<j_{(q+1)}\}}\,\left(i_{j_{(q+1)}}-i_{j_q+1}-1-j_{(q+1)}+j_q+2\right).
\end{align*}
Using
\[
1_{\{j_q+1<j_{(q+1)}\}}\,|\abar_{>x_{j_q+1},<x_{j_{(q+1)}}}\backslash \aint_{>x_{j_q+1},<x_{j_{(q+1)}}}|=j_{(q+1)}-j_q-2,
\]
we find that \eqref{eq:cdiminqconverse} is equivalent to
\begin{align*}
&\sum_{q=0}^p1_{\{j_q+1<j_{(q+1)}\}}\,|\abar_{>x_{j_q+1},<x_{j_{(q+1)}}}|\le |\{q\in [p]:j_q\in\{\LL-1,\LL\}\}|-c+\sum_{q=0}^p1_{\{j_q+1<j_{(q+1)}\}}\,(i_{j_{(q+1)}}-i_{j_q+1}-1).
\end{align*}
\end{proof}

In contrast to Proposition \ref{prop:critequiv}, the next lemma, which treats the opposite inequality of Proposition \ref{prop:critequiv}, holds for a \emph{fixed} $\mathcal \poly'$.
\begin{lemma}
\label{lem:le}
Fix an integer $p\ge 1$ and $j_0:=-1< j_1<\cdots<j_p<k+1=:j_{p+1}$ such that  $ i_{j_q+1}-i_{j_q}-1-1_{j_q\in\{\LL-1,\LL\}}$ are positive for any $q\in [p]$. Let

\[
\mathcal \poly':=(\underbrace{\poly_{j_1},\ldots,\poly_{j_1}}_{\kappa_1},\ldots,\underbrace{\poly_{j_p},\ldots,\poly_{j_p}}_{\kappa_p})\subseteq \mathcal \poly,
\]
where $\kappa_q$ are integers such that $0<\kappa_q\le i_{j_q+1}-i_{j_q}-1-1_{j_q\in\{\LL-1,\LL\}}$ for all $q\in [p]$, be such that
\[
\dim\left(\sum_{\poly\in \mathcal \poly' }\poly\right)\le |\mathcal\poly'|+c.
\]
Then,
\begin{align*}
&\sum_{q=0}^p1_{\{j_q+1<j_{(q+1)}\}}\,|\abar_{>x_{j_q+1},<x_{j_{(q+1)}}}|\ge |\{q\in [p]:j_q\in\{\LL-1,\LL\}\}|-c+\sum_{q=0}^p1_{\{j_q+1<j_{(q+1)}\}}\,(i_{j_{(q+1)}}-i_{j_q+1}-1).
\end{align*}
\end{lemma}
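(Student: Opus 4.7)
\medskip

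\noindent\textbf{Proof plan for Lemma \ref{lem:le}.}

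The plan is to reuse essentially the same algebraic computation performed in the proof of Proposition \ref{prop:critequiv}, but to read the inequalities in the opposite direction. The key observation is that in Proposition \ref{prop:critequiv} the equivalence of the two inequalities used the sharp choice $\kappa_q = i_{j_q+1}-i_{j_q}-1-1_{j_q\in\{\LL-1,\LL\}}$ only to go from (2) back to (1). For the direction relevant here, only the upper bound $\kappa_q\le i_{j_q+1}-i_{j_q}-1-1_{j_q\in\{\LL-1,\LL\}}$ is needed, and this is valid for the fixed $\mathcal{\poly}'$ given in the statement.

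Concretely, first I would invoke Lemma \ref{lem:spancollec} to write
\[
\dim\Bigl(\sum_{\poly\in\mathcal\poly'}\poly\Bigr)=n-k-\sum_{q=0}^p|\aint_{>x_{j_q+1},<x_{j_{(q+1)}}}|.
\]
Next, using the hypothesis $\kappa_q\le i_{j_q+1}-i_{j_q}-1-1_{j_q\in\{\LL-1,\LL\}}$ together with the telescoping identity verified in Proposition \ref{prop:critequiv}, I would bound
\[
|\mathcal\poly'|=\sum_{q=1}^p\kappa_q\le n-k-\sum_{q=0}^p\bigl(i_{j_{(q+1)}}-i_{j_q+1}-j_{q+1}+j_q+1+1_{j_{(q+1)}=\LL}+1_{j_q+1=\LL}\bigr).
\]
Substituting both expressions into the assumed inequality $\dim(\sum\poly)\le|\mathcal\poly'|+c$ and cancelling the common term $n-k$ yields
\[
\sum_{q=0}^p\bigl(i_{j_{(q+1)}}-i_{j_q+1}-j_{q+1}+j_q+1+1_{j_{(q+1)}=\LL}+1_{j_q+1=\LL}\bigr)\le\sum_{q=0}^p|\aint_{>x_{j_q+1},<x_{j_{(q+1)}}}|+c.
\]

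Finally, I would finish with the same three bookkeeping identities already used in Proposition \ref{prop:critequiv}: the indicator identification $|\aint_{>x_{j_q+1},<x_{j_{(q+1)}}}|=1_{\{j_q+1<j_{(q+1)}\}}|\aint_{>x_{j_q+1},<x_{j_{(q+1)}}}|$ (and analogously for the $i$-differences), the count $\sum_{q=0}^p(1_{j_{(q+1)}=\LL}+1_{j_q+1=\LL})=|\{q\in[p]:j_q\in\{\LL-1,\LL\}\}|$ (using that $j_0=-1$ and $j_{p+1}=k+1$ contribute nothing), and the identity $1_{\{j_q+1<j_{(q+1)}\}}|\abar_{>x_{j_q+1},<x_{j_{(q+1)}}}\setminus\aint_{>x_{j_q+1},<x_{j_{(q+1)}}}|=j_{(q+1)}-j_q-2$ for transferring between $\aint$ and $\abar$. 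Collecting these gives exactly the claimed inequality. Since this is a pure rearrangement, there is no genuine obstacle; the only point requiring care is that the direction of the inequality in the hypothesis ($\dim\le|\mathcal\poly'|+c$) combines correctly with the direction of the bound on $|\mathcal\poly'|$ ($\le$) so that the resulting chain of inequalities flips the Proposition \ref{prop:critequiv}(2)-style inequality into a $\ge$, which is exactly what is wanted.
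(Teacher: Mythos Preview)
Your proposal is correct and follows exactly the same approach as the paper: the paper's proof simply says ``proceed as in the proof of Proposition \ref{prop:critequiv} and use $|\mathcal\poly'|=\sum_{q=1}^p\kappa_q\le \sum_{q=1}^p(i_{j_q+1}-i_{j_q}-1-1_{j_q\in\{\LL-1,\LL\}})$ to reason about a fixed collection $\mathcal\poly'$,'' which is precisely your observation that only the upper bound on $\kappa_q$ (not the sharp choice) is needed for this direction.
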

\begin{proof}
We proceed as in the proof of Proposition \ref{prop:critequiv} and use 
\[
|\mathcal \poly'|=\sum_{q=1}^p\kappa_q\le \sum_{q=1}^p\left(i_{j_q+1}-i_{j_q}-1- 1_{j_q\in\{\LL-1,\LL\}}\right),
\]
to reason about a fixed collection $\mathcal \poly'$.
\end{proof}
As a consequence of Lemma \ref{lem:le}, we get the following combinatorial information about \emph{sharp} collections.
\begin{lemma}
\label{lem:critequiveq}
Fix $c\ge 0$, an integer $p\ge 1$, and $j_0:=-1< j_1<\cdots<j_p<k+1=:j_{p+1}$ such that  $ i_{j_q+1}-i_{j_q}-1-1_{j_q\in\{\LL-1,\LL\}}$ are positive for any $q\in [p]$. Suppose there exist
\[
\mathcal \poly':=(\underbrace{\poly_{j_1},\ldots,\poly_{j_1}}_{\kappa_1},\ldots,\underbrace{\poly_{j_p},\ldots,\poly_{j_p}}_{\kappa_p})\subseteq \mathcal \poly,
\]
where $\kappa_q$ are integers such that $0<\kappa_q\le i_{j_q+1}-i_{j_q}-1-1_{j_q\in\{\LL-1,\LL\}}$ for all $q\in [p]$, such that
\[
\dim\left(\sum_{\poly\in \mathcal \poly' }\poly\right)= |\mathcal\poly'|+c.
\]
Then,
\[
|\{q\in [p]:j_q\in\{\LL-1,\LL\}\}|\le c.
\]
\end{lemma}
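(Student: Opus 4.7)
The plan is to combine Lemma \ref{lem:le} applied to the given collection $\mathcal \poly'$ with the basic ``trivial extremals" bound coming from $|\mathcal N_=|>0$ (Theorem \ref{thm:trivsubcrit} and Assumption \ref{ass:N=>0}). More precisely, applying Lemma \ref{lem:le} directly yields
\begin{align*}
\sum_{q=0}^p 1_{\{j_q+1<j_{(q+1)}\}}\,|\abar_{>x_{j_q+1},<x_{j_{(q+1)}}}| \;\ge\; \bigl|\{q\in [p]: j_q\in\{\LL-1,\LL\}\}\bigr| - c + \sum_{q=0}^p 1_{\{j_q+1<j_{(q+1)}\}}\,(i_{j_{(q+1)}}-i_{j_q+1}-1).
\end{align*}
Thus, to deduce the desired bound $|\{q\in [p]: j_q\in\{\LL-1,\LL\}\}| \le c$, it suffices to prove that for every $q\in \llbracket 0,p\rrbracket$ with $j_q+1<j_{(q+1)}$, we have $|\abar_{>x_{j_q+1},<x_{j_{(q+1)}}}| \le i_{j_{(q+1)}}-i_{j_q+1}-1$, because then the two sums cancel and the assertion follows immediately.

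To establish this term-by-term inequality, I would first check that each such $(j_q, j_{(q+1)})$ is a splitting pair in the sense of Definition \ref{def:splitpair}: we clearly have $0 \le j_q+1 < j_{(q+1)} \le k+1$; moreover, since $p\ge 1$, if $j_q=-1$ then $q=0$ and $j_{(q+1)}=j_1 \le j_p < k+1$, while symmetrically if $j_{(q+1)}=k+1$ then $j_q = j_p \ge j_1 > -1$, so in either case $(j_q+1, j_{(q+1)}) \ne (0,k+1)$. Given Assumption \ref{ass:N=>0} that $|\mathcal N_=|>0$, the contrapositive of Theorem \ref{thm:trivsubcrit} yields exactly the bound $|\abar_{>x_{r+1},<x_s}| \le i_s - i_{r+1}-1$ for every splitting pair $(r,s)$, which applied to $(r,s) = (j_q, j_{(q+1)})$ gives what we need.

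Putting these pieces together completes the argument; no further work is required. The only subtlety worth highlighting is the sidestep in the preceding paragraph to check that the endpoints of each nontrivial gap $(j_q, j_{(q+1)})$ do form a splitting pair (rather than the excluded degenerate case $(0,k+1)$), which is what allows Theorem \ref{thm:trivsubcrit} to be invoked uniformly across all terms of the sum.
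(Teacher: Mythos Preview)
Your proof is correct and follows essentially the same approach as the paper: apply Lemma~\ref{lem:le} to obtain the lower bound, then use $|\mathcal N_=|>0$ to bound each term $|\abar_{>x_{j_q+1},<x_{j_{(q+1)}}}|\le i_{j_{(q+1)}}-i_{j_q+1}-1$, and subtract. The only cosmetic difference is that the paper justifies the term-by-term upper bound by direct counting (any $\sigma\in\mathcal N_=$ forces $\sigma(\abar_{>x_{j_q+1},<x_{j_{(q+1)}}})\subseteq\llbracket i_{j_q+1}+1,i_{j_{(q+1)}}-1\rrbracket$), whereas you route through the splitting-pair verification and Theorem~\ref{thm:trivsubcrit}; both are equivalent.
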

\begin{proof}
The assumption $\dim\left(\sum_{\poly\in \mathcal \poly' }\poly\right)= |\mathcal\poly'|+c$ implies  $\dim\left(\sum_{\poly\in \mathcal \poly' }\poly\right)\le |\mathcal\poly'|+c$, so by Lemma \ref{lem:le},
\begin{align}
\label{eq:dimtemp1}
&\sum_{q=0}^p1_{\{j_q+1<j_{(q+1)}\}}\,|\abar_{>x_{j_q+1},<x_{j_{(q+1)}}}|\ge |\{q\in [p]:j_q\in\{\LL-1,\LL\}\}|-c+\sum_{q=0}^p1_{\{j_q+1<j_{(q+1)}\}}\,(i_{j_{(q+1)}}-i_{j_q+1}-1).
\end{align}
On the other hand, since $|\mathcal N_=|>0$, we have 
\begin{align}
\label{eq:dimtemp1.5}
1_{\{j_q+1<j_{(q+1)}\}}\,|\abar_{>x_{j_q+1},<x_{j_{(q+1)}}}|\le 1_{\{j_q+1<j_{(q+1)}\}}\,(i_{j_{(q+1)}}-i_{j_q+1}-1)
\end{align}
(because $|\llbracket i_{j_q+1}+1,i_{j_{(q+1)}}-1\rrbracket|\le i_{j_{(q+1)}}-i_{j_q+1}-1$),
so
\begin{align}
\label{eq:dimtemp2}
\sum_{q=0}^p1_{\{j_q+1<j_{(q+1)}\}}\,|\abar_{>x_{j_q+1},<x_{j_{(q+1)}}}|\le \sum_{q=0}^p1_{\{j_q+1<j_{(q+1)}\}}\,(i_{j_{(q+1)}}-i_{j_q+1}-1).
\end{align}
Combining \eqref{eq:dimtemp1} and  \eqref{eq:dimtemp2} we get
\[
|\{q\in [p]:j_q\in\{\LL-1,\LL\}\}|\le c.
\]
\end{proof}

We are now ready to characterize the sharp-(sub)critical collections. We start with the sharp-subcritical collections. 

\begin{lemma}{\textnormal{(\textbf{Sharp-subcritical collections})}}
\label{lem:sharsubcrit}
Fix an integer $p\ge 1$, and $j_0:=-1< j_1<\cdots<j_p<k+1=:j_{p+1}$ such that  $ i_{j_q+1}-i_{j_q}-1-1_{j_q\in\{\LL-1,\LL\}}$ are positive for any $q\in [p]$. Suppose that
\[
\mathcal \poly':=(\underbrace{\poly_{j_1},\ldots,\poly_{j_1}}_{\kappa_1},\ldots,\underbrace{\poly_{j_p},\ldots,\poly_{j_p}}_{\kappa_p})\subseteq \mathcal \poly,
\]
where $\kappa_q$ are integers such that $0<\kappa_q\le i_{j_q+1}-i_{j_q}-1-1_{j_q\in\{\LL-1,\LL\}}$ for all $q\in [p]$, is sharp-subcritical. Then,
\[
\forall ~ q\in [p]:\quad j_q\notin\{\LL-1,\LL\}\quad\text{and}\quad1_{\{j_q+1<j_{(q+1)}\}}\,|\abar_{>x_{j_q+1},<x_{j_{(q+1)}}}|=1_{\{j_q+1<j_{(q+1)}\}}\,(i_{j_{(q+1)}}-i_{j_q+1}-1).
\]
\end{lemma}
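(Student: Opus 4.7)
The plan is to deduce both conclusions directly from Lemma \ref{lem:critequiveq} and Lemma \ref{lem:le}, specialized to the parameter value $c=0$, which is exactly what sharp-subcriticality gives. Sharp-subcriticality means $\dim\left(\sum_{\poly\in\mathcal\poly'}\poly\right)=|\mathcal\poly'|$, so in particular the equality form needed by Lemma \ref{lem:critequiveq} with $c=0$ is available, and so is the $\le$ form needed by Lemma \ref{lem:le} with $c=0$.

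First, I would apply Lemma \ref{lem:critequiveq} with $c=0$ to the collection $\mathcal\poly'$ under consideration. This immediately yields
\[
|\{q\in [p]:j_q\in\{\LL-1,\LL\}\}|\le 0,
\]
so $j_q\notin\{\LL-1,\LL\}$ for every $q\in [p]$, which is the first assertion of the lemma.

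Next, I would apply Lemma \ref{lem:le} with the same $c=0$ to obtain
\[
\sum_{q=0}^p1_{\{j_q+1<j_{(q+1)}\}}\,|\abar_{>x_{j_q+1},<x_{j_{(q+1)}}}|\ge |\{q\in [p]:j_q\in\{\LL-1,\LL\}\}|+\sum_{q=0}^p1_{\{j_q+1<j_{(q+1)}\}}\,(i_{j_{(q+1)}}-i_{j_q+1}-1).
\]
Using the first assertion already proved, the indicator sum on the right side vanishes, so this simplifies to
\[
\sum_{q=0}^p1_{\{j_q+1<j_{(q+1)}\}}\,|\abar_{>x_{j_q+1},<x_{j_{(q+1)}}}|\ge \sum_{q=0}^p1_{\{j_q+1<j_{(q+1)}\}}\,(i_{j_{(q+1)}}-i_{j_q+1}-1).
\]

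To finish, I would compare this aggregate lower bound with the pointwise upper bound \eqref{eq:dimtemp1.5}, which is valid termwise under Assumption \ref{ass:N=>0} because every element of $\abar_{>x_{j_q+1},<x_{j_{(q+1)}}}$ must be sent by a linear extension in $\mathcal N_=$ into $\llbracket i_{j_q+1}+1,\,i_{j_{(q+1)}}-1\rrbracket$. Since the sum of the pointwise upper bounds coincides with the aggregate lower bound, each pointwise inequality must be an equality; that is exactly the second assertion of the lemma. I do not expect a serious obstacle here: the conceptual work was already done in Lemma \ref{lem:critequiveq} and Lemma \ref{lem:le}, and the only remaining ingredient is promoting the summed inequality to a termwise one via the trivial pigeonhole bound \eqref{eq:dimtemp1.5}.
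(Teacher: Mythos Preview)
Your proposal is correct and follows essentially the same approach as the paper: apply Lemma \ref{lem:critequiveq} with $c=0$ for the first assertion, then Lemma \ref{lem:le} with $c=0$ for the aggregate lower bound, and finally promote to termwise equality via the pointwise upper bound \eqref{eq:dimtemp1.5}. The only cosmetic difference is that the paper phrases the matching aggregate upper bound as coming from Proposition \ref{prop:critequiv} (subcriticality of $\mathcal\poly$) rather than from summing \eqref{eq:dimtemp1.5}, but these are the same observation.
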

\begin{proof}
Take $c=0$ in Lemma \ref{lem:critequiveq} to get 
\begin{align}
\label{eq:c=0}
|\{q\in [p]:j_q\in\{\LL-1,\LL\}\}|=0.
\end{align}
Since $
\dim\left(\sum_{\poly\in \mathcal \poly' }\poly\right)\le |\mathcal\poly'|$, and $\mathcal \poly$ is subcritical, applying Lemma \ref{lem:le} and Proposition \ref{prop:critequiv}, with $c=0$, yields
\[
\sum_{q=0}^p1_{\{j_q+1<j_{(q+1)}\}}\,|\abar_{>x_{j_q+1},<x_{j_{(q+1)}}}|=\sum_{q=0}^p1_{\{j_q+1<j_{(q+1)}\}}\,(i_{j_{(q+1)}}-i_{j_q+1}-1).
\]
By \eqref{eq:dimtemp1.5}, it follows that, for every  $0\le j_q\le k+1$, 
\[
1_{\{j_q+1<j_{(q+1)}\}}\,|\abar_{>x_{j_q+1},<x_{j_{(q+1)}}}|=1_{\{j_q+1<j_{(q+1)}\}}\,(i_{j_{(q+1)}}-i_{j_q+1}-1).
\]
\end{proof}

We now turn to the sharp-critical collections. The assumption made in the following lemma does not follow automatically from the fact that $\mathcal \poly$ is sharp-critical.  Rather, we will be able to make this assumption only after Section \ref{sec:subcrit}, and the motivation behind this assumption can be found in Theorem \ref{thm:subcrit}. The proof, however, is similar in spirit to the rest of this section so it is included here.
\begin{lemma}{\textnormal{(\textbf{Sharp-critical collections})}}
\label{lem:sharcrit}
Suppose $|\abar_{>x_{r+1},<x_s}| \le i_s-i_{r+1}-2$ for every splitting pair $(r,s)$. Fix an integer $p\ge 1$, and $j_0:=-1< j_1<\cdots<j_p<k+1=:j_{p+1}$ such that $ i_{j_q+1}-i_{j_q}-1-1_{j_q\in\{\LL-1,\LL\}}$ are positive for any $q\in [p]$.
 Then, every
\[
\mathcal \poly':=(\underbrace{\poly_{j_1},\ldots,\poly_{j_1}}_{\kappa_1},\ldots,\underbrace{\poly_{j_p},\ldots,\poly_{j_p}}_{\kappa_p})\subseteq \mathcal \poly,
\]
 where $\kappa_q$ are integers such that $0<\kappa_q\le i_{j_q+1}-i_{j_q}-1-1_{j_q\in\{\LL-1,\LL\}}$ for all $q\in [p]$, satisfying
\[
\dim\left(\sum_{\poly\in \mathcal \poly' }\poly\right)=|\mathcal\poly'|+1,
\]
must be of the form
\[
\mathcal \poly'=(\mathcal \poly_0,\mathcal \poly_1,\ldots,\mathcal \poly_{r-1},\mathcal \poly_r,\mathcal \poly_s,\mathcal \poly_{s+1},\ldots,\mathcal \poly_k),
\]
where $(r,s)$ is an $\LL$-splitting pair satisfying 
\[
|\abar_{>x_{r+1},<x_s}|= i_s-i_{r+1}-2.
\]
\end{lemma}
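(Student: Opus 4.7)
My plan is to pair Lemma~\ref{lem:le} (with $c=1$) with the splitting-pair hypothesis to pin down the combinatorial structure of $\{j_q\}$ and the multiplicities $\{\kappa_q\}$. The first observation is that whenever $j_q+1<j_{(q+1)}$, the pair $(j_q,j_{(q+1)})$ is a splitting pair in the sense of Definition~\ref{def:splitpair}: the range bounds follow from $j_0=-1$ and $j_{p+1}=k+1$, while the forbidden value $(r+1,s)=(0,k+1)$ would force $q=0=p$, contradicting $p\ge 1$. The hypothesis then gives $|\abar_{>x_{j_q+1},<x_{j_{(q+1)}}}|\le i_{j_{(q+1)}}-i_{j_q+1}-2$ for each such $q$. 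Summing over $q$ and contrasting with the lower bound supplied by Lemma~\ref{lem:le} at $c=1$, the discrepancy of $1$ per gap produces
\[
|\{q\in[p]:j_q\in\{\LL-1,\LL\}\}|+\sum_{q=0}^p 1_{\{j_q+1<j_{(q+1)}\}}\le 1.
\]

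I would next rule out the possibility that the sum on the left vanishes: in that case every step $j_q\mapsto j_{q+1}$ would be unit, forcing $\{j_1,\ldots,j_p\}=\{0,\ldots,k\}$ and hence $|\{q:j_q\in\{\LL-1,\LL\}\}|=2$, a contradiction. Thus the sum equals $1$ and the count term vanishes, so there is a unique index $q^*$ with $j_{q^*}+1<j_{(q^*+1)}$. Setting $r:=j_{q^*}$ and $s:=j_{(q^*+1)}$, every other step is consecutive, forcing $\{j_1,\ldots,j_p\}=\{0,\ldots,r,s,\ldots,k\}$. Since $\LL-1$ and $\LL$ are missing from this set, they lie in the gap $\{r+1,\ldots,s-1\}$, giving $r+1<\LL<s$, so that $(r,s)$ is an $\LL$-splitting pair. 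Moreover the inequality chain is saturated at $q^*$, producing $|\abar_{>x_{r+1},<x_s}|=i_s-i_{r+1}-2$.

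The final task is to show $\kappa_q=i_{j_q+1}-i_{j_q}-1$ for every $q\in[p]$, which delivers the claimed form $\mathcal\poly'=(\mathcal\poly_0,\ldots,\mathcal\poly_{r-1},\mathcal\poly_r,\mathcal\poly_s,\ldots,\mathcal\poly_k)$. By Lemma~\ref{lem:spancollec} only $q^*$ contributes, so
\[
\dim\Bigl(\sum_{\poly\in\mathcal\poly'}\poly\Bigr)=n-k-|\aint_{>x_{r+1},<x_s}|=n-k-(i_s-i_{r+1}-s+r),
\]
using the identity $|\aint_{>x_{r+1},<x_s}|=|\abar_{>x_{r+1},<x_s}|-(s-r-2)$ obtained by removing the chain elements $x_{r+2},\ldots,x_{s-1}$. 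A telescoping calculation, split across the consecutive segments $q\in[1,r+1]$ and $q\in[r+2,p]$, evaluates the upper bound $\sum_{q=1}^p(i_{j_q+1}-i_{j_q}-1)$ on $|\mathcal\poly'|$ to the same quantity minus $1$; equating it with $|\mathcal\poly'|=\dim-1$ forces equality in every $\kappa_q$. The only point requiring real attention is this final bookkeeping, but it is a direct computation given the established structure of $\{j_q\}$.
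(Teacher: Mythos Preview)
Your argument is correct and follows essentially the same route as the paper: apply Lemma~\ref{lem:le} with $c=1$, combine with the splitting-pair hypothesis to obtain the inequality $|\{q:j_q\in\{\LL-1,\LL\}\}|+|\{q:j_q+1<j_{(q+1)}\}|\le 1$, rule out the case of no gaps, and read off the unique gap $(r,s)$ together with the saturation $|\abar_{>x_{r+1},<x_s}|=i_s-i_{r+1}-2$. Your final paragraph pinning down the multiplicities $\kappa_q$ via $\dim=|\mathcal\poly'|+1$ and the telescoping evaluation of $\sum_q(i_{j_q+1}-i_{j_q}-1)$ is a point the paper states without justification (it asserts the form of $\mathcal\poly'$ before even deriving the equality for $|\abar_{>x_{r+1},<x_s}|$), so your version is in fact more complete here.
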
 
\begin{proof}
First note that $(j_q+1,j_{(q+1)})\neq (0,k+1)$ because $j_q+1=0\Rightarrow q=0$ so $j_{(q+1)}=j_1\le j_p<j_{p+1}=k+1$. The assumption $|\abar_{>x_{r+1},<x_s}| \le i_s-i_{r+1}-2$ for every splitting pair $(r,s)$ implies that 
\begin{align*}
&\sum_{q=0}^p1_{\{j_q+1<j_{(q+1)}\}}|\abar_{>x_{j_q+1},<x_{j_{(q+1)}}}|\le \sum_{q=0}^p1_{\{j_q+1<j_{(q+1)}\}}\,(i_{j_{(q+1)}}-i_{j_q+1}-2)\\
=&-|\{q\in [p]: j_q+1<j_{(q+1)}\}|+\sum_{q=0}^p1_{\{j_q+1<j_{(q+1)}\}}(i_{j_{(q+1)}}-i_{j_q+1}-1).
\end{align*}
On the other hand, since $
\dim\left(\sum_{\poly\in \mathcal \poly' }\poly\right)\le |\mathcal\poly'|+1$, applying Lemma \ref{lem:le} with $c=1$ yields
\begin{align}
\label{eq:lower}
\sum_{q=0}^p1_{\{j_q+1<j_{(q+1)}\}}\,|\abar_{>x_{j_q+1},<x_{j_{(q+1)}}}|\ge |\{q\in [p]:j_q\in\{\LL-1,\LL\}\}|-1+\sum_{q=0}^p1_{\{j_q+1<j_{(q+1)}\}}\,(i_{j_{(q+1)}}-i_{j_q+1}-1).
\end{align}
We conclude that
\begin{align}
\label{eq:twoterms}
 |\{q\in [p]:j_q\in\{\LL-1,\LL\}\}|+|\{q\in [p]: j_q+1<j_{(q+1)}\}|\le 1.
\end{align}
Since
\[
|\{q\in [p]: j_q+1<j_{(q+1)}\}|=0\quad\Longrightarrow \quad\{j_1,\ldots,j_p\}=\{1,\ldots, k\},
\]
we get 
\[
|\{q\in [p]: j_q+1<j_{(q+1)}\}|=0\quad \Longrightarrow \quad |\{q\in [p]:j_q\in\{\LL-1,\LL\}\}|=2.
\]
Hence, \eqref{eq:twoterms} can hold if, and only if, 
\[
 |\{q\in [p]:j_q\in\{\LL-1,\LL\}\}|=0\quad\text{and}\quad|\{q\in [p]: j_q+1<j_{(q+1)}\}|=1.
\]
It follows that
\[
\mathcal \poly'=(\mathcal \poly_0,\mathcal \poly_1,\ldots,\mathcal \poly_{r-1},\mathcal \poly_r,\mathcal \poly_s,\mathcal \poly_{s+1},\ldots,\mathcal \poly_k),
\]
where $(r,s)$ is an $\LL$-splitting pair. Finally, plugging in $ |\{q\in [p]:j_q\in\{\LL-1,\LL\}\}|=0$ into \eqref{eq:lower}, and using that $(r,s)$ is the only pair $(j_q,j_{(q+1)})$ satisfying  $j_q+1<j_{(q+1)}$, yields
\[
|\abar_{>x_{r+1},<x_s}|\ge -1+[i_s-i_{r+1}-1]=i_s-i_{r+1}-2.
\]
On the other hand, by assumption, $|\abar_{>x_{r+1},<x_s}| \le i_s-i_{r+1}-2$, so we conclude
\[
|\abar_{>x_{r+1},<x_s}|=i_s-i_{r+1}-2.
\]
\end{proof}
\begin{remark}
In the proof of Lemma \ref{lem:sharcrit} we only used the condition $\dim\left(\sum_{\poly\in \mathcal \poly' }\poly\right)\le|\mathcal\poly'|+1$, so the reader might wonder why we assume that $\mathcal \poly'$ is sharp-critical. By Assumption \ref{ass:N=>0}, the only other possibility would be for $\mathcal \poly'$ to be sharp-subcritical, but this is impossible by Lemma \ref{lem:sharsubcrit} and the assumption $|\abar_{>x_{r+1},<x_s}| \le i_s-i_{r+1}-2$ for every splitting pair $(r,s)$.
\end{remark}

\section{Splitting and the subcritical extremals}
\label{sec:subcrit}
In this section we introduce the \emph{splitting} mechanism for posets, which is connected to a reduction to lower dimensional extremals. Consequently, we characterize the subcritical extremals (Theorem \ref{thm:subcrit}).
To motivate the splitting mechanism recall that,  by Lemma \ref{lem:sharsubcrit}, we know that every sharp-subcritical collection
 \[
\mathcal \poly':=(\poly_{j_1},\ldots,\poly_{j_1},\ldots,\poly_{j_p},\ldots,\poly_{j_p})\subseteq \mathcal \poly,
\]
must satisfy
\[
\forall ~ q\in [p]:\quad j_q\notin\{\LL-1,\LL\}\quad\text{and}\quad1_{\{j_q+1<j_{(q+1)}\}}\,|\abar_{>x_{j_q+1},<x_{j_{(q+1)}}}|=1_{\{j_q+1<j_{(q+1)}\}}\,(i_{j_{(q+1)}}-i_{j_q+1}-1).
\]
Fix an index $j_q$ such that  $j_q\notin\{\LL-1,\LL\}$ and $j_q+1<j_{(q+1)}$, so that
\[
|\abar_{>x_{j_q+1},<x_{j_{(q+1)}}}|=i_{j_{(q+1)}}-i_{j_q+1}-1.
\]
Since $|\llbracket  i_{j_q+1}+1,i_{j_{(q+1)}}-1\rrbracket|=i_{j_{(q+1)}}-i_{j_q+1}-1$, we must have
\[
\abar_{\ge x_{j_q+1},\le x_{j_{(q+1)}}}\quad\overset{\text{bijection}}{\mapsto} \quad \llbracket  i_{j_q+1},i_{j_{(q+1)}}\rrbracket
\]
under any linear extension.  This means that the poset $\abar$ can be \emph{split} by factoring out the poset $\abar_{\ge x_{j_q+1},\le x_{j_{(q+1)}}}$, so that we are left with a poset with a shorter chain. We will show that $|\mathcal N_=|^2=|\mathcal N_-||\mathcal N_+|$ implies that equality holds in Stanley's inequalities \emph{also for the poset with the shorter chain}. We may then resort to our induction hypothesis that the extremals in the case where the chain size is $<k$ were already characterized.

\begin{remark}
\label{rem:splitproj}
The splitting mechanism described in this section can be viewed as a combinatorial equivalence of the projection formula for mixed volumes \cite[Theorem 5.3.1]{Sch14}. This is another building block of our dictionary between geometry and combinatorics.
\end{remark}

We now proceed to formalize the above splitting mechanism. 

\begin{definition}
\label{def:split}
The \emph{split} of $\abar$, based on a splitting pair $(r,s)$, is given by defining posets $\abar_1,\abar_2$ as 
\begin{align*}
&\abar_1:= \abar_{\ge x_{r+1},\le x_s}\quad\mbox{and}\quad \abar_2:=(\abar\backslash\abar_1)\cup \{x\},
\end{align*}
where the relations for $x$ are defined via $x\ast z$, for $\ast\in\{<,>\}$ and $z\in \abar\backslash\abar_1$, if, and only if, there exists $w\in\abar_1$ such that $w\ast z$.\footnote{The new element $x$ should be thought of as a compression of $\abar_1$ into one element, namely $x$. The relations for $x$ are consistent since we cannot have $w_1<z<w_2$ for $w_1,w_2\in \abar_1, z\in \abar\backslash\abar_1$ because this would imply that $x_r\le z\le x_s$, and hence $z\in  \abar_1$, which is a contradiction.}
\end{definition}

Let $(r,s)$ be a splitting pair satisfying $\LL\notin\{r+1,s\}$. We will define the analogues of $\mathcal N_-,\mathcal N_=,\mathcal N_+$ associated with the posets $\abar_1,\abar_2$. This requires distinguishing between two cases: (1) $x_{\LL}\in\{x_{r+2},\ldots,x_{s-1}\}$ and (2) $x_{\LL}\in\{x_1,\ldots,x_r\}\cup\{x_{s+1},\ldots,x_k\}$; note that $x_{\LL}\notin\{x_{r+1},x_s\}$ by assumption.\footnote{We use the convention $\{x_a,\ldots, x_z\}=\varnothing$ when $z<a$; e.g., $\{x_1,\ldots, x_{r-2}\}=\varnothing$ when $r=0$.} For $\iota=1,2$ let
\[
\mathcal N^{\iota}:=\{\mbox{bijections }\sigma:\abar_{\iota}\to [|\abar_{\iota}|]: w\le z\Rightarrow \sigma(w)\le\sigma(z)~\forall~ w,z\in\abar_i\},
\]
and, given  $\s\in\{-,=,+\}$, let $1_{\s}:=1_{\{\s\text{ is }+\}}-1_{\{\s\text{ is }-\}}$.\\

\emph{Case (1).} For $\s\in\{-,=,+\}$ set
\begin{align*}
&\mathcal N^1_{\s}:=\{\sigma\in \mathcal N^1:\sigma(x_j)=i_j-i_{r+1}+1_{j=\LL}1_{\s}\mbox{ for }j\in \llbracket r+1,s\rrbracket\},\\
&\mathcal N_{\s}^2:=\{\sigma\in \mathcal N^2:\sigma(x_j)=i_j \mbox{ for }j\in \llbracket 0,r\rrbracket, ~\sigma(x)=i_{r+1}, \mbox{ and } \sigma(x_j)=i_j -(i_s-i_{r+1})\mbox{ for }j\in \llbracket s+1,k+1\rrbracket\};
\end{align*}
note that the definition of $\mathcal N_{\s}^2$ is independent of $\s$.\\

\emph{Case (2).} For $\s\in\{-,=,+\}$ set
\begin{align*}
\mathcal N^1_{\s}:=\{\sigma\in \mathcal N^1:\sigma(x_j)=i_j-i_{r+1}\mbox{ for }j\in \llbracket r+1,s\rrbracket\},
\end{align*}
and
\begin{align*}
\mathcal N_{\s}^2:=\{\sigma\in \mathcal N^2&: \sigma(x_j)=i_j+1_{\{j=\LL\}}1_{\s}\mbox{ for }j\in \llbracket 0,r\rrbracket, ~\sigma(x)=i_{r+1},\\
& \mbox{ and } \sigma(x_j)=i_j -(i_s-i_{r+1})+1_{j=\LL}1_{\s}\mbox{ for }j\in \llbracket s+1,k+1\rrbracket\};
\end{align*}
note that the definition of $\mathcal N_{\s}^1$ is independent of $\s$.

Before exploiting the splitting mechanism we start with a quick observation. 
\begin{lemma}
\label{lem:subcritcplus}
For every splitting pair $(r,s)$,
\begin{equation}
\label{eq:subcritcplus}
 |\abar_{>x_{r+1},<x_s}|\le   i_s-i_{r+1}-1-1_{r+1=\LL}-1_{s=\LL}.
 \end{equation}
\end{lemma}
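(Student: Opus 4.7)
My plan is to exhibit, for each splitting pair $(r,s)$, a single linear extension $\sigma$ whose values at $x_{r+1}$ and $x_s$ are as close together as possible, and then count using injectivity of $\sigma$.

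First I note that by Assumption \ref{ass:N=>0} we have $|\mathcal N_=|>0$ and $|\mathcal N_=|^2=|\mathcal N_-||\mathcal N_+|$, so all three of $|\mathcal N_-|,|\mathcal N_=|,|\mathcal N_+|$ are positive. Since $r+1<s$, exactly one of the three cases $r+1=\LL$, $s=\LL$, or $\LL\notin\{r+1,s\}$ holds, and correspondingly $1_{r+1=\LL}+1_{s=\LL}\in\{0,1\}$. I choose $\sigma$ according to this trichotomy: pick $\sigma\in\mathcal N_+$ when $r+1=\LL$, pick $\sigma\in\mathcal N_-$ when $s=\LL$, and pick $\sigma\in\mathcal N_=$ otherwise (in all cases such $\sigma$ exists by the previous observation, and the condition $i_{\LL-1}+1<i_\LL<i_{\LL+1}-1$ ensures that the shifted positions are well-defined; the auxiliary elements $x_0,x_{k+1}$ are handled by the convention $\sigma(x_0)=0$, $\sigma(x_{k+1})=n+1$).

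With this choice, a direct read-off of the definitions of $\mathcal N_-,\mathcal N_=,\mathcal N_+$ gives
\[
\sigma(x_{r+1})=i_{r+1}+1_{r+1=\LL},\qquad \sigma(x_s)=i_s-1_{s=\LL}.
\]
Every $z\in\abar_{>x_{r+1},<x_s}$ must satisfy $\sigma(x_{r+1})<\sigma(z)<\sigma(x_s)$ since $\sigma$ is a linear extension, so $\sigma(z)$ belongs to the integer interval
\[
\llbracket \sigma(x_{r+1})+1,\,\sigma(x_s)-1\rrbracket,
\]
which has cardinality $i_s-i_{r+1}-1-1_{r+1=\LL}-1_{s=\LL}$. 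Since $\sigma$ is injective, the bound \eqref{eq:subcritcplus} follows immediately.

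There is no real obstacle here; the only subtle point is checking that the chosen $\sigma$ exists and that the shift by $\pm 1$ does not collide with neighboring fixed positions $i_{\LL\pm 1}$, both of which are guaranteed by Assumption \ref{ass:N=>0} together with the standing hypothesis $i_{\LL-1}+1<i_\LL<i_{\LL+1}-1$.
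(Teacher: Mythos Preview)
Your proof is correct and takes essentially the same approach as the paper's: both pick a linear extension with $x_{r+1}$ and $x_s$ placed as close together as possible (using $\mathcal N_+$ when $r+1=\LL$, $\mathcal N_-$ when $s=\LL$) and then count available slots by injectivity. The only cosmetic difference is that the paper handles the case $\LL\notin\{r+1,s\}$ by invoking the converse of Theorem~\ref{thm:trivsubcrit}, whereas you treat all three cases uniformly via the single formula $\sigma(x_{r+1})=i_{r+1}+1_{r+1=\LL}$, $\sigma(x_s)=i_s-1_{s=\LL}$.
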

\begin{proof}
The converse of Theorem \ref{thm:trivsubcrit} yields
\[
 |\abar_{>x_{r+1},<x_s}|\le   i_s-i_{r+1}-1\quad\mbox{for every splitting pair } (r,s).
 \]
Hence, it suffices to consider the case where either $r+1=\LL$ or $s=\LL$. Suppose $r+1=\LL$; the case $s=\LL$ is proven analogously. Then, every $\sigma\in\mathcal N_+$ (which must exist since $|\mathcal N_=|>0\Rightarrow |\mathcal N_+|>0$ as $|\mathcal N_=|^2=|\mathcal N_-||\mathcal N_+|$) satisfies $\sigma(x_{r+1})=i_{r+1}+1$ and $\sigma(x_s)=i_s$. Hence, given $z\in \abar_{>x_{r+1},<x_s}$, the number of available spots for $\sigma(z)$ is  $|\llbracket i_{r+1}+2,i_s-1\rrbracket|=(i_s-1)-( i_{r+1}+2)+1=i_s-i_{r+1}-2$.
\end{proof}
\begin{proposition}
\label{prop:split}
Fix a splitting pair $(r,s)$ satisfying $\LL\notin\{r+1,s\}$, and let $\abar_1,\abar_2$ be the split based on $(r,s)$.  One of the following must occur:
\begin{enumerate}[(i)]
\item $|\mathcal N_=^{\iota}|^2=|\mathcal N_-^{\iota}||\mathcal N_+^{\iota}|$ for every $\iota\in\{1,2\}$. 
\item  $|\abar_{>x_{r+1},<x_s}|\le i_s-i_{r+1}-2$.
\end{enumerate}
\end{proposition}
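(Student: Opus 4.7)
The plan is to show that if (ii) fails then a bijective factorization of linear extensions forces Stanley's inequality to be tight on both $\abar_1$ and $\abar_2$. First I would invoke Lemma \ref{lem:subcritcplus} together with the hypothesis $\LL\notin\{r+1,s\}$: the lemma's bound reduces to $|\abar_{>x_{r+1},<x_s}|\le i_s-i_{r+1}-1$, so assuming (ii) fails pins down the equality $|\abar_{>x_{r+1},<x_s}|=i_s-i_{r+1}-1$. This will be the structural lever that produces a rigid splitting of every $\sigma$ in $\mathcal N_-\cup \mathcal N_=\cup \mathcal N_+$.

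Next I would extract the splitting itself. For any such $\sigma$ we have $\sigma(x_{r+1})=i_{r+1}$ and $\sigma(x_s)=i_s$ (since $\LL\notin\{r+1,s\}$), and every $z\in\abar_{>x_{r+1},<x_s}$ must land in $\llbracket i_{r+1}+1,i_s-1\rrbracket$. Since $|\abar_{>x_{r+1},<x_s}|=|\llbracket i_{r+1}+1,i_s-1\rrbracket|$, the pigeonhole principle forces $\sigma$ to restrict to a bijection $\abar_1\to\llbracket i_{r+1},i_s\rrbracket$, and consequently $\abar\setminus\abar_1\to [n]\setminus\llbracket i_{r+1},i_s\rrbracket$. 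Shifting positions in $\llbracket i_{r+1},i_s\rrbracket$ down into the range of $\abar_1$, and contracting $\abar_1$ to the single element $x$ placed at $i_{r+1}$ in $\abar_2$, I would verify that this gives a bijection
\[
\mathcal N_{\circ}\ \longleftrightarrow\ \mathcal N^1_{\circ}\times \mathcal N^2_{\circ},\qquad \circ\in\{-,=,+\},
\]
and hence $|\mathcal N_{\circ}|=|\mathcal N^1_{\circ}||\mathcal N^2_{\circ}|$. The bookkeeping here (especially checking that $\mathcal N^{\iota}_\s$ as defined in the excerpt are the images of $\mathcal N_\s$ under these shifts, in both Case (1) and Case (2)) is the main obstacle, though it is essentially a direct verification.

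To conclude I would argue as follows. Exactly one of $\abar_1,\abar_2$ contains $x_{\LL}$; call it $\abar_{\iota^*}$. For the other index, the definitions of $\mathcal N^{\iota}_-,\mathcal N^{\iota}_=,\mathcal N^{\iota}_+$ coincide (the $\LL$-perturbation $1_\s$ does not appear there), so Stanley's inequality on that side is a trivial equality. For $\iota^*$ the gap condition $i_{\LL-1}+1<i_{\LL}<i_{\LL+1}-1$ is preserved under the shift (because the shift is by a constant on the relevant block), so Stanley's inequality \eqref{eq:Stanley} applies and gives $|\mathcal N^{\iota^*}_=|^2\ge |\mathcal N^{\iota^*}_-||\mathcal N^{\iota^*}_+|$. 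Combining with the factorization and Assumption \ref{ass:N=>0},
\[
|\mathcal N_=|^2=|\mathcal N^1_=|^2|\mathcal N^2_=|^2\ \ge\ |\mathcal N^1_-||\mathcal N^1_+||\mathcal N^2_-||\mathcal N^2_+|=|\mathcal N_-||\mathcal N_+|=|\mathcal N_=|^2,
\]
so every inequality is an equality and (i) holds. The only subtle step I would need to write out with care is the verification that the Stanley set-up (chain with gap condition) really does pass to $\abar_{\iota^*}$ after the shift; everything else is combinatorial bookkeeping from the splitting.
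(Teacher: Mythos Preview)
Your approach is correct and matches the paper's: assume (ii) fails, use Lemma~\ref{lem:subcritcplus} to force $|\abar_{>x_{r+1},<x_s}|=i_s-i_{r+1}-1$, then build the bijection $\mathcal N_{\circ}\leftrightarrow\mathcal N^1_{\circ}\times\mathcal N^2_{\circ}$ exactly as you describe. The only difference is the last step. You invoke Stanley's inequality on $\abar_{\iota^*}$ and squeeze; the paper instead observes that on the non-$\iota^*$ side the three sets $\mathcal N^{\iota}_-,\mathcal N^{\iota}_=,\mathcal N^{\iota}_+$ are \emph{literally the same set} (the $1_\s$ does not appear), so their common cardinality is a positive constant that can be cancelled from $|\mathcal N_=|^2=|\mathcal N_-||\mathcal N_+|$ directly. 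This sidesteps the need to verify the gap condition on the split poset, so the step you flagged as ``the only subtle step'' is in fact avoidable.
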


\begin{proof}
We will prove the proposition under the assumption that case (1) occurs; the proof for case (2) is analogous. Note that under case (1) we trivially have $|\mathcal N_=^2|^2=|\mathcal N_-^2||\mathcal N_+^2|$ since $\mathcal N_{\s}^2$ is independent of $\s$. 

It suffices to show that if (ii) is false then (i) is true. This will be proven by showing that if (ii) is false, then, for any $\s\in\{-,=,+\}$,
\begin{equation}
\label{eq:bijections}
|\mathcal N_{\s}|=|\mathcal N_{\s}^1||\mathcal N_{\s}^2|,
\end{equation}
where we recall that $\mathcal N_{\s}^2$ is independent of $\s$. Plugging \eqref{eq:bijections} into $|\mathcal N_=|^2=|\mathcal N_-|\mathcal N_+|$ gives $|\mathcal N_=^1|^2|\mathcal N_{\s}^2|^2=|\mathcal N_-^1||\mathcal N_+^1||\mathcal N_{\s}^2|^2$. Canceling $|\mathcal N_{\s}^2|$ on both sides ($|\mathcal N_{\s}^2|>0$ since $|\mathcal N_=|>0$) gives (i). 

We now turn to prove \eqref{eq:bijections} under the assumption that (ii) is false. By \eqref{eq:subcritcplus}, (ii) being false is equivalent to $|\abar_{>x_{r+1},<x_s}|= i_s-i_{r+1}-1$, i.e., $|\abar_1|=i_s-i_{r+1}+1$. We will  prove \eqref{eq:bijections}  by constructing a bijection $b:\mathcal N_{\s}\to \mathcal N_{\s}^1\times \mathcal N_{\s}^2$ for $\s\in \{-,=,+\}$. Fix $\s\in \{-,=,+\}$ and define a map $b$ via $b=(b_1,b_2)$, with $b_1:\mathcal N_{\s}\to \mathcal N_{\s}^1, ~b_2:\mathcal N_{\s}\to \mathcal N_{\s}^2$, where we set, for each $\sigma\in \mathcal N_{\s}$,
\begin{align*}
&\mbox{For }z\in \abar_1:\quad b_1(\sigma)(z)=\sigma(z)-i_{r+1},\\
&\mbox{For }z\in \abar_2:\quad b_2(\sigma)(z)=
\begin{cases}
\sigma(z)\mbox{ if }\sigma(z)\in \llbracket 0,i_{r+1}-1\rrbracket,\\
i_{r+1} \mbox{ if } z=x,\\
\sigma(z)-(i_s-i_{r+1})\mbox{ if }\sigma(z)\in \llbracket i_s+1, n+1\rrbracket.
\end{cases}
\end{align*}
We will first check that, given $\sigma\in \mathcal N_{\s}$, $b_1(\sigma)\in \mathcal N_{\s}^1$ and $b_2(\sigma)\in \mathcal N_{\s}^2$. We will then construct a map $b':\mathcal N_{\s}^1\times \mathcal N_{\s}^2\to \mathcal N_{\s}$ and show that $b\circ b'=b'\circ b=\Id$, completing the proof. That $b_1(\sigma)\in \mathcal N_{\s}^1$ and $b_2(\sigma)\in \mathcal N_{\s}^2$ follows from the definitions of $\mathcal N_{\s}^1,\mathcal N_{\s}^2$ and the fact that $\sigma\in \mathcal N_{\s}$. The map $b':\mathcal N_{\s}^1\times \mathcal N_{\s}^2\to \mathcal N_{\s}$ is defined by taking $\sigma_{\iota}\in \mathcal N_{\s}^{\iota}$, for $\iota=1,2$, and setting, for $z\in \abar$,
\begin{align*}
b'(\sigma_1,\sigma_2)(z)=
\begin{cases}
\sigma_2(z)&\mbox{ if }z\in \abar_2\mbox{ and }\sigma_2(z)\in \llbracket 0,i_{r+1}-1\rrbracket\\
\sigma_1(z)+i_{r+1}&\mbox{ if }z\in \abar_1\\
\sigma_2(z)+(i_s-i_{r+1})&\mbox{ if }z\in \abar_2\mbox{ and }\sigma_2(z)\in \llbracket i_{r+1}+1,|\abar_2|\rrbracket
\end{cases}.
\end{align*}
To see that $b'(\sigma_1,\sigma_2)\in  \mathcal N_{\s}$ we first need to check that given $z<w$ we have $b'(\sigma_1,\sigma_2)(w)< b'(\sigma_1,\sigma_2)(z)$. If $w,z\in\abar_1$ or $w,z\in\abar_2$, this follows from $\sigma_i\in\mathcal N^{\iota}_{\s}$, for $\iota=1,2$, so it remains to check $w\in\abar_1,~z\in\abar_2$ and $w\in\abar_2, ~z\in\abar_1$; we check the first case and the second case is analogous. Suppose that $w\in\abar_1$ and $z\in\abar_2$. Then, we must have $\sigma_2(z)\in \llbracket 0,i_{r+1}-1\rrbracket$ since, by the definition of $x$, $w>z\Rightarrow x>z$ and $\sigma_2(x)=i_{r+1}$. Hence, $b'(\sigma_1,\sigma_2)(w)=\sigma_1(w)+i_{r+1}> \sigma_2(z)=b'(\sigma_1,\sigma_2)(z)$. 
Now that we know that $b'(\sigma_1,\sigma_2)$ respects the relations of $\abar$, in order to show that $b'(\sigma_1,\sigma_2)\in  \mathcal N_{\s}$, it remains to check that $b'(\sigma_1,\sigma_2)(x_j)=i_j+1_{j=\LL}1_{\s}$ for all $1\le j\le k$. This follows immediately from the definitions of $\mathcal N_{\s}^{\iota}$ for $\iota\in\{1,2\}$ and $\s\in \{-,=,+\}$.
Finally, that $b\circ b'=b'\circ b=\Id$ follows from the construction of $b$ and $b'$.
\end{proof}
The next result provides a \emph{geometric} characterization under which the case in Proposition \ref{prop:split}(i) occurs.
\begin{lemma}
\label{lem:split}
Let $\mathcal\poly '\subseteq \mathcal\poly$ be a sharp subcritical collection. Then, there exists a splitting pair $(r,s)$ satisfying $\LL\notin\{r+1,s\}$, with a corresponding split $\abar_1,\abar_2$, such that $\poly_r,\poly_s\in\mathcal\poly'$ and $|\mathcal N_=^{\iota}|^2=|\mathcal N_-^{\iota}||\mathcal N_+^{\iota}|$ for every $\iota\in\{1,2\}$. 
\end{lemma}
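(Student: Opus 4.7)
The plan is to invoke Proposition \ref{prop:split} after selecting a pair of indices in $\mathcal\poly'$ whose gap satisfies the tight identity supplied by Lemma \ref{lem:sharsubcrit}.

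First I would write $\mathcal\poly' = (\poly_{j_1}^{\kappa_1},\ldots,\poly_{j_p}^{\kappa_p})$ with $0 \le j_1 < \cdots < j_p \le k$, and extend by $j_0 := -1$ and $j_{p+1} := k+1$. Lemma \ref{lem:sharsubcrit} then delivers two structural facts: $j_q \notin \{\LL-1,\LL\}$ for every $q \in [p]$, and the tight identity
\[
|\abar_{>x_{j_q+1},<x_{j_{(q+1)}}}| = i_{j_{(q+1)}} - i_{j_q+1} - 1
\]
holds for every $q \in \{0,\ldots,p\}$ with $j_q+1 < j_{(q+1)}$.

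Next I would locate the ``straddling'' gap. Since $\LL-1,\LL \notin \{j_1,\ldots,j_p\}$ and $\LL \in [k]$, there is a unique $q^* \in \{0,\ldots,p\}$ with $j_{q^*} \le \LL-2$ and $j_{(q^*+1)} \ge \LL+1$. Setting $r := j_{q^*}$ and $s := j_{(q^*+1)}$ yields $r+1 \le \LL-1 < \LL+1 \le s$, so $(r,s)$ is a splitting pair and $\LL \notin \{r+1,s\}$ is automatic; the condition $(r+1,s) \neq (0,k+1)$ holds because $\mathcal\poly'$ is nonempty. When $r \ge 0$ and $s \le k$, both indices lie in $\{j_1,\ldots,j_p\}$, which supplies $\poly_r,\poly_s \in \mathcal\poly'$. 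The tight identity then gives $|\abar_{>x_{r+1},<x_s}| = i_s - i_{r+1} - 1$, which falsifies condition (ii) of Proposition \ref{prop:split}; the dichotomy there forces condition (i), i.e.\ $|\mathcal N_=^{\iota}|^2 = |\mathcal N_-^{\iota}||\mathcal N_+^{\iota}|$ for $\iota \in \{1,2\}$.

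The main obstacle is the boundary scenario in which every index of $\mathcal\poly'$ lies on a single side of $\LL$, forcing $r = -1$ or $s = k+1$; in that case one of $\poly_r,\poly_s$ is not among the defined polytopes $\poly_0,\ldots,\poly_k$. I would resolve this either by adopting the convention that $\poly_{-1}$ and $\poly_{k+1}$ are vacuously contained in every collection (they contribute trivially to any mixed volume), by invoking the positivity $|\mathcal N_=| > 0$ from Assumption \ref{ass:N=>0} to exclude such one-sided sharp-subcritical configurations, or by passing to a sharp-subcritical enlargement of $\mathcal\poly'$ that straddles $\LL$ before running the argument above.
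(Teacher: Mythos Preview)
Your argument is essentially the paper's: both extract $j_q\notin\{\LL-1,\LL\}$ and the tight gap identities from Lemma~\ref{lem:sharsubcrit}, locate a gap in the $j_q$-sequence (you single out the one straddling $\LL$, the paper argues by contradiction that some gap must exist and takes any such), and feed the resulting equality $|\abar_{>x_{r+1},<x_s}|=i_s-i_{r+1}-1$ into Proposition~\ref{prop:split} to falsify alternative (ii) and force (i).

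On your boundary concern: the clause $\poly_r,\poly_s\in\mathcal\poly'$ is an imprecision of the lemma statement itself (the paper's proof glosses over the same point) and is never used downstream---Theorem~\ref{thm:subcrit} only needs the split equality. The proof of Lemma~\ref{lem:sharsubcrit} in fact establishes the tight identity for every $q\in\{0,\ldots,p\}$ (termwise equality follows once the two sums agree and each summand obeys~\eqref{eq:dimtemp1.5}), so Proposition~\ref{prop:split} still applies when $q^*\in\{0,p\}$. Your first proposed convention therefore suffices; the second does not, since one-sided sharp-subcritical collections are not ruled out by $|\mathcal N_=|>0$.
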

\begin{proof}
By Lemma \ref{lem:sharsubcrit}, 
\[
\mathcal \poly'=(\mathcal\poly_{j_1},\ldots,\mathcal\poly_{j_p}),
\]where $j_0:=-1$, $0\le j_1<\cdots<j_p\le k$, $j_{p+1}:=k+1$, $\kappa_q\le i_{j_q+1}-i_{j_q}-1-1_{j_q\in\{\LL-1,\LL\}}$, and $p\in [n-k-2]$,
must satisfy
\[
\forall ~ q\in [p]:\quad j_q\notin\{\LL-1,\LL\}\quad\text{and}\quad1_{\{j_q+1<j_{(q+1)}\}}\,|\abar_{>x_{j_q+1},<x_{j_{(q+1)}}}|=1_{\{j_q+1<j_{(q+1)}\}}\,[i_{j_{(q+1)}}-i_{j_q+1}-1].
\]
Note that, for any $0\le q\le p$, $(j_q+1,j_{(q+1)})\neq (0,k+1)$. Indeed, for the latter to occur we need to have $p=1$ and $q=0$, but then $(j_0+1,j_{(0+1)})= (0,j_p)\neq (0,k+1)$ as $j_p<k+1$. We now show that there exists $0\le q'\le p$ such that $(j_{q'}+1,j_{(q'+1)})$ is a splitting pair. Indeed, if not, then $j_q+1=j_{(q+1)}$ for every $0\le q\le p$ so we get $j_1=0,j_2=1,\ldots ,j_{p+1}=k+1$ which contradicts $j_q\notin\{\LL-1,\LL\}$. Setting $r:=j_{q'},\, s:=j_{(q'+1)}$, we get a splitting pair $(r,s)$ such that $\poly_r,\poly_s\in \mathcal\poly'$  and $|\abar_{>x_{r+1},<x_s}|= i_s-i_{r+1}-1$. By Proposition \ref{prop:split}, we must have $|\mathcal N_=^{\iota}|^2=|\mathcal N_-^{\iota}||\mathcal N_+^{\iota}|$ for every $\iota\in\{1,2\}$. 
\end{proof}

Using Lemma \ref{lem:split}, the characterization of the subcritical extremals of Stanley's inequalities now follows.
\begin{theorem}{\textnormal{(\textbf{Subcritical extremals})}}
\label{thm:subcrit}
$~$

Suppose that $\mathcal\poly$ has a sharp-subcritical collection. Then there exists a splitting pair $(r,s)$ such that the associated posets split $\abar_1,\abar_2$ satisfies  $|\mathcal N_{=}^{\iota}|^2=|\mathcal N_{-}^{\iota}||\mathcal N_{+}^{\iota}|$ for every $\iota\in\{1,2\}$. 
\end{theorem}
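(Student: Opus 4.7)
The plan is to observe that Theorem \ref{thm:subcrit} is essentially a direct restatement of Lemma \ref{lem:split}, which has already been established. Given that $\mathcal\poly$ has a sharp-subcritical collection, I would denote such a collection by $\mathcal\poly' \subseteq \mathcal\poly$ and apply Lemma \ref{lem:split} verbatim: this yields a splitting pair $(r,s)$ with $\LL\notin\{r+1,s\}$ and $\poly_r,\poly_s\in\mathcal\poly'$, together with the desired equality $|\mathcal N_=^\iota|^2=|\mathcal N_-^\iota||\mathcal N_+^\iota|$ for $\iota\in\{1,2\}$. No additional work is required at this stage; Theorem \ref{thm:subcrit} is the ``existence'' consequence of the more detailed Lemma \ref{lem:split}.

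To double-check that the bridge between the two statements is indeed automatic, I would verify that nothing in the invocation of Lemma \ref{lem:split} is lost: the hypothesis ``$\mathcal\poly$ has a sharp-subcritical collection'' is precisely the hypothesis of Lemma \ref{lem:split}, and the conclusion of the theorem is a weakening of the conclusion of the lemma (it forgets the $\LL$-constraint on $(r,s)$ and the membership $\poly_r,\poly_s\in\mathcal\poly'$). So the proof would be a single line: ``This is an immediate consequence of Lemma \ref{lem:split}.''

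The genuine technical content sits upstream, and I would expect the main obstacle to have already been handled by Proposition \ref{prop:split}, whose dichotomy drives everything: either the split posets inherit the equality in Stanley's inequalities, or the bound $|\abar_{>x_{r+1},<x_s}|\le i_s-i_{r+1}-2$ forces a refinement of the subcriticality picture. The combinatorial extraction in Lemma \ref{lem:sharsubcrit} then guarantees that, for a sharp-subcritical collection, at least one pair $(j_{q'}+1,j_{(q'+1)})$ is splitting and matches the tight bound $|\abar_{>x_{j_{q'}+1},<x_{j_{(q'+1)}}}|=i_{j_{(q'+1)}}-i_{j_{q'}+1}-1$, so case (ii) of Proposition \ref{prop:split} is ruled out and we land in case (i). With all this already in place, Theorem \ref{thm:subcrit} itself presents no new obstacle.
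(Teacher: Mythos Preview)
Your proposal is correct and matches the paper's approach exactly: the paper simply states ``Using Lemma \ref{lem:split}, the characterization of the subcritical extremals of Stanley's inequalities now follows'' and then records Theorem \ref{thm:subcrit} without further argument. As you observed, the theorem is just the existence statement extracted from Lemma \ref{lem:split}, with the extra information about $\LL\notin\{r+1,s\}$ and $\poly_r,\poly_s\in\mathcal\poly'$ discarded.
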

Our induction hypothesis Assumption \ref{ass:induct} is that Theorem \ref{thm:supcrit} and Theorem \ref{thm:crit} hold for $k-1$. Hence, without loss of generality we may assume from now on that
\begin{align}
\label{eq:ass}
\text{For all splits }\abar_1,\abar_2: \quad  |\mathcal N_{=}^{\iota}|^2\neq|\mathcal N_{-}^{\iota}||\mathcal N_{+}^{\iota}|\quad \forall~\iota\in\{1,2\}.
\end{align}
By Theorem \ref{thm:subcrit}, the assumption \eqref{eq:ass} implies that $\mathcal \poly$ is critical. Further, by Proposition \ref{prop:split},
\[
|\abar_{>x_{r+1},<x_s}|\le i_s-i_{r+1}-2\quad\text{for every splitting pair $(r,s)$ satisfying $\LL\notin\{r+1,s\}$},
\]
so using in addition Lemma \ref{lem:subcritcplus}, we get
\[
|\abar_{>x_{r+1},<x_s}|\le i_s-i_{r+1}-2\quad\text{for every splitting pair $(r,s)$}.
\]
Putting everything together we assume from now on:
\begin{assumption}
\label{ass:crit}
The collection $\mathcal \poly$ is critical and
\[
|\abar_{\ge x_{r+1},\le x_s}|\le i_s-i_{r+1}\quad\mbox{for every splitting pair $(r,s)$}.
\]
\end{assumption}

\section{Mixing}
\label{sec:beyond}
Under the current assumptions, we know that $\abar$ cannot be totally ordered (Remark \ref{rem:totorder}). In this section, we develop the notion of \emph{mixing} which takes advantage of the fact that $\abar$ must have some incomparable elements. The level of mixing will depend on the criticality notions developed in Section \ref{sec:notionsCrit}, which will be further developed in the current section. We begin with Section \ref{subsec:towardlin} which characterizes the locations where elements of the poset can be placed. We then introduce in Section \ref{subsec:guide} the notions of criticality and maximality for splitting pairs. Finally, Section \ref{subsec:place} provides information on the mixing properties of  splitting pairs.

\subsection{Range}
\label{subsec:towardlin}
A fixed element $y\in\aint$ can only be placed in a limited number of locations under any linear extension. For example, if $\aint$ is totally ordered, there would be only one such location. We start by defining a few quantities associated to $y$ which will provide information on the possible placements of $y$ under linear extensions.

\begin{definition}
\label{def:lmu}
Given $y\in\aint$ let $i_{\max}(y)$ be the maximum index such that $y>x_{i_{\max}(y)}$ and let $i_{\min}(y)$ be the minimum index such that $y<x_{i_{\min}(y)}$. Set 
\begin{align*}
l_{\s}(y):=\max_{r\le i_{\max}(y)}(i_r^\circ+|\abar_{>x_r,\le y}|)\quad\text{and}\quad u_{\s}(y):=\min_{s\ge i_{\min}(y)}(i_s^\circ-|\abar_{\ge y,<x_s}|),
\end{align*}
where 
\[
i_j^\s := i_j+1_{j=\LL}1_{\s},
\]
and let
\begin{align*}
m^{\s}_{\min}(y):=\min_{\sigma\in\mathcal N_{\s}}\sigma(y)\quad\text{and}\quad m^{\s}_{\max}(y):=\max_{\sigma\in\mathcal N_{\s}}\sigma(y).
\end{align*}

\end{definition}
Note that $i_j^\s$ is the location where $x_j$ is placed under every linear extension in $\mathcal N_{\s}$.  Hence, for any choice of $r\le i_{\max}(y)$ (res. $s\ge i_{\min}(y)$), $y$ must be placed at a location at least as large (res. small) as $i_r^\circ+|\abar_{>x_r,\le y}|$ (res. $i_s^\circ-|\abar_{\ge y,<x_s}|$).

Definition \ref{def:lmu} immediately implies the following  relations between $l_{\s}$ (res. $u_{\s}$) for $\s\in\{-,=,+\}$:
\begin{lemma}
\label{lem:UL}
Fix $y\in\aint$. Then,\\

\begin{enumerate}[(i)]
\item $l_{=}(y)-1\le l_{-}(y)\le l_{=}(y)\le l_{+}(y)\le l_{=}(y)+1$.\\

\item If $i_{\max}(y)<\LL$, then $l_{-}(y)=l_{=}(y)=l_{+}(y)$.\\

\item $u_{=}(y)-1\le u_{-}(y)\le u_{=}(y)\le u_{+}(y)\le u_{=}(y)+1$.\\

\item If $i_{\min}(y)>\LL$, then $u_{-}(y)=u_{=}(y)=u_{+}(y)$.
\end{enumerate}
\end{lemma}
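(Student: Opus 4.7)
The plan is to reduce everything to a single elementary observation: in the definitions
\[
l_{\s}(y)=\max_{r\le i_{\max}(y)}\bigl(i_r^{\s}+|\abar_{>x_r,\le y}|\bigr),\qquad u_{\s}(y)=\min_{s\ge i_{\min}(y)}\bigl(i_s^{\s}-|\abar_{\ge y,<x_s}|\bigr),
\]
the only quantity that depends on $\s$ is $i_{\LL}^{\s}$, since $i_j^{\s}=i_j$ for every $j\neq \LL$ and $i_{\LL}^{-}=i_{\LL}-1$, $i_{\LL}^{=}=i_{\LL}$, $i_{\LL}^{+}=i_{\LL}+1$. Consequently, at most one term in the $\max$ (respectively $\min$) defining $l_{\s}(y)$ (resp.\ $u_{\s}(y)$)---namely the one at $r=\LL$ (resp.\ $s=\LL$)---changes as $\s$ varies, and it shifts by exactly $\pm 1$ between consecutive values of $\s$.

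For parts (i) and (iii), I will invoke the elementary fact that if one element of a finite real set $S$ is shifted by $+1$ while the others are kept fixed, then $\max S$ and $\min S$ each change by either $0$ or $+1$. Applied to the passage $\s:==\,\leadsto\,+$, this immediately yields
\[
l_{=}(y)\le l_{+}(y)\le l_{=}(y)+1,\qquad u_{=}(y)\le u_{+}(y)\le u_{=}(y)+1,
\]
and applied to $\s:==\,\leadsto\,-$ (where the relevant term shifts by $-1$, equivalently the shift $+1$ is reversed) it gives
\[
l_{=}(y)-1\le l_{-}(y)\le l_{=}(y),\qquad u_{=}(y)-1\le u_{-}(y)\le u_{=}(y).
\]
Chaining these two chains of inequalities produces the claimed statements in (i) and (iii).

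For parts (ii) and (iv), I will simply observe that when $i_{\max}(y)<\LL$ the index $r=\LL$ is excluded from the range of the $\max$ defining $l_{\s}(y)$, so every term in that $\max$ is independent of $\s$; the same reasoning with $i_{\min}(y)>\LL$ excludes $s=\LL$ from the $\min$ defining $u_{\s}(y)$. In both cases $l_{\s}(y)$, respectively $u_{\s}(y)$, does not depend on $\s$, giving the required equalities.

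I do not anticipate any real obstacle: the argument is pure bookkeeping once one isolates the lone $\s$-dependent term. The only thing worth being careful about is the trivial edge case where $\LL>i_{\max}(y)$ (for $l_{\s}$) or $\LL<i_{\min}(y)$ (for $u_{\s}$), in which the $\max$/$\min$ is taken over a set with no $\s$-dependent term at all; this case is already the content of (ii) and (iv), and is fully consistent with (i) and (iii).
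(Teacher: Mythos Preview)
Your argument is correct and matches the paper's approach: the paper simply states that the lemma ``immediately follows'' from Definition~\ref{def:lmu} without giving any further details, and your write-up is precisely the unpacking of that remark.
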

The next result provides necessary and sufficient conditions for an element of the poset to be placed at a specific location under linear extensions.
\begin{lemma}
\label{lem:range}
Fix $y\in \aint$, $\s\in\{-,=,+\}$, and $i\in [n]$. There exists $\sigma\in\mathcal N_{\s}$ with $\sigma(y)=i$ if, and only if, $i\in\llbracket l_{\s}(y),u_{\s}(y)\rrbracket$ and $i\neq i_m^\s$ for any $m\in [k]$.
\end{lemma}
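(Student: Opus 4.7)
Fix $\sigma\in\mathcal N_{\s}$ with $\sigma(y)=i$. Then $i\neq i_m^{\s}$ for any $m\in[k]$ since $\sigma(x_m)=i_m^{\s}$ and $\sigma$ is a bijection. For every $r\le i_{\max}(y)$, we have $x_r<y$, and $y$ is the unique maximum of $\abar_{>x_r,\le y}$ (all other elements lie strictly below $y$), so the $|\abar_{>x_r,\le y}|$ elements of this set fit as distinct values in $\llbracket i_r^{\s}+1,\sigma(y)\rrbracket$, forcing $\sigma(y)\ge i_r^{\s}+|\abar_{>x_r,\le y}|$. Maximizing over $r$ gives $\sigma(y)\ge l_{\s}(y)$; the bound $\sigma(y)\le u_{\s}(y)$ follows by the symmetric argument using $\abar_{\ge y,<x_s}$.

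\textbf{Sufficiency.} Suppose $i\in\llbracket l_{\s}(y),u_{\s}(y)\rrbracket$ and $i\neq i_m^{\s}$. Evaluating Definition~\ref{def:lmu} at $r=i_{\max}(y)$ and $s=i_{\min}(y)$ gives $l_{\s}(y)>i_{i_{\max}(y)}^{\s}$ and $u_{\s}(y)<i_{i_{\min}(y)}^{\s}$, so $i$ lies in a unique open interval $(i_{a^*}^{\s},i_{a^*+1}^{\s})$ with $i_{\max}(y)\le a^*<i_{\min}(y)$; in particular $y\in\bsg_{a^*}$. The plan is to set $\sigma(x_j):=i_j^{\s}$ and $\sigma(y):=i$, and then to fill the remaining $n-k-1$ positions with $\aint\setminus\{y\}$ in an order-preserving way.

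Concretely, I will produce a slot assignment $\hat a:\aint\to\llbracket 0,k\rrbracket$, extended by $\hat a(y):=a^*$, satisfying: (a) $z\in\bsg_{\hat a(z)}$ for every $z$; (b) $z<w$ in $\aint\cup\{y\}$ implies $\hat a(z)\le \hat a(w)$; (c) $|\hat a^{-1}(a)|$ matches the slot capacity $i_{a+1}^{\s}-i_a^{\s}-1$ for every $a$; and (d) within slot $a^*$, at most $i-i_{a^*}^{\s}-1$ elements lie below $y$ and at most $i_{a^*+1}^{\s}-i-1$ lie above $y$. Given such $\hat a$, independently linearizing each slot's sub-poset and inserting $y$ at the $(i-i_{a^*}^{\s})$-th position of slot $a^*$---which (d) permits---produces a valid $\sigma\in\mathcal N_{\s}$ with $\sigma(y)=i$.

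\textbf{Main obstacle.} The crux is proving the existence of a slot assignment $\hat a$ satisfying (a)--(d), which is a Hall-type feasibility question on $\aint$. The inequality $i\ge l_{\s}(y)$, rewritten as $i-i_r^{\s}\ge|\abar_{>x_r,\le y}|$ for every $r\le i_{\max}(y)$, says that the elements of $\aint$ forced by the partial order into positions $\llbracket i_r^{\s}+1,i-1\rrbracket$ never exceed the available capacity to the left of $y$; the symmetric inequality $i\le u_{\s}(y)$ controls the right side. These bounds combine into the defect-free Hall condition on the bipartite graph of element-slot compatibilities, so a greedy left-to-right placement---at each free position, inserting a minimal not-yet-placed element of $\aint$ that is compatible with the slot it is about to enter---never stalls and delivers the required $\hat a$.
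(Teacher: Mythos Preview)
Your necessity argument is fine and matches the paper. The sufficiency, however, has a genuine gap in the final paragraph.

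You assert that the bounds $l_{\s}(y)\le i\le u_{\s}(y)$ ``combine into the defect-free Hall condition'' and that a greedy fill never stalls. But the inequalities coming from $l_{\s}$ and $u_{\s}$ only control Hall constraints that pass through the position $i$ of $y$; they say nothing about intervals of positions lying entirely to the left of $i$ or entirely to the right of $i$. For those intervals you need that $\mathcal N_{\s}\neq\varnothing$ in the first place (equivalently, by Theorem~\ref{thm:trivsubcrit}, that $|\abar_{>x_{r+1},<x_s}|\le i_s^{\s}-i_{r+1}^{\s}-1$ for every pair $r+1<s$), and you never invoke this. Without it the greedy can certainly stall: imagine slots far from $y$ that are over-demanded by elements incomparable to $y$. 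Even granting the full Hall condition, the correctness of the specific greedy rule you describe (always insert a minimal compatible element) requires the interval structure of the compatibility sets and is not automatic; you would need to say why.

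The paper sidesteps all of this with a one-line reduction: relabel $y$ as an extra chain element $x'_{m+1}$ with target position $i'_{m+1}:=i$ (where $i_m^{\s}<i<i_{m+1}^{\s}$), and apply Theorem~\ref{thm:trivsubcrit} to the augmented chain of length $k+1$. The required inequalities $|\abar'_{>x'_{r+1},<x'_s}|\le i'_s-i'_{r+1}-1$ then split cleanly into three cases: pairs not involving the new index $m+1$ (handled by $|\mathcal N_{\s}|>0$ and Theorem~\ref{thm:trivsubcrit}), pairs with $s=m+1$ (handled by $i\ge l_{\s}(y)$), and pairs with $r+1=m+1$ (handled by $i\le u_{\s}(y)$). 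This is exactly the Hall verification you were reaching for, but packaged so that the ``old'' constraints come for free from the standing assumption $|\mathcal N_{\s}|>0$ and only the two new families need checking.
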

\begin{proof}
$~$

$\Longrightarrow$: Fix $\sigma\in\mathcal N_{\s}$ such that $\sigma(y)=i$. Since $y\neq x_m$ for all $m\in [k]$ it follows that $i\neq i_m^\s$. We now show $i\le u_{\s}(y)$; the argument for $i\ge l_{\s}(y)$ is analogous. Given any $s\ge i_{\min}(y)$, every element $z\in \abar_{>y,<x_s}$ must satisfy $i=\sigma(y)<\sigma(z)<\sigma(x_s)$. Hence, $\sigma(z)$ can take on only $\sigma(x_s)-i-1$ possible values, which means that $|\abar_{>y,<x_s}|\le \sigma(x_s)-i-1$. In other words, $i\le \sigma(x_s)-|\abar_{\ge y,<x_s}|=i_s^\s-|\abar_{\ge y,<x_s}|$. The latter holds for any $s\ge i_{\min}(y)$ which shows $i\le u_{\s}(y)$. \\

$\Longleftarrow$: The assumption $i\neq i_m^\s$ for any $m\in [k]$ implies that we can choose $m\in [k]$ such that $i_m^\s<i<i_{m+1}^\s$. Consider the poset $\abar':=\abar$ with the relabeling
\begin{align*}
&x_j'=x_j~\text{for } j\in \llbracket 1,m\rrbracket, \quad x_{m+1}'=y,\quad x_j'=x_{j-1}~\text{for } j\in \llbracket m+2,k+1\rrbracket,\\
&i_j'=i_j^\s~\text{for } j\in \llbracket 1,m\rrbracket, \quad i_{m+1}'=i,\quad i_j'= i_{j-1}^\s~\text{for } j\in \llbracket m+2,k+1\rrbracket.
\end{align*}
To complete the proof it suffices to show that there exists a linear extension $\sigma'$ of $\abar'$ satisfying $\sigma'(x_j')=i_j'$ for all $j\in\llbracket 1,k+1\rrbracket$. By Theorem \ref{thm:trivsubcrit}, it suffices to show that
\begin{equation}
\label{eq:temp}
 |\abar'_{>x_{r+1}',<x_s'}|\le i_s'-i_{r+1}'-1\quad\text{for all }0\le r+1< s\le k+1. 
\end{equation}
When $r+1\neq m+1,s\neq m+1$, \eqref{eq:temp} holds by the assumption $|\mathcal N_{\s}|>0$ for all $\s\in\{-,=,+\}$ and Theorem \ref{thm:trivsubcrit}. The case $r+1=m+1=s$ is impossible since $r+1<s$. It remains to check the cases $r+1=m+1,s\neq m+1$ and $r+1\ne m+1,s= m+1$. We verify \eqref{eq:temp} in the case $s=m+1$; the proof for the case $r+1=m+1$ is analogous. When $s=m+1$, \eqref{eq:temp} is equivalent to 
\begin{equation}
\label{eq:temp1}
|\abar_{>x_{r+1},<y}|=|\abar'_{>x_{r+1}',<x_s'}|\le i_s'-i_{r+1}'-1=i-i_{r+1}^\s-1.
\end{equation}
When $r+1\le i_{\max}(y)$, \eqref{eq:temp1} holds since, by assumption, $i-i_{r+1}^{\s}-1\ge l_{\s}(y)-i_{r+1}^{\s}-1$, so \eqref{eq:temp1} holds by the definition of $l_{\s}(y)$. When $i_{\max}(y)<r+1<s=m+1$, $\abar_{>x_{r+1},<y}=\varnothing$ because if there exists $x_{r+1}<z<y$, that would imply $x_{r+1}<y$, which contradicts the maximality of $i_{\max}(y)$. Hence, \eqref{eq:temp1} is equivalent to $0\le i-i_{r+1}^{\s}-1$, which holds since  $i_{r+1}^\s\le i_m^\s< i$, where the last inequality holds by the definition of $m$.
\end{proof}
Lemma \ref{lem:range} immediately implies:
\begin{corollary}
\label{cor:range}
Fix $y\in \aint$ and $\s\in\{-,=,+\}$. Then,
\[
l_{\s}(y)\le m^{\s}_{\min}(y)\quad\text{and}\quad m^{\s}_{\max}(y)\le u_{\s}(y).
\]
\end{corollary}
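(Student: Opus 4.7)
The corollary is an immediate consequence of the forward direction of Lemma \ref{lem:range}, so my plan is essentially to just unpack that implication.

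First I would observe that under Assumption \ref{ass:N=>0} we have $|\mathcal{N}_\s| > 0$ for every $\s \in \{-, =, +\}$ (since $|\mathcal N_=|>0$ and $|\mathcal N_=|^2 = |\mathcal N_-||\mathcal N_+|$ forces $|\mathcal N_-|, |\mathcal N_+| > 0$). Therefore both $m^{\s}_{\min}(y)$ and $m^{\s}_{\max}(y)$ are well-defined as the min/max over a nonempty set.

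Next, fix any $\sigma \in \mathcal{N}_\s$ and set $i := \sigma(y)$. By the ``only if'' direction of Lemma \ref{lem:range}, $i \in \llbracket l_\s(y), u_\s(y)\rrbracket$, so in particular $l_\s(y) \le \sigma(y) \le u_\s(y)$. Taking the minimum of $\sigma(y)$ over $\sigma \in \mathcal{N}_\s$ yields $l_\s(y) \le m^\s_{\min}(y)$, and taking the maximum yields $m^\s_{\max}(y) \le u_\s(y)$, as desired.

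There is no real obstacle here; the content lies entirely in Lemma \ref{lem:range}, which has already been proved. The corollary is essentially a convenient restatement saying that $l_\s(y)$ and $u_\s(y)$ serve as honest lower and upper bounds on the positions $y$ can occupy across all of $\mathcal{N}_\s$, a fact that will be used repeatedly in the subsequent mixing arguments.
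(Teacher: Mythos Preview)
Your proof is correct and follows exactly the approach the paper intends: the paper simply states that the corollary ``immediately implies'' from Lemma~\ref{lem:range}, and your unpacking of the forward direction (applying it to an arbitrary $\sigma\in\mathcal N_\s$ and then taking $\min/\max$) is precisely that implication. The added remark about $\mathcal N_\s\neq\varnothing$ under Assumption~\ref{ass:N=>0} is a reasonable well-definedness check.
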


A second corollary of Lemma \ref{lem:range} is the proof of Remark \ref{rem:posetchar}. Note that Assumption \ref{ass:cl} is not needed for the following result.

\begin{proposition}
\label{prop:IIIimpliesIV}
The condition in Theorem \ref{thm:supcrit}(iii) is equivalent to 
\[
\forall\,y<x_{\LL}~\exists\, s(y)\in\llbracket 0,k+1\rrbracket \text{ s.t. } y<x_{s(y)}\text{ and }|\abar_{>y, <x_{s(y)}}|>i_{s(y)}-i_{\LL},
\]
and
\[
\forall\,y>x_{\LL}~\exists\, r(y)\in\llbracket 0,k+1\rrbracket \text{ s.t. } y>x_{r(y)}\text{ and }|\abar_{>x_{r(y)},<y}|>i_{\LL}-i_{r(y)}.
\]
\end{proposition}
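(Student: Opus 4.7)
The plan is to rewrite the poset-level characterization in the statement in terms of the quantities $u_=(y)$ and $l_=(y)$ from Definition~\ref{def:lmu}, and then to match those quantities against the companion positions of $x_\LL$ using Lemma~\ref{lem:range} together with local swaps of adjacent elements. First, using $y<x_s\Leftrightarrow s\ge i_{\min}(y)$ and $|\abar_{>y,<x_s}|=|\abar_{\ge y,<x_s}|-1$, I would rewrite the characterization for $y<x_\LL$ as $\min_{s\ge i_{\min}(y)}(i_s-|\abar_{\ge y,<x_s}|)\le i_\LL-2$, that is $u_=(y)\le i_\LL-2$; symmetrically, the characterization for $y>x_\LL$ becomes $l_=(y)\ge i_\LL+2$.

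For the direction Theorem~\ref{thm:supcrit}(iii)$\Rightarrow$ the characterization, I would fix $y<x_\LL$; (iii) forbids any $\sigma\in\mathcal N_=$ from placing $y$ at $i_\LL-1$. Since $|\mathcal N_=|>0$ and $y<x_\LL$, every such $\sigma$ satisfies $\sigma(y)\le i_\LL-1$, giving $l_=(y)\le i_\LL-1$; moreover $i_\LL-1\neq i_m$ for every $m$, since the standing hypothesis $i_{\LL-1}+1<i_\LL$ forces $i_{\LL-1}\le i_\LL-2$ while $i_m\ge i_\LL$ for $m\ge\LL$. Lemma~\ref{lem:range} then says that $u_=(y)\ge i_\LL-1$ would produce some $\sigma\in\mathcal N_=$ with $\sigma(y)=i_\LL-1$, a contradiction, so $u_=(y)\le i_\LL-2$. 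The case $y>x_\LL$ is symmetric in $l_=$.

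For the converse, I would assume the characterization and suppose some $\sigma\in\mathcal N_\s$ has a companion $y$ comparable to $x_\LL$. By symmetry take $y<x_\LL$, which forces $\s\in\{=,+\}$. The cases $\sigma\in\mathcal N_=$ with $\sigma(y)=i_\LL-1$ and $\sigma\in\mathcal N_+$ with $\sigma(y)=i_\LL$ give respectively $u_=(y)\ge i_\LL-1$ and $u_+(y)\ge i_\LL$ via Corollary~\ref{cor:range}; in the latter, Lemma~\ref{lem:UL}(iii) upgrades the bound to $u_=(y)\ge i_\LL-1$, and both contradict the assumption. The delicate subcase is $\sigma\in\mathcal N_+$ with $\sigma(y)=i_\LL-1$; letting $z:=\sigma^{-1}(i_\LL)$, I would split on whether $z\sim x_\LL$. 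If $z\not\sim x_\LL$, swapping $z$ and $x_\LL$ in $\sigma$ yields $\sigma'\in\mathcal N_=$ with $\sigma'(y)=i_\LL-1$, reducing to the first case. If $z\sim x_\LL$, then $\sigma(z)<\sigma(x_\LL)$ forces $z<x_\LL$, so $z$ itself is comparable to $x_\LL$; now $\sigma(z)=i_\LL$ gives $u_+(z)\ge i_\LL$ by Corollary~\ref{cor:range}, and Lemma~\ref{lem:UL}(iii) yields $u_=(z)\ge i_\LL-1$, contradicting the assumed characterization for $z$.

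The hard part will be precisely this last subcase: when the upper companion $z$ in $\mathcal N_+$ is itself $<x_\LL$, no single swap brings us into $\mathcal N_=$ with $y$ at the companion slot, and the contradiction cannot be extracted from $y$ alone. The resolution is that the characterization must hold for every comparable element simultaneously, so the contradiction can instead be derived from $z$, with Lemma~\ref{lem:UL}(iii) bridging the one-step gap between $u_+$ and $u_=$.
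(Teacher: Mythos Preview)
Your proof is correct and the core mechanism matches the paper's: rewrite the poset condition as $u_=(y)\le i_\LL-2$ (respectively $l_=(y)\ge i_\LL+2$) and then use Lemma~\ref{lem:range} to link this bound to the existence or nonexistence of $\sigma\in\mathcal N_=$ placing $y$ at the companion slot.

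The one notable difference is in the converse direction. The paper first invokes Lemma~\ref{lem:suff}(a) to reduce condition~(iii) of Theorem~\ref{thm:supcrit} to the statement that both companions are incomparable to $x_\LL$ \emph{for $\sigma\in\mathcal N_=$ only}, and then shows this $\mathcal N_=$ condition is equivalent to the poset characterization via Lemma~\ref{lem:range}. That reduction, however, uses Stanley's inequality inside the proof of Lemma~\ref{lem:suff}(a). You instead handle the $\mathcal N_+$ (and by symmetry $\mathcal N_-$) companions directly: the upper-companion case via Lemma~\ref{lem:UL}(iii), and the delicate lower-companion case in $\mathcal N_+$ by either swapping $z=\sigma^{-1}(i_\LL)$ with $x_\LL$ when they are incomparable, or, when $z<x_\LL$, applying the characterization to $z$ rather than $y$. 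This buys you a self-contained combinatorial proof of the equivalence that does not appeal to Stanley's inequality, at the cost of a short extra case analysis; the paper's route is terser but leans on a less elementary ingredient.
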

\begin{proof}
By Lemma \ref{lem:suff}(a), the conditions in Theorem \ref{thm:supcrit}(iii) are equivalent to: $ \sigma^{-1}(i_{\LL}-1)\incomp x_{\LL}\text{ and } \sigma^{-1}(i_{\LL}+1)\incomp x_{\LL} ~\forall \,\sigma\in\mathcal N_=$. We start by showing that
\begin{align*}
&\forall y<x_{\LL}~\exists\, s(y)\in\llbracket 0,k+1\rrbracket \text{ s.t. } y<x_{s(y)}\text{ and }|\abar_{>y,<x_{s(y)}}|>i_{s(y)}-i_{\LL}\\
&\Longleftrightarrow \\
&\sigma^{-1}(i_{\LL}-1)\incomp x_{\LL}~\forall \,\sigma\in\mathcal N_=; 
\end{align*}
The equivalence $\forall\,y>x_{\LL}~\exists\, r(y)\in\llbracket 0,k+1\rrbracket \text{ s.t. } y>x_{r(y)}\text{ and }|\abar_{>x_{r(y)},<y}|>i_{\LL}-i_{r(y)}\Longleftrightarrow \sigma^{-1}(i_{\LL}+1)\incomp x_{\LL}~\forall \,\sigma\in\mathcal N_=$ is analogous.

Indeed, the statement $ \sigma^{-1}(i_{\LL}-1)\incomp x_{\LL} ~\forall\,\sigma\in\mathcal N_=$ is equivalent to the statement that for all $y<x_{\LL}$, there exists no $\sigma\in\mathcal N_=$ such that $\sigma(y)=i_{\LL}-1$. We will show that the latter is equivalent to $u_=(y)<i_{\LL}-1$, which completes the proof. To see this equivalence, note that if $u_=(y)<i_{\LL}-1$, then Lemma \ref{lem:range} implies that  exists no $\sigma\in\mathcal N_=$ such that $\sigma(y)=i_{\LL}-1$. Conversely, suppose there exists no $\sigma\in\mathcal N_=$ such that $\sigma(y)=i_{\LL}-1$, so, by Lemma \ref{lem:range}, $i_{\LL}-1\neq \llbracket l_=(y),u_=(y)\rrbracket$. Note that, by Lemma \ref{lem:range}, $u_=(y)\le i_{\LL}-1$ as $y<x_{\LL}$. Hence, the possibility of $i_{\LL}-1< l_=(y)\le u_=(y)$ cannot occur, which means that $i_{\LL}-1\neq \llbracket l_=(y),u_=(y)\rrbracket\Rightarrow u_=(y)<i_{\LL}-1$, as claimed.
\end{proof}

\subsection{Introduction to mixing}
\label{subsec:guide}
When $\abar$ is totally ordered we have, for any splitting pair $(r,s)$,
\[
\abar_{\ge x_{r+1},\le x_s}\overset{\text{bijection}}{\mapsto} \llbracket i_{r+1}, i_s \rrbracket
\]
under any linear extension $\sigma\in\bigcup_{\s\in\{-,=,+\}}\mathcal N_{\s}$. But under the current assumptions, $\abar$ is not totally ordered (Remark \ref{rem:totorder}), which means that a certain amount of \emph{mixing} must occurs; see Definition \ref{def:mixed_element} for a precise statement. In Section \ref{subsec:place} we will show that there is at least one mixed element (Lemma \ref{lem:splitequiv}) for any splitting pair $(r,s)$. When the splitting pair is in addition an $\LL$-splitting pair we characterize the exact number of mixed element, which depends on the criticality level of the pair:
\begin{definition}
\label{def:splitpaircrit}
An $\LL$-splitting pair $(r,s)$ is \emph{supercritical} if $\mathcal\poly':=(\mathcal \poly_0,\ldots,\mathcal \poly_r,\mathcal \poly_s,\ldots,\mathcal \poly_k)$ satisfies $\dim\left(\sum_{\poly\in \mathcal \poly'}\poly\right)\ge |\mathcal \poly'|+2$, and is \emph{sharp-critical} if $\dim\left(\sum_{\poly\in \mathcal \poly'}\poly\right)= |\mathcal \poly'|+1$.
\end{definition}
We show in Section \ref{subsec:place} how the above notion of criticality is related to the number of mixed elements (Lemma  \ref{lem:generalmixed}). The sharp-critical $\LL$-splitting pairs give rise to the following unique pair which will play an important role in the characterization of the extremals of the critical posets.
\begin{definition}
\label{def:rmaxsmin}
 Let $(r_{\iota},s_{\iota})_{\iota}$ be the sharp-critical $\ell$-splitting pairs, where we assume that at least one such pair exists. The \emph{maximal splitting pair} $(r_{\max},s_{\min})$ is given by $r_{\max}:=\max_{\iota}r_{\iota}$ and $s_{\min}:=\min_{\iota}s_{\iota}$. 
Associated to the maximal splitting pair are
\begin{align}
\label{eq:Kmax}
\begin{split}
&\mathcal\poly_{\max}:=(\mathcal\poly_0,\ldots,\mathcal\poly_{r_{\max}},\mathcal\poly_{s_{\min}},\ldots, \mathcal\poly_k),\\
&\bsg_{\max}:=\bsg_{\llbracket 0, r_{\max}\rrbracket\cup \llbracket s_{\min}, k\rrbracket},\quad\text{and}\quad \aint\backslash\bsg_{\max}=\aint_{>x_{r_{\max}+1},<x_{s_{\min}}},
\end{split}
\end{align}
where the last identity follows from Lemma \ref{lem:betaintv}. 
\end{definition}

The notion of the maximal splitting pair in Definition \ref{def:rmaxsmin} is tied to the notion of maximal sharp-critical collections introduced \cite[section 9.1]{SvH20}, as part of  the characterization of the extremals of the Alexandrov-Fenchel inequality for critical polytopes. In particular, a sharp-critical collection $\mathcal\poly '\subseteq\mathcal \poly$ is \emph{maximal}  if, for any $\mathcal\poly '\subseteq\mathcal\poly ''\subseteq\mathcal \poly$, we have $\dim\left(\sum_{\poly\in \mathcal \poly''}\poly\right)\ge |\mathcal \poly''|+2$. In other words, any addition of polytopes to $\mathcal \poly'$ destroys its sharp-critical nature. The next result explains the connection between these two notions of maximality. 
\begin{proposition}
\label{prop:max_notions}
Suppose there exists a sharp-critical collection. Then, $\mathcal\poly_{\max}$ is the only maximal sharp-critical collection.
\end{proposition}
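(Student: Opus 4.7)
The plan is to exploit Lemma~\ref{lem:sharcrit} to reduce Proposition~\ref{prop:max_notions} to a combinatorial statement about sharp-critical $\LL$-splitting pairs, and then to establish the sharp-criticality of $\mathcal\poly_{\max}$ via a submodularity argument on dimensions of sums of polytopes. As a first step, Lemma~\ref{lem:sharcrit} provides a bijection between sharp-critical collections $\mathcal\poly'\subseteq\mathcal\poly$ and sharp-critical $\LL$-splitting pairs $(r',s')$: each such $\mathcal\poly'$ is uniquely expressed as $(\mathcal\poly_0,\ldots,\mathcal\poly_{r'},\mathcal\poly_{s'},\ldots,\mathcal\poly_k)$ carrying the full multiplicities inherited from $\mathcal\poly$. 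By the very definitions of $r_{\max}$ and $s_{\min}$, any such pair satisfies $r'\le r_{\max}$ and $s'\ge s_{\min}$, so $\mathcal\poly'\subseteq\mathcal\poly_{\max}$ as multisets.

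The crux is to show that $\mathcal\poly_{\max}$ is itself sharp-critical. I would pick sharp-critical $\LL$-splitting pairs $(r_{\max},s_A)$ and $(r_B,s_{\min})$ witnessing the extremal values, and let $A,B\subseteq\mathcal\poly$ be their associated sharp-critical collections. A direct computation with index sets yields $A\cup B=\mathcal\poly_{\max}$ and $A\cap B=(\mathcal\poly_0,\ldots,\mathcal\poly_{r_B},\mathcal\poly_{s_A},\ldots,\mathcal\poly_k)$. By Lemma~\ref{lem:spancollec}, $\lin\left(\sum_{K\in\mathcal C}K\right)$ depends only on the underlying index set of $\mathcal C$, which gives the submodular inequality
\[
\dim\Bigl(\sum_{K\in A\cup B}K\Bigr)+\dim\Bigl(\sum_{K\in A\cap B}K\Bigr)\le \dim\Bigl(\sum_{K\in A}K\Bigr)+\dim\Bigl(\sum_{K\in B}K\Bigr).
\]
Combining this with the multiset identity $|A\cup B|+|A\cap B|=|A|+|B|$, the sharp-criticality of $A$ and $B$ (whose dimensions sum to $|A|+|B|+2$), and the critical lower bounds $\dim\ge|\cdot|+1$ from Assumption~\ref{ass:crit}, every inequality in the chain must be an equality, forcing $\mathcal\poly_{\max}=A\cup B$ to be sharp-critical.

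Given sharp-criticality of $\mathcal\poly_{\max}$, the conclusion follows quickly. For maximality, any proper super-collection $\mathcal\poly''\supsetneq\mathcal\poly_{\max}$ must fail to be sharp-critical: Lemma~\ref{lem:sharcrit} forces any sharp-critical collection to correspond to an $\LL$-splitting pair with $r\le r_{\max}$ and $s\ge s_{\min}$, whereas $\mathcal\poly''\supsetneq\mathcal\poly_{\max}$ would require $r>r_{\max}$ or $s<s_{\min}$; criticality of $\mathcal\poly$ then promotes $\mathcal\poly''$ to supercritical. Uniqueness is immediate from the first paragraph: any other maximal sharp-critical collection is contained in $\mathcal\poly_{\max}$, and if the containment were strict the sharp-criticality of $\mathcal\poly_{\max}$ would contradict the maximality of that collection.

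The main obstacle lies in the second paragraph: a priori $A\cap B$ can be empty, precisely when $r_B=-1$ and $s_A=k+1$ simultaneously, in which case the critical lower bound $\dim\ge|\cdot|+1$ is unavailable and the submodular argument breaks down. To exclude this edge case I would appeal to the standing assumption~\eqref{eq:ass}: in the edge case, a careful tracing of the split based on $(r_{\max},s_{\min})$ under Assumption~\ref{ass:crit} shows that the positions prescribed for the split posets by the formulas defining $\mathcal N^{\iota}_{\s}$ fall outside the admissible range, forcing $|\mathcal N^{\iota}_{\s}|=0$ for some $\iota\in\{1,2\}$ and $\s\in\{-,=,+\}$ and producing a trivial equality $|\mathcal N^{\iota}_{=}|^2=|\mathcal N^{\iota}_{-}||\mathcal N^{\iota}_{+}|$, contradicting~\eqref{eq:ass}. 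Consequently the edge case cannot arise, $A\cap B$ is nonempty, and the submodular argument completes the proof.
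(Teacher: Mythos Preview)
Your approach is genuinely different from the paper's. The paper invokes \cite[Lemma~9.2]{SvH20}---that distinct maximal sharp-critical subcollections are disjoint---and combines it with the observation (via Lemma~\ref{lem:sharcrit}) that any two sharp-critical collections intersect to conclude uniqueness; identification with $\mathcal\poly_{\max}$ then follows by showing the unique maximal collection equals the union of all sharp-critical collections. Your route instead establishes sharp-criticality of $\mathcal\poly_{\max}$ directly via submodularity, which is more self-contained. The submodularity step is correct: by Lemma~\ref{lem:spancollec} the relevant dimensions are cardinalities $|\bsg_S|$, and $\bsg_{S\cup T}=\bsg_S\cup\bsg_T$ together with $\bsg_{S\cap T}\subseteq\bsg_S\cap\bsg_T$ yields the inequality; the multiset identity $|A\cup B|+|A\cap B|=|A|+|B|$ holds because $A,B$ carry the full multiplicities from $\mathcal\poly$ (and the indices $\LL-1,\LL$ never appear).

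However, your resolution of the edge case $A\cap B=\varnothing$ does not work. Under Assumption~\ref{ass:crit} every splitting pair already satisfies $|\abar_{>x_{r+1},<x_s}|\le i_s-i_{r+1}-2$, so Proposition~\ref{prop:split} applied to $(r_{\max},s_{\min})$ lands in alternative~(ii) and never produces the equality $|\mathcal N^\iota_=|^2=|\mathcal N^\iota_-||\mathcal N^\iota_+|$ you need to contradict~\eqref{eq:ass}; the claim that ``positions prescribed for the split posets fall outside the admissible range'' has no clear content here. Without closing this gap, your submodularity chain only yields $\dim(\sum_{K\in\mathcal\poly_{\max}}K)\in\{|\mathcal\poly_{\max}|+1,|\mathcal\poly_{\max}|+2\}$, which does not suffice. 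The paper sidesteps this precisely by importing the general disjointness result from \cite{SvH20} rather than arguing combinatorially; if you want your self-contained route to succeed, you need a genuine argument excluding the simultaneous existence of sharp-critical $\LL$-splitting pairs of the form $(-1,s_{\min})$ and $(r_{\max},k+1)$.
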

\begin{proof}
We start by recalling  that all sharp-critical maximal collections of $\mathcal \poly$ must be disjoint \cite[Lemma 9.2]{SvH20}. By assumption there exists  a sharp-critical collection $\mathcal\poly'$ so let  $\mathcal \poly_*$ be the (necessarily unique) maximal sharp-critical collection containing  $\mathcal\poly'$.  On the other hand, Lemma \ref{lem:sharcrit} shows that any two sharp-critical collections of $\mathcal \poly$ have a non-trivial intersection. It follows that $\mathcal \poly_*$ is the only maximal sharp-critical collection in $\mathcal\poly$.
Next we show that
\[
\mathcal \poly_*= \{\textnormal{union of all sharp-critical collections}\}=\mathcal\poly_{\max},
\]
where the second identity follows from Lemma \ref{lem:sharcrit}, which completes the proof. Indeed, clearly, $\mathcal \poly_*\subseteq\bigcup \{\textnormal{sharp-critical collection}\}$ since $\mathcal \poly_*$ is a sharp-critical collection. If $\bigcup \{\textnormal{sharp-critical collection}\}$ a strictly greater than $\mathcal \poly_*$, i.e., it contains a polytope $K$ not in $\mathcal \poly_*$, then there exists a sharp-critical collection $\mathcal\poly''$ such that $K\in \mathcal\poly''$. Let $\mathcal \poly_{**}$ be the (necessarily unique) maximal sharp-critical collection containing  $\mathcal\poly''$. Then $\mathcal \poly_{**}\neq \mathcal \poly_{*}$ (as $K\in \mathcal \poly_{**}$ but $K\notin \mathcal \poly_{*}$), which contradicts the fact $\mathcal \poly_{*}$ is the only maximal sharp-critical collection. 
\end{proof}

We conclude the section by introducing notation that will be used throughout the paper. Let 
\begin{align}
\label{eq:[]}
\llbracket i_j,i_{j+1}\rrbracket^{\s}:=\llbracket i^\s_j,i^\s_{j+1}\rrbracket = \llbracket i_j+1_{j=\LL}1_{\s},i_{j+1}+1_{j+1=\LL}1_{\s}\rrbracket.
\end{align}
We use this notation when constants are added as well, for example, $\llbracket i_j+1,i_{j+1}-1\rrbracket^{\s}:=\llbracket i^\s_j+1,i^\s_{j+1}-1\rrbracket$.

\subsection{Mixing properties of splitting pairs}
\label{subsec:place}
In this section we analyze the mixing properties of splitting pairs---see Figure \ref{fig:orgsec} for a summary. 
\begin{figure}
\centering
\begin{tikzpicture}
\node[rectangle, draw=black!60, fill=blue!10, very thick, minimum width=50mm, rounded corners, align=center] (a) at (0,0) {\footnotesize \textbf{splitting pair}\\ \footnotesize $\ge 1$ mixed element(s)\\
\footnotesize{(Lemma \ref{lem:splitequiv})}};
\node[rectangle, draw=black!60, fill=blue!10, very thick, minimum width=50mm, rounded corners, align=center] (c) at (-3,-3) {\footnotesize \textbf{supercritical $\LL$-splitting pair}\\\footnotesize  $\ge 2$ mixed elements\\
\footnotesize{(Corollary \ref{cor:Ibetastrong})}};
\node[rectangle, draw=black!60, fill=blue!10, very thick, minimum width=50mm, rounded corners, align=center] (d) at (3,-3) {\footnotesize \textbf{sharp-critical $\LL$-splitting pair}\\ \footnotesize exactly $1$ mixed element\\
\footnotesize (Lemma \ref{lem:generalmixed})};
\node[rectangle, draw=black!60, fill=blue!10, very thick, minimum width=50mm, rounded corners, align=center] (f) at (3,-6) {\footnotesize \textbf{sharp-critical maximal splitting pair}\\ \footnotesize exactly $1$ mixed element, $\mixed$\\
\footnotesize (Corollary \ref{cor:max})
};
\draw [very thick] (a) -- (c);
\draw [very thick] (a) -- (d) -- (f);
\end{tikzpicture}
\caption{A summary of the mixing results from Section \ref{subsec:place}.}
\label{fig:orgsec}
\end{figure}
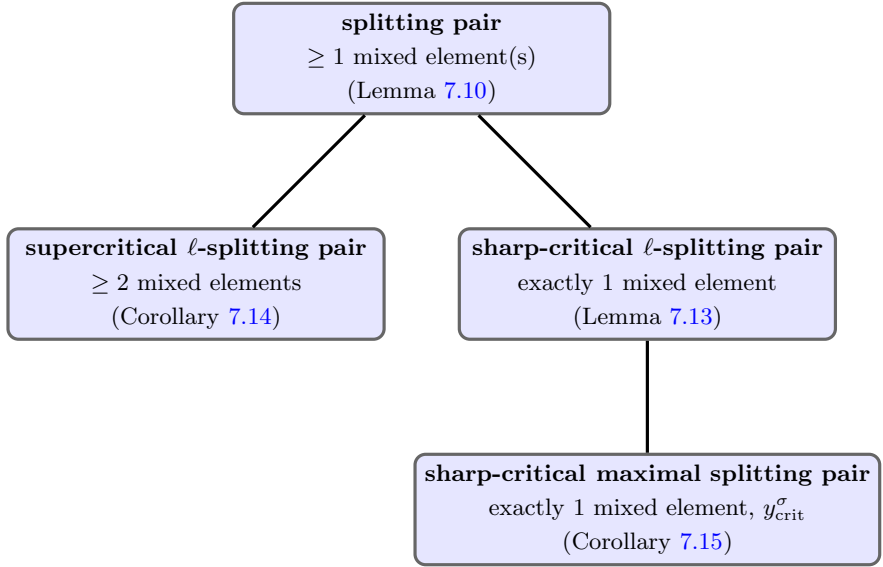
We start by making the definition of a mixed element precise (recall \eqref{eq:betai}):
\begin{definition}
\label{def:mixed_element}
Fix a splitting pair $(r,s)$ and $\sigma\in\mathcal N_{\s}$ for $\s\in \{-,=,+\}$. An element $y^{\sigma}\in \bsg_r\cup\bsg_s$ is a \emph{mixed element} if $\sigma(y^\sigma) \in \llbracket i_{r+1}, i_s \rrbracket\backslash\{i_{r+1},\ldots,i_s\}$.
\end{definition}
Our first result in this section is on the existence of mixed elements. 
\begin{lemma}
\label{lem:splitequiv}
Fix a splitting pair $(r,s)$ and $\sigma\in\mathcal N_{=}$. There exists a mixed element $y^{\sigma}\in \bsg_r\cup\bsg_s$ such that $\sigma(y^\sigma) \in \llbracket i_{r+1}, i_s \rrbracket\backslash\{i_{r+1},\ldots,i_s\}$.
\end{lemma}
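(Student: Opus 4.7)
The plan is to argue by contradiction via a tight pigeonhole count on the positions in $\llbracket i_{r+1}, i_s\rrbracket$. Under the linear extension $\sigma\in\mathcal N_=$, the chain elements $x_{r+1},x_{r+2},\ldots,x_s$ occupy precisely the positions $i_{r+1},i_{r+2},\ldots,i_s$, so exactly $(i_s-i_{r+1}+1)-(s-r)$ of the positions in $\llbracket i_{r+1},i_s\rrbracket$ must be filled by elements of $\aint$. The goal is to show that at least one of these ``non-chain slots'' must be occupied by an element of $\bsg_r\cup\bsg_s$.

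The first step is to explicitly identify the obstruction set, namely the elements of $\aint$ that lie outside $\bsg_r\cup\bsg_s$ and are allowed (by the order relations) to land in $\llbracket i_{r+1},i_s\rrbracket$ under $\sigma$. Using the definition $\bsg_i=\aint\setminus(\aint_{<x_i}\cup\aint_{>x_{i+1}})$, distributing intersections, and using the monotonicity $x_r\le x_s$ together with $x_{r+1}\le x_{s+1}$, I would derive the decomposition
\[
\aint\setminus(\bsg_r\cup\bsg_s)=\aint_{<x_r}\cup\aint_{>x_{r+1},<x_s}\cup\aint_{>x_{s+1}}.
\]
Under any $\sigma\in\mathcal N_=$, elements of $\aint_{<x_r}$ are forced into positions strictly below $i_r\le i_{r+1}$, and elements of $\aint_{>x_{s+1}}$ into positions strictly above $i_{s+1}>i_s$. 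Hence the only members of $\aint\setminus(\bsg_r\cup\bsg_s)$ that could fill a non-chain slot in $\llbracket i_{r+1},i_s\rrbracket$ are those of $\aint_{>x_{r+1},<x_s}$.

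The final step is a count. By Assumption~\ref{ass:crit}, $|\abar_{\ge x_{r+1},\le x_s}|\le i_s-i_{r+1}$, and since $\abar_{\ge x_{r+1},\le x_s}$ contains the $s-r$ chain elements $x_{r+1},\ldots,x_s$, we obtain $|\aint_{>x_{r+1},<x_s}|\le i_s-i_{r+1}-(s-r)$. This is strictly less than the number $(i_s-i_{r+1}+1)-(s-r)$ of non-chain slots to fill, so by the pigeonhole principle at least one slot must be occupied by some $y^\sigma\in\bsg_r\cup\bsg_s$, and by construction $\sigma(y^\sigma)\in\llbracket i_{r+1},i_s\rrbracket\setminus\{i_{r+1},\ldots,i_s\}$.

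I do not anticipate a substantive obstacle; the argument is essentially a one-line pigeonhole count enabled entirely by the criticality bound in Assumption~\ref{ass:crit}. The only care required is bookkeeping at the boundary: when $r=-1$ or $s=k+1$ one uses the conventions $\aint_{<x_0}=\aint_{>x_{k+1}}=\varnothing$ and $i_0=0,\,i_{k+1}=n+1$, and the exclusion $(r+1,s)\neq(0,k+1)$ from Definition~\ref{def:splitpair} guarantees that the criticality bound still applies and that the strict inequality of slots over available elements is preserved in these degenerate cases.
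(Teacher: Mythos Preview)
Your proof is correct and follows essentially the same approach as the paper: a pigeonhole count comparing the number of non-chain positions in $\llbracket i_{r+1},i_s\rrbracket$ against the bound $|\aint_{>x_{r+1},<x_s}|\le i_s-i_{r+1}-(s-r)$ from Assumption~\ref{ass:crit}. The only cosmetic difference is that the paper invokes Lemma~\ref{lem:betaintv} for the set identity $\aint_{>x_{r+1},<x_s}=\aint\setminus(\bsg_r\cup\bsg_s\cup\aint_{<x_{r+1}}\cup\aint_{>x_s})$, whereas you derive the equivalent decomposition $\aint\setminus(\bsg_r\cup\bsg_s)=\aint_{<x_r}\cup\aint_{>x_{r+1},<x_s}\cup\aint_{>x_{s+1}}$ directly; both lead to the same contradiction.
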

\begin{proof}
Recall that $|\abar_{\ge x_{r+1},\le x_s}|\le i_s-i_{r+1}$ by Assumption \ref{ass:crit}, which is equivalent to $|\aint_{> x_{r+1},< x_s}|\le i_s-i_{r+1}-(s-(r+1))-1$. Fix $\sigma\in\mathcal N_{=}$. If there exists no $y^{\sigma} \in \bsg_r\cup\bsg_s$ with  $\sigma(y^{\sigma})\in\llbracket i_{r+1}, i_s \rrbracket\backslash\{i_{r+1},\ldots,i_s\}$, then, by Lemma \ref{lem:betaintv},
\begin{align*}
|\aint_{> x_{r+1},< x_s}|&=|\aint\backslash(\bsg_r\cup\bsg_s\cup \aint_{< x_{r+1}}\cup \aint_{>x_s})|\ge |\llbracket i_{r+1}, i_s \rrbracket\backslash\{i_{r+1},\ldots,i_s\}|\\
&=i_s-i_{r+1}+1-(s-(r+1)+1)=i_s-i_{r+1}-(s-(r+1)),
\end{align*}
which is a contradiction. 
\end{proof}
\begin{corollary}
\label{cor:splitequiv}
For every $0\le j\le k$, $i_j+1<i_{j+1}$.
\end{corollary}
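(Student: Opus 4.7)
The plan is to prove this by contradiction via a direct application of Lemma \ref{lem:splitequiv}. Suppose for some $j \in \llbracket 0, k\rrbracket$ we have $i_j+1 = i_{j+1}$. I would consider the pair $(r,s) := (j-1, j+1)$ and first verify that this is a valid splitting pair in the sense of Definition \ref{def:splitpair}. Indeed, $r+1 = j \ge 0$ and $s = j+1 \le k+1$ satisfy $0 \le r+1 < s \le k+1$, and $(r+1,s) = (j, j+1)$ equals $(0, k+1)$ only if $k = 0$, a case excluded since we are working under the standing assumption $k \ge 1$.

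Since $|\mathcal N_=| > 0$ by Assumption \ref{ass:N=>0}, I would pick any $\sigma \in \mathcal N_=$ and apply Lemma \ref{lem:splitequiv} to $(r,s) = (j-1, j+1)$. This produces a mixed element $y^\sigma \in \bsg_{j-1} \cup \bsg_{j+1}$ satisfying
\[
\sigma(y^\sigma) \in \llbracket i_j, i_{j+1}\rrbracket \setminus \{i_j, i_{j+1}\}.
\]
However, the standing assumption $i_j+1 = i_{j+1}$ forces $\llbracket i_j, i_{j+1}\rrbracket = \{i_j, i_j+1\} = \{i_j, i_{j+1}\}$, so the target set is empty. This contradicts the existence of $y^\sigma$, completing the argument.

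There is no real obstacle here: once Lemma \ref{lem:splitequiv} is in hand, the corollary is essentially a bookkeeping consequence. The only subtlety worth flagging is boundary behavior at $j = 0$ or $j = k$, where one of $\bsg_{j-1}, \bsg_{j+1}$ becomes empty by the convention $\bsg_i = \varnothing$ for $i < 0$ or $i > k$; this does not affect the proof since Lemma \ref{lem:splitequiv} still guarantees a mixed element in the remaining one of $\bsg_{j-1}, \bsg_{j+1}$, and the emptiness of the target range is what produces the contradiction.
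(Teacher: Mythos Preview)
Your proposal is correct and follows essentially the same route as the paper: apply Lemma~\ref{lem:splitequiv} to the splitting pair $(r,s)=(j-1,j+1)$ and observe that the target set $\llbracket i_j,i_{j+1}\rrbracket\setminus\{i_j,i_{j+1}\}$ must be nonempty. The only cosmetic difference is that the paper treats $k=1$ as a separate base case (invoking the standing hypothesis $i_{\LL-1}+1<i_{\LL}<i_{\LL+1}-1$ directly), whereas you correctly note that $(j-1,j+1)$ is already a valid splitting pair for all $k\ge 1$, so no case split is needed.
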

\begin{proof}
If $k=1$ then the corollary holds by the assumption $i_{\LL}<i_{\LL+1}-1$. Otherwise, note that $(r,s) = (j-1,j+1)$ is a splitting pair. Fix $\sigma\in\mathcal N_=$ and note that Lemma \ref{lem:splitequiv} implies that there exists $y^{\sigma}\not \in \abar_{\ge x_{j}, \le x_{j+1}}$ with $\sigma(y^{\sigma})\in \llbracket i_{j}, i_{j+1}\rrbracket$. The first condition gives $y^\sigma \not \in \{ x_j,x_{j+1}\}$, so $\sigma(y^\sigma)\not \in \{ i_j, i_{j+1}\}$. We conclude that $\llbracket i_{j}+1, i_{j+1}-1\rrbracket = \llbracket i_{j}, i_{j+1}\rrbracket \setminus \{i_j, i_{j+1}\}$ is nonempty. 
\end{proof}
Next we move to the mixing properties of $\LL$-splitting pairs. This requires the following simple result.
\begin{lemma}
\label{lem:Ibeta}
$~$
\begin{itemize}
\item Fix $j\in\llbracket 0,k\rrbracket$. For every $\sigma\in\mathcal N_=$, $ \llbracket i_j+1,i_{j+1}-1\rrbracket\subseteq \sigma(\bsg_j)$ and, for every $S\subseteq \llbracket 0,k\rrbracket$, $\bigcup_{j\in S}\llbracket i_j+1,i_{j+1}-1\rrbracket\subseteq \sigma(\bsg_S)$.\\

\item  Fix $j\in \llbracket 0,k\rrbracket\backslash\{\LL-1,\LL\}$ and $\s\in\{-,+\}$. For every $\sigma\in\mathcal N_{\s}$, $\llbracket i_j+1,i_{j+1}-1\rrbracket^{\s}\subseteq \sigma(\bsg_j)$ and, for every $S\subseteq \llbracket 0,k\rrbracket\backslash\{\LL-1,\LL\}$, $\bigcup_{j\in S}\llbracket i_j+1,i_{j+1}-1\rrbracket^{\s}\subseteq \sigma(\bsg_S)$.
\end{itemize}
\end{lemma}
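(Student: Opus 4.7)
The plan is to prove both parts by a direct bijective argument using the fact that $\sigma$ sends each $x_{j'}$ to a prescribed location. First I would handle Part 1. Fix $\sigma \in \mathcal{N}_{=}$, fix $j \in \llbracket 0, k \rrbracket$, and pick any $m \in \llbracket i_j + 1, i_{j+1} - 1 \rrbracket$. Since $i_0 < i_1 < \cdots < i_{k+1}$, the integer $m$ differs from every $i_{j'}$, so $y := \sigma^{-1}(m)$ cannot equal any $x_{j'}$ (which is pinned to $\sigma(x_{j'}) = i_{j'}$ under any element of $\mathcal{N}_{=}$), and hence $y \in \aint$. It remains to check that $y \in \bsg_j = \aint \setminus (\aint_{<x_j} \cup \aint_{>x_{j+1}})$. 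If $y < x_j$, then order-preservation forces $m = \sigma(y) < \sigma(x_j) = i_j$, contradicting $m \ge i_j + 1$. Similarly $y > x_{j+1}$ forces $m > i_{j+1}$, contradicting $m \le i_{j+1} - 1$. Hence $y \in \bsg_j$, proving $\llbracket i_j+1, i_{j+1}-1\rrbracket \subseteq \sigma(\bsg_j)$. The statement for general $S \subseteq \llbracket 0, k \rrbracket$ then follows by taking the union over $j \in S$ and using $\bsg_S = \bigcup_{j \in S} \bsg_j$.

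For Part 2, the argument is analogous but one must be careful that the shifted index $i_{\LL}^{\s} = i_{\LL} + 1_{\s}$ does not create collisions. Fix $j \in \llbracket 0, k \rrbracket \setminus \{\LL - 1, \LL\}$ and $\s \in \{-, +\}$. Since $j \neq \LL$ and $j+1 \neq \LL$, we have $i_j^{\s} = i_j$ and $i_{j+1}^{\s} = i_{j+1}$, so $\llbracket i_j + 1, i_{j+1} - 1 \rrbracket^{\s} = \llbracket i_j + 1, i_{j+1} - 1 \rrbracket$. Now for any $\sigma \in \mathcal{N}_{\s}$ and any $m$ in this range, I claim $m \neq i_{j'}^{\s}$ for every $j' \in [k]$. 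This is clear when $j' \neq \LL$ (since then $i_{j'}^{\s} = i_{j'}$ lies outside $(i_j, i_{j+1})$), and for $j' = \LL$ it requires the spacing assumption $i_{\LL-1} + 1 < i_{\LL} < i_{\LL+1} - 1$: if $j < \LL - 1$ then $i_{j+1} \le i_{\LL - 1} < i_{\LL} - 1 \le i_{\LL}^{\s}$, and symmetric reasoning handles $j > \LL$. Therefore $y := \sigma^{-1}(m) \in \aint$, and the same order-preservation argument as in Part 1 (using $\sigma(x_j) = i_j$ and $\sigma(x_{j+1}) = i_{j+1}$) places $y$ in $\bsg_j$. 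The union statement then follows as before.

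There is no real obstacle here; the proof is essentially a direct verification from the definitions of $\mathcal{N}_{\s}$, $\bsg_j$, and $i_j^{\s}$. The only subtlety worth checking carefully is the non-collision in Part 2, which is where the standing hypothesis $i_{\LL-1} + 1 < i_{\LL} < i_{\LL + 1} - 1$ is used.
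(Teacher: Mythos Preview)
Your proof is correct and follows essentially the same approach as the paper: show that any position in the interval pulls back under $\sigma^{-1}$ to an element of $\aint$ not below $x_j$ and not above $x_{j+1}$, then take unions. The paper is terser for Part 2 (it simply notes that $j\notin\{\LL-1,\LL\}$ forces $\sigma(x_j)=i_j$ and $\sigma(x_{j+1})=i_{j+1}$), whereas you spell out the non-collision $m\neq i_\LL^{\s}$ explicitly; note, however, that this step only needs the strict ordering $i_{\LL-1}<i_\LL<i_{\LL+1}$, not the full spacing hypothesis.
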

\begin{proof}
$~$
\begin{itemize}
\item Fix $\sigma\in\mathcal N_=$. We will show that $\sigma(y)\in \llbracket i_j+1,i_{j+1}-1\rrbracket\Rightarrow y\in\bsg_j$ which implies $\llbracket i_j+1,i_{j+1}-1\rrbracket\subseteq \sigma(\bsg_j)$; the statement about $S$ follows by taking unions. If $\sigma(y)\in \llbracket i_j+1,i_{j+1}-1\rrbracket$, then clearly $y\in\aint$ and $\sigma(x_j)=i_j<\sigma(y)<i_{j+1}=\sigma(x_{j+1})$. Hence, neither $y<x_j$ nor $y>x_{j+1}$ can occur. It follows that $y\in \bsg_j$.
\item The proof is the same as for the first part where we use that $j\notin\{\LL-1,\LL\}\Rightarrow \sigma(x_j)=i_j\text{ and }\sigma(x_{j+1})=i_{j+1}$.
\end{itemize}
\end{proof}

We now show how the mixing properties of $\LL$-splitting pairs are related to their criticality properties.

\begin{lemma}
\label{lem:generalmixed}
Fix  an $\LL$-splitting pair $(r,s)$, let
\[
\mathcal\poly':=(\mathcal\poly_0,\ldots, \mathcal\poly_{r},\mathcal\poly_{s},\ldots ,\mathcal\poly_k),
\]
and set
\[
c := \dim\left(\sum_{\poly\in \mathcal \poly'}\poly\right) - |\mathcal \poly'|.
\]
Then, for any fixed $\sigma\in\mathcal N_{\s}$, for $\s\in\{-,=,+\}$, there are exactly $c$ distinct mixed elements $y^\sigma_1,\ldots, y^\sigma_c\in \bsg_r\cup\bsg_s$ satisfying $\sigma(y^\sigma_1), \ldots, \sigma(y^\sigma_c)\in \llbracket i_{r+1}, i_s \rrbracket\backslash\{i_{r+1},\ldots,i_s\}$. 
\end{lemma}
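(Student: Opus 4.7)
The plan is to extend the pigeonhole argument of Lemma~\ref{lem:splitequiv} to an exact count of mixed elements. I would begin by converting $c$ into an explicit combinatorial expression. By Lemma~\ref{lem:spancollec}, $\dim\bigl(\sum_{K\in\mathcal{K}'}K\bigr) = n - k - |\aint_{>x_{r+1},<x_s}|$, and summing $i_{m+1} - i_m - 1$ over $m \in \llbracket 0,r\rrbracket \cup \llbracket s,k\rrbracket$ yields
\[
|\mathcal{K}'| = (i_{r+1} - r - 1) + (n - k + s - i_s) = n - k + (s - r - 1) - (i_s - i_{r+1}).
\]
Subtracting gives
\[
c = (i_s - i_{r+1}) + 1 - (s - r) - |\aint_{>x_{r+1},<x_s}|.
\]

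For a fixed $\sigma \in \mathcal{N}_\circ$ I would introduce the set of ``admissible positions''
\[
T := \llbracket i_{r+1},i_s\rrbracket \setminus \{\sigma(x_{r+1}),\ldots,\sigma(x_s)\}.
\]
Using $r+1, s \ne \ell$ one has $\sigma(x_{r+1}) = i_{r+1}$, $\sigma(x_s) = i_s$, and $\sigma(x_\ell) = i_\ell + 1_\circ \in \llbracket i_{r+1},i_s\rrbracket$, so the $s - r$ removed values are distinct points of $\llbracket i_{r+1},i_s\rrbracket$ and $|T| = i_s - i_{r+1} + 1 - (s-r)$. Moreover, every element of $\sigma^{-1}(T)$ lies in $\aint$: the only $x_m$'s with $\sigma(x_m) \in \llbracket i_{r+1},i_s\rrbracket$ are $x_{r+1},\ldots,x_s$, whose positions have been removed.

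The key step is to split $\sigma^{-1}(T)$ via Lemma~\ref{lem:betaintv}, which gives the disjoint decomposition
\[
\aint = (\bsg_r \cup \bsg_s \cup \aint_{<x_{r+1}} \cup \aint_{>x_s}) \sqcup \aint_{>x_{r+1},<x_s}.
\]
Any $y \in \sigma^{-1}(T)$ satisfies $\sigma(x_{r+1}) \le \sigma(y) \le \sigma(x_s)$, ruling out $y \in \aint_{<x_{r+1}} \cup \aint_{>x_s}$; hence $\sigma^{-1}(T)$ is partitioned into mixed elements (those in $\bsg_r \cup \bsg_s$) and elements of $\aint_{>x_{r+1},<x_s}$. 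Conversely, every $y \in \aint_{>x_{r+1},<x_s}$ has $\sigma(y)$ strictly between $\sigma(x_{r+1})$ and $\sigma(x_s)$ while avoiding every $\sigma(x_m)$, so $\aint_{>x_{r+1},<x_s} \subseteq \sigma^{-1}(T)$. Therefore
\[
\#\{\text{mixed elements}\} = |T| - |\aint_{>x_{r+1},<x_s}| = c,
\]
with the $c$ mixed elements automatically distinct.

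The main obstacle is the $\circ$-dependent bookkeeping of chain positions: for $\circ \in \{-,+\}$ the position $\sigma(x_\ell) = i_\ell + 1_\circ$ differs from $i_\ell$, so the set $\{i_{r+1},\ldots,i_s\}$ appearing in Definition~\ref{def:mixed_element} must be interpreted as the actual $\sigma$-positions $\{\sigma(x_{r+1}),\ldots,\sigma(x_s)\}$ of the chain. This interpretation is what keeps $|T|$ uniform in $\circ$ and makes the final count equal $c$ regardless of $\circ$.
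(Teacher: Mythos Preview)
Your argument is correct and is essentially the same counting as the paper's, viewed from the complementary side: the paper fixes the \emph{element} set $\bsg_{\llbracket 0,r\rrbracket\cup\llbracket s,k\rrbracket}$, uses Lemma~\ref{lem:Ibeta} to show its $\sigma$-image contains the outer position set $\bigcup_{j\in\llbracket 0,r\rrbracket\cup\llbracket s,k\rrbracket}\llbracket i_j+1,i_{j+1}-1\rrbracket$, and reads off the $c$ surplus elements, whereas you fix the inner \emph{position} set $T$, show directly that it contains $\sigma(\aint_{>x_{r+1},<x_s})$, and read off the $c$ surplus positions---both routes then finish via the decomposition of Lemma~\ref{lem:betaintv}. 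Your closing remark about reading $\{i_{r+1},\ldots,i_s\}$ in Definition~\ref{def:mixed_element} as the actual chain positions $\{\sigma(x_{r+1}),\ldots,\sigma(x_s)\}$ when $\circ\in\{-,+\}$ is apt; the paper handles this only implicitly (for $j\in\llbracket 0,r\rrbracket\cup\llbracket s,k\rrbracket$ the intervals $\llbracket i_j+1,i_{j+1}-1\rrbracket$ are unaffected by $\circ$, so Lemma~\ref{lem:Ibeta} applies uniformly), and your formulation makes the bookkeeping cleaner.
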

\begin{proof}
By Lemma \ref{lem:spancollec},
\[
|\bsg_{\llbracket 0,r\rrbracket\cup \llbracket s,k\rrbracket}|=\dim\left(\sum_{\poly\in \mathcal \poly'}\poly\right) \quad \text{and} \quad |\mathcal \poly'|=| \cup_{j\in \llbracket 0,r\rrbracket\cup \llbracket s,k\rrbracket}\llbracket i_j+1,i_{j+1}-1\rrbracket|.
\]
On the other hand, applying Lemma \ref{lem:Ibeta} to $S:=\llbracket 0,r\rrbracket\cup \llbracket s,k\rrbracket$ yields $\cup_{j\in \llbracket 0,r\rrbracket\cup \llbracket s,k\rrbracket}\llbracket i_j+1,i_{j+1}-1\rrbracket\subseteq \sigma(\bsg_{\llbracket 0,r\rrbracket\cup \llbracket s,k\rrbracket})$. Hence, there are exactly $c$ distinct elements $\{y_i^{\sigma}\}_{i\in [c]}$ satisfying $y_i^{\sigma}\in\bsg_{\llbracket 0,r\rrbracket\cup \llbracket s,k\rrbracket}$ and $\sigma(y_i^{\sigma})\notin \cup_{j\in \llbracket 0,r\rrbracket\cup \llbracket s,k\rrbracket}\llbracket i_j+1,i_{j+1}-1\rrbracket$. Now recall that $\bsg_{\llbracket 0,r\rrbracket\cup \llbracket s,k\rrbracket}=\bsg_r\cup\bsg_s\cup\aint_{<x_{r+1}}\cup \aint_{>{x_s}}$ (Lemma \ref{lem:betaintv}), and note that $\sigma(y_i^\sigma) \notin \cup_{j\in \llbracket 0,r\rrbracket\cup \llbracket s,k\rrbracket}\llbracket i_j+1,i_{j+1}-1\rrbracket$ implies that $y_i^{\sigma}\in \bsg_r\cup\bsg_s$.
\end{proof}

\begin{corollary}
\label{cor:Ibetastrong}
 Let $(r,s)$ be a supercritical $\LL$-splitting pair. Then, for any $\sigma\in\mathcal N_{\s}$, for $\s\in\{-,=,+\}$, there are $c\ge 2$ distinct mixed elements $y^\sigma_1,\ldots, y^\sigma_c\in \bsg_r\cup\bsg_s$ satisfying $\sigma(y^\sigma_1), \ldots, \sigma(y^\sigma_c)\in \llbracket i_{r+1}, i_s \rrbracket\backslash\{i_{r+1},\ldots,i_s\}$. 
\end{corollary}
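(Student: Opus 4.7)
The plan is to derive this corollary as a direct consequence of Lemma \ref{lem:generalmixed}, which I would invoke with $\mathcal\poly':=(\mathcal\poly_0,\ldots,\mathcal\poly_r,\mathcal\poly_s,\ldots,\mathcal\poly_k)$ and $c := \dim\bigl(\sum_{\poly\in\mathcal\poly'}\poly\bigr)-|\mathcal\poly'|$. The lemma already guarantees the existence of exactly $c$ distinct mixed elements $y^\sigma_1,\ldots,y^\sigma_c\in\bsg_r\cup\bsg_s$ with $\sigma(y^\sigma_i)\in\llbracket i_{r+1},i_s\rrbracket\backslash\{i_{r+1},\ldots,i_s\}$, for every $\sigma\in\mathcal N_\s$ and every $\s\in\{-,=,+\}$. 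So the only thing left to verify is that the supercriticality hypothesis forces $c\ge 2$.

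This is immediate from Definition \ref{def:splitpaircrit}: an $\LL$-splitting pair $(r,s)$ is called supercritical precisely when $\dim\bigl(\sum_{\poly\in\mathcal\poly'}\poly\bigr)\ge |\mathcal\poly'|+2$, which is the same as $c\ge 2$. Plugging this into the conclusion of Lemma \ref{lem:generalmixed} yields the existence of $c\ge 2$ distinct mixed elements with the stated location property, completing the proof.

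There is essentially no obstacle here; the statement is just a bookkeeping consequence that packages the supercriticality inequality into the mixed-element language provided by Lemma \ref{lem:generalmixed}. The only mild check, which I would include for transparency, is that $\mathcal\poly'$ as defined really has the shape required by Lemma \ref{lem:generalmixed} (the indices $j_0=-1<0\le j_1<\cdots<j_p<k+1=j_{p+1}$ with the relevant positivity of $i_{j_q+1}-i_{j_q}-1-1_{j_q\in\{\LL-1,\LL\}}$); this is guaranteed by the $\LL$-splitting condition $r+1<\LL<s$ together with Corollary \ref{cor:splitequiv}, which ensures $i_{j+1}-i_j-1>0$ for each $j$, and by the absence of $\LL-1$ and $\LL$ among the indices $\{0,\ldots,r,s,\ldots,k\}$.
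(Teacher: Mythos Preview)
Your proposal is correct and matches the paper's approach exactly: the corollary is stated without a separate proof precisely because it follows immediately from Lemma~\ref{lem:generalmixed} combined with Definition~\ref{def:splitpaircrit}. Your additional verification that $\mathcal\poly'$ has the right shape is not strictly needed since Lemma~\ref{lem:generalmixed} applies to any $\LL$-splitting pair, but it does no harm.
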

Note that Corollary \ref{cor:Ibetastrong} is an improvement on Lemma \ref{lem:splitequiv} in the setting of supercritical $\LL$-splitting pairs, as it guarantees the existence of two distinct mixed elements rather than one. In addition,  because Corollary \ref{cor:Ibetastrong} specializes to $\LL$-splitting pairs it can handle  $\mathcal N_{\s}$, for any $\s\in\{-,=,+\}$, while Lemma \ref{lem:splitequiv} applies only to $\mathcal N_{=}$. 

We conclude this section by specializing to the setting where the $\LL$-splitting pair is maximal. Since the maximal splitting pair is sharp-critical, Lemma \ref{lem:generalmixed} immediately gives that we have exactly one mixed element. 

\begin{corollary}
\label{cor:max}
Fix $\s\in \{-,=,+\}$ and $\sigma\in\mathcal N_{\s}$. There exists a unique mixed element $\mixed$  satisfying $\mixed \in \bsg_{r_{\max}}\cup\bsg_{s_{\min}}$ and $\sigma(\mixed)\in \llbracket i_{r_{\max}+1}, i_{s_{\min}} \rrbracket\backslash\{i_{r_{\max}+1},\ldots,i_{s_{\min}}\}$. 
\end{corollary}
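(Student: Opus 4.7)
The plan is to derive this corollary as a direct specialization of Lemma \ref{lem:generalmixed} to the maximal splitting pair. The key preliminary step is to verify that $(r_{\max}, s_{\min})$ is itself a sharp-critical $\ell$-splitting pair, so that the constant $c$ appearing in Lemma \ref{lem:generalmixed} equals $1$.

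First I would check that $(r_{\max}, s_{\min})$ is an $\ell$-splitting pair: since each sharp-critical $\ell$-splitting pair $(r_\iota, s_\iota)$ satisfies $r_\iota + 1 < \ell < s_\iota$, taking the maximum in the first coordinate and minimum in the second preserves these strict inequalities, giving $r_{\max} + 1 < \ell < s_{\min}$. In particular $(r_{\max}, s_{\min})$ is a splitting pair in the sense of Definition \ref{def:splitpair}.

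Second, and this is the substantive step, I would invoke Proposition \ref{prop:max_notions} to conclude that $\mathcal\poly_{\max} = (\mathcal\poly_0, \ldots, \mathcal\poly_{r_{\max}}, \mathcal\poly_{s_{\min}}, \ldots, \mathcal\poly_k)$ is itself a sharp-critical collection (it is the unique maximal one). Hence
\[
\dim\left(\sum_{\poly\in \mathcal \poly_{\max}}\poly\right) - |\mathcal \poly_{\max}| = 1,
\]
which is precisely the constant $c = 1$ in the statement of Lemma \ref{lem:generalmixed} applied to the $\ell$-splitting pair $(r_{\max}, s_{\min})$.

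Finally, applying Lemma \ref{lem:generalmixed} directly with this pair yields, for any fixed $\s\in\{-,=,+\}$ and $\sigma\in\mathcal N_{\s}$, the existence of exactly $c = 1$ mixed element $\mixed \in \bsg_{r_{\max}}\cup\bsg_{s_{\min}}$ with $\sigma(\mixed)\in \llbracket i_{r_{\max}+1}, i_{s_{\min}} \rrbracket\backslash\{i_{r_{\max}+1},\ldots,i_{s_{\min}}\}$, as desired. There is no real obstacle here beyond correctly matching the two formalisms of maximality (the one in Definition \ref{def:rmaxsmin} and the abstract notion of maximal sharp-critical collection), which is precisely what Proposition \ref{prop:max_notions} accomplishes.
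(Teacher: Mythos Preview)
Your proposal is correct and matches the paper's approach exactly: the paper simply remarks that since the maximal splitting pair is sharp-critical (which is what Proposition \ref{prop:max_notions} guarantees), Lemma \ref{lem:generalmixed} immediately yields exactly one mixed element. You have spelled out the details the paper leaves implicit, including the verification that $(r_{\max},s_{\min})$ is an $\ell$-splitting pair.
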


\section{The extreme normal directions}
\label{sec:ext}
Once Assumption \ref{ass:crit} is set in place, we are ready, in principle, to apply Theorem \ref{thm:SvH}. However, Theorem \ref{thm:SvH} characterizes the extremals \emph{geometrically} in terms of the $(B,\mathcal \poly)$-extreme normal directions so a \emph{combinatorial} interpretation of these vectors is needed. The goal of this section is to characterize, combinatorially, a sufficient number of the $(B,\mathcal \poly)$-extreme normal directions so that Theorem \ref{thm:SvH} can be applied. 

We recall that $\{e_j\}_{j\in[n-k]}$ is the standard basis of $\R^{n-k}$ and, for $u,v\in [n-k]$ distinct, we let $e_{uv}:=\frac{e_u-e_v}{\sqrt{2}}$ and $o_{uv}:=\frac{e_u+e_v}{\sqrt{2}}$. We also recall the definition \eqref{eq:betai}:
\[
\bsg_i:=\aint\backslash(\aint_{<x_i}\cup \aint_{>x_{i+1}}).
\]
The next result characterizes certain faces of the polytopes $\{\poly_i\}$. 
\begin{lemma}
\label{lem:dirspan}
Fix $ i\in \llbracket 0,k\rrbracket$. We have,
\begin{enumerate}[(i)]
\item For $y_j\notin\bsg_i$, $\lin(F(\poly_i,\pm e_j))=\R^{\bsg_i}$, and for $y_u,y_v\notin\bsg_i$, $\lin(F(\poly_i,\pm e_{uv}))=\R^{\bsg_i}$.
\item For $y_j\in\bsg_i$, $\lin(F(\poly_i,-e_j))=\R^{\bsg_i\backslash\aint_{\le y_j}}$ and $\lin(F(\poly_i,e_j))=\R^{\bsg_i\backslash\aint_{\ge y_j}}$.
\item For $y_u,y_v\in\bsg_i$ such that $y_v$ covers $y_u$ in $\aint$, $\lin(F(\poly_i,e_{uv}))=\R^{\bsg_i\backslash\{y_u,y_v\}}\oplus\sspan(o_{uv})$.
\end{enumerate}
\end{lemma}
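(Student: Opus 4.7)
The plan is to leverage the identity \eqref{eq:Kiordpoly}, which expresses $\poly_i$ as a translate of the order polytope $O_{\bsg_i}$. Since translations preserve linear spans, it suffices to compute $\lin(F(O_{\bsg_i},\mathrm{u}))$ for each direction $\mathrm{u}$ in the statement. Two basic facts will be used throughout: $O_{\bsg_i}\subseteq \R^{\bsg_i}$ by construction, and $\dim O_{\bsg_i}=|\bsg_i|$ by Lemma \ref{lem:dimOb}, so $\lin(O_{\bsg_i})=\R^{\bsg_i}$.

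For (i), when $y_j\notin\bsg_i$ the functional $\langle e_j,\cdot\rangle$ vanishes identically on $\R^{\bsg_i}\supseteq O_{\bsg_i}$, so $F(O_{\bsg_i},\pm e_j)=O_{\bsg_i}$, which has linear span $\R^{\bsg_i}$. The same argument handles $e_{uv}$ when both $y_u,y_v\notin\bsg_i$.

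For (ii), with $y_j\in\bsg_i$ the face $F(O_{\bsg_i},-e_j)$ consists of those $t\in O_{\bsg_i}$ with $t_j=0$. Transitivity of the order forces $t_w=0$ for every $y_w\in\bsg_i$ with $y_w\le y_j$. Conversely, any order inequality $t_w\le t_{w'}$ linking $y_w\in\bsg_i\cap\aint_{\le y_j}$ (where $t_w=0$) to $y_{w'}\in\bsg_i\setminus\aint_{\le y_j}$ must come from $y_w\le y_{w'}$---otherwise $y_{w'}\le y_w\le y_j$ would force $y_{w'}\in\aint_{\le y_j}$, a contradiction---and hence reduces to the trivial $0\le t_{w'}$. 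Thus the face is naturally identified with $O_{\bsg_i\setminus\aint_{\le y_j}}$, whose linear span is $\R^{\bsg_i\setminus\aint_{\le y_j}}$ by Lemma \ref{lem:dimOb}. The case $F(O_{\bsg_i},e_j)$ is symmetric: $t_j=1$ propagates upward to force $t_w=1$ for every $y_w\in\bsg_i\cap\aint_{\ge y_j}$, and the analogous compatibility check yields the span $\R^{\bsg_i\setminus\aint_{\ge y_j}}$.

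For (iii), we have $\langle e_{uv},t\rangle=(t_u-t_v)/\sqrt 2\le 0$ on $O_{\bsg_i}$, with equality exactly on $\{t_u=t_v\}$. Since $y_v$ covers $y_u$ in $\aint$, and hence in $\bsg_i$, Lemma \ref{lem:dimOb}(iii) identifies this set as a facet of $O_{\bsg_i}$, of dimension $|\bsg_i|-1$. Its linear span sits inside the hyperplane $\R^{\bsg_i}\cap\{t_u=t_v\}=\R^{\bsg_i\setminus\{y_u,y_v\}}\oplus\sspan(o_{uv})$, which itself has dimension $|\bsg_i|-1$, so equality follows. The main subtlety---clarifying why the covering hypothesis is essential---is that any intermediate $y_u<y_w<y_v$ in $\bsg_i$ would compel $t_u=t_w=t_v$ on the face and collapse it to a lower-dimensional object, invalidating the identification with the full hyperplane.
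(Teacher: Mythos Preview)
Your proof is correct and follows essentially the same approach as the paper: reduce to $O_{\bsg_i}$ via the translation identity \eqref{eq:Kiordpoly}, then compute each face directly and invoke Lemma~\ref{lem:dimOb} for the dimension count. You are in fact slightly more careful than the paper in two places: in (ii) you explicitly verify the identification $O_{\bsg_i}\cap\{t_j=0\}=O_{\bsg_i\setminus\aint_{\le y_j}}$ rather than asserting it, and in (iii) you note that covering in $\aint$ implies covering in $\bsg_i$, which is needed to invoke Lemma~\ref{lem:dimOb}(iii).
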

\begin{proof}
We start by recalling \eqref{eq:Kiordpoly}:
\[
\poly_i=O_{\bsg_i}+1_{\aint_{>x_{i+1}}}\mbox{ for } i\in \llbracket 0,k\rrbracket
\]
so that
\[
\lin(F(\poly_i,\mathrm u))=\lin(F(O_{\bsg_i},\mathrm u))\quad\forall\, \mathrm u\in S^{n-k-1}.
\]
\begin{enumerate}[(i)]
\item Let $\mathrm u\in \{\pm e_j\}$ so, since $h_{O_{\bsg_i}}(\mathrm u)=0$ as $y_j\notin\bsg_i$, we get that $\lin(F(\poly_i,\mathrm u))=O_{\bsg_i}\cap \{t_j=0\}=O_{\bsg_i}$, where the last equality holds as $y_j\notin\bsg_i$. Similarly, let $\mathrm u\in\{\pm e_{uv}\}$ so, since $h_{O_{\bsg_i}}(\mathrm u)=0$ as $y_u,y_v\notin\bsg_i$, we get that $\lin(F(\poly_i,\mathrm u))=O_{\bsg_i}\cap \{t_u=t_v\}=O_{\bsg_i}$, where the last equality holds as $y_u,y_v\notin\bsg_i$. The proof is complete as $\dim O_{\bsg_i}=|\bsg_i|$ (Lemma \ref{lem:dimOb}).
\item Since $h_{O_{\bsg_i}}(-e_j)=0$, we get $\lin(F(\poly_i,-e_j))=O_{\bsg_i}\cap \{t_j=0\}=O_{\bsg_i\backslash\aint_{\le y_j}}$ where the last equality holds as $y_j\in \bsg_i$. Analogously, since $h_{O_{\bsg_i}}(e_j)=1$ (because $y_j\in\bsg_i$), we get $\lin(F(\poly_i,e_j))=O_{\bsg_i}\cap \{t_j=1\}=O_{\bsg_i\backslash\aint_{\ge y_j}}$.
\item  Since $y_u\le y_v$ we have $h_{O_{\bsg_i}}(e_{uv})=0$, so $\lin(F(\poly_i,e_{uv}))=O_{\bsg_i}\cap \{t_u=t_v\}$. Since $y_v$ covers $y_u$, it follows from Lemma \ref{lem:dimOb}(iii) that $\dim(\lin(F(\poly_i,e_{uv})))=|\bsg_i|-1$. On the other hand, since $\lin(F(\poly_i,e_{uv}))\perp e_{uv}$, we have $\lin(F(\poly_i,e_{uv}))\subseteq \R^{\bsg_i}\cap e_{uv}^{\perp}=\R^{\bsg_i\backslash\{y_u,y_v\}}\oplus\sspan(o_{uv})$. The proof is complete since $\dim(\R^{\bsg_i\backslash\{y_u,y_v\}}\oplus\sspan(o_{uv}))=|\bsg_i|-1$. 
\end{enumerate}
\end{proof}

The following proposition, which is the main result of this section,  characterizes combinatorially some of the $(B,\mathcal \poly)$-extreme normal directions. We remark that the $(B,\mathcal \poly)$-extreme normal directions given in Proposition \ref{prop:dir}($e$--$h$) will  be used only for the characterization of the extremals of sharp-critical posets. 

\begin{proposition}
\label{prop:dir}
The following vectors are $(B,\mathcal \poly)$-extreme normal directions:\\

\begin{enumerate}[(a)] 
\item For each fixed $ 0\le m\le \LL$: $-e_j$ for any $j$ such that $y_j\in\aint_{>x_m}$ and there exists $\sigma\in\mathcal N_=$ satisfying $\sigma(y_j)=i_m+1$. \\
\item For each fixed $ \LL\le m\le k+1$: $e_j$ for any $j$ such that $y_j\in\aint_{<x_m}$ and there exists $\sigma\in\mathcal N_=$ satisfying $\sigma(y_j)=i_m-1$.\\
\item $e_{uv}$ for any $u,v$ such that $y_u<y_v$ and there exists $\sigma\in\mathcal N_=$ satisfying $\sigma(y_u)+1=\sigma(y_v)$.\\
\item $e_{uv}$ for any $u,v$ such that $y_u<y_v$ and  there exists $\sigma\in\mathcal N_=$ satisfying $\sigma(y_u)=i_{\LL}-1$ and $\sigma(y_v)=i_{\LL}+1$.\\
\item For each fixed $r_{\max}+1\le m\le\LL-1$: $-e_j$  for any $j$ such that $y_j\in\aint_{>x_m}$ and there exists $\sigma\in\mathcal N_=$ satisfying $\sigma(y_j)=i_m+2$.\\
\item For each fixed $\LL+1\le m\le s_{\min}$: $e_j$  for any $j$ such that $y_j\in\aint_{<x_m}$ and  there exists $\sigma\in\mathcal N_=$ satisfying $\sigma(y_j)=i_m-2$.\\
\item $-e_j$  for any $j$ such that $y_j\in\aint_{>x_{\LL-1}}$ and there exists $\sigma\in\mathcal N_+$ satisfying $\sigma(y_j)=i_{\LL-1}+2$.\\
\item $e_j$  for any $j$ such that $y_j\in\aint_{<x_{\LL+1}}$ and there exists $\sigma\in\mathcal N_-$ satisfying $\sigma(y_j)=i_{\LL+1}-2$.
\end{enumerate}
\end{proposition}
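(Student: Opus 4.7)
The plan is to verify Definition \ref{def:extreme} directly for each of the eight families of vectors. Writing $J\subseteq\llbracket 0,k\rrbracket$ for the set of distinct indices such that $\poly_j\in\mathcal\poly'$ with multiplicity $n_j$, and using \eqref{eq:linface}, the extremality condition reduces to
\[
\dim\sspan\bigl\{\lin(F(\poly_j,\mathrm u)):j\in J\bigr\}\ge\sum_{j\in J}n_j,\qquad n_j\le i_{j+1}-i_j-1-1_{j\in\{\LL-1,\LL\}}.
\]
Lemma \ref{lem:dirspan} provides explicit descriptions of each $\lin(F(\poly_j,\mathrm u))$ as a coordinate subspace $\R^{T_j}$ (with a possible one-dimensional $\sspan(o_{uv})$ correction in cases (c), (d) when both $y_u,y_v\in\bsg_j$ and $y_v$ covers $y_u$ in $\aint$), so the task becomes a combinatorial lower bound on $|\bigcup_{j\in J}T_j|$.

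The common strategy is to use the hypothesized linear extension $\sigma$ to inject $\mathcal\poly'$ into $\bigcup_{j\in J}T_j$ via $\sigma^{-1}$. For each $j\in J$, one selects $n_j$ positions from $\llbracket i_j+1,i_{j+1}-1\rrbracket^{\s}$ whose $\sigma^{-1}$-images lie in $\bsg_j$ by Lemma \ref{lem:Ibeta}, and checks, using the positional information in the hypothesis (e.g., $\sigma(y_j)=i_m+1$ in case (a)), that these images avoid the ``removed'' subset defining $T_j$. The multiplicity reduction at $j\in\{\LL-1,\LL\}$ provides the slack required to skip the forbidden positions $i_\LL-1$ and $i_\LL+1$; Lemma \ref{lem:range} is used to control the range of positions available to each $y_j$, and in particular to ensure that the positions chosen via $\sigma^{-1}$ land in the intended $\bsg_j$.

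Cases (a)--(c) use $\sigma\in\mathcal N_=$ directly and suffice for the supercritical analysis. Case (d) is more subtle because $y_u$ and $y_v$ straddle position $i_\LL$; the extra $\sspan(o_{uv})$ dimension from Lemma \ref{lem:dirspan}(iii) compensates for the need to avoid positions in both slots adjacent to $x_\LL$. Cases (e)--(h), needed for the sharp-critical setting, invoke Assumption \ref{ass:crit} together with the mixing results of Section \ref{subsec:place}: Lemma \ref{lem:generalmixed} and Corollary \ref{cor:Ibetastrong} produce at least one (and typically two) mixed elements $\mixed\in\bsg_{r_{\max}}\cup\bsg_{s_{\min}}$ whose inclusion in $\bigcup_{j\in J}T_j$ supplies the extra dimension needed when the slack from $\{\LL-1,\LL\}\cap J$ does not suffice on its own, while the restrictions $r_{\max}+1\le m\le\LL-1$ and $\LL+1\le m\le s_{\min}$ ensure the shifted positions $i_m\pm 2$ actually lie in the relevant slot.

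The main obstacle is the bookkeeping in cases (d), (g), and (h), where avoidance of $i_\LL\pm 1$ or of $i_m\pm 2$ creates a potential dimensional shortfall when $J$ contains neither $\LL-1$ nor $\LL$. The resolution proceeds by a case analysis on $J\cap\{\LL-1,\LL,r_{\max},s_{\min}\}$: when the standard slack is insufficient, one extracts extra elements of $\bigcup_{j\in J}T_j$ either from the $\sspan(o_{uv})$ contribution in case (d), or from the mixed elements guaranteed by Corollary \ref{cor:Ibetastrong} applied to a supercritical $\LL$-splitting pair arising from the structure of $J$. This case-by-case verification accounts for the bulk of the technical work; the structural ingredients are already in place from Sections \ref{sec:notionsCrit}--\ref{sec:beyond}.
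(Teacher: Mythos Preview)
Your overall framework is the paper's: reduce to the dimension inequality for an arbitrary sub-collection indexed by $J\subseteq\llbracket 0,k\rrbracket$, use Lemma~\ref{lem:dirspan} to identify each $\lin(F(\poly_j,\mathrm u))$ with a set $\gamma_j\subseteq\aint$ (plus a possible $\sspan(o_{uv})$), and lower-bound $|\gamma_J|$ by exhibiting enough elements via $\sigma^{-1}$ applied to intervals $\I_j$, exploiting the slack $1_{\LL-1\in J}+1_{\LL\in J}$.

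There are, however, several misattributions that would cause gaps if followed literally. First, cases (a) and (c) are \emph{not} straightforward: when $m\in J$ but $\LL\notin J$ (so no slack from $1_{\LL\in J}$), you lose one position at $i_m{+}1$ (resp.\ two at $\sigma(y_u),\sigma(y_v)$) and must recover it via a mixed element from Lemma~\ref{lem:splitequiv} applied to the $\LL$-splitting pair $(j_b,j_{b+1})$ straddling $\LL$ in $J$. You do not mention this, and without it (a), (c) fail. Second, your account of the $\sspan(o_{uv})$ contribution is inverted: it is crucial in case~(c) when $m\in J$ (giving $|\gamma_J|+1$), whereas case~(d) never needs it---the loss in Claim~\ref{cl:d2} is exactly $1_{\LL-1\in J}+1_{\LL\in J}$, matching the slack, so no extra dimension is required. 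Third, (d), (g), (h) are in fact the \emph{easier} cases, not the harder ones: no mixed elements are needed because the lost positions are always at $\LL-1$ or $\LL$, absorbed by the slack; the two-mixed-element argument via Corollary~\ref{cor:Ibetastrong} is used only in case~(e) (and its mirror (f)) in the sub-case $\LL-1,\LL\notin J$, where the supercriticality of the relevant pair follows from $m>r_{\max}$. Finally, Lemma~\ref{lem:range} plays no role here; the relevant inputs are Lemma~\ref{lem:Ibeta}, Lemma~\ref{lem:splitequiv}, and Corollary~\ref{cor:Ibetastrong}.
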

Note that parts (a--b), which suffice for the supercritical posets, provide information about nearest neighbors of $x_m$, while parts (e--f), which are needed for the critical posets, provide information about second-nearest neighbors of $x_m$.
\begin{proof}{(of Proposition \ref{prop:dir})}
By Definition \ref{def:extreme}, we need to show that, whenever $\mathrm u$ is one of the vectors in the proposition, we have, for any collection $\mathcal \poly'\subseteq\mathcal \poly$,
\[
\dim\left(\sum_{\poly\in\mathcal \poly'}F(\poly,\mathrm u)\right)\ge |\mathcal \poly'|.
\]
Let $j_0:=-1<0\le j_1<\cdots<j_p\le k<k+1=:j_{p+1}$ and $\kappa_1,\ldots, \kappa_p$, with $0\le\kappa_q\le i_{j_q+1}-i_{j_q}-1-1_{j_q\in\{\LL-1,\LL\}}$, for $j_q\in\llbracket 0,k\rrbracket$, and set
\[
\mathcal\poly':=(\underbrace{\poly_{j_1},\ldots,\poly_{j_1}}_{\kappa_1},\ldots,\underbrace{\poly_{j_p},\ldots,\poly_{j_p}}_{\kappa_p}),
\]
\[
J:=\{j_1,\ldots,j_p\}.
\]
For notational simplicity we set
\begin{align}
\label{eq:Ij}
\I_j:=\llbracket i_j+1,i_{j+1}-1\rrbracket\quad \text{for }j\in\llbracket 0,k\rrbracket,\quad \I_S:=\cup_{j_q\in S}\I_{j_q}\quad \text{for }S\subset \llbracket 0,k\rrbracket;
\end{align}
for example, 
\[
\I_{\llbracket r+1,s\rrbracket}=\llbracket i_{r+1},i_s\rrbracket\backslash\{i_{r+1},\ldots, i_s\}.
\]
Note that
\[
|\I_J|-1_{\LL-1\in J}-1_{\LL\in J}\ge  |\mathcal \poly'|,
\]
because  $0\le\kappa_q\le i_{j_q+1}-i_{j_q}-1-1_{j_q\in\{\LL-1,\LL\}}$ and since $\I_{j_q}=i_{j_q+1}-i_{j_q}-1$.
\begin{enumerate}[(a)]
\item Fix $0\le m\le \LL$ and consider $\sigma \in\mathcal N_=$ such that $\sigma(y_j)=i_m+1$ where $j$ is such that $y_j\in\aint_{>x_m}$.  Let
\[
\gamma_{j_q}:=
\begin{cases}
\bsg_{j_q}&\mbox{if }y_j\notin\bsg_{j_q}\\
\bsg_{j_q}\backslash\aint_{\le y_j} &\mbox{if }y_j\in\bsg_{j_q}
\end{cases},
\]
and $\gamma_J:=\cup_{j_q\in J}\gamma_q$. By Lemma \ref{lem:dirspan}(i--ii),
\[
\lin(F(\poly_{j_q},-e_j))=\R^{\gamma_{j_q}}\quad \text{for all}\quad  j_q\in J,
\]
so, by \eqref{eq:linspan},
\[
\lin\left(F\left(\sum_{\poly\in\mathcal \poly'}\poly,-e_j\right)\right)=\R^{\gamma_J}.
\]
It follows that
\[
\dim\left(\sum_{\poly\in\mathcal \poly'}F(\poly,-e_j)\right)=|\gamma_J|,
\]
so it remains to show that $|\gamma_J|\ge |\mathcal \poly'|$. Since $|\I_J|-1_{\LL-1\in J}-1_{\LL\in J}\ge  |\mathcal \poly'|$, it will suffice to show that
\[
|\gamma_J|\ge |\I_J|-1_{\LL-1\in J}-1_{\LL\in J},
\]
which requires the following claim.
\begin{claim}
\label{cl:a}
$~$

\begin{enumerate}[(i)]
\item For $j_q\neq m$, $\I_{j_q}	\subseteq\sigma(\gamma_{j_q})$.
\item For $j_q= m$, $\I_{m}\backslash\{i_m+1\}	\subseteq\sigma(\gamma_m)$.
\end{enumerate}
\end{claim}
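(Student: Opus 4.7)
The plan is to combine Lemma \ref{lem:Ibeta}, which says $\I_{j_q} \subseteq \sigma(\bsg_{j_q})$, with a case analysis on the position of $j_q$ relative to $m$ and on whether $y_j \in \bsg_{j_q}$. The key identity to exploit is that $\sigma(y_j) = i_m+1$ forces $\sigma^{-1}(i) \le y_j$ to imply $i \le i_m + 1$, so we can rule out containment in $\aint_{\le y_j}$ purely from the position $i$.

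For part (i), fix $j_q \neq m$ and $i \in \I_{j_q}$. By Lemma \ref{lem:Ibeta} we already have $\sigma^{-1}(i) \in \bsg_{j_q}$, so we only need to rule out $\sigma^{-1}(i) \in \aint_{\le y_j}$ in the case $y_j \in \bsg_{j_q}$ (in the other case $\gamma_{j_q} = \bsg_{j_q}$ and the claim is immediate). I would first dispose of the subcase $j_q < m$: since $y_j > x_m \ge x_{j_q+1}$ in $\abar$, we get $y_j \in \aint_{>x_{j_q+1}}$, hence $y_j \notin \bsg_{j_q}$, which reduces to the trivial subcase. For $j_q > m$, suppose toward contradiction that $\sigma^{-1}(i) \le y_j$; then $i \le \sigma(y_j) = i_m + 1$. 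But $i \ge i_{j_q}+1 \ge i_{m+1}+1 > i_m+1$ (using Corollary \ref{cor:splitequiv} to ensure $i_{m+1} \ge i_m+2$), a contradiction. Thus $\sigma^{-1}(i) \in \bsg_{j_q}\setminus\aint_{\le y_j} = \gamma_{j_q}$.

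For part (ii), the observation is that $y_j \in \bsg_m$ necessarily: $y_j > x_m$ rules out $y_j \in \aint_{<x_m}$, and if $y_j > x_{m+1}$ then $\sigma(y_j) > i_{m+1} \ge i_m+2$, contradicting $\sigma(y_j) = i_m+1$. Hence $\gamma_m = \bsg_m \setminus \aint_{\le y_j}$, and since $y_j \in \aint_{\le y_j}$ with $\sigma(y_j) = i_m+1$, the value $i_m+1$ must genuinely be excluded. For $i \in \I_m \setminus\{i_m+1\}$, Lemma \ref{lem:Ibeta} gives $\sigma^{-1}(i) \in \bsg_m$; if also $\sigma^{-1}(i) \le y_j$, then $i \le i_m+1$, but $i \ge i_m+2$, a contradiction. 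So $\sigma^{-1}(i) \in \gamma_m$.

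The step I expect to be the most delicate is the case $j_q > m$ in part (i), where one really uses Corollary \ref{cor:splitequiv} (the strict gap $i_{m+1} > i_m+1$) to produce the needed contradiction; everything else is bookkeeping that follows directly from the definitions of $\bsg_i$, $\gamma_{j_q}$, and the placement $\sigma(y_j) = i_m+1$.
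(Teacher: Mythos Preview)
Your proof is correct and follows essentially the same approach as the paper's: split into the cases $y_j\notin\bsg_{j_q}$ and $y_j\in\bsg_{j_q}$, rule out $j_q<m$ in the latter by observing $y_j>x_m\ge x_{j_q+1}$, and then use $\sigma(y)>i_m+1=\sigma(y_j)$ to exclude $\aint_{\le y_j}$. One small remark: in part~(i) you invoke Corollary~\ref{cor:splitequiv} for the inequality $i\ge i_{j_q}+1>i_m+1$, but this only needs $i_{j_q}>i_m$, which is immediate from $j_q>m$ and the strict monotonicity of the $i_j$'s; the paper's argument uses exactly this simpler observation.
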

\begin{proof}
$~$

\begin{enumerate}[(i)]
\item We need to consider the cases  $y_j\notin\bsg_{j_q}$ and $y_j\in\bsg_{j_q}$. If $y_j\notin\bsg_{j_q}$ then the result holds by Lemma \ref{lem:Ibeta}. Suppose $y_j\in\bsg_{j_q}$. Then, we must have $m<j_q$; otherwise, $j_q<m$ (by assumption $j_q\neq m$) so  $x_{j_q+1}\le x_m<y_j$, but this implies $y_j\notin\bsg_{j_q}$, which is a contradiction. Now let $y$ be any element such that $\sigma(y)\in \I_{j_q}$, which by Lemma \ref{lem:Ibeta}, implies that $y\in\bsg_{j_q}$. Since $\sigma(y)\ge i_{j_q}+1>i_m+1=\sigma(y_j)$, we can conclude that, in fact, $y\in\bsg_{j_q}\backslash\aint_{\le y_j}=\gamma_{j_q}$. To summarize, $\sigma(y)\in \I_{j_q}\Rightarrow y\in\gamma_{j_q}$, which shows $\I_{j_q}\subseteq\sigma(\gamma_{j_q})$.
\item We need to consider the cases  $y_j\notin\bsg_m$ and $y_j\in\bsg_m$. Suppose $y_j\notin\bsg_m$. By Corollary \ref{cor:splitequiv}, $i_m+1<i_{m+1}$ so $\sigma(y_j)=i_m+1\in \I_m\subseteq\sigma(\bsg_m)$, where we used Lemma \ref{lem:Ibeta}. This contradicts $y_j\notin\bsg_m$ so we are left to consider $y_j\in\bsg_m$. Let $y$ be any element such that $\sigma(y)\in \I_{m}\backslash\{i_m+1\}=\llbracket i_m+2,i_{m+1}-1\rrbracket$. Then, $y\in \bsg_{m}\backslash\aint_{\le y_j}$ since, by Lemma \ref{lem:Ibeta}, $y\in \bsg_{m}$, but we also have $\sigma(y)\ge i_m+2>i_m+1=\sigma(y_j)$. To summarize,  $\sigma(y)\in \I_{m}\backslash\{i_m+1\}\Rightarrow y\in \bsg_{m}\backslash\aint_{\le y_j}$, which shows $\I_{m}\backslash\{i_m+1\}\subseteq\sigma(\gamma_m)$. 
\end{enumerate}
\end{proof}
In order to use Claim \ref{cl:a} in the proof of $|\gamma_J|\ge |\I_J|-1_{\LL-1\in J}-1_{\LL\in J}$, we  distinguish between two cases: $m\notin J$ and $m\in J$. If $m\notin J$, then taking a union over $j_q\in J$ in Claim \ref{cl:a} gives $\I_J\subseteq\sigma(\gamma_J)$, so $|\gamma_J|\ge |\I_J|\ge  |\I_J|-1_{\LL-1\in J}-1_{\LL\in J}$, as desired. 

Suppose then that $m\in J$. Taking a union over $j_q\in J$ in Claim \ref{cl:a} gives $\I_J\backslash\{i_m+1\}\subseteq \sigma(\gamma_J)$. Hence, if $\LL\in J$, we have $|\gamma_J|\ge  |\I_J|-1\ge   |\I_J|-1_{\LL-1\in J}-1_{\LL\in J}$, which completes the proof. It remains to consider the case $m\in J$ and $\LL\notin J$:

Choose the largest $0\le b\le p$ such that $j_b<\LL$, so $j_b<\LL<j_{b+1}$, and, in particular, $(j_b,j_{b+1})$ is an $\ell$-splitting pair. By Lemma \ref{lem:splitequiv}, there exists $y^{\sigma} \in \bsg_{j_b}\cup\bsg_{j_b+1}$ such that $\sigma(y^\sigma) \in\I_{\llbracket j_b+1,j_{b+1}-1\rrbracket}$. 
Since $m=j_q<\LL$ for some $ 0\le q\le p$, and since $b$ is the largest element in $\llbracket 0,p\rrbracket$ such that $j_b<\LL$, we have $q\le b$, and hence $m\le j_b$. It follows that $\sigma(y_j)=i_m+1<i_{j_b+1}+1\le \sigma(y^{\sigma})$, and, in particular, $y^{\sigma}\notin \aint_{\le y_j}$. Hence, $y^{\sigma}\in (\bsg_{j_b}\backslash \aint_{\le y_j})\cup(\bsg_{j_{b+1}}\backslash \aint_{\le y_j})\subseteq \gamma_{j_b}\cup  \gamma_{j_{b+1}}\subseteq\gamma_J$, so $(\I_J\backslash\{i_m+1\})\cup\{\sigma(y^{\sigma})\}\subseteq \sigma(\gamma_J)$. Finally, $\sigma(y^{\sigma})\notin\I_J$ because $J$ and $\llbracket j_b+1,j_{b+1}-1\rrbracket$ do not intersect, which completes the proof since it implies that $|\gamma_J|\ge |(\I_J\backslash\{i_m+1\})\cup\{\sigma(y^{\sigma})\}|\ge |\I_J|-1+1=|\I_J|\ge    |\I_J|-1_{\LL-1\in J}-1_{\LL\in J}$.\\

\item The proof is analogous to part (a). \\

\item Fix $u,v$ such that there exist $y_u<y_v$ with $\sigma\in\mathcal N_=$ satisfying $\sigma(y_u)+1=\sigma(y_v)$. For $j_q\in J$, let
\[
\gamma_{j_q}:=
\begin{cases}
\bsg_{j_q}&\mbox{if }y_u,y_v\notin \bsg_{j_q}\\
\bsg_{j_q}\backslash\{y_u,y_v\}&\mbox{if }y_u,y_v\in\bsg_{j_q},\\
\bsg_{j_q}\backslash\aint_{\ge y_u} &\mbox{if }y_u\in\bsg_{j_q},y_v\notin\bsg_{j_q},\\
\bsg_{j_q}\backslash\aint_{\le y_v} &\mbox{if }y_u\notin\bsg_{j_q},y_v\in\bsg_{j_q}.
\end{cases}
\]
We start by describing the faces of $\{\poly_{j_q}\}_{j_q\in J}$ in the directions $\{e_{uv}\}$.
\begin{claim}
\label{cl:c1}
For every $j_q\in J$,
\[
\lin(F(K_{j_q},e_{uv}))=
\begin{cases}
\R^{\gamma_{j_q}}\oplus\sspan(o_{uv})&\mbox{if }y_u,y_v\in \bsg_{j_q},\\
\R^{\gamma_{j_q}}&\mbox{ otherwise}.
\end{cases}
\]
\end{claim}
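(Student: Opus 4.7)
The plan is to reduce each case of the claim to a direct application of Lemma~\ref{lem:dirspan}. First I would use \eqref{eq:Kiordpoly} to rewrite $\poly_{j_q}$ as a translate of the order polytope $O_{\bsg_{j_q}}$, so that $\lin(F(\poly_{j_q}, e_{uv})) = \lin(F(O_{\bsg_{j_q}}, e_{uv}))$ and only the order-polytope structure matters. The key observation I would then establish is that the hypothesis $\sigma(y_u) + 1 = \sigma(y_v)$ for some $\sigma \in \mathcal N_=$, together with $y_u < y_v$, forces $y_v$ to cover $y_u$ in $\aint$: any intermediate $z \in \aint$ with $y_u < z < y_v$ would have to satisfy $\sigma(y_u) < \sigma(z) < \sigma(y_v)$, which is impossible for consecutive integers.

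With these two preparatory facts in hand, I would split into four cases according to whether $y_u$ and $y_v$ lie in $\bsg_{j_q}$. The two ``clean'' cases follow immediately. If $y_u, y_v \in \bsg_{j_q}$, then $y_v$ covers $y_u$ in the induced sub-poset $\bsg_{j_q}$ as well (any witness to non-covering in $\bsg_{j_q}$ would also witness non-covering in $\aint$), so Lemma~\ref{lem:dirspan}(iii) yields $\lin(F(\poly_{j_q}, e_{uv})) = \R^{\bsg_{j_q} \setminus \{y_u,y_v\}} \oplus \sspan(o_{uv}) = \R^{\gamma_{j_q}} \oplus \sspan(o_{uv})$. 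If $y_u, y_v \notin \bsg_{j_q}$, Lemma~\ref{lem:dirspan}(i) gives $\lin(F(\poly_{j_q}, e_{uv})) = \R^{\bsg_{j_q}} = \R^{\gamma_{j_q}}$, again matching the claim.

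For the two mixed cases, the strategy is to use the defining constraints of $\bsg_{j_q}$ to pin down one of the coordinates $t_u$ or $t_v$ on $\poly_{j_q}$, thereby identifying $F(\poly_{j_q}, e_{uv})$ with a face of the form $F(\poly_{j_q}, \pm e_j)$ that is covered by Lemma~\ref{lem:dirspan}(ii). Concretely, if $y_u \in \bsg_{j_q}$ and $y_v \notin \bsg_{j_q}$, then the relation $y_u < y_v$ rules out $y_v < x_{j_q}$ (which would give $y_u < x_{j_q}$, contradicting $y_u \in \bsg_{j_q}$), leaving $y_v > x_{j_q+1}$; hence $t_v \equiv 1$ on $\poly_{j_q}$, so maximizing $\langle e_{uv}, t\rangle$ reduces to maximizing $\langle e_u, t\rangle$, and Lemma~\ref{lem:dirspan}(ii) delivers $\lin(F(\poly_{j_q}, e_{uv})) = \R^{\bsg_{j_q} \setminus \aint_{\ge y_u}} = \R^{\gamma_{j_q}}$. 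The symmetric case $y_u \notin \bsg_{j_q}$, $y_v \in \bsg_{j_q}$ forces $y_u < x_{j_q}$ (else $y_v > y_u > x_{j_q+1}$ contradicts $y_v \in \bsg_{j_q}$), so $t_u \equiv 0$ on $\poly_{j_q}$ and the face coincides with $F(\poly_{j_q}, -e_v)$, giving $\lin = \R^{\bsg_{j_q} \setminus \aint_{\le y_v}} = \R^{\gamma_{j_q}}$. The main obstacle is really only the bookkeeping needed to confirm that the definition of $\gamma_{j_q}$ matches the linear span produced by Lemma~\ref{lem:dirspan} in each branch; I do not anticipate any substantive difficulty beyond this case analysis.
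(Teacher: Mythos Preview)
Your proposal is correct and follows essentially the same approach as the paper: the same four-case split, the same reduction of the mixed cases to $F(\poly_{j_q},\pm e_j)$ via the frozen coordinate, and the same appeals to Lemma~\ref{lem:dirspan}. Your explicit verification that $y_v$ covers $y_u$ in $\aint$ (from $\sigma(y_u)+1=\sigma(y_v)$) is a detail the paper leaves implicit when invoking Lemma~\ref{lem:dirspan}(iii), but otherwise the arguments match.
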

\begin{proof}
There are four cases to consider:
\begin{itemize}
\item $y_u,y_v\in \bsg_{j_q}$: The claim follows from Lemma \ref{lem:dirspan}(iii).
\item $y_u,y_v\notin \bsg_{j_q}$: The claim follows from Lemma \ref{lem:dirspan}(i).
\item $y_u\in \bsg_{j_q}, y_v\notin \bsg_{j_q}$: We will show that $\lin(F(K_{j_q},e_{uv}))=\lin(F(K_{j_q},e_u))$, and the claim will then follow from Lemma \ref{lem:dirspan}(ii). Indeed, the assumption $y_v\notin \bsg_{j_q}$ implies that $y_v\in\aint_{<x_{j_q}}\cup\aint_{>x_{j_q+1}}$. But $y_v\notin\aint_{<x_{j_q}}$ because, otherwise, $y_u<y_v<x_{j_q}$, which contradicts the assumption $y_u\in \bsg_{j_q}$. Hence, $y_v>x_{j_q+1}$ so, by the definition \eqref{eq:Ki} of $K_{j_q}$, $t_v=1$ for any $t\in O_{\aint}$. Since $\sup_{t_u\in [0,1]}t_u=1$ (as $y_u\in \bsg_{j_q}\Rightarrow y\not<x_{j_q}$), it follows that $h_{K_{j_q}}(e_{uv})=\sup_{t_u\in [0,1]}\frac{t_u-t_v}{\sqrt{2}}=0$, and hence 
\[
\lin(F(K_{j_q},e_{uv}))=K_{j_q}\cap \{t_u=t_v\}=K_{j_q}\cap \{t_u=1\}=\lin(F(K_{j_q},e_{u})),
\]
as needed.
\item $y_u\notin \bsg_{j_q},y_v\in \bsg_{j_q}$: The argument is analogous to the previous case: $y_u\in \bsg_{j_q}$ and $y_v\notin \bsg_{j_q}$.
\end{itemize}
\end{proof}

Next we prove the analogue of Claim \ref{cl:a}.
\begin{claim}
\label{cl:c2}
Choose $m\in \llbracket 0,k+1\rrbracket$ such that $i_m<\sigma(y_u)<\sigma(y_v)<i_{m+1}$.
\begin{enumerate}[(i)]
\item For $j_q\neq m$, $\I_{j_q}	\subseteq\sigma(\gamma_{j_q})$.
\item For $j_q= m$, $\I_{m}\backslash\{\sigma(y_u),\sigma(y_v)\}	\subseteq\sigma(\gamma_m)$.
\end{enumerate}

\begin{proof}
$~$
We need to consider the four cases (1) $y_u,y_v\in \bsg_{j_q}$, (2) $y_u,y_v\notin \bsg_{j_q}$, (3) $y_u\in \bsg_{j_q}, y_v\notin \bsg_{j_q}$, and (4) $y_u\notin \bsg_{j_q},y_v\in \bsg_{j_q}$. 
\begin{enumerate}[(i)]
\item Case (1): For any $y$ such that $\sigma(y)\in \I_{j_q}$, we have $y\in \bsg_{j_q}$, by Lemma \ref{lem:Ibeta}, and $y\notin\{y_u,y_v\}$, since $\sigma(y_u),\sigma(y_v)\in \I_m$, and $ \I_m\cap \I_{j_q}=\varnothing$ as $m\neq j_q$. Hence, $y\in  \bsg_{j_q}\backslash\{y_u,y_v\}=\gamma_{j_q}$, so we conclude $\I_{j_q}\subseteq \sigma(\gamma_{j_q})$. 

Case (2): Since $\gamma_{j_q}=\bsg_{j_q}$, Lemma \ref{lem:Ibeta} implies $\I_{j_q}\subseteq \sigma(\gamma_{j_q})$. 

Case (3): For any $y$ such that $\sigma(y)\in \I_{j_q}$, we have $y\in \bsg_{j_q}$, by Lemma \ref{lem:Ibeta}.  On the other hand, the proof of Claim \ref{cl:c1} showed that $y_v>x_{j_q+1}$, so the assumption on $m$ implies that $j_q<m$, which means that $\sigma(y)<i_{j_q+1}\le i_m<\sigma (y_u)$. In particular, $y\notin \aint_{\ge y_u}$ so we conclude that $y\in \bsg_{j_q}\backslash  \aint_{\ge y_u}=\gamma_{j_q}$. It follows that $\I_{j_q}\subseteq \sigma(\gamma_{j_q})$. 

Case (4) is analogous to case (3).

 \item Case (1): For any $y\in\I_{m}\backslash\{\sigma(y_u),\sigma(y_v)\}$, Lemma \ref{lem:Ibeta} implies that $y\in \bsg_m\backslash \{y_u,y_v\}=\gamma_m$, which implies that $\I_{m}\backslash\{\sigma(y_u),\sigma(y_v)\}	\subseteq \sigma(\gamma_m)$.

Case (2): Since $\gamma_m=\bsg_m$, Lemma \ref{lem:Ibeta} implies $\I_{m}\backslash\{\sigma(y_u),\sigma(y_v)\}	\subseteq \sigma(\gamma_m)$. 

Case (3): As shown in part (i) case (3), we must have $j_q<m$ so this case cannot occur. 

Case (4) is analogous to case (3).
\end{enumerate}
\end{proof}
\end{claim}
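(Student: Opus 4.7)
The plan is to handle both parts by the same fourfold case analysis on the relative position of $y_u,y_v$ with respect to $\bsg_{j_q}$ that governs the definition of $\gamma_{j_q}$. The underlying tool in every case is Lemma \ref{lem:Ibeta}, which yields $\I_{j_q}\subseteq\sigma(\bsg_{j_q})$ for $\sigma\in\mathcal N_=$; so for any element $y$ with $\sigma(y)\in\I_{j_q}$ we already have $y\in\bsg_{j_q}$, and the work reduces to verifying that such $y$ avoids the elements discarded from $\bsg_{j_q}$ to form $\gamma_{j_q}$, namely $\{y_u,y_v\}$ in case (1), nothing in case (2), $\aint_{\ge y_u}$ in case (3), and $\aint_{\le y_v}$ in case (4).

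For part (i), case (1) $y_u,y_v\in\bsg_{j_q}$ is immediate since $\sigma(y_u),\sigma(y_v)\in\I_m$, and $\I_m\cap\I_{j_q}=\varnothing$ whenever $j_q\ne m$, so any $y$ with $\sigma(y)\in\I_{j_q}$ is distinct from $y_u,y_v$. Case (2) $y_u,y_v\notin\bsg_{j_q}$ is trivial because $\gamma_{j_q}=\bsg_{j_q}$. The delicate case is (3) $y_u\in\bsg_{j_q}$, $y_v\notin\bsg_{j_q}$, which I would resolve by reusing the observation already appearing in the proof of Claim \ref{cl:c1}: $y_v\notin\bsg_{j_q}$ together with $y_u<y_v$ and $y_u\in\bsg_{j_q}$ rules out $y_v<x_{j_q}$, so $y_v>x_{j_q+1}$, giving $\sigma(y_v)>i_{j_q+1}$ and hence $j_q<m$ (since $\sigma(y_v)<i_{m+1}$). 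Then any $y$ with $\sigma(y)\in\I_{j_q}$ satisfies $\sigma(y)\le i_{j_q+1}-1<i_m<\sigma(y_u)$, so $y\notin\aint_{\ge y_u}$, i.e., $y\in\gamma_{j_q}$. Case (4) is symmetric, yielding $j_q>m$ and $y\notin\aint_{\le y_v}$.

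For part (ii) the same four cases apply with $j_q=m$. Case (1) uses Lemma \ref{lem:Ibeta} again: any $y$ with $\sigma(y)\in\I_m\setminus\{\sigma(y_u),\sigma(y_v)\}$ lies in $\bsg_m$ and is visibly distinct from $y_u,y_v$, hence in $\gamma_m=\bsg_m\setminus\{y_u,y_v\}$; case (2) is trivial. The key observation is that cases (3) and (4) cannot occur here, because the case-(3) argument above forced $j_q<m$ and the case-(4) argument symmetrically forces $j_q>m$, both incompatible with $j_q=m$. The only non-routine ingredient anywhere is the deduction $y_v>x_{j_q+1}$ in case (3), which is the same trick used in Claim \ref{cl:c1}, so I anticipate no further obstacles.
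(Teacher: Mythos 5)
Your proposal is correct and follows essentially the same route as the paper's proof: the same four-case split governed by membership of $y_u,y_v$ in $\bsg_{j_q}$, Lemma \ref{lem:Ibeta} as the base inclusion, the reuse of the observation $y_v>x_{j_q+1}$ from Claim \ref{cl:c1} to force $j_q<m$ in case (3), and the resulting impossibility of cases (3) and (4) when $j_q=m$.
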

Choose $m\in \llbracket 0,k+1\rrbracket$ such that $i_m<\sigma(y_u)<\sigma(y_v)<i_{m+1}$. To complete the proof we  distinguish between two cases: $m\notin J$ and $m\in J$. Suppose $m\notin J$. By \eqref{eq:linspan} and Claim \ref{cl:c1}, $\R^{\gamma_J}\subseteq\lin\left(\sum_{\poly\in\mathcal \poly'}F(\poly,e_{uv})\right)$,
so $\dim\left(\sum_{\poly\in\mathcal \poly'}F(\poly,e_{uv})\right)\ge |\gamma_J|$. On the other hand, by Claim \ref{cl:c2} and as $m\notin J$, $|\gamma_J|\ge |\I_J|$. We conclude 
\[
\dim\left(\sum_{\poly\in\mathcal \poly'}F(\poly,e_{uv})\right)\ge |\I_J|\ge  |\I_J|-1_{\LL-1\in J}-1_{\LL\in J}\ge |\mathcal \poly'|,
\]
which completes the proof.

Suppose that $m\in J$. By the definition of $m$, $\sigma(y_u),\sigma(y_v)\in \I_m$, so Lemma \ref{lem:Ibeta} implies that $y_u,y_v\in \bsg_m$. By Claim \ref{cl:c1}, it follows that $F(\poly_m,e_{uv})=\R^{\gamma_m}\oplus \sspan(o_{uv})$. On the other hand, for any $j_q\in J$, by the definition of $\gamma_{j_q}$, we have $y_u,y_v\notin\gamma_{j_q}$. Hence, $\R^{\gamma_{j_q}}\cap \sspan(o_{uv})= \{0\}$ for all $j_q\in J$, and in particular, $\R^{\gamma_{J}}\cap \sspan(o_{uv})= \{0\}$. It follows from  \eqref{eq:linspan} that
\[
\lin\left(\sum_{\poly\in\mathcal \poly'}F(\poly,e_{uv})\right)=\R^{\gamma_J}\oplus \sspan(o_{uv}),
\]
and
\[
\dim\left(\sum_{\poly\in\mathcal \poly'}F(\poly,e_{uv})\right)= |\gamma_J|+1.
\]
We now consider separately the cases $\LL\in J$ and $\LL\notin J$. Suppose $\LL\in J$. By Claim \ref{cl:c2}, $|\gamma_J|\ge |\I_J|-2$ so 
\[
|\gamma_J|+1\ge  |\I_J|-1\ge  |\I_J|-1-1_{\LL-1\in J}\ge |\mathcal \poly'|,
\]
which completes the proof. It remains to consider the case $m\in J$ and $\LL\notin J$:

Choose the largest $b\in\llbracket 0,p\rrbracket$ such that $j_b<\LL$, so $j_b<\LL<j_{b+1}$, and, in particular, $(j_b,j_{b+1})$ is an $\ell$-splitting pair. By Lemma \ref{lem:splitequiv}, there exists $y^{\sigma}\in \bsg_{j_b}\cup\bsg_{j_{b+1}}$ with $\sigma(y^{\sigma})\in \I_{\llbracket j_b+1,j_{b+1}-1\rrbracket}$. We will show that 
\begin{align}
\label{eq:ysigmgamma}
y^{\sigma}\in  \gamma_{j_b}\cup\gamma_{j_{b+1}}.
\end{align}
Assume for now that \eqref{eq:ysigmgamma} holds. Then, $(\I_J\backslash\{\sigma(y_u),\sigma(y_v)\})\cup\{\sigma(y^{\sigma})\}\subseteq \sigma(\gamma_J)$. On the other hand, arguing as in part (a) for the case $m\in J,\LL\notin J$, we have $\sigma(y^{\sigma})\notin \I_J$. Hence, $|\gamma_J|+1\ge |\I_J|$, so $\dim\left(\sum_{\poly\in\mathcal \poly'}F(\poly,e_{uv})\right)\ge  |\I_J|\ge  |\I_J|-1_{\LL-1\in J}-1_{\LL\in J}\ge |\mathcal \poly'|$, which completes the proof.

It remains to prove \eqref{eq:ysigmgamma}. We will show $y^{\sigma}\in  \bsg_{j_b}\Rightarrow y^{\sigma}\in  \gamma_{j_b}$, and the argument for $y^{\sigma}\in  \bsg_{j_{b+1}}\Rightarrow y^{\sigma}\in  \gamma_{j_{b+1}}$ is analogous. Since $y^{\sigma}\in \bsg_{j_b}\cup\bsg_{j_{b+1}}$, \eqref{eq:ysigmgamma} will follow. Suppose then that $y^{\sigma}\in  \bsg_{j_b}$ so our task is to show that $y^{\sigma}\in  \gamma_{j_b}$. There are two cases to consider: $j_b\ge m$ and $j_{b+1}\le m$; we will consider the case $j_b\ge m$ and the argument for the case $j_{b+1}\le m$ is analogous.

Let us start by showing that $\gamma_{j_q}$ cannot be equal to $\bsg_{j_q}\backslash\aint_{\ge y_u}$. Indeed, the latter occurs only if $y_u\in\bsg_{j_q},y_v\notin \bsg_{j_q}$, in which case, either $y_v<x_{j_b}$ or $y_v>x_{j_b+1}$. If $y_v<x_{j_b}$, then $y_u<y_v<x_{j_b}$ which contradicts $y_u\in\bsg_{j_q}$. If $y_v>x_{j_b+1}$, then $\sigma(x_{j_b+1})<\sigma(y_v)<i_{m+1}=\sigma(x_{m+1})$, which contradicts $m\le j_b$. We conclude that $\gamma_{j_q}\in\{\bsg_{j_q},\bsg_{j_q}\backslash\{y_u,y_v\},\bsg_{j_q}\backslash\aint_{\le y_u}\}$, and since $y^{\sigma}\in  \bsg_{j_b}$, it suffices to show that $y^{\sigma}\notin \{y_u,y_v\}$ and $y^{\sigma}\notin\aint_{\le y_u}$. To see that $y^{\sigma}\notin \{y_u,y_v\}$, note that $\sigma(y^{\sigma})\in \I_{\llbracket j_b+1,j_{b+1}-1\rrbracket}$ while $\sigma(y_u),\sigma(y_v)\in \I_m$. Since $m\le j_b$, $\I_{\llbracket j_b+1,j_{b+1}-1\rrbracket}\cap \I_m=\varnothing$ so $y^{\sigma}\notin \{y_u,y_v\}$. To see that $y^{\sigma}\notin\aint_{\le y_u}$, note that, since  $m\le j_b$, $\sigma(y_u)<i_{m+1}\le i_{j_b+1}<\sigma(y^{\sigma})$, where the last inequality holds as $\sigma(y^{\sigma})\in \I_{\llbracket j_b+1,j_{b+1}-1\rrbracket}$.\\

\item Fix $u,v$ such that there exist $y_u<y_v$ with $\sigma\in\mathcal N_=$ satisfying $\sigma(y_u)=i_{\LL}-1$ and $\sigma(y_v)=i_{\LL}+1$. For $j_q\in J$ we let $\gamma_{j_q}$ be as in part (c). We start by showing that Claim \ref{cl:c1} holds here as well.

\begin{claim}
\label{cl:d1}
For every $j_q\in J$,
\[
\lin(F(K_{j_q},e_{uv}))=
\begin{cases}
\R^{\gamma_{j_q}}\oplus\sspan(o_{uv})&\mbox{if }y_u,y_v\in \bsg_{j_q},\\
\R^{\gamma_{j_q}}&\mbox{ otherwise}.
\end{cases}
\]
\end{claim}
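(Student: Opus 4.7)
The plan is to mirror the proof of Claim \ref{cl:c1} verbatim, performing the same four-case analysis according to whether $y_u$ and $y_v$ belong to $\bsg_{j_q}$. Three of the four cases transfer without modification, and only case (1), where $y_u, y_v \in \bsg_{j_q}$, requires a genuinely new verification. The reason is that in part (c), the adjacency $\sigma(y_u)+1 = \sigma(y_v)$ immediately forces $y_v$ to cover $y_u$ in $\aint$ (no element of $\abar$ can squeeze between them in the linear order), and Lemma \ref{lem:dirspan}(iii) applies directly. In part (d), by contrast, $x_{\LL}$ sits in between $y_u$ and $y_v$ in the linear order, so a separate poset-theoretic argument is needed to establish the covering relation.

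First I would verify that $y_v$ covers $y_u$ in $\aint$ under the present hypothesis. Suppose for contradiction that there exists $z \in \aint$ with $y_u < z < y_v$. Then $\sigma(y_u) < \sigma(z) < \sigma(y_v)$ combined with $\sigma(y_u) = i_{\LL}-1$ and $\sigma(y_v) = i_{\LL}+1$ forces $\sigma(z) = i_{\LL}$. But $\sigma \in \mathcal N_=$ imposes $\sigma(x_{\LL}) = i_{\LL}$, so $z = x_{\LL}$, contradicting $z \in \aint$. Hence no such $z$ exists, and $y_v$ covers $y_u$ in $\aint$. Lemma \ref{lem:dirspan}(iii) then yields $\lin(F(\poly_{j_q}, e_{uv})) = \R^{\bsg_{j_q} \setminus \{y_u, y_v\}} \oplus \sspan(o_{uv}) = \R^{\gamma_{j_q}} \oplus \sspan(o_{uv})$, matching the first branch of the claim.

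The remaining three cases are carried over unchanged from Claim \ref{cl:c1} and do not use the precise values $i_{\LL}\pm 1$. Case (2), $y_u, y_v \notin \bsg_{j_q}$, follows immediately from Lemma \ref{lem:dirspan}(i) together with $\gamma_{j_q} = \bsg_{j_q}$. In cases (3) and (4), exactly one of $y_u, y_v$ lies outside $\bsg_{j_q}$; as in the proof of Claim \ref{cl:c1}, one observes that the outside element is pinned to coordinate value $0$ or $1$ by the defining equations \eqref{eq:Ki} of $\poly_{j_q}$ (and the alternative would contradict $y_u \le y_v$ together with $y_v \in \bsg_{j_q}$ or $y_u \in \bsg_{j_q}$), so the face $\{t_u = t_v\}$ collapses to $\{t_u = 1\}$ or $\{t_v = 0\}$ accordingly, and Lemma \ref{lem:dirspan}(ii) gives $\lin(F(\poly_{j_q}, e_{uv})) = \R^{\gamma_{j_q}}$.

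The only non-routine step is the covering verification in case (1); once that is in place, the structural content of the proof is identical to Claim \ref{cl:c1} and no further obstacles arise.
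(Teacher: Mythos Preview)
Your proposal is correct and matches the paper's own proof essentially verbatim: the paper states that the proof is the same as that of Claim~\ref{cl:c1}, with the sole additional observation that when $y_u,y_v\in\bsg_{j_q}$ one must verify $y_v$ covers $y_u$ in $\aint$, and it proves this covering relation by exactly the argument you give (any intermediate $z\in\aint$ would be forced to have $\sigma(z)=i_{\LL}$, hence $z=x_{\LL}$, a contradiction).
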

\begin{proof}
The proof is the same as the proof of Claim \ref{cl:c1}, but we need to check that, when $y_u,y_v\in\bsg_{j_q}$, $y_v$ covers $y_u$ in $\aint$. The latter must be true since, otherwise, there exists $z\in\aint$ such that $y_u<z<y_v$, so $i_{\LL}-1=\sigma(y_u)<\sigma(z)<\sigma(y_v)=i_{\LL}+1$. This implies $z=x_{\LL}$, which contradicts $z\in\aint$.
\end{proof}
Next we prove the analogue of Claim \ref{cl:c2}.

\begin{claim}
\label{cl:d2}
$~$
\begin{enumerate}[(i)]
\item For $j_q\notin\{\LL-1,\LL\}$, $\I_{j_q}	\subseteq\sigma(\gamma_{j_q})$.
\item  For $j_q=\LL-1$, $\I_{\LL-1}\backslash\{i_{\LL}-1\}	\subseteq\sigma(\gamma_{\LL-1})$.
\item  For $j_q=\LL$, $\I_{\LL}\backslash\{i_{\LL}+1\}	\subseteq\sigma(\gamma_{\LL})$.
\end{enumerate}
\end{claim}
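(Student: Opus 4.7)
The plan is to follow the pattern of Claim \ref{cl:c2}, exploiting the specific feature that here $\sigma(y_u) = i_\LL - 1$ and $\sigma(y_v) = i_\LL + 1$ flank the fixed location $\sigma(x_\LL) = i_\LL$. The role of the index $m$ from Claim \ref{cl:c2} is now played by the pair $(\LL-1,\LL)$, which is why parts (ii) and (iii) must be separated.

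For part (i), fix $j_q \in \llbracket 0,k\rrbracket \setminus \{\LL-1,\LL\}$ and take $y$ with $\sigma(y) \in \I_{j_q}$; Lemma \ref{lem:Ibeta} gives $y \in \bsg_{j_q}$. The assumption $i_{\LL-1}+1 < i_\LL < i_{\LL+1}-1$ forces $\I_{j_q} \cap \{i_\LL - 1, i_\LL + 1\} = \varnothing$, so $y \neq y_u, y_v$. This settles cases (1)--(2) in the definition of $\gamma_{j_q}$. In case (3), the argument from Claim \ref{cl:c1} forces $y_v > x_{j_q+1}$, whence $i_{j_q+1} < \sigma(y_v) = i_\LL + 1$ and, using $j_q \neq \LL - 1$, $j_q \le \LL - 2$; then $\sigma(y) \le i_{j_q+1} - 1 \le i_{\LL-1} - 1 < i_\LL - 1 = \sigma(y_u)$, which gives $y \not\ge y_u$. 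Case (4) is symmetric, yielding $j_q \ge \LL + 1$ and $\sigma(y) > \sigma(y_v)$, so $y \not\le y_v$.

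For parts (ii) and (iii), the key observation is that $\sigma(y_u) = i_\LL - 1 \in \I_{\LL-1}$ and $\sigma(y_v) = i_\LL + 1 \in \I_\LL$ (using $i_\LL + 1 \le i_{\LL+1}-1$), so Lemma \ref{lem:Ibeta} automatically places $y_u \in \bsg_{\LL-1}$ and $y_v \in \bsg_\LL$. This eliminates cases (2) and (4) of the definition of $\gamma_{\LL-1}$, and cases (2) and (3) of the definition of $\gamma_\LL$. For part (ii), take $y$ with $\sigma(y) \in \llbracket i_{\LL-1}+1, i_\LL - 2 \rrbracket$; then $y \in \bsg_{\LL-1}$ by Lemma \ref{lem:Ibeta}, and the bound $\sigma(y) \le i_\LL - 2 < \sigma(y_u) < \sigma(y_v)$ simultaneously gives $y \notin \{y_u,y_v\}$ and $y \not\ge y_u$, handling cases (1) and (3) uniformly. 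Part (iii) is the mirror image, using $\sigma(y) \ge i_\LL + 2 > \sigma(y_v) > \sigma(y_u)$ to obtain $y \not\le y_v$.

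I do not foresee any genuine obstacle: this is a routine bookkeeping argument. The delicate splitting-pair machinery (Lemma \ref{lem:splitequiv}) that intervenes at the corresponding stage in the proof of part (c) is unnecessary here, because the hypothesis that $\sigma(y_u)$ and $\sigma(y_v)$ sandwich $i_\LL$ at distance exactly one already forces both $y_u$ and $y_v$ into the relevant $\bsg_j$'s, collapsing the otherwise awkward mixed cases. The only point deserving minor care is the systematic use of the strict inequality $i_{\LL-1} + 1 < i_\LL < i_{\LL+1} - 1$, which is precisely what guarantees that $\I_{\LL-1} \setminus \{i_\LL-1\}$ and $\I_\LL \setminus \{i_\LL+1\}$ are the correct truncations.
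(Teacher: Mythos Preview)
Your proof is correct and follows essentially the same case analysis as the paper's own proof. Your observation in parts (ii)--(iii) that $\sigma(y_u)\in\I_{\LL-1}$ and $\sigma(y_v)\in\I_\LL$ force $y_u\in\bsg_{\LL-1}$ and $y_v\in\bsg_\LL$, thereby eliminating two of the four cases upfront, is a mild streamlining over the paper, which redundantly treats all four cases even when some are vacuous.
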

\begin{proof}
We need to consider the four cases (1) $y_u,y_v\in \bsg_{j_q}$, (2) $y_u,y_v\notin \bsg_{j_q}$, (3) $y_u\in \bsg_{j_q}, y_v\notin \bsg_{j_q}$, and (4) $y_u\notin \bsg_{j_q},y_v\in \bsg_{j_q}$.
\begin{enumerate}[(i)]
\item  Case (1): For any $y$ such that $\sigma(y)\in \I_{j_q}$, we have $y\notin\{y_u,y_v\}$ since $\sigma(y_u),\sigma(y_v)\notin \I_{j_q}$ (because $j_q\notin\{\LL-1,\LL\}$). Hence, by Lemma \ref{lem:Ibeta}, $y\in\bsg_{j_q}\backslash \{y_u,y_v\}=\gamma_{j_q}$, so we conclude $\I_{j_q}	\subseteq\sigma(\gamma_{j_q})$. 

Case (2):  By Lemma \ref{lem:Ibeta}, $\I_{j_q}\subseteq \bsg_{j_q}=\gamma_{j_q}$ so  $\I_{j_q}	\subseteq\sigma(\gamma_{j_q})$. 

Case (3): We start by showing that $j_q<\LL$. Indeed, suppose for contradiction that $ j_q\ge \LL$. Since $y_v\notin\bsg_{j_q}$, we have that either $y_v<x_{j_q}$ or $y_v>x_{j_q+1}\ge x_{\LL+1}$. We cannot have $y_v>x_{j_q+1}\ge x_{\LL+1}$, since $\sigma(y_v)=i_{\LL}+1<i_{\LL+1}=\sigma(x_{\LL+1})$. Hence, we must have $y_u<y_v<x_{j_q}$, which contradicts $y_u\in\bsg_{j_q}$. We conclude that $j_q<\LL$. The assumption $j_q\notin\{\LL-1,\LL\}$ implies that in fact $j_q<\LL-1$. Hence, for any $y$ such that $\sigma(y)\in \I_{j_q}$, we have $y\in\bsg_{j_q}\backslash\aint_{\ge y_u}=\gamma_{j_q}$, because $\sigma(y)<i_{j_q+1}\le i_{\LL-1}<i_{\LL}-1=\sigma(y_u)$. It follows that $\I_{j_q}\subseteq \sigma(\gamma_{j_q})$.

 Case (4) is analogous to case (3). 
 
\item  Case (1): For any $y$ such that $\sigma(y)\in \I_{\LL-1}\backslash\{i_{\LL}-1\}$, we have  $y\notin\{y_u,y_v\}$ so, by Lemma \ref{lem:Ibeta}, $\I_{\LL-1}\backslash\{i_{\LL}-1\}\subseteq \sigma(\gamma_{\LL-1})$. 

Case (2): By Lemma \ref{lem:Ibeta}, $\I_{\LL-1}\subseteq \sigma(\bsg_{\LL-1})=\sigma(\gamma_{\LL-1})$ so  $\I_{\LL-1}	\subseteq\sigma(\gamma_{\LL-1})$. 

Case (3):  For any $y$ such that $\sigma(y)\in \I_{\LL-1}\backslash\{i_{\LL}-1\}$, we have $y\in\bsg_{\LL-1}\backslash\aint_{\ge y_u}=\gamma_{\LL-1}$, because, by the definition of  $\I_{\LL-1}\backslash\{i_{\LL}-1\}$, $\sigma(y)<i_{\LL}-1=\sigma(y_u)$. It follows that $\I_{\LL-1}\backslash\{i_{\LL}-1\}	\subseteq\sigma(\gamma_{\LL-1})$. 

Case (4) is analogous to case (3).

\item The argument is analogous to (ii).

\end{enumerate}
\end{proof}
 By \eqref{eq:linspan} and Claim \ref{cl:d1}, $\R^{\gamma_J}\subseteq\lin\left(\sum_{\poly\in\mathcal \poly'}F(\poly,e_{uv})\right)$, so $\dim\left(\sum_{\poly\in\mathcal \poly'}F(\poly,e_{uv})\right)\ge |\gamma_J|$. By Claim \ref{cl:d2}, using the fact that $\{\I_{j_q}\}_{j_q\in J\backslash\{\LL-1,\LL\}}, \I_{\LL-1},\I_{\LL}$ are disjoint, we have 
 \[
 |\gamma_J|\ge \sum_{j_q\in J}[ |\I_{j_q}|-1_{j_q=\LL-1}-1_{j_q=\LL}]=|\I_J|-1_{\LL-1\in J}-1_{\LL\in J}\ge |\mathcal\poly'|,
 \]
which completes the proof.\\
 
\item Fix $ r_{\max}+1\le m\le\LL-1$ and consider $\sigma\in\mathcal N_=$ such that $\sigma(y_j)=i_m+2$ where $j$ is such that $y_j\in\aint_{>x_m}$. By Corollary \ref{cor:splitequiv}, $ \sigma(y_j)=i_m+2\le i_{m+1}=\sigma(x_{m+1})$, and since $\sigma(y_j)\neq \sigma(x_{m+1})$ (as $y_j\neq x_{m+1}$), we get that  $ \sigma(y_j)=i_m+2<i_m+3\le  \sigma(x_{m+1})=i_{m+1}$. It follows that $i_m+1<i_{m+1}-1$, so $\sigma(y_j)\in \I_m$. 

For $j_q\in J$, let $\gamma_{j_q}$ be as in part (a), and note that an analogous argument yield
\[
\dim\left(\sum_{\poly\in\mathcal \poly'}F(\poly,-e_j)\right)=|\gamma_J|,
\]
and
\begin{claim}
\label{cl:e}
$~$

\begin{enumerate}[(i)]
\item For $j_q\neq m$, $\I_{j_q}	\subseteq\sigma(\gamma_{j_q})$.
\item For $j_q= m$, $\I_{m}\backslash\{i_m+1,i_m+2\}	\subseteq\sigma(\gamma_m)$.
\end{enumerate}
\end{claim}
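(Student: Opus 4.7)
The plan is to mirror the proof of Claim~\ref{cl:a} verbatim, with the single change that $\sigma(y_j)$ now equals $i_m+2$ rather than $i_m+1$. The key preliminary fact, already recorded in the text immediately preceding the claim, is that $\sigma(y_j)=i_m+2\in\I_m$; equivalently, $i_{m+1}\ge i_m+3$. This inequality is what makes the arithmetic in all of the case analyses work and is obtained from Corollary~\ref{cor:splitequiv} together with the fact that $\sigma(y_j)$ is strictly smaller than $\sigma(x_{m+1})=i_{m+1}$.

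For part~(i), I would split on whether $y_j\in\bsg_{j_q}$. If $y_j\notin\bsg_{j_q}$, then $\gamma_{j_q}=\bsg_{j_q}$ and Lemma~\ref{lem:Ibeta} immediately gives $\I_{j_q}\subseteq\sigma(\bsg_{j_q})=\sigma(\gamma_{j_q})$. If $y_j\in\bsg_{j_q}$, the first step is to argue $m<j_q$: the opposite assumption $j_q<m$ would force $x_{j_q+1}\le x_m<y_j$, contradicting $y_j\in\bsg_{j_q}$. Once this is in place, for any $y$ with $\sigma(y)\in\I_{j_q}$, Lemma~\ref{lem:Ibeta} yields $y\in\bsg_{j_q}$, and the chain of inequalities $\sigma(y)\ge i_{j_q}+1\ge i_{m+1}+1\ge i_m+3>i_m+2=\sigma(y_j)$ (using $j_q\ge m+1$ and Corollary~\ref{cor:splitequiv}) shows $y\notin\aint_{\le y_j}$, hence $y\in\gamma_{j_q}$.

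For part~(ii), the first step is to rule out $y_j\notin\bsg_m$: this case is impossible because $\sigma(y_j)=i_m+2\in\I_m\subseteq\sigma(\bsg_m)$ by Lemma~\ref{lem:Ibeta} and the preliminary inequality $i_{m+1}\ge i_m+3$. Hence $y_j\in\bsg_m$ and $\gamma_m=\bsg_m\setminus\aint_{\le y_j}$. For any $y$ with $\sigma(y)\in\I_m\setminus\{i_m+1,i_m+2\}=\llbracket i_m+3,i_{m+1}-1\rrbracket$, Lemma~\ref{lem:Ibeta} gives $y\in\bsg_m$, and $\sigma(y)\ge i_m+3>i_m+2=\sigma(y_j)$ ensures $y\notin\aint_{\le y_j}$, so $y\in\gamma_m$.

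There is essentially no obstacle here beyond bookkeeping: the hypothesis $r_{\max}+1\le m\le\LL-1$ from Proposition~\ref{prop:dir}(e) is not used within the proof of the claim itself (it is needed only to ensure the existence of the relevant $\sigma\in\mathcal N_=$ with $\sigma(y_j)=i_m+2$ in the ambient proposition), and the only nontrivial ingredient is the arithmetic inequality $i_{m+1}\ge i_m+3$, which replaces the $i_{m+1}\ge i_m+2$ used in Claim~\ref{cl:a}. The slight tightening of the excluded set from $\{i_m+1\}$ to $\{i_m+1,i_m+2\}$ in part~(ii) is precisely what allows the same argument to go through.
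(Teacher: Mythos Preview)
Your proposal is correct and is precisely the argument the paper has in mind: the paper does not write out a separate proof of Claim~\ref{cl:e}, but simply states that ``an analogous argument'' to Claim~\ref{cl:a} yields it, with the only change being $\sigma(y_j)=i_m+2$ in place of $i_m+1$ and the preliminary inequality $i_{m+1}\ge i_m+3$ replacing $i_{m+1}\ge i_m+2$. Your write-up supplies exactly these details.
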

In order to complete the proof we  distinguish between two cases: $m\notin J$ and $m\in J$. The proof of the case $m\notin J$ is the same as in part (a). Suppose that $m\in J$ and consider the following cases:
\begin{itemize}
\item $\LL-1,\LL\in J$: The proof is complete since $|\mathcal \poly'|\le |\I_J|-1_{\LL-1\in J}-1_{\LL\in J}=|\I_J|-2$, and since Claim \ref{cl:e} yields $|\gamma_J|\ge |\I_J|-2$.\\

\item $\LL-1\in J$ and $\LL\notin J$: Since $\LL\notin J$, there is an index $j_b$ such that $j_b=\LL-1$ and $j_{b+1}>\LL$, and note that $(j_b,j_{b+1})$ is a splitting pair. Note that since $m\le \LL-1$, and $m\in J$, we must have $m\le j_b$. By Lemma \ref{lem:splitequiv}, there exists $y^{\sigma}\in \bsg_{j_b}\cup \bsg_{j_b+1}$ such that $\sigma(y^{\sigma})\in \I_{\llbracket j_b+1,j_{b+1}-1\rrbracket}$. Suppose  $y^{\sigma}\in \bsg_{j_b}$; the proof for the case $y^{\sigma}\in \bsg_{j_b+1}$ is analogous. Since $\sigma(y^{\sigma})>i_{j_b+1}\ge i_{m+1}> \sigma(y_j)$, we get $y^{\sigma}\in \bsg_{j_b}\backslash\aint_{\le y_j}\subseteq \gamma_{j_b}\subseteq \gamma_J$. Hence, 
\[
(\I_J\backslash\{ i_m+1,i_m+2\})\cup\{\sigma(y^{\sigma})\}\subseteq\sigma(\gamma_J).
\]
Since $\sigma(y^{\sigma})\in \I_{\llbracket j_b+1,j_{b+1}-1\rrbracket}$, we have $\sigma(y^{\sigma})\notin \I_J$ (because $j_b=\LL-1$ and $\LL\notin J$ so the indices $\{j_b+1,\ldots,j_{b+1}-1\}=\llbracket j_b+1,j_{b+1}-1\rrbracket$ are not in $J$), so we get that $|\gamma_J|\ge |\I_J|-1=|\I_J|-1_{\LL-1\in J}-1_{\LL\in J}\ge |\mathcal \poly'|$. \\

\item $\LL-1\notin J$ and $\LL\in J$: The proof is analogous to the case $\LL-1\in J$ and $\LL\notin J$.\\

\item $\LL-1,\LL\notin J$: Since $\LL-1,\LL\notin J$, we can choose $b$ to be an index such that $j_b<\LL-1<\LL<j_{b+1}$, or the largest index such  $j_b<\LL-1<\LL$, and note that $(j_b,j_{b+1})$ is an $\LL$-splitting pair. Note that since $m\le \LL-1$, and $m\in J$, we must have $m\le j_b$. Consider the collection 
\[
\mathcal \poly'':=(\mathcal\poly_0,\ldots, \mathcal\poly_{j_b},\mathcal\poly_{j_b+1},\ldots,\mathcal\poly_k)
\]
and note that, by Assumption \ref{ass:crit}, $\mathcal \poly''$ is critical. We claim that $\mathcal \poly''$ is  in fact  supercritical. Indeed, if $\mathcal \poly''$ is sharp-critical, then $j_b\le r_{\max}$. But $j_b\ge m>r_{\max}$, so we get a contradiction. Since  $\mathcal \poly''$ is supercritical, and since $(j_b,j_{b+1})$ is an $\LL$-splitting pair, Corollary \ref{cor:Ibetastrong} provides  two distinct $y^{\sigma},z^{\sigma}\in\bsg_{j_b}\cup\bsg_{j_{b+1}}$, with $\sigma(y^{\sigma}), \sigma(z^{\sigma})\in \I_{\llbracket j_b+1,j_{b+1}-1\rrbracket}$, from which it follows that 
\[
\I_{\llbracket 0,j_b\rrbracket\cup \llbracket j_{b+1},k\rrbracket}\cup\{\sigma(y^{\sigma}),\sigma(z^{\sigma})\}\subseteq \sigma(\bsg_{\llbracket 0,j_b\rrbracket\cup \llbracket j_{b+1},k\rrbracket}).
\]
Suppose that $y^{\sigma}\in \bsg_{j_b}$; the case $y^{\sigma}\in \bsg_{j_{b+1}}$ is analogous. Since $m\le j_b$, $\sigma(y^{\sigma})>i_{j_b+1}\ge i_{m+1}> \sigma(y_j)$, so we can conclude that $y^{\sigma}\in\bsg_{j_b}\backslash\aint_{\le y_j}\subseteq \gamma_{j_b}\subseteq\gamma_J$. Analogous argument shows that $z^{\sigma}\in \gamma_J$. By Claim \ref{cl:e}, it follows that
\[
(\I_J\backslash\{ i_m+1,i_m+2\})\cup\{\sigma(y^{\sigma}),\sigma(z^{\sigma})\}\subseteq\sigma(\gamma_J).
\]
Since $\sigma(y^{\sigma}),\sigma(z^{\sigma})\in \I_{\llbracket j_b+1,j_{b+1}-1\rrbracket}$, we have $\sigma(y^{\sigma}),\sigma(z^{\sigma})\notin \I_J$ (because $b$ satisfies $j_b<\LL-1<\LL<j_{b+1}$, or the maximal $j_b<\LL-1$, so the indices $\{j_b+1,\ldots,j_{b+1}-1\}=\llbracket j_b+1,j_{b+1}-1\rrbracket$ are not in $J$). On the other hand, because $m\in J$ and $i_m+2<i_{m+1}$, we have $i_m+1,i_m+2\in \I_J$. It follows that $(\I_J\backslash\{ i_m+1,i_m+2\})\cup\{\sigma(y^{\sigma}),\sigma(z^{\sigma})\}|=|\I_J|$, and hence, $|\gamma_J|\ge |\I_J|\ge |\mathcal \poly'| $.
\end{itemize}
$~$\\

\item  The proof is analogous to part (e).\\

\item Consider $\sigma\in\mathcal N_+$ such that $\sigma(y_j)=i_{\LL-1}+2$ where $j$ is such that $y_j\in\aint_{>x_{\LL-1}}$. By Corollary \ref{cor:splitequiv}, $\sigma(x_{\LL-1})<i_{\LL-1}+2=\sigma(y_j)<i_{\LL}+1=\sigma(x_{\LL})$, so we  conclude that $y_j\in\bsg_{\LL-1}$. For $j_q\in J$ let $\gamma_{j_q}$ be as in part (a), and note that an analogous argument yields $\dim\left(\sum_{\poly\in\mathcal \poly'}F(\poly,-e_j)\right)=|\gamma_J|$. We start with the analogue of Claim \ref{cl:d2}. 

\begin{claim}
\label{cl:g}
$~$
\begin{enumerate}[(i)]
\item For $j_q\notin\{\LL-1,\LL\}$, $\I_{j_q}	\subseteq\sigma(\gamma_{j_q})$.
\item  For $j_q=\LL-1$, $(\I_{\LL-1}\cup\{i_{\LL}\})\backslash\{i_{\LL-1}+1,i_{\LL-1}+2\}	\subseteq\sigma(\gamma_{\LL-1})$.
\item  For $j_q=\LL$, $\I_{\LL}\backslash\{i_{\LL}+1\}	\subseteq\sigma(\gamma_{\LL})$.
\end{enumerate}
\end{claim}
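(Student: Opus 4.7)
The plan is to mirror the structure of Claim \ref{cl:d2}, exploiting the fact that $\sigma(y_j) = i_{\LL-1}+2$ is very small relative to most positions in $\I_J$. The key mechanism in each case will be: any element $y$ with $\sigma(y) > \sigma(y_j)$ cannot satisfy $y \le y_j$ (since $y \le y_j \Rightarrow \sigma(y) \le \sigma(y_j)$), hence $y \in \bsg_{j_q} \setminus \aint_{\le y_j}$ whenever $y \in \bsg_{j_q}$.

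For part (i), I would split on the position of $j_q$. If $j_q \le \LL - 2$, then $x_{j_q+1} \le x_{\LL-1} < y_j$ forces $y_j \notin \bsg_{j_q}$, so $\gamma_{j_q} = \bsg_{j_q}$ and the inclusion follows from Lemma \ref{lem:Ibeta}(2) together with the identity $\I_{j_q}^+ = \I_{j_q}$ (valid since $j_q \notin \{\LL-1,\LL\}$). If $j_q \ge \LL+1$, then any $y$ with $\sigma(y) \in \I_{j_q}$ lies in $\bsg_{j_q}$ by Lemma \ref{lem:Ibeta}(2), and satisfies $\sigma(y) \ge i_{j_q}+1 > i_{\LL-1}+2 = \sigma(y_j)$, so $y \in \gamma_{j_q}$ regardless of whether $y_j \in \bsg_{j_q}$.

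For part (ii), I take $i \in (\I_{\LL-1} \cup \{i_{\LL}\}) \setminus \{i_{\LL-1}+1, i_{\LL-1}+2\} = \llbracket i_{\LL-1}+3, i_{\LL}\rrbracket$ and let $y := \sigma^{-1}(i)$. Because $\sigma \in \mathcal{N}_+$ places $x_{\LL-1}$ at $i_{\LL-1}$ and $x_{\LL}$ at $i_{\LL}+1$, the inequalities $\sigma(x_{\LL-1}) < i < \sigma(x_{\LL})$ together with the order-preserving property of $\sigma$ force $y \in \bsg_{\LL-1}$. Since $\sigma(y) = i > i_{\LL-1}+2 = \sigma(y_j)$, the element $y$ is distinct from $y_j$ and not below $y_j$, so $y \in \gamma_{\LL-1}$. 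Part (iii) follows by the same template: for $i \in \I_{\LL} \setminus \{i_{\LL}+1\} = \llbracket i_{\LL}+2, i_{\LL+1}-1 \rrbracket$, the inequalities $\sigma(x_{\LL}) = i_{\LL}+1 < i < i_{\LL+1} = \sigma(x_{\LL+1})$ place $y := \sigma^{-1}(i)$ in $\bsg_{\LL}$, and either $y_j \notin \bsg_{\LL}$ (so $\gamma_{\LL} = \bsg_{\LL}$) or else $\sigma(y) \ge i_{\LL}+2 > \sigma(y_j)$ gives $y \not\le y_j$.

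I expect no genuine obstacle here: the asymmetry of $\sigma(y_j)$ being near $i_{\LL-1}$ means the required comparison $\sigma(y) > \sigma(y_j)$ essentially comes for free in the cases where $y_j$ can actually belong to $\bsg_{j_q}$. The only point that requires a little care is recognizing that for $j_q \le \LL-2$ the hypothesis $y_j > x_{\LL-1}$ automatically excludes $y_j$ from $\bsg_{j_q}$, which collapses the case analysis. The rest is routine bookkeeping with the intervals $\I_{j}$ and the fact that $\sigma \in \mathcal{N}_+$ shifts only the position of $x_{\LL}$.
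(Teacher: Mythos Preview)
Your proposal is correct and follows essentially the same approach as the paper. The only cosmetic difference is the case split in part (i): the paper first splits on whether $y_j\in\bsg_{j_q}$ and then deduces that $y_j\in\bsg_{j_q}$ forces $j_q>\LL$, whereas you split directly on $j_q\le\LL-2$ versus $j_q\ge\LL+1$ and observe that the former forces $y_j\notin\bsg_{j_q}$; the underlying logic and the use of Lemma~\ref{lem:Ibeta} are identical.
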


\begin{proof}
There two cases to consider: (1) $y_j\notin\beta_{j_q}$ and (2) $y_j\in\beta_{j_q}$.
\begin{enumerate}[(i)]
\item Case (1): By Lemma \ref{lem:Ibeta}, $\I_{j_q}\subseteq \sigma(\beta_{j_q})=\sigma(\gamma_{j_q})$. 

Case (2): First we note that $j_q\ge \LL-1$ since, otherwise, $y_j>x_{\LL-1}\ge x_{j_q+1}$ which contradicts $y_j\in\beta_{j_q}$. Since  $j_q\notin\{\LL-1,\LL\}$, it follows that in fact $\LL<j_q$. Hence, for any $y$ such that $\sigma(y)\in \I_{j_q}$, we have $\sigma(y)>\sigma(x_{j_q})\ge \sigma(x_{\LL+1})=i_{\LL+1}>i_{\LL-1}+2=\sigma(y_j)$, so that $y\notin\aint_{\le y_j}$. It follows that $y\in \gamma_{j_q}$, so we conclude $\I_{j_q}	\subseteq\sigma(\gamma_{j_q})$.

\item Case (1) cannot occur since we have shown that $y_j\in\bsg_{\LL-1}$. 

Case (2): Every $y$ such that $\sigma(y)\in(\I_{\LL-1}\cup\{i_{\LL}\})\backslash\{i_{\LL-1}+1,i_{\LL-1}+2\}$ satisfies $\sigma(x_{\LL-1})<\sigma(y)<\sigma(x_{\LL})$, so $y\in\bsg_{\LL-1}$. Further, $\sigma(y)>i_{\LL-1}+2=\sigma(y_j)$, so  $y\notin\aint_{\le y_j}$. It follows that $y\in \gamma_{\LL-1}$, so we conclude $(\I_{\LL-1}\cup\{i_{\LL}\})\backslash\{i_{\LL-1}+1,i_{\LL-1}+2\}	\subseteq\sigma(\gamma_{\LL-1})$.

\item Case (1): Every $y$ such that $\sigma(y)\in \I_{\LL}\backslash\{i_{\LL}+1\}$ satisfies $\sigma(x_{\LL})=i_{\LL}+1<\sigma(y)<i_{\LL+1}=\sigma(x_{\LL+1})$, so $y\in\bsg_{\LL}=\gamma_{\LL}$. We conclude that $\I_{\LL}\backslash\{i_{\LL}+1\}	\subseteq\sigma(\gamma_{\LL})$.

Case (2): Every $y$ such that $\sigma(y)\in \I_{\LL}\backslash\{i_{\LL}+1\}$ satisfies $\sigma(x_{\LL})=i_{\LL}+1<\sigma(y)<i_{\LL+1}=\sigma(x_{\LL+1}$, so $y\in\bsg_{\LL}$. Further, $\sigma(y)>i_{\LL}+1>i_{\LL-1}+2=\sigma(y_j)$, so  $y\notin\aint_{\le y_j}$. It follows that $y\in \gamma_{\LL}$, so we conclude $ \I_{\LL}\backslash\{i_{\LL}+1\}	\subseteq\sigma(\gamma_{\LL})$.
\end{enumerate}
\end{proof}
By  \eqref{eq:linspan} $\lin\left(\sum_{\poly\in\mathcal \poly'}F(\poly,-e_j)\right)=\R^{\gamma_J}$, so $\dim\left(\sum_{\poly\in\mathcal \poly'}F(\poly,-e_j)\right)= |\gamma_J|$. By Claim \ref{cl:g}, using the fact that $\{\I_{j_q}\}_{j_q\in J\backslash\{\LL-1,\LL\}}, \I_{\LL-1},\I_{\LL}$ are disjoint, it suffices to show that $|(\I_{\LL-1}\cup\{i_{\LL}\})\backslash\{i_{\LL-1}+1,i_{\LL-1}+2\}|=|\I_{\LL-1}|-1$, and that $|\I_{\LL}\backslash\{i_{\LL}+1\}|=|\I_{\LL}|-1$, since then
\[
 |\gamma_J|\ge \sum_{j_q\in J}[ |\I_{j_q}|-1_{j_q=\LL-1}-1_{j_q=\LL}]=|\I_J|-1_{\LL-1\in J}-1_{\LL\in J}\ge |\mathcal\poly'|,
 \]
which completes the proof. To see that $|(\I_{\LL-1}\cup\{i_{\LL}\})\backslash\{i_{\LL-1}+1,i_{\LL-1}+2\}|=|\I_{\LL-1}|-1$, we note that $|\I_{\LL-1}\cup\{i_{\LL}\}|=|\I_{\LL-1}|+1$, and that $i_{\LL-1}+1,i_{\LL-1}+2\in \I_{\LL-1}\cup\{i_{\LL}\}$, because $i_{\LL-1}+1<i_{\LL-1}+2\le i_{\LL}$, by Corollary \ref{cor:splitequiv}. Hence, $|(\I_{\LL-1}\cup\{i_{\LL}\})\backslash\{i_{\LL-1}+1,i_{\LL-1}+2\}|=(|\I_{\LL-1}|+1)-2=|\I_{\LL-1}|-1$. Finally, it is clear that $|\I_{\LL}\backslash\{i_{\LL}+1\}|=|\I_{\LL}|-1$, since $i_{\LL}+1\in \I_{\LL}$. \\

\item  The proof is analogous to part (g).
\end{enumerate}
\end{proof}

\section{Supercritical posets}
\label{sec:supercrit}
In this section we complete the characterization of the extremals of Stanley's inequalities for supercritical posets. The following result, together with Proposition \ref{prop:suff}, Lemma \ref{lem:suff}, Proposition \ref{prop:clposet}, and Proposition \ref{prop:critequiv}, complete the proof of Theorem \ref{thm:supcrit}.

\begin{theorem}
\label{thm:supcritsec}
Suppose that $\mathcal\poly$ is supercritical and that $|\mathcal N_{=}|^2=|\mathcal N_{-}||\mathcal N_{+}|$. Then,
\[|\mathcal N_=(\incomp,\sim)|=|\mathcal N_=(\sim,\incomp)|=|\mathcal N_=(\sim,\sim)|=0.
\]
\end{theorem}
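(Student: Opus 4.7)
The plan is to apply Theorem \ref{thm:SvHComb} in the supercritical case, which provides constants $a \ge 0$ and $\mathrm v \in \R^{n-k}$ such that
\[
h_{K_{\LL-1}}(\mathrm u) \;=\; a\, h_{K_\LL}(\mathrm u) + \langle \mathrm v, \mathrm u\rangle
\]
for every $(B,\mathcal\poly)$-extreme normal direction $\mathrm u$. Using the representation \eqref{eq:Kiordpoly} of $K_i$ as a translated order polytope, the support functions $h_{K_i}(\pm e_j)$ and $h_{K_i}(e_{uv})$ can be evaluated explicitly in terms of whether $y_j$ (or $y_u, y_v$) lies in $\bsg_i$, is strictly below $x_i$, or is strictly above $x_{i+1}$. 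Into the identity above I will substitute the combinatorially-described extreme directions from Proposition \ref{prop:dir}(a)--(d).

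The first step is to determine $a$ and to pin down the components $\mathrm v_j$ corresponding to elements $y_j$ that can appear close to $x_\LL$ under some $\sigma \in \mathcal N_=$. The nearest-neighbor directions $\pm e_j$ from parts (a)--(b) provide a linear relation involving $a$ and $\mathrm v_j$ for each such $y_j$, while the pair directions $e_{uv}$ from part (c) yield $\mathrm v_u = \mathrm v_v$ whenever $y_u < y_v$ occupy adjacent positions under some $\sigma \in \mathcal N_=$. Supercriticality of $\mathcal\poly$ enters through Corollary \ref{cor:Ibetastrong}: every $\LL$-splitting pair supports at least two mixed elements, which in turn guarantees that the pool of available extreme directions is rich enough to propagate these equalities across $\aint$ and to force $a = 1$ together with explicit values of the relevant $\mathrm v_j$.

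With $a$ and these $\mathrm v_j$ so determined, I invoke Lemma \ref{lem:suff}(a) and argue by contradiction. If some $\sigma \in \mathcal N_=$ has an upper companion $y_v = \sigma^{-1}(i_\LL+1)$ comparable to $x_\LL$---which forces $y_v > x_\LL$ and hence $y_v \in \bsg_\LL \setminus \bsg_{\LL-1}$---then Proposition \ref{prop:dir}(a) with $m = \LL$ makes $-e_v$ a $(B,\mathcal\poly)$-extreme direction, and the support-function identity at $-e_v$ reads $-1 = a \cdot 0 - \mathrm v_v$, forcing $\mathrm v_v = 1$. This contradicts the value of $\mathrm v_v$ obtained in the previous paragraph, handling the case $|\mathcal N_=(\incomp,\sim)| + |\mathcal N_=(\sim,\sim)| > 0$ on the upper side. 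The cases involving a comparable lower companion are treated symmetrically using Proposition \ref{prop:dir}(b), and the case $|\mathcal N_=(\sim,\sim)| > 0$ where both companions are comparable to $x_\LL$ and to each other is handled by Proposition \ref{prop:dir}(d), whose direction $e_{uv}$ is precisely tailored to the pair of companions.

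The hardest part will be executing the first step---systematically tracking which menu of $(B,\mathcal\poly)$-extreme directions produces which linear equations, propagating the equalities $\mathrm v_u = \mathrm v_v$ along enough $\sigma$-consecutive chains in $\aint$ to cover every candidate companion, and pinning down $a = 1$ in a coordinate-free manner (presumably by playing off the translation-and-dilation freedom of the mixed volume against the chain $x_1 < \cdots < x_k$, in line with the corresponding row of the dictionary in Figure \ref{fig:dictionary}). Supercriticality is indispensable here precisely because Corollary \ref{cor:Ibetastrong} supplies two mixed elements per $\LL$-splitting pair; in the merely-critical regime one loses one of them, and the argument of Section \ref{sec:crit} must compensate by restricting to a critical subspace and invoking the finer second-neighbor extreme directions of Proposition \ref{prop:dir}(e)--(h).
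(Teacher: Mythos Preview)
Your proposal is correct and follows the paper's approach: Proposition \ref{prop:av} and Lemma \ref{lem:minmax} carry out your propagation step, Lemma \ref{lem:a=1} establishes $a=1$, and the final contradiction via $-e_j$ is exactly as you describe. One clarification worth making: supercriticality of $\mathcal\poly$ does not enter through Corollary \ref{cor:Ibetastrong}; the extreme directions of Proposition \ref{prop:dir}(a)--(d) need only Lemma \ref{lem:splitequiv} (one mixed element), and the propagation in Lemmas \ref{lem:minmax} and \ref{lem:a=1} works already under Assumption \ref{ass:crit}. Supercriticality is used solely to invoke the first bullet of Theorem \ref{thm:SvHComb} and obtain \eqref{eq:exth} free of degenerate pairs---this is precisely what breaks in the critical case and forces the critical-subspace machinery of Section \ref{sec:crit}.
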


In order to prove Theorem \ref{thm:supcritsec}, we will invoke Theorem \ref{thm:SvHComb} and use the extreme normal directions found in Proposition \ref{prop:dir}(a--d). Theorem \ref{thm:SvHComb} tells us that there exist $a\ge 0$ and $\mathrm v\in \R^{n-k}$ such that
\begin{align}
\label{eq:exth}
h_{\poly_{\LL-1}}(\mathrm u)=h_{a\poly_{\LL}+\mathrm v}(\mathrm u) \quad \mbox{for all }(B,\mathcal \poly)\textnormal{-extreme normal directions } \mathrm u. 
\end{align}
The following results derive constraints from \eqref{eq:exth} on the allowed $a$ and $\mathrm v$. We start with $\mathrm v$.
\begin{proposition}
$~$

\label{prop:av}
\begin{enumerate}[(a)]
\item For each fixed $0\le m\le\LL-1$: $\mathrm v_j=0$ for any $j$ such that $y_j\in\aint_{>x_m}$ and there exists $\sigma\in\mathcal N_=$ satisfying $\sigma(y_j)=i_m+1$.\\

\item For each fixed $\LL+1\le m\le k+1$: $\mathrm v_j=1-a$ for any $j$ such that $y_j\in\aint_{<x_m}$ and  there exists $\sigma\in\mathcal N_=$ satisfying $\sigma(y_j)=i_m-1$.\\

\item $\mathrm v_u=\mathrm v_v$ for any $u,v$ such that  $y_u<y_v$ and there exists $\sigma\in\mathcal N_=$ satisfying $\sigma(y_u)+1=\sigma(y_v)$.\\

\item $\mathrm v_u=\mathrm v_v$ for any $u,v$ such that $y_u<y_v$ and  there exists $\sigma\in\mathcal N_=$ satisfying $\sigma(y_u)=i_{\LL}-1$ and $\sigma(y_v)=i_{\LL}+1$.
\end{enumerate}
\end{proposition}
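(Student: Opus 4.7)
The plan is to apply Theorem \ref{thm:SvHComb} in the supercritical case, which converts the hypothesis $|\mathcal N_=|^2=|\mathcal N_-||\mathcal N_+|$ into the existence of $a\ge 0$ and $\mathrm v\in\R^{n-k}$ with
\[
h_{\poly_{\LL-1}}(\mathrm u)=a\,h_{\poly_\LL}(\mathrm u)+\langle \mathrm v,\mathrm u\rangle \qquad\text{for every }(B,\mathcal\poly)\text{-extreme normal direction }\mathrm u.
\]
For each of the extreme normal directions produced by Proposition \ref{prop:dir}(a--d), I will substitute $\mathrm u$ into this identity and read off the constraint on $\mathrm v$. Everything reduces to computing $h_{\poly_{\LL-1}}$ and $h_{\poly_\LL}$ in the three kinds of directions $-e_j$, $e_j$, $e_{uv}$.

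The first step is a uniform support function computation using $\poly_i=O_{\bsg_i}+1_{\aint_{>x_{i+1}}}$ from \eqref{eq:Kiordpoly}. A direct analysis of the order polytope yields, for each $i\in\llbracket 0,k\rrbracket$,
\[
h_{\poly_i}(-e_j)=-1_{y_j>x_{i+1}},\qquad h_{\poly_i}(e_j)=1-1_{y_j<x_i},
\]
and, for $y_u<y_v$,
\[
h_{\poly_i}(e_{uv})=\begin{cases}-\tfrac{1}{\sqrt{2}}&\text{if }y_u<x_i\text{ and }y_v>x_{i+1},\\ 0&\text{otherwise.}\end{cases}
\]
The last formula is proved by a four-way case analysis depending on whether $y_u,y_v\in\bsg_i$; the nontrivial case is when both lie outside $\bsg_i$ on opposite sides of the interval $[x_i,x_{i+1}]$.

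The second step is a case-by-case substitution using the specified $\sigma\in\mathcal N_=$ (or $\mathcal N_\pm$ in (g)--(h), though those are not needed here). For (a), if $\sigma(y_j)=i_m+1$ with $m\le\LL-1$, then Corollary \ref{cor:splitequiv} gives $\sigma(y_j)\le i_{\LL-1}+1\le i_\LL-1<i_\LL$, so $y_j\not>x_\LL$ and hence $y_j\not>x_{\LL+1}$; both support functions equal $0$, and the identity collapses to $0=-\mathrm v_j$. Part (b) is symmetric: $\sigma(y_j)=i_m-1\ge i_{\LL+1}-1\ge i_\LL+1$ forces $y_j\not<x_{\LL-1}$ and $y_j\not<x_\LL$, whence both support functions equal $1$ and we get $1=a+\mathrm v_j$, i.e.\ $\mathrm v_j=1-a$. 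For (c), if $h_{\poly_{\LL-1}}(e_{uv})\neq 0$ then $y_u<x_{\LL-1}$ and $y_v>x_\LL$, giving $\sigma(y_v)-\sigma(y_u)\ge i_\LL-i_{\LL-1}\ge 2$ by Corollary \ref{cor:splitequiv}, contradicting $\sigma(y_v)=\sigma(y_u)+1$; the analogous argument at index $\LL$ rules out $h_{\poly_\LL}(e_{uv})\neq 0$. Both support functions therefore vanish, and the identity yields $(\mathrm v_u-\mathrm v_v)/\sqrt{2}=0$. Part (d) is similar: from $\sigma(y_u)=i_\LL-1$ we have $\sigma(y_u)\ge i_{\LL-1}+1>i_{\LL-1}$, ruling out $y_u<x_{\LL-1}$, and from $\sigma(y_v)=i_\LL+1\le i_{\LL+1}-1<i_{\LL+1}$ we rule out $y_v>x_{\LL+1}$; again both support functions vanish and $\mathrm v_u=\mathrm v_v$.

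The main obstacle is the book-keeping in the $e_{uv}$ cases (c) and (d), since the correct support function value depends sensitively on the inclusion $y_u,y_v\in\bsg_i$ and on whether $y_u,y_v$ sit on opposite sides of the interval $\llbracket x_i,x_{i+1}\rrbracket$. The payoff is that the gaps provided by Corollary \ref{cor:splitequiv} (namely $i_j+1<i_{j+1}$ for every $j$) are just enough to exclude the single ``bad'' configuration in which the two support functions would differ; once this configuration is ruled out, each substitution gives a single linear equation in the coordinates of $\mathrm v$ which is exactly the claim of the corresponding part of the proposition.
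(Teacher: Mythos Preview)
Your proof is correct and follows essentially the same approach as the paper: apply Theorem \ref{thm:SvHComb}, plug in the extreme normal directions from Proposition \ref{prop:dir}(a--d), and compute the support functions of $\poly_{\LL-1}$ and $\poly_\LL$ in those directions to extract the linear constraints on $\mathrm v$. The only difference is organizational: you compute closed-form expressions for $h_{\poly_i}(\pm e_j)$ and $h_{\poly_i}(e_{uv})$ once and then specialize, whereas the paper carries out the relevant case analysis inline for each of (a)--(d), invoking \eqref{eq:Kiordpoly} and the argument of Claim \ref{cl:c1} directly.
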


\begin{proof}
$~$

\begin{enumerate}[(a)]
\item  By Proposition \ref{prop:dir}(a), $-e_j$ is a $(B,\mathcal \poly)$-extreme normal direction, so by \eqref{eq:exth}, $h_{\poly_{\LL-1}}(-e_j)=ah_{\poly_{\LL}}(-e_j)-\mathrm v_j$. Since $\sigma(y_j)=i_m+1$, and $m\le \LL-1$, we have $\sigma(y_j)=i_m+1\le i_{\LL-1}+1<i_{\LL},i_{\LL+1}$, so $y_j\notin\aint_{>x_{\LL}}\cup \aint_{>x_{\LL+1}}$. Hence, it follows from \eqref{eq:Kiordpoly} that $h_{\poly_{\LL-1}}(-e_j)=h_{\poly_{\LL}}(-e_j)=0$. We conclude that $\mathrm v_j=0$. \\

\item By Proposition \ref{prop:dir}(b), $e_j$ is a $(B,\mathcal \poly)$-extreme normal direction, so by \eqref{eq:exth}, $h_{\poly_{\LL-1}}(-e_j)=ah_{\poly_{\LL}}(-e_j)+\mathrm v_j$. Since $\sigma(y_j)=i_m-1$, and $m\ge \LL+1$, we have $\sigma(y_j)=i_m-1\ge i_{\LL+1}-1>i_{\LL},i_{\LL-1}$, so $y_j\notin\aint_{<x_{\LL-1}}\cup \aint_{<x_{\LL}}$. Hence, it follows from \eqref{eq:Kiordpoly} that $h_{\poly_{\LL-1}}(e_j)=1$ and $ah_{\poly_{\LL}}(e_j)+\mathrm v_j=a+\mathrm v_j$. We conclude that $\mathrm v_j=1-a$. \\

\item By Proposition \ref{prop:dir}(c), $e_{uv}$ is a $(B,\mathcal \poly)$-extreme normal direction, so by \eqref{eq:exth}, $h_{\poly_{\LL-1}}(e_{uv})=ah_{\poly_{\LL}}(e_{uv})+\frac{1}{\sqrt{2}} (\mathrm v_u-\mathrm v_v)$. We will show that $h_{\poly_{\LL-1}}(e_{uv})=h_{\poly_{\LL}}(e_{uv})=0$, from which we can conclude $\mathrm v_u=\mathrm v_v$. We will show that $h_{\poly_{\LL-1}}(e_{uv})=0$; the proof of $h_{\poly_{\LL}}(e_{uv})=0$ is analogous. We distinguish between the following cases:

Case (1): $y_u,y_v\in\bsg_{\LL-1}$.  By \eqref{eq:Kiordpoly},  $h_{\poly_{\LL-1}}(e_{uv})=0$ since $t_u\le t_v$ for $t\in O_{\bsg_{\LL-1}}$, and equality is attained with $t=0$. 

Case (2): $y_u\in\bsg_{\LL-1},y_v\notin\bsg_{\LL-1}$, or $y_u\notin\bsg_{\LL-1},y_v\in\bsg_{\LL-1}$.  See the proof of Claim \ref{cl:c1}.

Case (3): $y_u,y_v\notin\bsg_{\LL-1}$. Since there exists $\sigma\in\mathcal N_=$ with $\sigma(y_u)+1=\sigma(y_v)$, the assumption $y_u,y_v\notin\bsg_{\LL-1}$ implies that either $y_u,y_v<x_{\LL-1}$, or $y_u,y_v>x_{\LL}$. Hence, either $t_u=t_v=1$, or $t_u=t_v=0$ for any $t\in\poly_{\LL-1}$, so, in particular, $h_{\poly_{\LL-1}}(e_{uv})=0$.\\

\item The proof is analogous to part (c), where we note that $y_u\notin\bsg_{\LL-1}$ cannot occur. 
\end{enumerate}
\end{proof}
While Proposition \ref{prop:av}(a--b) took care of elements neighboring $x_m$'s, the next result takes care of elements that are at the bottom (res. the top) of the poset.

\begin{lemma}
\label{lem:minmax}
For any $y_j\in\aint$: If $m^{=}_{\min}(y_j)<i_{\LL}$ then $\mathrm v_j=0$, and if $m^{=}_{\max}(y_j)>i_{\LL}$ then $\mathrm v_j=1-a$.
\end{lemma}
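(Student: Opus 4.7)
The plan is to prove only the first assertion, as the second follows by a symmetric argument exchanging the roles of $l_=$ and $u_=$ and using Proposition \ref{prop:av}(b) in place of (a). I will use strong induction on $i := m^=_{\min}(y_j)$, fixing a witness $\sigma \in \mathcal N_=$ with $\sigma(y_j) = i$, and choosing the unique $m \le \LL - 1$ with $i_m < i < i_{m+1}$. For the base case $i = 1$, one has $m = 0$, and since $x_0$ is smaller than every element of $\abar$, the relation $y_j > x_0$ holds automatically, so Proposition \ref{prop:av}(a) with $m=0$ applies directly to yield $\mathrm v_j = 0$.

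For the inductive step, I will examine $y_u := \sigma^{-1}(i - 1)$, the element placed immediately before $y_j$ in $\sigma$. If $i - 1 \notin \{i_1, \ldots, i_k\}$, then $y_u \in \aint$; the relation $y_u > y_j$ is impossible (as $\sigma(y_u) < \sigma(y_j)$), and $y_u \incomp y_j$ would allow me to swap them and produce $\sigma' \in \mathcal N_=$ with $\sigma'(y_j) = i - 1$, contradicting the minimality of $i$. Hence $y_u < y_j$, so Proposition \ref{prop:av}(c) applied to $\sigma$ yields $\mathrm v_u = \mathrm v_j$; since $\sigma(y_u) = i - 1 < i < i_\LL$, the inductive hypothesis gives $\mathrm v_u = 0$ and therefore $\mathrm v_j = 0$.

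The remaining sub-case is $i - 1 = i_m$ (so $y_u = x_m$). The relation $y_j < x_m$ is precluded by $\sigma(y_j) > \sigma(x_m)$, and $y_j > x_m$ permits direct application of Proposition \ref{prop:av}(a). The residual sub-sub-case is $y_j \incomp x_m$ in $\abar$: here I will invoke Assumption \ref{ass:cl} to obtain $\tau \in \mathcal N_- \cup \mathcal N_= \cup \mathcal N_+$ with $\tau(y_j) < \tau(x_m) = i_m^{\tau} \le i_m + 1$. A witness $\tau \in \mathcal N_=$ would immediately contradict the minimality of $i = i_m + 1$. For $\tau \in \mathcal N_{\pm}$, say $\tau \in \mathcal N_-$ (the other case is symmetric), the plan is to modify $\tau$ into a linear extension $\tau' \in \mathcal N_=$ satisfying $\tau'(y_j) = \tau(y_j) < i_m < i$: concretely, this is effected by swapping $x_\LL$ at position $i_\LL - 1$ with an element incomparable to $x_\LL$ at position $i_\LL$, thereby restoring $\tau'(x_\LL) = i_\LL$ without disturbing the placement of $y_j$.

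The hard part will be completing this last sub-sub-case in its entirety, specifically when the element $z := \tau^{-1}(i_\LL)$ satisfies $z > x_\LL$ in $\abar$, precluding a direct swap. In this situation an iterated rearrangement of $\tau$ near position $i_\LL$ is required, exploiting Assumption \ref{ass:N=>0} (non-emptiness of $\mathcal N_=$) and Corollary \ref{cor:splitequiv} to produce a sequence of local order-preserving swaps that eventually positions some element incomparable to $x_\LL$ adjacent to $x_\LL$, enabling the needed swap while keeping $y_j$ at its low position and thereby producing the desired $\tau' \in \mathcal N_=$ that contradicts the minimality of $i$.
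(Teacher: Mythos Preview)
Your overall strategy matches the paper's: both build a chain of elements linked via Proposition~\ref{prop:av}(c), terminating when one hits a position adjacent to some $x_m$ and then invoking Proposition~\ref{prop:av}(a) (or (b)). The paper phrases this as an iterative algorithm constructing $y_{j_0} < y_{j_1} < \cdots < y_{j_p}$; your strong induction on $m^{=}_{\min}(y_j)$ is the same descent argument in different packaging.

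The substantive difference is in the termination step, where $i-1 = i_m$ for some $m \le \LL-1$ and you must establish $y_j > x_m$. Your plan to handle the residual case $y_j \incomp x_m$ by explicitly swapping $\tau \in \mathcal N_\pm$ into some $\tau' \in \mathcal N_=$ via ``iterated rearrangement near position $i_\LL$'' is left vague, and it is not clear the local-swap procedure you sketch terminates while keeping $y_j$ below position $i$. The paper sidesteps this entirely using Lemma~\ref{lem:range}: from $\tau(y_j) < \tau(x_m) = i_m$ (noting $m < \LL$ so $i_m^\circ = i_m$) one gets $y_j \not> x_m$, hence $i_{\max}(y_j) < m < \LL$, whence Lemma~\ref{lem:UL}(ii) gives $l_=(y_j) = l_\circ(y_j) \le q := \tau(y_j)$. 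Combined with $u_=(y_j) \ge m^=_{\min}(y_j) = i_m + 1 > q$ and the check that $q \ne i_r$ for all $r$ (any such $r$ would satisfy $r < m < \LL$, forcing $\tau(x_r) = i_r = q = \tau(y_j)$, impossible), Lemma~\ref{lem:range} directly produces $\sigma' \in \mathcal N_=$ with $\sigma'(y_j) = q < i$, the desired contradiction. This replaces your entire ``hard part'' with a two-line application of an already-proved tool; no explicit swap gymnastics are needed.
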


\begin{proof}
We prove that $m^{=}_{\max}(y_j)>i_{\LL}\Rightarrow \mathrm v_j=1-a$; the proof of $m^{=}_{\min}(y_j)<i_{\LL}\Rightarrow \mathrm v_j=0$ is analogous.

Set $y_{j_0}:=y_j$ and construct the sequence $y_{j_0}<y_{j_1}<\cdots <y_{j_p}$, for some $p<\infty$, iteratively, according to the algorithm below. The sequence will be constructed so that $y_{j_i}\in\aint$ for every $i\in\llbracket 0,p\rrbracket$, $\mathrm v_{j_i}=\mathrm v_{j_{i+1}}$ for all $i\in \llbracket 0,p-1\rrbracket$, and $\mathrm v_{j_p}=1-a$. Clearly, it will then follow that $\mathrm v_j=\mathrm v_{j_0}=1-a$, completing the proof.

Assume that the sequence $y_{j_0}<y_{j_1}<\cdots <y_{j_i}$ has been constructed. Set $M:=m^{=}_{\max}(y_{j_i})$, and note that $i_{\LL}<m^{=}_{\max}(y_{j_0})\le M$. Consider the following two cases:

\begin{itemize}
\item $M\neq i_m-1$ for every $\LL<m$: Choose $\sigma\in\mathcal N_=$ such that $\sigma(y_{j_i})=M$ (such a $\sigma$ must exist by the definition of $M$) and set $y_{j_{i+1}}:=\sigma^{-1}(M+1)$. We first show that $M+1\neq i_m$ for any $m\in\llbracket 0,k\rrbracket$. Indeed, by assumption $M+1\neq i_m$ for every $\LL<m$, and if $m\le \LL$, then $i_m\le i_{\LL}<M+1$. It follows that $y_{j_{i+1}}\in\aint$. Next we show that $y_{j_i}<y_{j_{i+1}}$. Indeed, otherwise, by the definition of $M$, $y_{j_i}$ and $y_{j_{i+1}}$ must be incomparable, so we can swap the positions of $y_{j_i}$ and $y_{j_{i+1}}$ in $\sigma$ to get $\sigma'\in\mathcal N_=$ such that $\sigma'(y_{j_i})=M+1$, which contradicts the maximality of $M$. We conclude that $y_{j_i}<y_{j_{i+1}}$. Finally,  by Proposition \ref{prop:av}(c), $\mathrm v_{j_i}=\mathrm v_{j_{i+1}}$.\\

\item $M= i_m-1$ for some $\LL<m$: In this case, the sequence will be terminated with $p:=i$. Note that Corollary \ref{cor:splitequiv} implies that $y_{j_i}\in\aint$, since $M=i_m-1$. We will show that $\sigma(y_{j_i})<\sigma(x_m)$ for all $\sigma\in\cup_{\s\in\{-,=,+\}}\mathcal N_{\s}$. Then, by Assumption \ref{ass:cl}, it follows that $y_{j_i}<x_m$ so, by Proposition \ref{prop:av}(b), $\mathrm v_{j_i}=1-a$. To show that that $\sigma(y_{j_i})<\sigma(x_m)$ for all $\sigma\in\cup_{\s\in\{-,=,+\}}\mathcal N_{\s}$, suppose for contradiction otherwise, which means that there exists $\sigma\in\mathcal N_{\s}$, for some $\s\in\{-,=,+\}$, such that $\sigma(y_{j_i})>\sigma(x_m)=i_m$. Set $q:=\sigma(y_{j_i})$. We will show that Lemma \ref{lem:range} can be applied with $y_{j_i}$, $=$, and $q$, to yield $\sigma'\in\mathcal N_=$ such that $\sigma'(y_{j_i})=q$, contradicting the maximality of $M$ (since $q>i_m>i_m-1=M$).

To apply Lemma \ref{lem:range} to $y_{j_i}$, $=$, and $q$, we need to check that all of the conditions of the lemma are satisfied. Applying the lemma to $y_{j_i}$, $\s$, and $q$, we get $q\le u_{\s}(y_{j_i})$, and by Lemma \ref{lem:UL} (as $i_{\min}(y_{j_i})>m>\LL$), we get $q\le u_{\s}(y_{j_i})=u_{=}(y_{j_i})$. On the other hand, by Corollary \ref{cor:range}, $l_{=}(y_{j_i})\le m^{=}_{\max}(y_{j_i})=M=i_m-1<q$. We conclude that the condition $q\in\llbracket l_{=}(y_{j_i}),u_{=}(y_{j_i})\rrbracket$ holds. Finally, we show that $q\neq i_r+1_{=}=i_r$ for any $r\in\llbracket 1,k\rrbracket$. Indeed, if $q=i_r$ for some $r\in\llbracket 1,k\rrbracket$, then $i_r=q>i_m$, which implies $\LL<m<r$, and hence $\sigma(x_r)=i_r$ as $r\neq \LL$. It follows that $\sigma(y_{j_i})=q=\sigma(x_r)$, contradicting $y\in\aint$. 
\end{itemize}
\end{proof}

Next we move to $a$.

\begin{lemma}
\label{lem:a=1}
$a=1$.
\end{lemma}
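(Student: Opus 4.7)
The plan is to exhibit a single element $y_j\in\aint$ that is placeable both strictly below and strictly above $i_\ell$ under different linear extensions in $\mathcal N_=$. For any such $y_j$, Lemma \ref{lem:minmax} forces both $\mathrm v_j=0$ (from $m^=_{\min}(y_j)<i_\ell$) and $\mathrm v_j=1-a$ (from $m^=_{\max}(y_j)>i_\ell$); equating the two identities immediately yields $a=1$.

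To produce such $y_j$, I would exploit the mixing guaranteed by Corollary \ref{cor:Ibetastrong}. When $2\le\ell\le k-1$, take the $\ell$-splitting pair $(r,s)=(\ell-2,\ell+2)$: the associated collection $\mathcal\poly'=(\mathcal\poly_0,\ldots,\mathcal\poly_{\ell-2},\mathcal\poly_{\ell+2},\ldots,\mathcal\poly_k)$ involves no copies of $\poly_{\ell-1}$ or $\poly_{\ell}$ and is therefore a sub-collection of $\mathcal\poly$ (as a multiset), so it inherits supercriticality directly from the supercriticality of $\mathcal\poly$. Corollary \ref{cor:Ibetastrong} then yields, for each $\sigma\in\mathcal N_=$, at least two distinct mixed elements in $\bsg_{\ell-2}\cup\bsg_{\ell+2}$ placed among $\llbracket i_{\ell-1}+1,i_{\ell+1}-1\rrbracket\setminus\{i_\ell\}$. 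Since this available set of positions spans both sides of $i_\ell$, at least one of these mixed elements $y_j$ admits placements straddling $i_\ell$ as needed; the second placement on the opposite side of $i_\ell$ is produced by Lemma \ref{lem:range} applied to the allowed range $\llbracket l_{=}(y_j),u_{=}(y_j)\rrbracket$, which is wide enough by the supercriticality assumption.

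The main obstacle lies in the boundary cases $\ell\in\{1,k\}$ (and small $k$), where no $\ell$-splitting pair of the form $(\ell-2,\ell+2)$ exists. Here the plan is to argue via a chain of consecutive-placement pairs using Proposition \ref{prop:av}(c): start from an element $y\in\aint$ with $m^=_{\min}(y)<i_\ell$ (so $\mathrm v_y=0$) and chain through consecutive placements in $\mathcal N_=$ along $y=y^{(0)}<y^{(1)}<\cdots<y^{(p)}$ until reaching an element $y^{(p)}$ with $m^=_{\max}(y^{(p)})>i_\ell$ (so $\mathrm v_{y^{(p)}}=1-a$). The chain propagation of $\mathrm v$ values yields $0=\mathrm v_y=\mathrm v_{y^{(p)}}=1-a$, hence $a=1$. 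The existence of such a chain follows from the closure assumption (Assumption \ref{ass:cl}) combined with the supercriticality of $\mathcal\poly$, which precludes the degenerate configurations that would disconnect the poset into independent components straddling $x_\ell$. This mirrors in spirit the iterative construction used in the proof of Lemma \ref{lem:minmax} itself, extended to walk across the critical threshold $i_\ell$ instead of towards a fixed chain element.
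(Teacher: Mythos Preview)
Your high-level idea is right: find some $y_j$ with $\mathrm v_j=0$ and $\mathrm v_j=1-a$ simultaneously. But the execution has a real gap at the step that matters most, namely \emph{crossing the position $i_\ell$}.

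First, the mixing argument does not deliver what you claim. Corollary~\ref{cor:Ibetastrong} applied to $(r,s)=(\ell-2,\ell+2)$ gives two mixed elements in $\bsg_{\ell-2}\cup\bsg_{\ell+2}$ with $\sigma$-values in $\I_{\ell-1}\cup\I_\ell\cup\I_{\ell+1}$. Nothing prevents both of them from sitting on the same side of $i_\ell$, and membership in $\bsg_{\ell-2}$ or $\bsg_{\ell+2}$ says nothing about comparability with $x_\ell$, so there is no automatic reason the range $\llbracket l_=(y_j),u_=(y_j)\rrbracket$ of either mixed element reaches across $i_\ell$. The appeal to ``supercriticality'' here is not a statement about any individual element's range.

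Second, and more fundamentally, your chain argument via Proposition~\ref{prop:av}(c) cannot traverse $i_\ell$: consecutive placements $\sigma(y_u)+1=\sigma(y_v)$ in $\mathcal N_=$ never link $i_\ell-1$ to $i_\ell+1$, because $x_\ell$ occupies $i_\ell$. The iteration in Lemma~\ref{lem:minmax} deliberately stays on one side of $i_\ell$; ``extending it to walk across'' is precisely the hard step you have not supplied.

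The paper's proof closes exactly this gap by working locally with the companions $y_u^\sigma:=\sigma^{-1}(i_\ell-1)$ and $y_v^\sigma:=\sigma^{-1}(i_\ell+1)$ and doing a case analysis on their comparability to $x_\ell$. In the easy case $y_u^\sigma<x_\ell<y_v^\sigma$, Proposition~\ref{prop:av}(d) gives $\mathrm v_u=\mathrm v_v$, and Lemma~\ref{lem:minmax} finishes. If instead $y_u^\sigma\nsim x_\ell$, either $m^=_{\max}(y_u^\sigma)>i_\ell$ and Lemma~\ref{lem:minmax} applies directly, or a short range argument forces $y_u^\sigma<y_v^\sigma$, after which Proposition~\ref{prop:av}(d) again bridges the two sides. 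The tool you are missing is Proposition~\ref{prop:av}(d), which exists precisely to jump over $i_\ell$; neither your mixing argument nor your chain argument substitutes for it.
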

\begin{proof}
Fix $\sigma\in \mathcal N_{=}$ and set $y_u^{\sigma}:=\sigma^{-1}(i_{\LL}-1)$, $y_v^{\sigma}:=\sigma^{-1}(i_{\LL}+1)$. There are a few cases to check:\\

\begin{itemize}
\item $y_u^{\sigma}\incomp x_{\LL}$: If $m_{\max}^=(y_u^{\sigma})>i_{\LL}$, then, since $m_{\min}^=(y_u^{\sigma})\le \sigma (y_u^{\sigma})<i_{\LL}$, Lemma \ref{lem:minmax} implies that $\mathrm v_u=0$ and $\mathrm v_u=1-a$ so $a=1$. 
Suppose then that $m_{\max}^=(y_u^{\sigma})<i_{\LL}$. We claim that $u_=(y_u^{\sigma})\le i_{\LL}$. Indeed, otherwise,  $u_=(y_u^{\sigma})\ge i_{\LL}+1\ge \sigma(y_u^{\sigma})\ge l_=(y_u^{\sigma})$. Hence, since $i_{\LL}+1\neq \sigma(x_m)$ for any $m\in\llbracket 1,k\rrbracket$, Lemma \ref{lem:range} implies that there exists $\sigma'\in\mathcal N_=$ satisfying $\sigma'(y_u^{\sigma})=i_{\LL}+1$, which contradicts the maximality of $m_{\max}^=(y_u^{\sigma})<i_{\LL}$. Now, since $u_=(y_u^{\sigma})\le i_{\LL}$, there must exist $b$, with $y_u^{\sigma}<x_b$, such that $i_b^{\s}-|\abar_{\ge y_{u}^{\sigma},<x_b}|\le i_{\LL}$. It follows that $|\abar_{\ge y_{u}^{\sigma},<x_b}|\ge i_b-i_{\LL}$, where  we used $i_b^{=}=i_b$. Fix $z\in \abar_{>y_{u}^{\sigma},<x_b}$, and note that $z\neq x_{\LL}$, since otherwise $x_{\LL}>y_{u}^{\sigma}$, which contradicts the assumption $y_u^{\sigma}\incomp x_{\LL}$. In particular, since $\sigma(x_{\LL})=i_{\LL}$, we have $\sigma(z)\neq i_{\LL}$. Since $i_{\LL}-1=\sigma(y_{u}^{\sigma})<\sigma(z)<\sigma(x_b)=i_b$, we conclude that $\sigma(z)\in \llbracket i_{\LL}+1,i_b-1\rrbracket$. The size of $ \llbracket i_{\LL}+1,i_b-1\rrbracket$ is $i_b-i_{\LL}-1$, so combining $|\abar_{> y_{u}^{\sigma},<x_b}|\ge i_b-i_{\LL}-1$, with $\sigma(z)\in \llbracket i_{\LL}+1,i_b-1\rrbracket$ for every  $z\in \abar_{>y_{u}^{\sigma},<x_b}$, shows that $\sigma(\abar_{>y_{u}^{\sigma},<x_b})=\llbracket i_{\LL}+1,i_b-1\rrbracket$. In particular, since $\sigma(y_v^{\sigma})=i_{\LL}+1$, we get that $y_v^{\sigma}\in \abar_{>y_{u}^{\sigma},<x_b}$, so $y_{v}^{\sigma}<y_{u}^{\sigma}$. It follows from Proposition \ref{prop:av}(c) that $\mathrm v_u=\mathrm v_v$. Since $m_{\min}^=(y_u^{\sigma})<i_{\LL}$, and $m_{\max}^=(y_v^{\sigma})>i_{\LL}$, Lemma \ref{lem:minmax} yields $0=\mathrm v_u=\mathrm v_v=1-a$. We conclude that $a=1$.\\

\item  $y_v^{\sigma}\incomp x_{\LL}$: Analogous to the case $y_u^{\sigma}\incomp x_{\LL}$.\\

\item $y_u^{\sigma}<x_{\LL}$ and $x_{\LL}<y_v^{\sigma}$: By Proposition \ref{prop:av}(d), $\mathrm v_u=\mathrm v_v$. Since $y_u^{\sigma}<x_{\LL}$, we have $m_{\min}^=(y_u^{\sigma})<i_{\LL}$ so, by Lemma \ref{lem:minmax}, $\mathrm v_u=0$. Similarly, since $x_{\LL}<y_v^{\sigma}$, we have $m_{\max}^=(y_v^{\sigma})>i_{\LL}$ so, by Lemma \ref{lem:minmax}, $\mathrm v_v=1-a$.  We conclude that $0=\mathrm v_u=\mathrm v_v=1-a$, so $a=1$. 
\end{itemize}
\end{proof}
We are now ready to prove Theorem \ref{thm:supcritsec}.
\begin{proof}{\textnormal{(of Theorem \ref{thm:supcritsec})}}
We will show that
\begin{align}
\label{eq:supcritequiv}
\forall~ \sigma\in\mathcal N_=:\quad \sigma^{-1}(i_{\LL}-1)\incomp x_{\LL}\quad\text{and}\quad \sigma^{-1}(i_{\LL}+1)\incomp x_{\LL},
\end{align}
which is equivalent to $|\mathcal N_=(\incomp,\sim)|=|\mathcal N_=(\sim,\incomp)|=|\mathcal N_=(\sim,\sim)|=0$.

Let $y_j\in\aint$ be any element such that there exists $\sigma\in\mathcal N_=$ with $\sigma(y_j)=i_{\LL}+1$; the proof for elements $y_j\in\aint$ with $\sigma\in\mathcal N_=$ satisfying $\sigma(y_j)=i_{\LL}-1$ is analogous. Since $m^{=}_{\max}(y_j)>i_{\LL}$, Lemma \ref{lem:minmax} yields $\mathrm v_j=1-a=0$, where the last equality follows from Lemma \ref{lem:a=1}. Assume for contradiction that $x_{\LL}$ is comparable to $y_j$, which, by the assumption $\sigma(y_j)=i_{\LL}+1$, means that $x_{\LL}<y_j$. By Proposition \ref{prop:dir}(a), $-e_j$ is a $(B,\mathcal \poly)$-extreme normal direction so, by \eqref{eq:exth},
$h_{\poly_{\LL-1}}(-e_j)=h_{\poly_{\LL}}(-e_j)$. Since $x_{\LL}<y_j$, we have $h_{\poly_{\LL-1}}(-e_j)=-1$. On the other hand, $i_{\LL}<\sigma(y_j)=i_{\LL}+1<i_{\LL+1}$, so $y_j\in\bsg_{\LL}$. By \eqref{eq:Kiordpoly}, $h_{\poly_{\LL}}(-e_j)=0\neq -1=h_{\poly_{\LL-1}}(-e_j)$, so we have arrived at the desired contradiction.
\end{proof}

\section{Critical posets}
\label{sec:crit}
In this section we complete the characterization of the extremals of Stanley's inequalities for critical posets (as well as Theorem \ref{thm:N_=>0intro}). We will assume that $\mathcal \poly$ is sharp-critical since, otherwise, we reduce back to the supercritical setting. We note that the assumption that $\mathcal \poly$ is sharp-critical implies, by Proposition \ref{prop:max_notions}, that the maximal sharp-critical collection $\mathcal\poly_{\max}$, with its associated splitting pair $(r_{\max},s_{\min})$, exist.  The following result (Theorem \ref{thm:critsec}), together with Proposition \ref{prop:suff},  Lemma \ref{lem:suff}, Proposition \ref{prop:clposet}, and Proposition \ref{prop:critequiv}, complete the proof of Theorem \ref{thm:crit}. 

The proof of Theorem \ref{thm:N_=>0intro} follows by Corollary \ref{cor:referee}, and by applying Theorem \ref{thm:subcrit} repeatedly until arriving at a critical subposet. Once a critical subposet is reached, Theorem \ref{thm:crit} can be applied to the critical subposet, together with the bijection construction in the proof of Proposition \ref{prop:split}, to conclude that the results of Theorem \ref{thm:crit} hold for the original poset as well. 

\begin{theorem}
\label{thm:critsec}
Suppose that $\mathcal\poly$ is sharp-critical and that $|\mathcal N_{=}|^2=|\mathcal N_{-}||\mathcal N_{+}|$. Then,
\[
|\mathcal N_-(\sim,\sim)|=|\mathcal N_+(\sim,\sim)|=0.
\]
\end{theorem}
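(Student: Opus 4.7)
The plan is to apply Theorem~\ref{thm:SvHComb} in its critical form and then translate the resulting support-function identity into the combinatorial statement via the dictionary built in Sections~\ref{sec:beyond}--\ref{sec:ext}. Theorem~\ref{thm:SvHComb} furnishes $a\ge 0$, $\mathrm v\in\R^{n-k}$, and finitely many $\mathcal\poly$-degenerate pairs $(P_1,Q_1),\ldots,(P_d,Q_d)$ such that
\begin{equation*}
h_{\poly_{\LL-1}+\sum_{j=1}^d Q_j}(\mathrm u) \;=\; h_{a\poly_{\LL}+\mathrm v+\sum_{j=1}^d P_j}(\mathrm u)
\end{equation*}
for every $(B,\mathcal\poly)$-extreme normal direction $\mathrm u$. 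The first key step is to isolate a \emph{critical subspace} $E^{\perp}:=\R^{\bsg_{\max}}$, with $\bsg_{\max}$ as in~\eqref{eq:Kmax}, on which the degenerate contributions disappear. Combining the maximality of $\mathcal\poly_{\max}$ (Proposition~\ref{prop:max_notions}) with the mixing analysis of the maximal splitting pair (Corollary~\ref{cor:max}, which pins down the unique mixed element $\mixed$), I expect each degenerate pair to be supported on $\R^{\aint\setminus\bsg_{\max}}$, so that on $(B,\mathcal\poly)$-extreme directions $\mathrm u\in E^{\perp}$ the identity collapses to the supercritical-style equation $h_{\poly_{\LL-1}}(\mathrm u)=h_{a\poly_{\LL}+\mathrm v}(\mathrm u)$.

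Second, I would lift the machinery of Section~\ref{sec:supercrit} to this restricted setting. The combinatorially characterized extreme directions of Proposition~\ref{prop:dir}(a)--(d) that happen to lie in $E^{\perp}$ yield the critical analogues of Proposition~\ref{prop:av}, establishing $\mathrm v_j=0$ or $\mathrm v_j=1-a$ for those $y_j\in\bsg_{\max}$ that appear adjacent to some $x_m$ under a $\sigma\in\mathcal N_=$, and then propagating along chains (in the style of Lemma~\ref{lem:minmax}) to many further elements. Here the auxiliary directions of Proposition~\ref{prop:dir}(e)--(h) become essential: because they encode \emph{second}-neighbor information and use $\mathcal N_-,\mathcal N_+$ extensions rather than $\mathcal N_=$, they supply precisely the constraints needed to cross past the degenerate pairs and to reach elements of $\aint\setminus\bsg_{\max}$. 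Combined with an analogue of Lemma~\ref{lem:a=1}, this program should force $a=1$ and fix $\mathrm v$ on every coordinate relevant to the companions of $x_{\LL}$.

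Third, I would argue by contradiction. Assume $\sigma\in\mathcal N_+(\sim,\sim)$, so both $\sigma^{-1}(i_{\LL}-1)$ and $\sigma^{-1}(i_{\LL})$ are strictly smaller than $x_{\LL}$ in $\abar$. Feeding the extreme normal direction supplied by Proposition~\ref{prop:dir}(g) (applied to a suitable element witnessed by $\sigma$) back into the support-function identity, and using the values of $a$ and $\mathrm v$ already established together with the vanishing of the degenerate-pair contributions in the relevant coordinate, produces a mismatch of the form ``$-1=0$'' between the two sides, exactly as in the final paragraph of the proof of Theorem~\ref{thm:supcritsec}. The symmetric use of Proposition~\ref{prop:dir}(h) rules out $|\mathcal N_-(\sim,\sim)|>0$.

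The hard part will be the first step: rigorously controlling the degenerate pairs. Specifically, one must prove that every $(P_j,Q_j)$ is supported on $\R^{\aint\setminus\bsg_{\max}}$, so that $h_{\sum_j P_j}$ and $h_{\sum_j Q_j}$ vanish on $E^{\perp}$, and moreover that the second-neighbor directions of Proposition~\ref{prop:dir}(e)--(h), which typically do \emph{not} lie in $E^{\perp}$, still interact cleanly enough with the degenerate contributions to yield the needed constraints on $\mathrm v$ and $a$. This delicate interplay between the combinatorics of the maximal splitting pair and the geometry of the degenerate pairs is precisely what distinguishes the critical regime from the supercritical one, and is where I expect the bulk of the technical work to lie.
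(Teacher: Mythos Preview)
Your high-level architecture matches the paper's: isolate a subspace on which the degenerate pairs vanish, rerun the supercritical machinery there using the extra directions of Proposition~\ref{prop:dir}(e)--(h), force $a=1$, and finish by contradiction. However, you have the orientation of the critical subspace reversed, and this error propagates through the entire proposal.

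The paper sets $E:=\R^{\bsg_{\max}}=\lin(\mathcal\poly_{\max})$ and $E^{\perp}:=\R^{\aint\setminus\bsg_{\max}}=\R^{\aint_{>x_{r_{\max}+1},<x_{s_{\min}}}}$, not the other way round. Lemma~\ref{lem:deg} shows every $\mathcal\poly$-degenerate pair is supported on $E$ (not on $\R^{\aint\setminus\bsg_{\max}}$), so the simplified identity $h_{\poly_{\LL-1}}(\mathrm u)=h_{a\poly_{\LL}+\mathrm v}(\mathrm u)$ holds only for extreme directions $\mathrm u\in E^{\perp}=\R^{\aint\setminus\bsg_{\max}}$. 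Consequently the analogues of Proposition~\ref{prop:av} and Lemma~\ref{lem:minmax} (the paper's Proposition~\ref{prop:avcrit} and Lemmas~\ref{lem:minmaxcrit}--\ref{lem:minellmaxcrit}) give information about $\mathrm v_j$ only for $y_j\in\aint_{>x_{r_{\max}+1},<x_{s_{\min}}}$, the \emph{opposite} of your ``$y_j\in\bsg_{\max}$''. With the correct orientation, the directions of Proposition~\ref{prop:dir}(e)--(h), once restricted to such $y_j$, automatically lie in $E^{\perp}$, so your closing worry about them ``not lying in $E^{\perp}$'' disappears; they are needed not to cross out of the critical subposet but to compensate for the \emph{single} slot occupied by the mixed element $\mixed$ of Corollary~\ref{cor:max}.

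Your endgame also diverges from the paper. The final contradiction does not feed Proposition~\ref{prop:dir}(g) directly into the support-function identity to obtain ``$-1=0$''. Instead, one first proves a critical analogue of~\eqref{eq:supcritequiv} (Lemma~\ref{lem:incompcrit}): any $y\in\aint_{>x_{r_{\max}+1},<x_{s_{\min}}}$ landing at $i_{\LL}\pm1$ under $\mathcal N_=$, or at $i_{\LL}$ under $\mathcal N_{\pm}$, is incomparable to $x_{\LL}$. Then, assuming $\sigma\in\mathcal N_+(\sim,\sim)$, Corollary~\ref{cor:max_plus} forces the companion $y_v=\sigma^{-1}(i_{\LL})$ to lie in $\aint_{>x_{r_{\max}+1},<x_{s_{\min}}}\cup\{\mixed\}$; if $y_v\neq\mixed$ one invokes Lemma~\ref{lem:incompcrit}(b) for an immediate contradiction, and if $y_v=\mixed$ one swaps with the other companion to reduce to the first case. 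The mixed element $\mixed$ is thus central to the contradiction step, not merely to setting up $E^{\perp}$.
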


\subsection{The critical subspace}
We now enter the critical territory so the equation
\[
h_{\poly_{\LL-1}}(\mathrm u)=h_{a\poly_{\LL}+\mathrm v}(\mathrm u) \quad \mbox{for all }(B,\mathcal \poly)\textnormal{-extreme normal directions } \mathrm u,
\]
which held for supercritical posets, is no longer valid. Instead, we only have
\[
h_{\poly_{\LL-1}+\sum_{j=1}^dQ_j}(\mathrm u)=h_{a\poly_{\LL}+\mathrm v+\sum_{j=1}^dP_j}(\mathrm u) \quad \mbox{for all }(B,\mathcal \poly)\textnormal{-extreme normal directions } \mathrm u,
\]
where $(P_1,Q_1),\ldots,(P_d,Q_d)$ are $\mathcal \poly$-degenerate pairs. Our approach to this problem is to find a  subspace $E^{\perp}$, on which we do in fact have $h_{\poly_{\LL-1}}(\mathrm u)=h_{a\poly_{\LL}+\mathrm v}(\mathrm u)$ for all $(B,\mathcal \poly)$-extreme normal directions $\mathrm u\in E^{\perp}$. Since we now require that the $(B,\mathcal \poly)$-extreme normal directions are contained in $E^{\perp}$, we will need more of them in order to derive enough constraints to characterize the extremals of critical posets. These extreme normal directions are the ones given in Proposition \ref{prop:dir}(e--h). We define the subspace $E^{\perp}$ by
\begin{align}
\label{eq:Eperp}
E^{\perp}:=\R^{\aint\backslash \bsg_{\max}},
\end{align}
where we recall \eqref{eq:Kmax}. We call the subspace $E$ the \emph{critical subspace} and note that, by Lemma \ref{lem:spancollec}, $\lin(\mathcal\poly_{\max})=\R^{\bsg_{\max}}=E$. The following result explains the connection between  $\mathcal\poly$-degenerate pairs and $E$. 

\begin{lemma}
\label{lem:deg}
Let $(P,Q)$ be a $\mathcal\poly$-degenerate pair. Then, $\lin(P),\lin(Q)\subseteq E$.
\end{lemma}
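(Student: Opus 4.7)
The starting point is the positivity criterion (Lemma \ref{lem:mvpos}): the vanishing $\V_{n-k}(P,Q,\mathcal\poly)=0$ forces the existence of a subcollection $\mathcal C' \subseteq \{P,Q\}\cup\mathcal\poly$ with $\dim\bigl(\sum_{C\in\mathcal C'}C\bigr) < |\mathcal C'|$. Our strategy is to argue that essentially the only way this can occur, under the degenerate pair hypotheses, is for $\mathcal C'$ to contain both $P$ and $Q$ together with a nonempty sharp-critical sub-collection of $\mathcal\poly$; then Proposition \ref{prop:max_notions} confines that sub-collection inside $\mathcal\poly_{\max}$, which by Lemma \ref{lem:spancollec} has linear span equal to $E$.

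Explicitly, I would do a case analysis on $\mathcal C'\cap\{P,Q\}$. The case $\mathcal C'\cap\{P,Q\}=\varnothing$ is immediately ruled out by subcriticality of $\mathcal\poly$ (Assumption \ref{ass:crit}). For the ``one-sided'' case, say $\mathcal C'\cap\{P,Q\}=\{P\}$, writing $\mathcal C'' := \mathcal C'\setminus\{P\}\subseteq\mathcal\poly$, criticality of $\mathcal\poly$ gives $\dim(\sum_{\mathcal C''}C)\ge|\mathcal C''|+1$ whenever $\mathcal C''\neq\varnothing$, which together with monotonicity forces $\dim(P+\sum_{\mathcal C''})\ge|\mathcal C'|$, a contradiction; hence $\mathcal C''=\varnothing$ and $P$ must be a point. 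Then $\V_{n-k}(P,B,\mathcal\poly)=0$ by translation invariance, so by the hypothesis $\V_{n-k}(Q,B,\mathcal\poly)=0$, and the same argument applied to $\{Q,B\}\cup\mathcal\poly$ (noting $B$ is full-dimensional and hence irrelevant to any bad subcollection) forces $Q$ to be a point as well. But two points are translates of each other, contradicting the definition of a degenerate pair. By symmetry the same applies to $\{Q\}$, so the one-sided case is impossible.

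In the remaining case $\{P,Q\}\subseteq\mathcal C'$, set $\mathcal C'':=\mathcal C'\setminus\{P,Q\}\subseteq\mathcal\poly$. If $\mathcal C''\neq\varnothing$, then criticality gives $\dim(\sum_{\mathcal C''}C)\ge|\mathcal C''|+1$, and $\dim(P+Q+\sum_{\mathcal C''}C)<|\mathcal C''|+2$ forces both $\dim(\sum_{\mathcal C''}C)=|\mathcal C''|+1$ (i.e.\ $\mathcal C''$ is sharp-critical) and $\dim(P+Q+\sum_{\mathcal C''}C)=\dim(\sum_{\mathcal C''}C)$, which in turn yields $\lin(P),\lin(Q)\subseteq\lin(\sum_{\mathcal C''}C)$. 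Then Proposition \ref{prop:max_notions} gives $\mathcal C''\subseteq\mathcal\poly_{\max}$, so $\lin(P),\lin(Q)\subseteq\lin\bigl(\sum_{C\in\mathcal\poly_{\max}}C\bigr) = \R^{\bsg_{\max}}=E$ by Lemma \ref{lem:spancollec}, as desired.

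\textbf{Main obstacle.} The delicate residual case is $\mathcal C'=\{P,Q\}$ with $\mathcal C''=\varnothing$, where $\dim(P+Q)<2$. Here the preceding combinatorial arguments give no information, and I would rule it out using the remaining hypothesis $\V_{n-k}(P,B,\mathcal\poly)=\V_{n-k}(Q,B,\mathcal\poly)$. In this situation $P$ and $Q$ are both segments (possibly points) in a common line direction $v$, say $P=p_0+[0,a]v$ and $Q=q_0+[0,b]v$; by translation invariance and homogeneity of mixed volumes in each slot, $\V_{n-k}(P,B,\mathcal\poly)=a\cdot\V_{n-k}([0,1]v,B,\mathcal\poly)$ and likewise for $Q$ with $b$. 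Since $B$ is full-dimensional, the subcriticality conditions for $\{[0,1]v,B\}\cup\mathcal C$ are automatic for any $\mathcal C\subseteq\mathcal\poly$, so $\V_{n-k}([0,1]v,B,\mathcal\poly)>0$ by Lemma \ref{lem:mvpos}; hence $a=b$, making $Q$ a translate of $P$, contradicting the definition. This cross-use of the second condition in Definition \ref{def:deg} is the crux that closes the argument.
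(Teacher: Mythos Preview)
Your argument is correct and essentially self-contained, whereas the paper's proof is a one-line citation to \cite[Lemma 9.6]{SvH20} together with Proposition~\ref{prop:max_notions}. What you have written is, in effect, a direct re-derivation of that external lemma in the present setting: you unpack the positivity criterion (Lemma~\ref{lem:mvpos}) and run a case analysis on which of $P,Q$ lie in the deficient subcollection, using criticality of $\mathcal\poly$ (Assumption~\ref{ass:crit}) to force the interesting case $\{P,Q\}\subseteq\mathcal C'$ with $\mathcal C''$ sharp-critical, and then invoking Proposition~\ref{prop:max_notions}. Your treatment of the residual cases (one-sided, and $\mathcal C'=\{P,Q\}$) via the second defining condition $\V_{n-k}(P,B,\mathcal\poly)=\V_{n-k}(Q,B,\mathcal\poly)$ is exactly the kind of cross-use that the abstract result in \cite{SvH20} encapsulates.

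Two minor remarks. First, in Case~3a you need not just the statement of Proposition~\ref{prop:max_notions} but the fact established in its proof that $\mathcal\poly_{\max}$ equals the union of all sharp-critical subcollections (so that your sharp-critical $\mathcal C''$ is contained in $\mathcal\poly_{\max}$); it would be cleaner to cite that explicitly. Second, your approach has the advantage of being independent of the general structural lemma in \cite{SvH20}, at the modest cost of a longer argument; the paper's approach is terser but leans on the black box.
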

\begin{proof}
The result follows by \cite[Lemma 9.6]{SvH20} and  Proposition \ref{prop:max_notions}.
\end{proof}

When we restrict to the subspace $E^{\perp}$, we are in the supercritical case in the following sense:
\begin{lemma}
\label{lem:supcritE}
There exist $a\ge 0$ and $\mathrm v\in S^{n-k-1}$ such that
\[
h_{\poly_{\LL-1}}(\mathrm u)=h_{a\poly_{\LL}+\mathrm v}(\mathrm u) \quad \mbox{for all }(B,\mathcal \poly)\textnormal{-extreme normal directions } \mathrm u\text{ which are contained in $E^{\perp}$}.
\]
\end{lemma}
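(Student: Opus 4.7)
The plan is to start from the critical case of Theorem \ref{thm:SvHComb}, which provides scalars $a\ge 0$, a vector $\mathrm v\in\R^{n-k}$, and $\mathcal\poly$-degenerate pairs $(P_1,Q_1),\ldots,(P_d,Q_d)$ such that
\[
h_{\poly_{\LL-1}+\sum_{j=1}^dQ_j}(\mathrm u)=h_{a\poly_{\LL}+\mathrm v+\sum_{j=1}^dP_j}(\mathrm u)
\]
for every $(B,\mathcal\poly)$-extreme normal direction $\mathrm u$. Using the fact that $h_{C+C'}=h_C+h_{C'}$ and $h_{\mathrm w}(\mathrm u)=\langle \mathrm u,\mathrm w\rangle$, this expands to
\[
h_{\poly_{\LL-1}}(\mathrm u)+\sum_{j=1}^dh_{Q_j}(\mathrm u)=a\,h_{\poly_{\LL}}(\mathrm u)+\langle\mathrm u,\mathrm v\rangle+\sum_{j=1}^dh_{P_j}(\mathrm u)
\]
for every such $\mathrm u$. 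The key observation is then that on $E^\perp$, the degenerate pairs contribute only a linear term, and so can be absorbed into a modified translation vector.

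More precisely, by Lemma \ref{lem:deg} we have $\lin(P_j),\lin(Q_j)\subseteq E$ for every $j\in[d]$. Fix arbitrary basepoints $p_j\in P_j$ and $q_j\in Q_j$. For any $\mathrm u\in E^\perp$ and any $x\in P_j$ we have $x-p_j\in\lin(P_j)\subseteq E$, so $\langle \mathrm u,x\rangle=\langle \mathrm u,p_j\rangle$, which yields $h_{P_j}(\mathrm u)=\langle \mathrm u,p_j\rangle$. The analogous identity holds for $Q_j$. Substituting these identities into the displayed equality above, valid now only for $(B,\mathcal\poly)$-extreme normal directions $\mathrm u$ lying in $E^\perp$, gives
\[
h_{\poly_{\LL-1}}(\mathrm u)=a\,h_{\poly_{\LL}}(\mathrm u)+\Big\langle\mathrm u,\,\mathrm v+\sum_{j=1}^d(p_j-q_j)\Big\rangle.
\]
Setting $\mathrm v':=\mathrm v+\sum_{j=1}^d(p_j-q_j)\in\R^{n-k}$ and using $h_{a\poly_{\LL}+\mathrm v'}(\mathrm u)=a\,h_{\poly_{\LL}}(\mathrm u)+\langle\mathrm u,\mathrm v'\rangle$ completes the identification
\[
h_{\poly_{\LL-1}}(\mathrm u)=h_{a\poly_{\LL}+\mathrm v'}(\mathrm u)\quad\text{for every $(B,\mathcal\poly)$-extreme normal direction $\mathrm u\in E^\perp$},
\]
as claimed.

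There is no serious obstacle here: the only ingredient beyond the support-function calculus is Lemma \ref{lem:deg}, which pins down the linear hulls of the degenerate summands inside $E$ and is precisely what lets the contributions of the pairs $(P_j,Q_j)$ collapse to an affine correction on $E^\perp$. The payoff is that, after passing to $E^\perp$, the extremal characterization takes the same clean form as in the supercritical case treated in Section \ref{sec:supercrit}, at the cost of having access to fewer extreme normal directions---which is exactly why the extra directions supplied by Proposition \ref{prop:dir}(e--h) are needed in the critical analysis to follow.
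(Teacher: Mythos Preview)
Your proof is correct and is essentially the same as the paper's own argument: both invoke Theorem~\ref{thm:SvHComb}, use Lemma~\ref{lem:deg} to place $\lin(P_j),\lin(Q_j)\subseteq E$, translate each $P_j,Q_j$ by a chosen basepoint so that the translated bodies have zero support function on $E^\perp$, and absorb the resulting linear correction into a new vector $\mathrm v'$. The only difference is notational---the paper packages the translated sums as $\tilde P,\tilde Q\subseteq E$ and notes $h_{\tilde P}(\mathrm u)=h_{\tilde Q}(\mathrm u)=0$, whereas you compute $h_{P_j}(\mathrm u)=\langle\mathrm u,p_j\rangle$ directly.
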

\begin{proof}
Let $\mathrm u\in E^{\perp}$ be a $(B,\mathcal \poly)$-extreme normal direction. By Theorem \ref{thm:SvHComb}
\[
h_{\poly_{\LL-1}+\sum_{j=1}^dQ_j'+\sum_{j=1}^dq_j}(\mathrm u)=h_{a\poly_{\LL}+\mathrm v+\sum_{j=1}^dP_j'+\sum_{j=1}^dp_j}(\mathrm u),
\]
where $(P_j,Q_j)_{j\in\llbracket 1,d\rrbracket}$ are $\mathcal \poly$-degenerate pairs and $P_j'=P_j-p_j$, $Q_j'=Q_j-q_j$ where  $p_j\in P_j$, $q_j\in Q_j$ are fixed. Hence, with $\mathrm v':=v+\sum_{j=1}^dp_j-\sum_{j=1}^dq_j,~\tilde P:=\sum_{j=1}^dP_j'$, and $\tilde Q:=\sum_{j=1}^dQ_j'$, we have
\[
h_{\poly_{\LL-1}+\tilde Q}(\mathrm u)=h_{a\poly_{\LL}+\mathrm v'+\tilde P}(\mathrm u).
\]
Since $\tilde P,\tilde Q\subseteq E$ and $\mathrm u\in E^{\perp}$, we have $h_{\tilde Q}(\mathrm u)=h_{\tilde P}(\mathrm u)=0$. Relabeling $\mathrm v'\to\mathrm v$ completes the proof.
\end{proof}

\subsection{The critical extremals}
In order to prove Theorem \ref{thm:critsec} we need to prove the analogues of Proposition \ref{prop:av}, Lemma \ref{lem:minmax}, and Lemma \ref{lem:a=1}, as well as some additional results. Roughly speaking, on 
\[
\aint\backslash\bsg_{\max}=\aint_{>x_{r_{\max}+1},<x_{s_{\min}}},
\]
we have a supercritical behavior. Indeed, the proof of the following result is analogous to the proof of Proposition \ref{prop:av} once we use the full power of Proposition \ref{prop:dir}, Lemma \ref{lem:supcritE}, and restrict to  $y_j,y_u,y_v\in \aint_{>x_{r_{\max}+1},<x_{s_{\min}}}$, rather than allowing for all $y_j,y_u,y_v\in \aint$.
\begin{proposition}
\label{prop:avcrit}
For any $y_j,y_u,y_v\in \aint_{>x_{r_{\max}+1},<x_{s_{\min}}}$: 
\begin{enumerate}[(a)]
\item For each fixed $0\le m\le \LL-1$: $\mathrm v_j=0$ for any $j$ such that $y_j\in\aint_{>x_m}$ and there exists $\sigma\in\mathcal N_=$ satisfying $\sigma(y_j)=i_m+1$.\\

\item For each fixed $\LL+1\le m\le k+1$: $\mathrm v_j=1-a$ for any $j$ such that $y_j\in\aint_{<x_m}$ and there exists $\sigma\in\mathcal N_=$ satisfying $\sigma(y_j)=i_m-1$.\\

\item $\mathrm v_u=\mathrm v_v$ for any $u,v$ such that $y_u<y_v$ and there exists $\sigma\in\mathcal N_=$ satisfying $\sigma(y_u)+1=\sigma(y_v)$.\\

\item $\mathrm v_u=\mathrm v_v$ for any $u,v$ such that $y_u<y_v$ and there exists $\sigma\in\mathcal N_=$ satisfying $\sigma(y_u)=i_{\LL}-1$ and $\sigma(y_v)=i_{\LL}+1$.\\

\item For each fixed $ r_{\max}\le m\le\LL-1$: $\mathrm v_j=0$ for any $j$ such that  $y_j\in\aint_{>x_m}$ and there exists $\sigma\in\mathcal N_=$ satisfying either $\sigma(y_j)=i_m+1$ or $\sigma(y_j)=i_m+2$.\\

\item For each fixed $\LL+1\le m\le s_{\min}$: $\mathrm v_j=1-a$ for any $j$ such that  $y_j\in\aint_{<x_m}$ and there exists $\sigma\in\mathcal N_=$ satisfying either $\sigma(y_j)=i_m-1$ or $\sigma(y_j)=i_m-2$.\\

\item $\mathrm v_j=0$ for any $j$ such that $y_j\in\aint_{>x_{\LL-1}}$ and there exists $\sigma\in\mathcal N_+$ satisfying $\sigma(y_j)=i_{\LL-1}+2$.\\

\item $\mathrm v_j=1-a$ for any $j$ such that $y_j\in\aint_{<x_{\LL+1}}$ and there exists $\sigma\in\mathcal N_-$ satisfying $\sigma(y_j)=i_{\LL+1}-2$.\\
\end{enumerate}
\end{proposition}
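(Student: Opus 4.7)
The plan is to follow the template of the proof of Proposition \ref{prop:av}, with two modifications. First, instead of applying Theorem \ref{thm:SvHComb} directly (which in the critical case would involve degenerate pairs), we apply Lemma \ref{lem:supcritE}. This gives us the clean identity $h_{\poly_{\LL-1}}(\mathrm u) = h_{a\poly_{\LL}+\mathrm v}(\mathrm u)$, but only for $(B,\mathcal\poly)$-extreme normal directions $\mathrm u \in E^{\perp}$. Second, we leverage the full set of extreme normal directions from all eight parts of Proposition \ref{prop:dir}, rather than just the first four, to obtain the additional identities in parts (e)–(h).

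The role of the hypothesis $y_j,y_u,y_v \in \aint_{>x_{r_{\max}+1},<x_{s_{\min}}}$ is precisely to ensure that the relevant directions $\pm e_j$ and $e_{uv}$ lie in $E^{\perp}$. Indeed, by \eqref{eq:Eperp} we have $E^{\perp} = \R^{\aint\backslash\bsg_{\max}}$, and Lemma \ref{lem:betaintv} identifies $\aint\backslash\bsg_{\max} = \aint_{>x_{r_{\max}+1},<x_{s_{\min}}}$. Thus if the indices correspond to elements of this set, the standard basis vectors indexed by them (and the renormalized differences $e_{uv}$) lie in $E^{\perp}$, and Lemma \ref{lem:supcritE} can be invoked.

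For parts (a)–(d), the argument is verbatim the proof of Proposition \ref{prop:av}(a)–(d): Proposition \ref{prop:dir}(a)–(d) guarantees extremality of the chosen direction, and the support function values $h_{\poly_{\LL-1}}(\mathrm u)$ and $h_{\poly_{\LL}}(\mathrm u)$ are computed directly from \eqref{eq:Kiordpoly} using the hypothesis on $\sigma(y_j)$ (respectively $\sigma(y_u),\sigma(y_v)$). For instance, in part (a), from $\sigma(y_j)=i_m+1 \le i_{\LL-1}+1 < i_{\LL}$ we get $y_j \notin \aint_{>x_{\LL}} \cup \aint_{>x_{\LL+1}}$, so both support functions vanish on $-e_j$ and we conclude $\mathrm v_j=0$. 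The only new ingredient over Proposition \ref{prop:av} is the verification, noted above, that $\mathrm u \in E^{\perp}$.

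For parts (e)–(h), the strategy is identical, now calling upon Proposition \ref{prop:dir}(e)–(h) to certify extremality. The only nontrivial bookkeeping is to re-do the support function computations in the new range of indices. In (e), the case $\sigma(y_j)=i_m+1$ is already handled by (a); for the case $\sigma(y_j)=i_m+2$ with $r_{\max}+1\le m\le\LL-1$, Corollary \ref{cor:splitequiv} gives $\sigma(y_j) \le i_{\LL-1}+2 \le i_{\LL}$, and since $y_j\in\aint$ forces $\sigma(y_j)\neq i_{\LL}$, we have $y_j\notin\aint_{>x_{\LL}}\cup\aint_{>x_{\LL+1}}$; hence $h_{\poly_{\LL-1}}(-e_j)=h_{\poly_{\LL}}(-e_j)=0$ and $\mathrm v_j=0$. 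Part (f) is symmetric. Parts (g) and (h) use the linear extensions in $\mathcal N_+$ and $\mathcal N_-$ respectively: in (g), $\sigma(y_j)=i_{\LL-1}+2$ with $y_j\in\aint_{>x_{\LL-1}}$ implies $\sigma(y_j)<i_{\LL+1}$, hence $y_j\notin\aint_{>x_{\LL}}\cup\aint_{>x_{\LL+1}}$ and the same conclusion $\mathrm v_j=0$ follows. No conceptual difficulty is expected; the substantive work was done in constructing $E^{\perp}$, proving Lemma \ref{lem:supcritE}, and producing the second-neighbor extreme directions in Proposition \ref{prop:dir}(e)–(h). The main effort in this proof is the routine case analysis across the eight parts.
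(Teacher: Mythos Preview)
Your proposal is correct and matches the paper's approach exactly: the paper states that the proof is analogous to that of Proposition~\ref{prop:av}, using the full Proposition~\ref{prop:dir}, Lemma~\ref{lem:supcritE}, and the restriction $y_j,y_u,y_v\in\aint_{>x_{r_{\max}+1},<x_{s_{\min}}}$ to ensure the relevant directions lie in $E^{\perp}$. Your additional support-function checks for parts (e)--(h) fill in precisely the routine details the paper leaves implicit.
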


Towards the proofs of the analogues of Lemma \ref{lem:minmax} and Lemma \ref{lem:a=1} we recall Corollary \ref{cor:max}, together with some of its immediate consequences.

\begin{corollary}
\label{cor:max_plus}
Fix $\s\in \{-,=,+\}$ and $\sigma\in\mathcal N_{\s}$. There exists a unique mixed element $\mixed$  satisfying $\mixed \in \bsg_{r_{\max}}\cup\bsg_{s_{\min}}$ and $\sigma(\mixed)\in \llbracket i_{r_{\max}+1}, i_{s_{\min}} \rrbracket\backslash\{i_{r_{\max}+1},\ldots,i_{s_{\min}}\}$. In particular, any other element $y\neq \mixed$ satisfying $\sigma(y) \in \llbracket i_{r_{\max}+1}, i_{s_{\min}} \rrbracket$ must satisfy $y\in\abar_{\ge x_{r_{\max}+1}, \le x_{s_{\min}}}$.  Furthermore, $\mixed$ satisfies either $\mixed \not \ge x_{r_{\max}+1}$ or $\mixed \not \le x_{s_{\min}}$. If $\mixed \not \ge x_{r_{\max}+1}$, then $\mixed \not \ge y$ for any $y\in \abar_{\ge x_{r_{\max}+1}, \le x_{s_{\min}}}$. Analogously, if $\mixed \not \le x_{s_{\min}}$, then $\mixed \not \le y$ for any $y \in \abar_{\ge x_{r_{\max}+1}, \le x_{s_{\min}}}$.
\end{corollary}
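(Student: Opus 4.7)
The opening sentence of the statement is precisely Corollary~\ref{cor:max} applied to the maximal splitting pair $(r_{\max},s_{\min})$, so the real content lies in the three additional assertions. My plan is to deduce all of them from one pigeonhole identity together with the definition of $\bsg_i$.

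For the ``in particular'' claim I would set
\[
Y := \sigma^{-1}(\llbracket i_{r_{\max}+1}, i_{s_{\min}}\rrbracket), \qquad Z := \abar_{\ge x_{r_{\max}+1}, \le x_{s_{\min}}},
\]
and show that $Y = Z \sqcup \{\mixed\}$. The inclusion $Z \subseteq Y$ is easy: since $(r_{\max}, s_{\min})$ is an $\LL$-splitting pair we have $r_{\max}+1, s_{\min} \neq \LL$, so $\sigma(x_{r_{\max}+1}) = i_{r_{\max}+1}$ and $\sigma(x_{s_{\min}}) = i_{s_{\min}}$ regardless of the choice of $\s$, which squeezes every element of $Z$ into the window. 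Then I would count: $|Y| = i_{s_{\min}} - i_{r_{\max}+1} + 1$ is tautological, while Lemma~\ref{lem:sharcrit} applied to the sharp-critical $\LL$-splitting pair $(r_{\max}, s_{\min})$ gives $|\abar_{>x_{r_{\max}+1},<x_{s_{\min}}}| = i_{s_{\min}} - i_{r_{\max}+1} - 2$, and hence $|Z| = i_{s_{\min}} - i_{r_{\max}+1} = |Y|-1$ after putting back $x_{r_{\max}+1}$ and $x_{s_{\min}}$. Finally, because $\mixed$ lives in $\bsg_{r_{\max}} \cup \bsg_{s_{\min}} \subseteq \aint$, it is neither a chain element nor an element of $\aint_{>x_{r_{\max}+1},<x_{s_{\min}}}$, so $\mixed \in Y \setminus Z$, and the partition $Y = Z \sqcup \{\mixed\}$ falls out at once.

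The remaining assertions are definitional. Membership of $\mixed$ in $\bsg_{r_{\max}}$ (resp.\ $\bsg_{s_{\min}}$) directly yields $\mixed \not\ge x_{r_{\max}+1}$ (resp.\ $\mixed \not\le x_{s_{\min}}$), so at least one of the two must hold. And if $\mixed \not\ge x_{r_{\max}+1}$, then for any $y \in Z$ the chain $\mixed \ge y \ge x_{r_{\max}+1}$ would contradict this by transitivity, so $\mixed \not\ge y$; the other implication is symmetric.

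The only non-bookkeeping ingredient is the verification that Lemma~\ref{lem:sharcrit} actually applies, i.e.\ that $(r_{\max}, s_{\min})$ is itself sharp-critical. This is the content of Proposition~\ref{prop:max_notions}, which identifies $\mathcal\poly_{\max}$ as the unique maximal sharp-critical collection; I expect this invocation to be the conceptual crux, with everything else being routine.
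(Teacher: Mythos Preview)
Your proposal is correct and matches the paper's intent: the paper presents Corollary~\ref{cor:max_plus} as ``Corollary~\ref{cor:max}, together with some of its immediate consequences'' without writing out a proof, and your pigeonhole argument $Y = Z \sqcup \{\mixed\}$ via the sharp-critical count from Lemma~\ref{lem:sharcrit} (with Proposition~\ref{prop:max_notions} ensuring $\mathcal\poly_{\max}$ is itself sharp-critical) is exactly the natural way to make these consequences explicit. One cosmetic simplification: once you have $\mixed\in Y\setminus Z$, the ``either $\mixed\not\ge x_{r_{\max}+1}$ or $\mixed\not\le x_{s_{\min}}$'' clause already follows from $\mixed\notin Z$ without revisiting the $\bsg_i$ definitions, though your route via $\bsg_{r_{\max}}\cup\bsg_{s_{\min}}$ is equally valid.
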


The following result is the analogue of Lemma \ref{lem:minmax} where again we restrict to $y_j\in\aint_{>x_{r_{\max}+1},<x_{s_{\min}}}$ rather than allowing for all $y_j\in\aint$.
\begin{lemma}
\label{lem:minmaxcrit}
For any $y_j\in \aint_{>x_{r_{\max}+1},<x_{s_{\min}}}$: If $m^{=}_{\min}(y_j)<i_{\LL}$ then $\mathrm v_j=0$, and if $m^{=}_{\max}(y_j)>i_{\LL}$ then $\mathrm v_j=1-a$.
\end{lemma}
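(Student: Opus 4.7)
The plan is to adapt the chain-building argument of Lemma \ref{lem:minmax} to the restricted set $\aint_{>x_{r_{\max}+1},<x_{s_{\min}}}$, using Proposition \ref{prop:avcrit} and Corollary \ref{cor:max_plus}. I will prove the upper implication $m^{=}_{\max}(y_j)>i_\LL \Rightarrow \mathrm v_j=1-a$; the lower implication is entirely symmetric, using parts (a), (c), and (e) of Proposition \ref{prop:avcrit} in place of (b), (c), and (f) and substituting $x_{r_{\max}+1}$ for $x_{s_{\min}}$.

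Set $y_{j_0}:=y_j$, and at step $i$ let $M:=m^{=}_{\max}(y_{j_i})$ and pick $\sigma\in\mathcal N_{=}$ with $\sigma(y_{j_i})=M$. Inductively $M>i_\LL$, and closure together with $\sigma$-positioning forces $y_{j_i}\not<x_\LL$, whence $i_{\min}(y_{j_i})>\LL$, so the maximality-plus-closure argument from Lemma \ref{lem:minmax} (using Lemma \ref{lem:range} and Lemma \ref{lem:UL}(iv)) transfers unchanged. If $M+1=i_m$ for some $\LL<m\le s_{\min}$, that argument yields $y_{j_i}<x_m$ in the closure (trivial when $m=s_{\min}$), and Proposition \ref{prop:avcrit}(b) terminates the chain with $\mathrm v_{j_i}=1-a$. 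Otherwise $y':=\sigma^{-1}(M+1)\in\aint$, and the standard incomparability-swap forces $y_{j_i}<y'$.

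The main obstacle is distinguishing whether $y'\in\aint_{>x_{r_{\max}+1},<x_{s_{\min}}}$ or $y'=\mixed$, the unique mixed element furnished by Corollary \ref{cor:max_plus}. In the benign first case, I set $y_{j_{i+1}}:=y'$ and Proposition \ref{prop:avcrit}(c) yields $\mathrm v_{j_i}=\mathrm v_{j_{i+1}}$. When $y'=\mixed$, transitivity from $y_{j_i}>x_{r_{\max}+1}$ and $y_{j_i}<\mixed$ forces $\mixed\notin\bsg_{r_{\max}}$, so $\mixed\in\bsg_{s_{\min}}$; combined with $\sigma(\mixed)<i_{s_{\min}}$, this gives $\mixed\incomp x_{s_{\min}}$. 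I then consider $y'':=\sigma^{-1}(M+2)$. If $M+2=i_m$ for some $\LL<m\le s_{\min}$, the same closure argument yields $y_{j_i}<x_m$ and Proposition \ref{prop:avcrit}(f) terminates the chain; otherwise $y''\in\aint$ is non-mixed, hence lies in $\aint_{>x_{r_{\max}+1},<x_{s_{\min}}}$ by Corollary \ref{cor:max_plus}. The relations $\mixed\incomp x_{s_{\min}}$ and $y''<x_{s_{\min}}$ together yield $\mixed\not\le y''$, while $\sigma(\mixed)<\sigma(y'')$ rules out $\mixed\ge y''$, so $\mixed\incomp y''$; swapping $\mixed$ and $y''$ in $\sigma$ produces $\sigma'\in\mathcal N_{=}$ with $\sigma'(y_{j_i})+1=\sigma'(y'')$, a final incomparability-swap gives $y_{j_i}<y''$, and Proposition \ref{prop:avcrit}(c) yields $\mathrm v_{j_i}=\mathrm v_{y''}$; set $y_{j_{i+1}}:=y''$. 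The sequence of $M$-values strictly increases and is bounded above by $i_{s_{\min}}-1$, so the iteration terminates.
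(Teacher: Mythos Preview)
Your proof is correct and follows essentially the same approach as the paper's. The only difference is organizational: the paper checks the termination condition $M\in\{i_m-2,i_m-1\}$ for some $\ell<m$ in a single case up front (terminating via Proposition~\ref{prop:avcrit}(f), which covers both $\sigma(y_{j_i})=i_m-1$ and $\sigma(y_{j_i})=i_m-2$), whereas you test $M+1=i_m$ first (terminating via (b)) and only test $M+2=i_m$ later if you hit the mixed element (terminating via (f)); the swap of $\mixed$ with the element at position $M+2$ and the use of Corollary~\ref{cor:max_plus} to keep the chain inside $\aint_{>x_{r_{\max}+1},<x_{s_{\min}}}$ are identical.
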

\begin{proof}
We prove that $m^{=}_{\max}(y_j)>i_{\LL}\Rightarrow\mathrm v_j=1-a$; the proof of $m^{=}_{\min}(y_j)<i_{\LL}\Rightarrow\mathrm v_j=0$ is analogous.

Set $y_{j_0}:=y_j\in  \aint_{>x_{r_{\max}+1},<x_{s_{\min}}}$ and construct the sequence $y_{j_0}<y_{j_1}<\cdots <y_{j_p}$, for some $p<\infty$, iteratively, according to the algorithm below. The sequence will be constructed so that $y_{j_i}\in \aint_{>x_{r_{\max}+1},<x_{s_{\min}}}$ for every $i\in\llbracket 0,p\rrbracket$, $\mathrm v_{j_i}=\mathrm v_{j_{i+1}}$ for all $i\in \llbracket 0,p-1\rrbracket$, and $\mathrm v_{j_p}=1-a$. Clearly, it will then follow that $\mathrm v_j=\mathrm v_{j_0}=1-a$, completing the proof.

Assume that the sequence $y_{j_0}<y_{j_1}<\cdots <y_{j_i}$ has been constructed. Set $M:=m^{=}_{\max}(y_{j_i})$ and note that $i_{\LL}<m^{=}_{\max}(y_{j_0})\le M<i_{s_{\min}}$. Let $b$ be the index satisfying $i_b<M<i_{b+1}$ so that $\LL\le b\le s_{\min}-1$.  Consider the following two cases:\\

\begin{itemize}
\item $M\notin\{i_m-2,i_m-1\}$ for every $\LL<m$. Choose $\sigma\in\mathcal N_=$ such that $\sigma(y_{j_i})=M$ (such a $\sigma$ must exist by the definition of $M$) and set $y_r=\sigma^{-1}(M+1), y_s=\sigma^{-1}(M+2)$. Note that $i_{b+1}\notin \{M+1,M+2\}$ since $M\notin\{i_m-2,i_m-1\}$ for every $\LL<m$, so in particular, we can take $b+1=m$ (using $b+1>\LL$). Hence, we have $i_b<M,M+1,M+2<i_{b+1}$, so $M,M+1,M+2\in \llbracket i_b+1,i_{b+1}-1\rrbracket$, and hence $y_r,y_s\in \aint$. Note that $y_{j_i}<y_r$ since otherwise their positions in $\sigma$ can be swapped to contradict the maximality of $M$. Further, $M,M+1,M+2\in \llbracket i_b+1,i_{b+1}-1\rrbracket\Rightarrow \sigma(y_r),\sigma(y_s)\in \llbracket i_b+1,i_{b+1}-1\rrbracket\subseteq \I_{\llbracket r_{\max}+1,s_{\min}-1\rrbracket}$, where the last containment holds since $b\le s_{\min}-1$ (as shown above), and since $r_{\max}+1\le b$ (because $r_{\max}+1<\LL\le b$ as $(r_{\max},s_{\min})$ is an $\LL$-splitting pair). Corollary \ref{cor:max_plus} now yields $y_r,y_s\in \aint_{>x_{r_{\max}+1},<x_{s_{\min}}}\cup\{\mixed\}$. We now choose $y_{j_{i+1}}$ as follows:
\begin{enumerate}[(1)]
\item If $y_r\in \aint_{>x_{r_{\max}+1},<x_{s_{\min}}}$ set $y_{j_{i+1}}:=y_r$. Then we see that $y_{j_i}<y_{j_{i+1}}$ and that $y_{j_{i+1}}\in \aint_{>x_{r_{\max}+1},<x_{s_{\min}}}$ so Proposition \ref{prop:avcrit}(c) yields $\mathrm v_{j_{i+1}}=\mathrm v_{j_{i}}$.
\item If $y_r=\mixed$, then $y_s \in \aint_{>x_{r_{\max}+1},<x_{s_{\min}}}$. If $\mixed \not\ge x_{r_{\max}+1}$, then $\mixed \not \ge y_{j_i}$, a contradiction. Otherwise, $\mixed \not\le x_{s_{\min}}$, so $\mixed \not \le y_s$. Hence, we can swap the positions of $y_r=\mixed$ and $y_s$, which reduces to (1). 
\end{enumerate}

\item $M\in\{i_m-2,i_m-1\}$ for some $\LL<m$. In this case the sequence will be terminated with $p:=i$. Arguing as in the analogous case in Lemma \ref{lem:minmax}, we get that $y_{j_i}<x_m$. Note that $m=b+1\le s_{\min}$ (the last inequality was shown above), so since $\LL+1\le m\le s_{\min}$, Proposition \ref{prop:avcrit}(f) yields $\mathrm v_{j_i}=1-a$.
\end{itemize}
\end{proof}
The following result can be viewed as a continuation of Lemma \ref{lem:minmaxcrit}. To ease the notation we will use
\begin{align}
\label{eq:I}
\I_j := \llbracket i_j+1,i_{j+1}-1\rrbracket\quad \text{and} \quad \I_S:= \cup_{j\in S} \I_j\quad \text{for}\quad S\subseteq \llbracket 0, k\rrbracket.
\end{align}

\begin{lemma}
\label{lem:minellmaxcrit}
For any $y_j\in \aint_{>x_{r_{\max}+1},<x_{s_{\min}}}$:  If $\min_{\s\in\{-,=,+\}}m_{\min}^{\s}(y_j)<i_{\LL}+1_{\s}$ then $\mathrm v_j=0$, and if $\max_{\s\in\{-,=,+\}}m_{\max}^{\s}(y_j)>i_{\LL}+1_{\s}$  then $\mathrm v_j=1-a$. 
\end{lemma}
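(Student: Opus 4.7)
The plan is to reduce to Lemma \ref{lem:minmaxcrit} by constructing, from the given $\sigma \in \mathcal N_\s$, a linear extension $\sigma^* \in \mathcal N_=$ that also places $y_j$ strictly after $x_\LL$ (for the ``max'' case; the ``min'' case is dual). Throughout, suppose there exists $\s \in \{-,=,+\}$ and $\sigma \in \mathcal N_\s$ with $\sigma(y_j) > i_\LL + 1_\s = \sigma(x_\LL)$. Since Assumption \ref{ass:cl} gives $\abar = \textnormal{Cl}(\abar)$, this immediately rules out $y_j < x_\LL$; the case $y_j > x_\LL$ implies $\tau(y_j) > i_\LL$ for every $\tau \in \mathcal N_=$, and Lemma \ref{lem:minmaxcrit} concludes. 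Thus I may assume $y_j \incomp x_\LL$, and it suffices to show that $i_\LL + 1$ is an achievable position for $y_j$ in some $\mathcal N_=$-extension.

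By Lemma \ref{lem:range}, this is equivalent to $l_=(y_j) \le i_\LL + 1 \le u_=(y_j)$ (using that $i_\LL + 1 \ne i_m$ for any $m$, since $i_\LL < i_{\LL+1} - 1$). The lower bound $l_=(y_j) \le i_\LL + 1$ is automatic: any $\tau \in \mathcal N_=$ with $\tau(y_j) > i_\LL$ already hands us the conclusion via Lemma \ref{lem:minmaxcrit}, so I may assume $\tau(y_j) \le i_\LL - 1$ for some $\tau$, whence $l_=(y_j) \le i_\LL - 1$. For the upper bound, I will use Lemma \ref{lem:UL}(iii) to transfer information from $u_\s(y_j)$ to $u_=(y_j)$. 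If $\s = =$, direct application of Lemma \ref{lem:minmaxcrit} finishes. If $\s = +$, then $\sigma(y_j) \ge i_\LL + 2$ gives $u_+(y_j) \ge i_\LL + 2$, hence $u_=(y_j) \ge u_+(y_j) - 1 \ge i_\LL + 1$. If $\s = -$ and $\sigma(y_j) \ge i_\LL + 1$, then $u_-(y_j) \ge i_\LL + 1$ and $u_=(y_j) \ge u_-(y_j) \ge i_\LL + 1$.

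The one residual configuration is $\s = -$ with $\sigma(y_j) = i_\LL$ exactly. Since $y_j \incomp x_\LL$, swapping the positions of $x_\LL$ (at $i_\LL - 1$) and $y_j$ (at $i_\LL$) in $\sigma$ produces $\sigma' \in \mathcal N_=$ with $\sigma'(y_j) = i_\LL - 1$; the goal then is to further modify $\sigma'$ so as to shift $y_j$ past $x_\LL$. Here I will attempt a three-cycle on the positions $\{i_\LL - 1, i_\LL, i_\LL + 1\}$ involving $x_\LL$, $y_j$, and $w := \sigma^{-1}(i_\LL + 1)$. The cycle is admissible when $w \incomp x_\LL$ and $w \incomp y_j$; when it succeeds, it produces $\tau \in \mathcal N_=$ with $\tau(y_j) = i_\LL + 1$, and Lemma \ref{lem:minmaxcrit} closes the argument. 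When the cycle is obstructed (because some element adjacent to $x_\LL$ is comparable to $x_\LL$), I will exploit the structure provided by Corollary \ref{cor:max_plus} to locate a mixed element $\mixed$ permitting an analogous swap inside the critical window $\llbracket i_{r_{\max}+1}, i_{s_{\min}} \rrbracket$.

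The main obstacle is precisely this residual $\s = -$, $\sigma(y_j) = i_\LL$ case, where the elementary bound from Lemma \ref{lem:UL}(iii) is too weak to force $u_=(y_j) \ge i_\LL + 1$. The delicate point is to show that a congested neighborhood around $x_\LL$ (which blocks the three-cycle) is incompatible with $y_j$ lying in $\aint_{> x_{r_{\max}+1}, < x_{s_{\min}}}$ while being incomparable to $x_\LL$; indeed, Corollary \ref{cor:max_plus} constrains the elements at positions $\llbracket i_{r_{\max}+1}, i_{s_{\min}} \rrbracket$ to come from $\abar_{\ge x_{r_{\max}+1}, \le x_{s_{\min}}} \cup \{\mixed\}$, and the maximal-splitting-pair structure will provide the incomparability needed to carry out the three-cycle.
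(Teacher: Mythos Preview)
Your handling of the cases $\s\in\{=,+\}$ and of $\s=-$ with $\sigma(y_j)\ge i_\LL+1$ is fine and matches the paper. The gap is in the residual case $\s=-$, $\sigma(y_j)=i_\LL$.

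Your entire strategy there is to manufacture a $\tau\in\mathcal N_=$ with $\tau(y_j)=i_\LL+1$ and then invoke Lemma~\ref{lem:minmaxcrit} for $y_j$ itself. This is impossible in general. Working (as you may, WLOG) under $m_{\max}^{=}(y_j)<i_\LL$, one first shows $m_{\max}^{-}(y_j)=i_\LL$ (otherwise Lemma~\ref{lem:range} and Lemma~\ref{lem:UL} would already produce such a $\tau$, contradiction). Now set $w:=\sigma^{-1}(i_\LL+1)$. If $y_j\incomp w$, swapping $y_j$ and $w$ inside $\sigma$ gives a $\sigma_3\in\mathcal N_-$ with $\sigma_3(y_j)=i_\LL+1$, contradicting $m_{\max}^{-}(y_j)=i_\LL$. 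Hence necessarily $y_j<w$, and your three-cycle---which requires $y_j\incomp w$---is \emph{always} blocked in this regime. No ``analogous swap'' via $\mixed$ will rescue it: the obstruction is not congestion around $x_\LL$, it is the genuine relation $y_j<w$, and under $m_{\max}^{=}(y_j)<i_\LL$ there simply is no $\tau\in\mathcal N_=$ placing $y_j$ above $i_\LL$.

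The paper exploits the relation $y_j<w$ rather than fighting it. Swapping $y_j$ and $x_\LL$ in $\sigma$ yields $\sigma'\in\mathcal N_=$ with $\sigma'(y_j)=i_\LL-1$ and $\sigma'(w)=i_\LL+1$; since $y_j<w$, Proposition~\ref{prop:avcrit}(d) gives $\mathrm v_j=\mathrm v_w$. If $w\in\aint_{>x_{r_{\max}+1},<x_{s_{\min}}}$, Lemma~\ref{lem:minmaxcrit} applied to $w$ (which does satisfy $m_{\max}^{=}(w)>i_\LL$) finishes. If $w=\mixed$, a further argument is needed: one looks at $z:=\sigma^{-1}(i_\LL+2)$, uses Corollary~\ref{cor:max_plus} to swap $\mixed$ with $z$ when possible, and otherwise shows $i_{\LL+1}=i_\LL+2$ and closes via Proposition~\ref{prop:avcrit}(h) or a further mixed-element argument in $\I_{\llbracket \LL+1,s_{\min}-1\rrbracket}$. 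The key tool you are missing is Proposition~\ref{prop:avcrit}(d): the route is to transfer the conclusion from $w$ back to $y_j$, not to push $y_j$ itself past $x_\LL$ in $\mathcal N_=$.
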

\begin{proof}
We will prove $\max_{\s\in\{-,=,+\}}m_{\max}^{\s}(y_j)>i_{\LL}+1_{\s}\Rightarrow\mathrm v_j=1-a$; the proof of $\min_{\s\in\{-,=,+\}}m_{\min}^{\s}(y_j)<i_{\LL}+1_{\s}\Rightarrow\mathrm v_j=0$ is analogous. Fix $\s\in\{-,=,+\}$ and $\sigma\in \mathcal N_{\s}$ such that $\sigma(y_j)>\sigma(x_{\LL})=i_{\LL}+1_{\s}$. There are three cases to consider:\\

\begin{enumerate}[(1)]

\item $\s$ is $=$. We have $m^{=}_{\max}(y_j)\ge \sigma(y_j)$ and by assumption $\sigma(y_j)>\sigma(x_{\LL})=i_{\LL}$. Hence, $m^{=}_{\max}(y_j)>i_{\LL}$  and the proof is complete by Lemma \ref{lem:minmaxcrit}.\\

\item $\s$ is $+$. Let $q:=\sigma(y_j)>i_{\LL}+1$. We are going to apply Lemma \ref{lem:range} with $y_j,=$, and $q$ so we will check its conditions. Since $i_{\min}(y_j)>\LL$, Lemma \ref{lem:UL} and Corollary \ref{cor:range} yield
 $u_{=}(y_j)=u_{+}(y_j)\ge m_{\max}^+(y_j)\ge q$ and  $l_{=}(y_j)\le l_{+}(y_j)\le m_{\min}^+(y_j)\le\sigma(y_j)=q$, so we conclude that  $q\in\llbracket l_{=}(y_j),u_{=}(y_j)\rrbracket$. Next we show that $q\neq i_m$ for any $m\in [k]$. Indeed, if $m\le \LL$ then $i_m\le i_{\LL}<q$, and if $m>\LL$, then $i_m=q$ implies $\sigma(y_j)=\sigma(x_m)$, which is impossible since $y_j\in \aint_{>x_{r_{\max}+1},<x_{s_{\min}}}\subseteq\aint$. It follows from Lemma \ref{lem:range} that there exists $\sigma'\in\mathcal N_=$ such that $\sigma'(y_j)=q$. It follows that $m_{\max}^=(y_j)\ge \sigma'(y_j)=q>i_{\LL}$, and the proof is complete by Lemma \ref{lem:minmaxcrit}. \\

\item $\s$ is $-$. If $m_{\max}^=(y_j)>i_{\LL}$ we are done by Lemma \ref{lem:minmaxcrit}. Suppose then that $m^{=}_{\max}(y_j)<i_{\LL}$ (note that $m^{=}_{\max}(y_j)=i_{\LL}$ is impossible). 
\begin{claim}
\label{cl:1}
$m_{\max}^-(y_j)=i_{\LL}=\sigma(y_j)$.
\end{claim}
\begin{proof}
Suppose for contradiction that $m_{\max}^-(y_j)\ge i_{\LL}+1$, so there must exist $\sigma_1\in\mathcal N_-$ with $\sigma_1(y_j)\ge i_{\LL}+1$.  Since $i_{\min}(y_j)>\LL$, Lemma \ref{lem:UL} and Corollary \ref{cor:range} yield
 $u_{=}(y_j)=u_{-}(y_j)\ge m_{\max}^-(y_j)\ge \sigma_1(y_j)=i_{\LL}+1$. On the other hand, by  Corollary \ref{cor:range} and the assumption  $m_{\max}^=(y_j)<i_{\LL}$, we have $l_{=}(y_j)\le m_{\min}^=(y_j)\le  m_{\max}^=(y_j)<i_{\LL}$, so we conclude that  $i_{\LL}+1\in\llbracket l_{=}(y_j),u_{=}(y_j)\rrbracket$. By Corollary \ref{cor:splitequiv}, $i_m\neq i_{\LL}+1$ for any $m\in [k]$ so Lemma \ref{lem:range} implies that there exists $\sigma_2\in\mathcal N_=$ satisfying $\sigma_2(y_j)=i_{\LL}+1$, which contradicts $m^{=}_{\max}(y_j)<i_{\LL}$. We conclude that $m_{\max}^-(y_j)\le i_{\LL}$. Since, by assumption, $\sigma(y_j)>\sigma(x_{\LL})=i_{\LL}-1$ we get $m_{\max}^-(y_j)=\sigma(y_j)= i_{\LL}$.
\end{proof}
Let $y_v$ be such that $\sigma(y_v)=i_{\LL}+1$ and note that $y_v\in\aint$ by Corollary \ref{cor:splitequiv}. We must have $y_j<y_v$ since if $y_j\incomp y_v$ (by Claim \ref{cl:1} it is impossible to have $y_v<y_j$), then we can swap the positions of $y_j$ and $y_v$ in $\sigma$ to get $\sigma_3\in\mathcal N_-$ satisfying $\sigma_3(y_j)=i_{\LL}+1$, which contradicts  Claim \ref{cl:1}. Next we show that there exists $\sigma'\in\mathcal N_=$ satisfying $\sigma'(y_j)=i_{\LL}-1$ and $\sigma'(y_v)=i_{\LL}+1$. Indeed, since we assume $m_{\max}^=(y_j)<i_{\LL}$, we have that, for any $\sigma_4\in\mathcal N_{=}$, $\sigma_4(y_j)<\sigma_4(x_{\LL})$. Hence, since by the assumption  $\sigma(y_j)>\sigma(x_{\LL})$,  we must have $y_j\incomp x_{\LL}$. Swapping the positions of $y_j$ and $x_{\LL}$ in $\sigma$ yields $\sigma'$, where we used Claim \ref{cl:1}.\\

We will now analyze the element $y_v$. Since $\sigma'(y_v)=i_{\LL}+1$ we see that $\sigma'(y_v)\in \I_{\llbracket r_{\max}+1,s_{\min}-1\rrbracket}$ because $(r_{\max},s_{\min})$ is an $\LL$-splitting pair. Hence, Corollary \ref{cor:max_plus} yields that either $y_v=\mixed$ or $y_v \in \aint_{>x_{r_{\max}+1},<x_{s_{\min}}}$. Consider both cases:\\

\begin{enumerate}[(a)]
\item $y_v\in \aint_{>x_{r_{\max}+1},<x_{s_{\min}}}$. Since $y_j<y_v$, and since there exists $\sigma'\in\mathcal N_=$ satisfying  $\sigma'(y_j)=i_{\LL}-1$ and $\sigma'(y_v)=i_{\LL}+1$, Proposition \ref{prop:avcrit}(d) yields $\mathrm v_j=\mathrm v_v$. On the other hand, $\mathrm v_v=1-a$ by Lemma \ref{lem:minmaxcrit} since $m_{\max}^=(y_v)>i_{\LL}$ as $\sigma'(y_v)=i_{\LL}+1$. We conclude that $\mathrm v_j=1-a$, which proves the lemma.\\

\item $y_v=\mixed$. Since $y_v>y_j$ and $y_j \in \aint_{>x_{r_{\max}+1},<x_{s_{\min}}}$ (as we cannot have $y_j=\mixed$), we have $\mixed > x_{r_{\max}+1}$. Hence, we must have $\mixed \not \le x_{s_{\min}}$. Let $z$ be such that $\sigma(z)=i_{\LL}+2$ and note that $\sigma'(z)=i_{\LL}+2$ as well (since $\sigma'$ was obtained from $\sigma$ by swapping the positions of $y_j$ and $x_{\LL}$ in $\sigma$). If $\sigma'(z)\in \I_{\llbracket r_{\max}+1,s_{\min}-1\rrbracket}$, then, by Corollary \ref{cor:max_plus}, since $z\neq \mixed$, we must have $z\in \aint_{>x_{r_{\max}+1},<x_{s_{\min}}}$. Recall that $\mixed \not \le x_{s_{\min}}$ so $\mixed \not \le z$. Hence, we can swap $y_v=\mixed$ and $z$ to reduce to the case (a).

Suppose then that $\sigma'(z)\notin \I_{\llbracket r_{\max}+1,s_{\min}-1\rrbracket}$:
\begin{claim}
\label{cl:2}
If $\sigma'(z)\notin \I_{\llbracket r_{\max}+1,s_{\min}-1\rrbracket}$ then $z=x_{\LL+1}$ and $i_{\LL+1}=i_{\LL}+2$.
\end{claim} 
\begin{proof}
Since $\sigma'(z)=i_{\LL}+2\notin \I_{\llbracket r_{\max}+1,s_{\min}-1\rrbracket}$ we get that $i_{\LL}+2\notin \I_{\LL}$ (because $r_{\max}+1<\LL<s_{\min}$ as $(r_{\max},s_{\min})$ is an $\LL$-splitting pair). Hence, $i_{\LL}+2\ge i_{\LL+1}$ (since $i_{\LL}+2\le i_{\LL}$ is impossible). On the other hand, Corollary \ref{cor:splitequiv} yields $i_{\LL}+1<i_{\LL+1}\le i_{\LL}+2$ so we conclude $i_{\LL}+2=i_{\LL+1}$. Since $\sigma'\in\mathcal N_=$ we also conclude that $z=x_{\LL+1}$.
\end{proof}
Since $(r_{\max},s_{\min})$ is an $\LL$-splitting pair, we have that either $s_{\min}=\LL+1$ or $s_{\min}>\LL+1$. If $s_{\min}=\LL+1$ then, since by assumption $y_j\in \aint_{>x_{r_{\max}+1},<x_{s_{\min}}}$, we have $y_j<x_{s_{\min}}=x_{\LL+1}$. Since, by Claim \ref{cl:1} and Claim \ref{cl:2}, $\sigma(y
_j)=i_{\LL}=i_{\LL+1}-2$, Proposition \ref{prop:avcrit}(h) shows that $\mathrm v_j=1-a$. 

Suppose then that $s_{\min}>\LL+1$. Consider the set 
\[
\gamma := \{y\in \alpha : \sigma'(y) \in \I_{\llbracket \LL+1,s_{\min}-1\rrbracket}, y\not > x_{l+1} \}.
\]
We claim that $\gamma$ is nonempty. Indeed, since $(\LL,s_{\min})$ is a splitting pair, Lemma \ref{lem:splitequiv} yields $y^{\sigma'}\in\bsg_{\LL}\cup\bsg_{s_{\min}}$ such that  $\sigma'(y^{\sigma'})\in \I_{\llbracket \LL+1,s_{\min}-1\rrbracket}$. We must have that either $y^{\sigma'} \not \le x_{s_{\min}}$ or $y^{\sigma'}  \not \ge x_{l+1}$. We cannot have $y^{\sigma'}  \not \le x_{s_{\min}}$ since $r_{\max}+1<\LL+1$ and $y^{\sigma'}  \neq \mixed$ (as $\sigma'(\mixed)=i_{\LL}+1\notin  \I_{\llbracket \LL+1,s_{\min}-1\rrbracket}\ni \sigma'(y^{\sigma'} )$) imply $y^{\sigma'} \in \aint_{>x_{r_{\max}+1},<x_{s_{\min}}}$. Hence, $y^{\sigma'} \in \gamma$. Now pick $y\in \gamma^\downarrow$, which exists as $\gamma$ is nonempty. Note that $y\in \gamma^\downarrow$ implies that $y\in \aint_{>x_{r_{\max}+1},<x_{s_{\min}}}$ because $ \sigma'(y) \in \I_{\llbracket \LL+1,s_{\min}-1\rrbracket}\subseteq  \I_{\llbracket r_{\max}+1,s_{\min}-1\rrbracket}$ yields, by Corollary \ref{cor:max_plus}, $y\in  \aint_{>x_{r_{\max}+1},<x_{s_{\min}}}\cup \{\mixed\}$, and $y\neq \mixed$ since   $\sigma'(\mixed)=i_{\LL}+1\notin  \I_{\llbracket \LL+1,s_{\min}-1\rrbracket}\ni \sigma'(y )$.

We will show next that the positions of $y$ and $y_v$ can be swapped in both $\sigma$ and $\sigma'$ to yield valid linear extensions in $\mathcal N_-,\mathcal N_=$, respectively.  This completes the proof since we reduce back to 3(a).

Let us now verify that the swaps yield valid linear extensions. We will show the validity of the swap of $\sigma$; the argument for $\sigma'$ is analogous since by construction $\sigma$ and $\sigma'$ are the same up to the swap of $y_j$ and $x_{\LL}$. Suppose this swap violated some relation so that there exists $w$ such that $\sigma(y_v)=i_{\LL}+1<\sigma(w)<\sigma(y)$, satisfying either $y_v<w$ or $w<y$. We cannot have $\mixed =y_v<w$ because $\sigma(w)\in \llbracket i_{r_{\max}+1}, i_{s_{\min}}\rrbracket$ implies, by Corollary \ref{cor:max_plus}, that $w\le x_{s_{\min}}$ (as $w\neq \mixed$). But then $\mixed = y_v < w \le x_{s_{\min}}$, which contradicts $\mixed \not \le x_{s_{\min}}$, as was shown at  the beginning of (3). We also cannot have $w<y$ since, otherwise, $w\not \ge x_{l+1}$ by the definition of $\gamma$. But if $w \not \in \aint$, then $w = x_r$ for some $r\ge l+1$ (as $i_{\LL}+1<\sigma(w)$) which implies $w\ge x_{l+1}$, a contradiction. On the other hand, if $w\in \aint$, then combined with $\sigma(w)\in \llbracket i_{l+1}, i_{s_{\min}}\rrbracket$ we have that $\sigma(w)\in \I_{\llbracket l+1,s_{\min}-1\rrbracket}$. Hence, $w\in \gamma$, which contradicts $y\in \gamma^\downarrow$. 
\end{enumerate}
\end{enumerate}
\end{proof}

Next we move to proving the analogue of Lemma \ref{lem:a=1}. We will again use the notation \eqref{eq:I}.
\begin{lemma}
\label{lem:a=1crit}
$a=1$. 
\end{lemma}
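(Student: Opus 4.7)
The strategy is to mimic the proof of Lemma \ref{lem:a=1} with Proposition \ref{prop:av} and Lemma \ref{lem:minmax} replaced by their critical analogues Proposition \ref{prop:avcrit} and Lemmas \ref{lem:minmaxcrit}--\ref{lem:minellmaxcrit}. The new obstacle is that these critical analogues only control elements of $\aint_{>x_{r_{\max}+1},<x_{s_{\min}}}$, whereas the companions $y_u^\sigma:=\sigma^{-1}(i_\LL-1)$ and $y_v^\sigma:=\sigma^{-1}(i_\LL+1)$ of $x_\LL$ under some $\sigma\in\mathcal N_=$ may coincide with the unique mixed element $\mixed$ of Corollary \ref{cor:max_plus}, in which case they sit outside this region.

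As a first step, fix any $\sigma\in\mathcal N_=$ (which exists since $|\mathcal N_=|>0$). Because $r_{\max}+1<\LL<s_{\min}$, we have $\sigma(y_u^\sigma),\sigma(y_v^\sigma)\in\I_{\llbracket r_{\max}+1,s_{\min}-1\rrbracket}$. By Corollary \ref{cor:max_plus}, each of $y_u^\sigma,y_v^\sigma$ lies in $\aint_{>x_{r_{\max}+1},<x_{s_{\min}}}\cup\{\mixed\}$, and by the uniqueness of $\mixed$ at most one of them equals $\mixed$.

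In the easy case, where neither $y_u^\sigma$ nor $y_v^\sigma$ equals $\mixed$, both companions lie in $\aint_{>x_{r_{\max}+1},<x_{s_{\min}}}$, and I would run the same three-case analysis as in Lemma \ref{lem:a=1}, but using Proposition \ref{prop:avcrit}(c-d) in place of Proposition \ref{prop:av}(c-d) and Lemma \ref{lem:minmaxcrit} in place of Lemma \ref{lem:minmax}: if $y_u^\sigma\incomp x_\LL$, apply Lemma \ref{lem:range} together with the argument that constructs a chain of covers forcing $\mathrm v_u=\mathrm v_v$; if $y_u^\sigma<x_\LL<y_v^\sigma$, apply Proposition \ref{prop:avcrit}(d) directly. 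In both instances one then invokes Lemma \ref{lem:minmaxcrit} to conclude $0=\mathrm v_u=\mathrm v_v=1-a$, so $a=1$.

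The main obstacle is the hard case where, say, $y_v^\sigma=\mixed$ (the case $y_u^\sigma=\mixed$ is symmetric). Here I cannot apply Proposition \ref{prop:avcrit} or Lemma \ref{lem:minmaxcrit} to $y_v^\sigma$ directly. The plan is to replace $\sigma$ by a linear extension whose upper companion is \emph{not} $\mixed$, using the structural information of Corollary \ref{cor:max_plus} that $\mixed\not\ge x_{r_{\max}+1}$ or $\mixed\not\le x_{s_{\min}}$. Concretely, suppose $\mixed\not\le x_{s_{\min}}$ (the other sub-case is symmetric). I would construct an element $y$ with $\sigma(y)\in\I_{\llbracket \LL+1,s_{\min}-1\rrbracket}$ via the splitting-pair/mixing argument at the end of the proof of Lemma \ref{lem:minellmaxcrit}: apply Lemma \ref{lem:splitequiv} to the splitting pair $(\LL,s_{\min})$ to find some $y^\sigma\in\bsg_\LL\cup\bsg_{s_{\min}}$ with $\sigma(y^\sigma)\in\I_{\llbracket \LL+1,s_{\min}-1\rrbracket}$, and then take a $\downarrow$-minimal such $y$; the uniqueness of $\mixed$ together with $\mixed\not\le x_{s_{\min}}$ forces $\mixed\not\le y$, so $\mixed$ and $y$ can be swapped to produce $\sigma'\in\mathcal N_+$ in which some element of $\aint_{>x_{r_{\max}+1},<x_{s_{\min}}}$ lands at position $>i_\LL+1$. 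Lemma \ref{lem:minellmaxcrit} (with $\s$ equal to $+$) then yields $\mathrm v=1-a$ for this element, while a symmetric construction from below, or application of Proposition \ref{prop:avcrit}(e) to the element at position $i_\LL-1$, produces a matching element with $\mathrm v=0$. Combining a chain of Proposition \ref{prop:avcrit}(c)--(h) relations along $y_u^\sigma\cdots y$ then produces the equality $0=1-a$, giving $a=1$. The delicate part, as in the proof of Lemma \ref{lem:minellmaxcrit}, is verifying that the proposed swaps are valid linear extensions, which requires carefully tracing how the relations involving $\mixed$ interact with the $\downarrow$-minimality of $y$ and with elements in $\I_{\llbracket r_{\max}+1,s_{\min}-1\rrbracket}$.
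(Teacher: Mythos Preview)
Your strategy is quite different from the paper's, and the hard case is incomplete as written. The paper abandons the companion-based template of Lemma \ref{lem:a=1} entirely. Instead it proves by a short counting argument that some $y_j\in\aint_{>x_{r_{\max}+1},<x_{s_{\min}}}$ must be incomparable to $x_\LL$: if every such element were comparable, then splitting into those $<x_\LL$ and those $>x_\LL$ and using $\sigma\in\mathcal N_-$ for the former and $\sigma\in\mathcal N_+$ for the latter bounds each piece by $|\I_{\llbracket r_{\max}+1,\LL-1\rrbracket}|-1$ and $|\I_{\llbracket\LL,s_{\min}-1\rrbracket}|-1$ respectively, so $|\aint_{>x_{r_{\max}+1},<x_{s_{\min}}}|\le|\I_{\llbracket r_{\max}+1,s_{\min}-1\rrbracket}|-2$. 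By Lemma \ref{lem:generalmixed} this forces at least two mixed elements, contradicting sharp-criticality of the maximal pair. Once such a $y_j$ exists, the closure assumption (Assumption \ref{ass:cl}) produces linear extensions placing $y_j$ on either side of $x_\LL$, and a single application of Lemma \ref{lem:minellmaxcrit} gives $0=\mathrm v_j=1-a$.

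Your hard case has real gaps. First, the swap of $\mixed$ with $y$ keeps $\sigma'$ in $\mathcal N_=$, not $\mathcal N_+$; what actually happens is that $y$ becomes the new upper companion at position $i_\LL+1$, reducing you to the easy case---your description of ``some element landing at position $>i_\LL+1$'' and then invoking Lemma \ref{lem:minellmaxcrit} with $\s$ equal to $+$ is not the right conclusion to draw. More seriously, applying Lemma \ref{lem:splitequiv} to $(\LL,s_{\min})$ requires $\LL+1<s_{\min}$; when $s_{\min}=\LL+1$ this is not a splitting pair and there is no room above $\mixed$ for the swap you describe (compare the special handling of $s_{\min}=\LL+1$ via Proposition \ref{prop:avcrit}(h) in the proof of Lemma \ref{lem:minellmaxcrit}, case 3(b)). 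Finally, the sub-case $\mixed\not\ge x_{r_{\max}+1}$ combined with $y_v^\sigma=\mixed$ is not symmetric to the one you wrote out: swapping $\mixed$ with $y_u^\sigma$ moves $\mixed$ into the \emph{lower}-companion slot rather than away from both, so you would need a further round of swapping with its own edge cases. These issues are probably all patchable, but the result would be substantially longer than the paper's pigeonhole argument, which sidesteps $\mixed$ altogether.
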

\begin{proof}
We will show that there exists $y_j\in \aint_{>x_{r_{\max}+1},<x_{s_{\min}}}$ such that $y_j\incomp x_{\LL}$. This will complete the proof since, by Assumption \ref{ass:cl}, $y_j\incomp x_{\LL}$ implies that there exist $\sigma,\sigma'\in\cup_{\s\in\{-,=,+\}}\mathcal N_{\s}$ satisfying  $\sigma(y_j)>\sigma(x_{\LL})$ and $\sigma'(y_j)<\sigma'(x_{\LL})$. Applying Lemma \ref{lem:minellmaxcrit} yields $0=\mathrm v_j=1-a$ so $a=1$.

We now show that there exists $y_j\in \aint_{>x_{r_{\max}+1},<x_{s_{\min}}}$ such that $y_j\incomp x_{\LL}$. Suppose for contradiction that such $y_j$ does not exist. Then, for any $y\in \aint_{>x_{r_{\max}+1},<x_{s_{\min}}}$, we must have either $y<x_{\LL}$ or $y>x_{\LL}$. In particular, we have the disjoint union
\begin{align}
\label{eq:union}
\aint_{>x_{r_{\max}+1},<x_{s_{\min}}}= [\aint_{>x_{r_{\max}+1},<x_{s_{\min}}}\cap \aint_{<x_{\LL}}]\cup  [\aint_{>x_{r_{\max}+1},<x_{s_{\min}}}\cap \aint_{>x_{\LL}}].
\end{align}
Let us show that 
\begin{align}
\label{eq:eps<>}
|\aint_{>x_{r_{\max}+1},<x_{s_{\min}}}\cap \aint_{>x_{\LL}}|\le |\I_{\llbracket \LL,s_{\min}-1\rrbracket}|-1 \quad \text{and}\quad |\aint_{>x_{r_{\max}+1},<x_{s_{\min}}}\cap \aint_{<x_{\LL}}|\le |\I_{\llbracket r_{\max}+1,\LL-1\rrbracket}|-1;
\end{align}
we prove the first inequality and the proof of the second inequality is analogous. Given any $\sigma\in\mathcal N_+$ and $y\in \aint_{>x_{r_{\max}+1},<x_{s_{\min}}}\cap \aint_{>x_{\LL}}$ we have $i_{\LL}+1<\sigma(y)<i_{s_{\min}}$ so $\sigma(y)\in \I_{\llbracket \LL,s_{\min}-1\rrbracket}\backslash\{i_l+1\}$. It follows that $|\aint_{>x_{r_{\max}+1},<x_{s_{\min}}}\cap \aint_{>x_{\LL}}|\le |\I_{\llbracket \LL,s_{\min}-1\rrbracket}|-1$ as desired. By \eqref{eq:union} and \eqref{eq:eps<>} we now get
\begin{align}
\label{eq:eps<>union}
|\aint_{>x_{r_{\max}+1},<x_{s_{\min}}}|\le|\I_{\llbracket r_{\max}+1,\LL-1\rrbracket}|+ |\I_{\llbracket \LL,s_{\min}-1\rrbracket}|-2= |\I_{\llbracket r_{\max}+1,s_{\min}-1\rrbracket}|-2.
\end{align}
However, by Lemma \ref{lem:generalmixed}, $|\aint_{>x_{r_{\max}+1},<x_{s_{\min}}}|=|\I_{\llbracket r_{\max}+1,s_{\min}-1\rrbracket}|-|\{\text{mixed elements}\}|$. Hence, the number of mixed elements is at least 2 which means that the maximal splitting pair is supercritical, which contradicts Proposition \ref{prop:max_notions}.
\end{proof}

We are now ready to prove Theorem \ref{thm:critsec}. 

\begin{proof}[Proof of Theorem \ref{thm:critsec}]
We start by proving the analogue of \eqref{eq:supcritequiv}.

\begin{lemma}
\label{lem:incompcrit}
Let $y\in \aint_{>x_{r_{\max}+1},<x_{s_{\min}}}$.
\begin{enumerate}[(a)]
\item If there exists $\sigma\in\mathcal N_=$ such that either $\sigma(y)=i_{\LL}-1$ or $\sigma(y)=i_{\LL}+1$, then $y\incomp x_{\LL}$. 
\item If there exists $\sigma\in\mathcal N_-\cup \mathcal N_+$ such that $\sigma(y)=i_{\LL}$, then $y\incomp x_{\LL}$. 
\end{enumerate}
\end{lemma}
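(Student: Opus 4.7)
The plan is to mimic the proof of Theorem \ref{thm:supcritsec}, exploiting that on the subspace $E^\perp$ one has the supercritical-like identity
\[
h_{\poly_{\LL-1}}(\mathrm u) = h_{\poly_\LL}(\mathrm u) + \langle \mathrm v, \mathrm u\rangle
\]
for every $(B,\mathcal\poly)$-extreme normal direction $\mathrm u \in E^\perp$ (Lemma \ref{lem:supcritE} combined with $a = 1$ from Lemma \ref{lem:a=1crit}). Since $y = y_j$ lies in $\aint_{>x_{r_{\max}+1},<x_{s_{\min}}} = \aint \setminus \bsg_{\max}$, both $\pm e_j$ belong to $E^\perp$. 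In each part the strategy is to confront this equation (for a suitable $\mathrm u \in \{\pm e_j\}$) with the bound on $\mathrm v_j$ coming from Lemma \ref{lem:minmaxcrit} or Lemma \ref{lem:minellmaxcrit}, producing a contradiction whenever $y$ is comparable to $x_\LL$.

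For part (a), consider the case $\sigma \in \mathcal N_=$ with $\sigma(y_j) = i_\LL + 1$ (the case $\sigma(y_j) = i_\LL - 1$ is handled analogously, using Proposition \ref{prop:dir}(b) and $+e_j$ in place of Proposition \ref{prop:dir}(a) and $-e_j$). Assume for contradiction that $y_j$ is comparable to $x_\LL$; since $\sigma(y_j) > \sigma(x_\LL)$, this forces $y_j > x_\LL$. Proposition \ref{prop:dir}(a) with $m = \LL$ then yields that $-e_j$ is a $(B,\mathcal\poly)$-extreme normal direction. Using \eqref{eq:Kiordpoly}: $y_j \in \aint_{>x_\LL}$ pins the $j$-th coordinate on $\poly_{\LL-1}$ to $1$, so $h_{\poly_{\LL-1}}(-e_j) = -1$; meanwhile $\sigma(y_j) < i_{\LL+1}$ places $y_j$ in $\bsg_\LL$, so $h_{\poly_\LL}(-e_j) = 0$. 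Substituting into the identity produces $\mathrm v_j = 1$, while $m_{\max}^=(y_j) \geq i_\LL + 1 > i_\LL$ together with Lemma \ref{lem:minmaxcrit} give $\mathrm v_j = 1 - a = 0$, a contradiction.

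For part (b), consider $\sigma \in \mathcal N_+$ with $\sigma(y_j) = i_\LL$ (the $\mathcal N_-$ case is symmetric). Assume for contradiction that $y_j < x_\LL$. The crux is to construct an auxiliary $\sigma' \in \mathcal N_=$ with $\sigma'(y_j) = i_\LL - 1$, which by Proposition \ref{prop:dir}(b) (with $m = \LL$) makes $e_j$ a $(B,\mathcal\poly)$-extreme direction. Existence of $\sigma'$ follows from Lemma \ref{lem:range}: $i_\LL - 1 \notin \{i_m\}_{m \in [k]}$ by Corollary \ref{cor:splitequiv}; the bound $u_=(y_j) \geq i_\LL - 1$ follows from Lemma \ref{lem:UL}(iii) together with $u_+(y_j) \geq m_{\max}^+(y_j) \geq \sigma(y_j) = i_\LL$; and $l_=(y_j) \leq i_\LL - 1$ because $y_j < x_\LL$ and $|\mathcal N_=|>0$ force $m_{\max}^=(y_j) \leq i_\LL - 1$. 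Once $e_j$ is known to be extreme, observe that $\sigma(y_j) > i_{\LL-1}$ rules out $y_j < x_{\LL-1}$, so together with $y_j < x_\LL$ we obtain $y_j \in \bsg_{\LL-1} \setminus \bsg_\LL$; hence $h_{\poly_{\LL-1}}(e_j) = 1$ and $h_{\poly_\LL}(e_j) = 0$, forcing $\mathrm v_j = 1$. Finally $m_{\min}^+(y_j) \leq \sigma(y_j) = i_\LL < i_\LL + 1 = i_\LL + 1_+$, so Lemma \ref{lem:minellmaxcrit} gives $\mathrm v_j = 0$, contradicting the previous line.

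The main obstacle is the construction of the auxiliary $\sigma' \in \mathcal N_=$ in part (b): the hypothesis of Proposition \ref{prop:dir}(b) requires a member of $\mathcal N_=$ placing $y_j$ at $i_\LL - 1$, whereas we are only handed a $\sigma \in \mathcal N_+$ placing $y_j$ at $i_\LL$. Bridging this gap relies on the range analysis via Lemmas \ref{lem:UL} and \ref{lem:range}, combined with the elementary bound $m_{\max}^=(y_j) \leq i_\LL - 1$ forced by $y_j < x_\LL$.
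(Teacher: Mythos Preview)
Your proof is correct and follows essentially the same approach as the paper. Part (a) matches the paper's argument verbatim. For part (b), both you and the paper construct an auxiliary linear extension in $\mathcal N_=$ placing $y_j$ adjacent to $x_\LL$; the only cosmetic difference is that the paper then simply invokes part (a) to reach the contradiction, whereas you unroll that argument by recomputing the support functions and appealing to Lemma~\ref{lem:minellmaxcrit} directly.
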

\begin{proof}
$~$
\begin{enumerate}[(a)]
\item  We proceed as in the proof of Theorem \ref{thm:supcritsec} where we use Lemma \ref{lem:supcritE} rather than \eqref{eq:exth}.

\item Let $y\in \aint_{>x_{r_{\max}+1},<x_{s_{\min}}}$ be such that there exists  $\sigma \in\mathcal N_-$ with $\sigma(y)=i_{\LL}$; the proof for the case $\sigma \in\mathcal N_+$ is analogous. Since we cannot have $y<x_{\LL}$ it suffices to show that $y\not>x_{\LL}$. Suppose for contradiction that $y>x_{\LL}$. By Lemma \ref{lem:range}, $l_{-}(y)\le i_{\LL}$ so by Lemma \ref{lem:UL} $l_{=}(y)\le i_{\LL}+1$. On the other hand, for any $\sigma'\in \mathcal N_=$, Corollary \ref{cor:range} yields $i_{\LL}=\sigma'(x_{\LL})<\sigma'(y)\le m_{\max}^=(y)\le u_=(y)$ so $u_=(y)\ge i_{\LL}+1$. Since $i_{\LL}+1\neq i_m$ for any $m\in [k]$ (by Corollary \ref{cor:splitequiv}), Lemma \ref{lem:range} yields $\sigma''\in\mathcal N_=$ such that $\sigma''(y)=i_{\LL}+1$. By part (a), $y\incomp x_{\LL}$, which contradicts $y>x_{\LL}$. 
\end{enumerate}
\end{proof}
We now prove $|\mathcal N_+(\sim,\sim)|=0$; the proof of $|\mathcal N_-(\sim,\sim)|=0$ is analogous. Suppose for contradiction that $|\mathcal N_+(\sim,\sim)|>0$ so there exists $\sigma\in \mathcal N_+$ such that $y_u:=\sigma^{-1}(i_{\LL}-1)$ and $y_v:=\sigma^{-1}(i_{\LL})$ satisfy $y_u,y_v<x_{\LL}$. Since $ i_{\LL}-1,i_{\LL}\in \I_{\llbracket r_{\max}+1,s_{\min}-1\rrbracket}$ (because $(r_{\max},s_{\min})$ is an $\LL$-splitting pair so $i_{r_{\max}+1}\le i_{\LL-1}<i_{\LL}-1$ by Corollary \ref{cor:splitequiv}), Corollary \ref{cor:max_plus} yields $y_u,y_v\in \aint_{>x_{r_{\max}+1},<x_{s_{\min}}}\cup\{\mixed\}$. Consider the following two cases:

If $y_v\in \aint_{>x_{r_{\max}+1},<x_{s_{\min}}}$, then, by Lemma \ref{lem:incompcrit}(b), $y_v\incomp x_{\LL}$ which contradicts $y_v<x_{\LL}$. 

If $y_v=\mixed$, we have $y_u \in  \aint_{>x_{r_{\max}+1},<x_{s_{\min}}}$. Then, because $\mixed < x_{\LL} < x_{s_{\min}}$, we must have $\mixed \not \ge x_{r_{\max}+1}$, which implies $y_v = \mixed \not \ge y_u$. Hence, we can swap the positions of $y_u$ and $y_v$ in $\sigma$ to reduce to the previous case.
\end{proof}

\section*{Notation index}

\begin{itemize}
\item $[p]:=\{1,\ldots, p\}$ for positive integers $p$.\\

\item $\llbracket p,q \rrbracket :=\{p,p+1,\ldots,q-1,q\}$ for integers $p\le q$; \eqref{eq:bracket}.\\
\item $\abar=\{y_1,\ldots,y_{n-k},x_0,x_1,\ldots, x_k,x_{k+1}\}$ and $\aint=\{y_1,\ldots,y_{n-k}\}$ where $x_0$ (res. $x_{k+1}$) is smaller (res. bigger) than every element in $\abar$.\\
\item $i_0=0$ and $i_{k+1}=n+1$. $j_0=-1$ and $j_{p+1}=k+1$. \\
\item  $1_{\s}=1_{\{\s\text{ is }+\}}-1_{\{\s\text{ is }-\}}$ for $\s\in\{-,=,+\}$.\\
\item $\bsg_i=\aint\backslash(\aint_{<x_i}\cup \aint_{>x_{i+1}})$ and $\bsg_S=\cup_{i\in S}\bsg_i$; \eqref{eq:betai}.\\

\item $i_{\max}(y)$ (res. $i_{\min}(y)$) is the maximum (res. minimum) number such that $y>x_{i_{\max}(y)}$ (res. $y<x_{i_{\min}(y)}$); Definition \ref{def:lmu}.\\

\item $l_{\s}(y):=\max_{r\le i_{\max}(y)}(i_r+1_{\s}+|\abar_{>x_r,<y}|+1)$ and $u_{\s}(y):=\min_{s\ge i_{\min}(y)}(i_s+1_{\s}-|\abar_{>y,<x_s}|-1)$; Definition \ref{def:lmu}.\\

\item  $m^{\s}_{\min}(y)=\min_{\sigma\in\mathcal N_{\s}}\sigma(y)\quad\text{and}\quad m^{\s}_{\max}(y)=\max_{\sigma\in\mathcal N_{\s}}\sigma(y)$ for $\s\in\{-,=,+\}$ and $y\in\aint$; Definition \ref{def:lmu}.\\

\item $i_j^\s := i_j+1_{j=\LL}1_{\s}$; Definition \ref{def:lmu}.\\

\item $r_{\max}=\max_{\iota}r_{\iota}$ and $s_{\min}=\min_{\iota}s_{\iota}$  where $(r_{\iota},s_{\iota})$ are the sharp-critical $\LL$-splitting pairs; Definition \ref{def:rmaxsmin}.\\

\item $\mixed$; Corollary \ref{cor:max}.\\

\item $\mathcal\poly_{\max}$, $\bsg_{\max}:=\bsg_{\llbracket 0, r_{\max}\rrbracket\cup \llbracket s_{\min}, k\rrbracket}$, $\aint\backslash\bsg_{\max}=\aint_{>x_{r_{\max}+1},<x_{s_{\min}}}$, and $E^{\perp}:=\R^{\aint\backslash \bsg_{\max}}$; \eqref{eq:Kmax}, \eqref{eq:Eperp}.\\

\item $\llbracket i_j,i_{j+1}\rrbracket^{\s}:=\llbracket i^\s_j,i^\s_{j+1}\rrbracket = \llbracket i_j+1_{j=\LL}1_{\s},i_{j+1}+1_{j+1=\LL}1_{\s}\rrbracket$ and $\llbracket i_j+1,i_{j+1}-1\rrbracket^{\s}:=\llbracket i^\s_j+1,i^\s_{j+1}-1\rrbracket$; \eqref{eq:[]}.\\

\item $\I_{j_q}:=\llbracket i_j+1,i_{j+1}-1\rrbracket\quad \text{for }j_q\in\llbracket 0,k\rrbracket,\quad \I_J:=\cup_{j_q\in J}\I_{j_q}$; \eqref{eq:Ij}, \eqref{eq:I}.\\

\end{itemize}

\subsection*{Acknowledgments} We are grateful to David Jerison, Greta Panova, and Yufei Zhao for helpful comments on this work. We are especially grateful to Swee Hong Chan, Igor Pak, and Ramon van Handel for their valuable comments. We also thank the anonymous referee for many useful comments that improved this paper; Corollary \ref{cor:referee} is due to them. 

Zhao Yu Ma was partly supported by UROP at MIT. This material is based upon work supported by the National Science Foundation under Award Number 2002022.

\bibliographystyle{amsplain0}
\bibliography{ref_Stanley}

\end{document}